\numberwithin{equation}{section}
\newtheorem{theorem}{Theorem}[section]
\newtheorem{definition}[theorem]{Definition}
\newtheorem{lemma}[theorem]{Lemma}
\newtheorem{corollary}[theorem]{Corollary}
\newlength\saveparindent
\DeclareMathAlphabet{\mathsfsl}{OT1}{cmss}{m}{sl}
\newcommand{\tens}[1]{\mathsfsl{#1}}
\DeclareMathOperator*{\supp}{Supp}
\DeclareMathOperator*{\conv}{Conv}
\DeclareMathOperator*{\face}{Face}
\DeclareMathOperator*{\facet}{Facet}
\DeclareMathOperator*{\ord}{ord}
\DeclareMathOperator*{\rank}{rank}
\newcommand{\BB}[1]{{\mathbb{#1}}}
\newcommand{\hs}[1]{\hspace{#1}}
\newcommand{\mc}[3]{\multicolumn{#1}{#2}{#3}}
\newcommand{\mr}[2]{\multirow{#1}{*}{#2}}
\newcommand{\ol}[1]{\overline{#1}}
\newcommand{\wh}[1]{\widehat{#1}}
\newcommand{\wt}[1]{{\widetilde{#1}}}
\newcommand{\AM}{\wt\Z_*(\wt\Y_*)}
\newcommand{\ald}{{\Ga_\D}}
\newcommand{\alg}{{\Ga'_{\D'}}}
\newcommand{\Bt}[2]{b_{#2}({#1})}
\newcommand{\Btb}[1]{\Bt{\partial\SO}{#1}}
\newcommand{\Bti}[1]{\Bt{\SO^\circ}{#1}}
\newcommand{\Bts}[1]{\Bt{\SO}{#1}}
\newcommand{\bz}{\mathbf{0}}
\newcommand{\CP}[3]{{#1}_{#2}\wedge\,\cdots\,\wedge{#1}_{#3}}
\newcommand{\DE}[1]{\Delta_{#1}}
\newcommand{\DF}[1]{\BB F^{#1}}
\newcommand{\DFs}[1]{(\BB F^*)^{#1}}
\newcommand{\DN}[1]{\BB N^{#1}\setminus\{\bz\}}
\newcommand{\DR}{\Rp^n\setminus\{\bz\}}
\newcommand{\dotc}[2]{\{{#1},\dotsc,{#2}\}}
\newcommand{\EI}[2]{\mathcal{E}_{\iI{#2}}(#1)}
\newcommand{\Ec}[2]{{#1}_{\hs{-1pt}\iI{#2}}}
\newcommand{\Ev}[2]{{#1}_{\hs{-1pt}\iI{#2}}}
\newcommand{\Evv}[2]{{#1}_{\hs{-1pt}\iI{#2}\times\iI{#2}}}
\newcommand{\Evz}[3]{{#1}_{\hs{-1pt}\iI{#2}\times\iI{#3}}}
\newcommand{\es}[2]{{\supp\nolimits_{\iI{#2}}\hs{-2pt}(#1)}}
\newcommand{\esc}[2]{{\supp\nolimits_{\iI{#2}}^c\hs{-2pt}(#1)}}
\newcommand{\FFs}{\mathfrak{F}^\star}
\newcommand{\FV}[1]{\CF{\CV{#1}}}
\newcommand{\fnd}{\mathfrak{g}}
\newcommand{\gfv}[1]{\CG{\FV{#1}}}
\newcommand{\gv}[1]{\CG{\CV{#1}}}
\newcommand{\LE}[3]{{#1}_{#2},\dotsc,{#1}_{#3}}
\newcommand{\LEs}[5]{{#1}_{#2}^{#4},\dotsc,{#1}_{#3}^{#5}}
\newcommand{\Le}[1]{{#1}_1,{#1}_2,{#1}_3}
\newcommand{\lalg}{\Al''_{\D'}}
\newcommand{\lt}{{^t\hs{-1pt}}}
\newcommand{\ME}[3]{{#1}_{#2}~\cdots~{#1}_{#3}}
\newcommand{\MEs}[5]{{#1}_{#2}^{#4}~\cdots~{#1}_{#3}^{#5}}
\newcommand{\MP}[1]{\TP{(\overline{#1})}}
\newcommand{\Me}[1]{{#1}_1~{#1}_2~{#1}_3}
\newcommand{\Mnorm}[1]{\|{#1}\|_{\max}}
\newcommand{\mnorm}[1]{\|{#1}\|_{\min}}
\newcommand{\NP}{\mathsf{NP}}
\newcommand{\npp}[1]{\NP(\varphi_{#1})}
\newcommand{\OM}[1]{\mathrm{\Omega}_{#1}}
\newcommand{\ON}{\OM\NP}
\newcommand{\ONs}{\ON^\star}
\newcommand{\oDE}[1]{\ol\Delta_{#1}}
\newcommand{\oSV}[1]{\ol{\Delta}_{\CV{#1}}}
\newcommand{\PI}[1]{\Pi_{#1}}
\newcommand{\pAl}{\perm\hs{-0.8pt}\Al}
\newcommand{\RI}[2]{\mathcal{R}_{\iI{#2}}\hs{-1pt}(#1)}
\newcommand{\Rp}{\BB R_{\ge 0}}
\newcommand{\ro}{{(\rho)}}
\newcommand{\SF}[1]{\mathsf{F}_{#1}}
\newcommand{\SO}{\Sec\ONs}
\newcommand{\SV}[1]{\DE{\CV{#1}}}
\newcommand{\Sec}[1]{\Gamma_{#1}}
\newcommand{\succn}{\succ_*}
\newcommand{\succx}{\succ_0}
\newcommand{\sumsep}{\hs{-3pt}}
\newcommand{\sumsepm}{\hs{-4pt}}
\newcommand{\Top}{{\T'_{\sz}}}
\newcommand{\tlZ}{{\_\wt\Z}}
\newcommand{\VF}[1]{\CV{\SF{#1}}}
\newcommand{\vft}[2]{{#1}_{\wt z_{#2}}}
\newcommand{\vftp}[2]{{#1}_{\wt z'_{#2}}}
\newcommand{\vfz}[2]{{#1}_{z_{#2}}}
\newcommand{\vfze}[1]{{#1}_{\Z'_\star}}
\newcommand{\vfzp}[2]{{#1}_{z'_{#2}}}
\newcommand{\XIp}{{\XI'}}
\newcommand{\XIq}{{\XI''}}
\newcommand{\Xop}{{\X'_{\sz}}}
\newcommand{\YV}[1]{\Y_{\CV{#1}}}
\newcommand{\Ya}{{\Y_*}}
\newcommand{\Yo}{{\Y_{\sz}}}
\newcommand{\Yop}{{\Y'_{\sz}}}
\newcommand{\Za}{{\Z_*}}
\newcommand{\Zap}{{\Z'_*}}
\newcommand{\Zo}{{\Z_{\sz}}}
\newcommand{\Zop}{{\Z'_{\sz}}}
\newcommand{\Zoq}{{\Z''_{\sz}}}
\newcommand{\CC}{\mathcal{C}}
\newcommand{\CF}[1]{\mathcal{F}_{#1}}
\newcommand{\cF}{\mathcal{F}}
\newcommand{\CG}[1]{\mathcal{G}_{#1}}
\newcommand{\CK}{\mathcal{K}}
\newcommand{\CI}{\mathcal{I}}
\newcommand{\CL}[1]{\mathcal{L}_{#1}}
\newcommand{\CT}[1]{\mathcal{T}_{#1}}
\newcommand{\CV}[1]{\mathcal{V}_{#1}}
\newcommand{\iI}[1]{\textit{I}_{#1}}
\newcommand{\iJ}{\textit{J}}
\newcommand{\lS}{\_\bs}
\newcommand{\lX}{{\_\X}}
\newcommand{\lZ}{{\_\Z}}
\newcommand{\lXp}{{\_\Xp}}
\newcommand{\lAl}{\_\hs{-0.5pt}\Al}
\newcommand{\lTM}{{\_\TM}}
\newcommand{\lTQ}{{\_\hs{-1.2pt}\TQ}}
\newcommand{\Xp}{{\X'}}
\newcommand{\Xq}{{\X''}}
\newcommand{\Yp}{{\Y'}}
\newcommand{\Zp}{{\Z'}}
\newcommand{\Zq}{{\Z''}}
\newcommand{\sF}{\mathsf{F}}
\newcommand{\sH}{\mathsf{H}}
\newcommand{\sP}{\mathsf{P}}
\newcommand{\sU}{\mathsf{U}}
\newcommand{\TA}{\tens{A}}
\newcommand{\TB}{\tens{B}}
\newcommand{\TC}{\tens{C}}
\newcommand{\TD}{\tens{D}}
\newcommand{\TE}{\tens{E}}
\newcommand{\TF}{\tens{F}}
\newcommand{\TG}{\tens{G}}
\newcommand{\Th}{\tens{H}}
\newcommand{\TM}{\tens{M}}
\newcommand{\TN}{\tens{N}}
\newcommand{\TO}{\tens{0}}
\newcommand{\TP}{\tens{P}}
\newcommand{\TQ}{\tens{Q}}
\newcommand{\TS}{\tens{S}}
\newcommand{\TT}{\tens{T}}
\newcommand{\LAm}{\tens{\Lambda}}
\newcommand{\TOm}{\tens{\Omega}}
\newcommand{\Al}{{\bm{\alpha}}}
\newcommand{\Be}{{\bm{\beta}}}
\newcommand{\Ga}{{\bm{\gamma}}}
\newcommand{\La}{{\bm{\lambda}}}
\newcommand{\Mu}{{\bm{\mu}}}
\newcommand{\XI}{{\bm{\xi}}}
\newcommand{\Ze}{{\bm{\zeta}}}
\newcommand{\A}{{\mathbf{a}}}
\newcommand{\B}{{\bm{b}}}
\newcommand{\D}{{\bm{d}}}
\newcommand{\E}{{\bm{e}}}
\newcommand{\F}{{\bm{f}}}
\newcommand{\bl}{{\bm{l}}}
\newcommand{\M}{{\bm{m}}}
\newcommand{\N}{{\bm{n}}}
\newcommand{\R}{{\bm{r}}}
\newcommand{\bs}{{\bm{s}}}
\newcommand{\T}{{\bm{t}}}
\newcommand{\U}{{\bm{u}}}
\newcommand{\V}{{\bm{v}}}
\newcommand{\W}{{\bm{w}}}
\newcommand{\X}{{\bm{x}}}
\newcommand{\Y}{{\bm{y}}}
\newcommand{\Z}{{\bm{z}}}
\newcommand{\crD}{\mathscr{D}}
\newcommand{\crC}{\mathscr{C}}
\newsavebox{\Dlu}
\sbox{\Dlu}{$\Delta\hs{-1pt}$\_\hs{-1pt}}
\newcommand{\Dl}{\usebox{\Dlu}}
\newsavebox{\permt}
\sbox{\permt}{$^\sigma\hs{-2.5pt}$}
\newcommand{\per}{^\sigma\hs{-2.5pt}}
\newcommand{\perl}{\usebox{\permt}\hs{-1mm}}
\newcommand{\perm}{\usebox{\permt}}
\newsavebox{\ssz}
\sbox{\ssz}{\raisebox{0.3pt}[0pt]{$\scriptscriptstyle{0}$}}
\newcommand{\sz}{\usebox{\ssz}}
\begin{document}

\title{A Newtonian and Weierstrassian Approach\\
to Local Resolution of Singularities\\
in Characteristic Zero\\
\footnotetext{\textit{Email:} \texttt{masm.math@163.com}}
\footnotetext{\textit{Address:} LMIB \& School of Mathematics,
Beihang University, No. 37 Xue Yuan Road, Hai Dian District,
Beijing 100191, China.} \footnotetext{2000 \textit{Mathematics
Subject Classification.} Primary 14B05; 14J17; 14E15; 32S05.}
\footnotetext{\textit{Key words and phrases:} resolution of
singularities; Newton polyhedron; monomial transformation; branch
point; Weierstrass preparation theorem; partition of unity.}}
\author{Sheng-Ming Ma}
\date{}
\maketitle

\begin{abstract}
This paper formulates an elementary algorithm for resolution of singularities in a neighborhood of a singular point over a field of characteristic zero.
The algorithm is composed of finite sequences of Newton polyhedra and monomial transformations and based on Weierstrass preparation theorem.
This approach entails such new methods as canonical reduction and synthesis of monomial transformations as well as latency and revival of primary variables.
The orders of primary variables serve as the decreasing singularity invariants for the algorithm albeit with some temporary increases.
A finite partition of unity in a neighborhood of the singular point is constructed in an inductive way depending on the topological constraint imposed by Euler characteristic of the normal vector set of Newton polyhedron.
\end{abstract}

\tableofcontents

\section{Introduction}

It is well known that an assemblage of algebraic geometers have made preeminent contributions to the problem of resolution of singularities.
The seminal works of Zariski, Walker, Abhyankar and Lipman culminated in the monumental achievement of Hironaka \cite{Hi} who first solved the problem over a field of characteristic zero around 1964.
The first constructive and simplified proofs were formulated by Bierstone and Milman \cite{BM1} and Villamayor \cite{V}.
The subsequent simplifications were accomplished by Bierstone and Milman \cite{BM2}, Encinas and Villamayor \cite{EV}, Encinas and Hauser \cite{EH}, Cutkosky \cite{C}, W\l odarczyk \cite{W} and Koll\'ar \cite{K}.
This paper formulates a resolution algorithm that is composed of finite sequences of Newton polyhedra and monomial transformations in a neighborhood of a singular point over a field of characteristic zero.
The algorithm is elementary and local in nature and not written in the modern language of algebraic geometry.
The motivation of the paper is to demonstrate that the original insight of Newton and ingenious preparation of Weierstrass can lead to another approach to the problem.
Nonetheless the guidance built upon the endeavors of the aforementioned algebraic geometers, especially that of Hironaka, can never be overestimated when the author was entangled in technical intricacies.

Each step of the resolution algorithm in the paper starts with a procedure called the preliminary and Weierstrass reductions.
The preliminary reduction amounts to a linear modification of the function under investigation, which is ensued by the Weierstrass reduction entailing an invocation of Weierstrass preparation theorem and completion of perfect power.
After these reductions the function or a part of it is represented in a Weierstrass form with an apex such as \eqref{PrimaryForm}, \eqref{MonoWeierstrass} or \eqref{LatentWeierstrassForm}.
The order of the apex represents that of the function and is naively defined as the singularity invariant called the singularity height in the paper.
The termination of the resolution algorithm depends on the ultimate strict decrease of the singularity height albeit with some temporary increases.

Associated with each refined vertex cone of the Newton polyhedron of a Weierstrass form as in \eqref{PrimaryForm}, a monomial transformation is defined that partially factorizes the form into exceptional factors and a partial transform.
Instead of studying the improvement of singularities of the partial transform directly, we reduce the monomial transformation into a canonical form so that it is an identity map on the exponents of the non-exceptional variables.
In this way the singularity height can measure the improvement of singularities at a reduced branch point directly based on such exponential identities as \eqref{ExponIdentity}.

The singularities of branch points can be classified into regular and irregular ones.
The former one can be reduced to examining the order of a univariate polynomial in terms of the primary variable based on the canonical reduction, which is elaborated in Section \ref{Section:RegularSing};
whereas the later one represents a temporary but essential increase of the singularity height.
In resolution of irregular singularities in Section \ref{Section:SecondarySing}, a method called latency implementation is contrived to define a latent variable and thus reduce the dimension of the problem.
This approach is similar to the blowup of the axis of the primary variable in the traditional approach but is from the perspective of Newton polyhedron.
The involved subtlety is the method to apply Weierstrass preparation theorem in the presence of the latent variable such that an effective Weierstrass form can be established.
This is achieved by a partial gradation of the function into a reducible part and remainder one with the former one being reduced to a Weierstrass form.
Moreover, the residual order strictly decreases from the prior singularity height after the revival of the latent variable.
This is due to the invariance of the exponent of the latent variable throughout the resolution process for irregular singularities.

The singularity height might have a temporary but essential increase in the case of inconsistent singularities, which is defined in Definition \ref{Def:Consistency-1} and addressed in Section \ref{Section:Inconsistency}.
In this case the primary variable becomes latent and consequently it is ineffectual to invoke the canonical reduction like in Section \ref{Section:RegularSing}.
A new primary variable is defined instead in Section \ref{Section:Inconsistency} through the procedure of latent preliminary and Weierstrass reductions.
Weierstrass preparation theorem is invoked through a procedure of partial gradation called the latent gradation.
The intricate method in this section is the decomposition and synthesis of reduced exponential matrices into a synthetic one such as \eqref{CstnSyntheticMatrix} or \eqref{IncstnSyntheticMatrix} that acts on the exponents of the latent primary variables directly.
In this way either the latent primary variables sustain their latency, in which case the degree of nested latency increases, or one of them is revived through a canonical reduction, in which case resumes the strict decrease of the singularity height associated with the latent primary variable.
In Section \ref{Section:Inconsistency} it is shown that the former case eventually leads to a resumed strict decrease of the singularity height associated with one of the latent primary variables as well.

Section \ref{Section:Generic} serves to elucidate the case when the irregular and inconsistent singularities are intermingled.
Moreover, the above resolution algorithm for the hypersurface case can be applied to the generators of an ideal simultaneously in the generic case since the traditional resolution centers are selected by Newton polyhedra ``automatically".

In Section \ref{Section:Partition} a finite partition of unity is constructed in a dominant neighborhood of each branch point.
The proof involves a calculation of Euler characteristic of the sectional polyhedra of the vertex cones of the pertinent Newton polyhedron.
As a result, it suffices to study the part of an exceptional branch inside a compact set because with each common facet of two refined vertex cones there associated a pair of conjugate variables that are reciprocal to each other.
The compactness ensures that a finite number of dominant neighborhoods of branch points can cover all the exceptional branches.

In Section \ref{Section:SurfaceExample} we investigate the simple case of analytic surfaces that exemplifies the algorithm for resolution of singularities in the paper.

Let us use $\BB F$ to denote a field of characteristic zero and $\DF n$ the $n$-dimensional affine space over $\BB F$ throughout the paper.
Let us denote as usual the sets of complex numbers, real numbers, rational
numbers, integers and natural numbers as $\BB C$,
$\BB R$, $\BB Q$, $\BB Z$ and $\BB N$
respectively.
In particular, we adopt the convention that $0\in\BB N$, $\BB N^*=\BB N\setminus\{0\}$ and $\BB F^*=\BB F\setminus\{0\}$ as well as $\Rp:=\{r\in\BB R\colon r\ge 0\}$.
In this paper the vectors are typeset as lowercase letters in boldface and matrices as slanted uppercase letters.
In this way a matrix $\TM$ written as $[\ME\V 1n]$ means that it has column vectors $\LE\V 1n$.
The determinant of a matrix $\TM$ is denoted as $\det\TM$.
The variables and their exponents are typeset as lowercase English and Greek letters in boldface respectively such as $\X=(\LE x 1n)$ and $\Al=(\LE\alpha 1n)\in\BB N^n$.
Let us use the notations $\BB F[\X]=\BB F[\LE x1n]$ and $\BB F\{\X\}=\BB F\{\LE x1n\}$ to denote the respective algebras of polynomials and convergent power series in the paper.
Let us also use the notation $\sigma$ to denote a permutation of the index set $\dotc 1n$.
A $k$ by $k$ unit matrix is usually written as $\TE_k$, or simply as $\TE$ when its dimensions are clear from the context.
The Euclidean inner product of two vectors $\V$ and $\W$ is denoted as $\langle\V,\W\rangle$.
The notation $|A|$ stands for the number of elements in a set $A$.
The maximal number of linearly independent vectors in a vector set $V$ is given the usual notation $\rank (V)$.
The vector set $\{\LE\E 1n\}:=\dotc{(1,\dotsc,0)}{(0,\dotsc,1)}$ denotes the standard basis of $\BB R^n$.
The zero vector is usually written as $\bz$ whose dimension should be either clear from the context or stated otherwise.
For simplicity of notations in this paper, two functions with different variables are deemed as different functions albeit they might share the same name.

Please note that a few terminologies like ``consistency" and ``localization" in the paper are different from those well known in the literature either because they stand for different objects of study or due to the elementary nature of the algorithm in the paper.
In order to avoid unaesthetic page breaks, large matrices are not at their optimum places occasionally and readers should refer to them by their tag numbers.
Moreover, the author strives to make a reference list that is both complete and relevant and were there any regretful omissions, the authors of the existing literature have the absolute priorities.

\section{Resolution of singularities via canonical reduction}

\subsection{Partial resolution of singularities}

\begin{definition}\label{Def:NewtonPolyhedron}
{\upshape ($\X^\Al$; $\X^\TM$; $\supp(f)$; $\ord (f)$; $\deg (f)$; Newton polyhedron $\NP(f)$; normal vector set $\PI\NP$; $\facet (\V)$; $\face (\W)$)}

With $\Al:=(\LE\alpha 1n)\in\BB N^n$ and $\X:=(\LE x 1n)$, let us denote $\X^\Al:=x_1^{\alpha_1}\dotsm x_n^{\alpha_n}$ as a monomial henceforth.

Let $\TM$ be an $n$ by $n$ matrix with elements in $\BB N$.
If its $j$-th row vector is denoted as $\M_j$ for $1\le j\le n$, then $\X^\TM:=(\LEs \X{}{}{\M_1}{\M_n})$.

For a function $f(\X):=\sum_\Al c_\Al\X^\Al\in\BB F\{\X\}$ with $\Al\in\BB N^n$ and $c_\Al\in\BB F$, let us define $\supp(f):=\{\Al\in\BB N^n\colon c_\Al\ne 0\}$.
Further, $\ord (f):=\min\{\sum_{j=1}^n\alpha_j\colon\Al\in\supp(f)\}$ and $\deg (f):=\max\{\sum_{j=1}^n\alpha_j\colon\Al\in\supp(f)\}$ when $f\in\BB F[\X]$.

The \emph{Newton polyhedron} of $f$, denoted as $\NP(f)$, is defined as the convex hull of the set $\supp(f)+\Rp^n$, i.e., $\NP(f):=\conv(\supp(f))+\Rp^n$.

Every facet of a Newton polyhedron $\NP$ has a normal vector in $\DN n$ whose nonzero components are coprime.
The set of all such \emph{normal} vectors of $\NP$ is denoted as $\PI\NP$.
The \emph{facet} of a normal vector $\V$ is denoted as $\facet (\V)$.

For $\forall\W\in\DR$, let us denote the \emph{face} of $\NP$ associated with the vector $\W$ as follows.
\begin{equation}\label{FaceEq}
\face (\W):=\{\Al\in\NP\colon\langle\W,\Al\rangle\le
\langle\W,\Be\rangle~\text{for}~\forall\Be\in\NP\}.
\end{equation}
\end{definition}

\begin{definition}\label{Def:VertexCone}
{\upshape (Convex polyhedral cone $\CC (V)$; $\CC^\circ(V)$; simplicial cone; facial cone $\CV\sF$ and its generator set $\gv\sF$; vertex cone $\CV\A$ and its generator set $\gv\A$)}

A cone $\CC$ is defined to be a \emph{convex polyhedral cone} generated by a vector set $V=\{\LE\V 1m\}\subseteq\DR$ if $\CC=\bigl\{\sum_{j=1}^m\lambda_j\V_j\colon\lambda_j\in\Rp\bigr\}\setminus\{\bz\}:=\CC (V)$.
If we substitute $\lambda_j\in\BB R_{>0}$ for $\lambda_j\in\Rp$ in this definition, we obtain the definition for $\CC^\circ(V)$.
In particular, $\CC (V)$ is called a \emph{$k$-dimensional} cone if $\rank (V)=k$, and $k$-dimensional \emph{simplicial} cone if $|V|=\rank (V)=k$ with $1\le k\le n$.

let $\sF$ be a face of a Newton polyhedron $\NP$ such that $\sF=\face (\W)$ as in \eqref{FaceEq} with $\W\in\DR$.
The normal vector set $\gv\sF:=\{\V\in\PI\NP\colon\sF\subseteq\facet (\V)\}$ is called the \emph{generator set} of the \emph{facial cone} $\CV\sF:=\CC (\gv\sF)$ associated with the face $\sF$.
In particular, when the face $\sF$ is a vertex $\A$ of the Newton polyhedron $\NP$, which is the intersection of at least $n$ facets of $\NP$, the normal vector set $\gv\A:=\{\V\in\PI\NP\colon\A\in\facet (\V)\}$ is called the \emph{generator set} of the \emph{vertex cone} $\CV\A:=\CC (\gv\A)$ associated with the vertex $\A$.
\end{definition}

\begin{definition}\label{Def:Simplicity}
{\upshape (Projective cone $[\CC]$; cone section $\Sec\CC$; polyhedral cone; cone facet and face; minimal generator set; polyhedral and simplicial fan; convex and simplicial subdivision; simplicial refinement; auxiliary vector)}

The \emph{projective cone} $[\CC]$ associated with a cone $\CC\subseteq\Rp^n$ is defined as the set of vectors $\V\in\CC$ modulo the equivalence relation $\V\sim\lambda\V$ for $\lambda\in\BB R_{>0}$.
Equivalently $[\CC]$ can be defined as the corresponding set of rays in $\CC$ or corresponding subset of the real projective space $\BB R\BB P^{n-1}$.
The \emph{section} $\Sec\CC$ of a cone $\CC$ is defined as the transverse cross section of $\CC$ and the $(n-1)$-dimensional hyperplane with normal vector $\bm{1}:=(1,\dotsc,1)$ such that the equivalent classes of the elements in $\Sec\CC$ modulo the above equivalence relation constitute the projective cone $[\CC]$.
A cone $\CC$ is called a \emph{polyhedral} cone if its section $\Sec\CC$ is a polyhedron.
A subcone $\CC'\subseteq\CC$ is called a \emph{facet} or \emph{face} of a polyhedral cone $\CC$ if its section $\Sec{\CC'}$ is a facet or face of the polyhedron $\Sec\CC$.
The generator set $V$ of a convex polyhedral cone $\CC (V)$ is said to be \emph{minimal} if $\V\notin\CC (V\setminus\{\V\})$ for $\forall\V\in V$.

A $k$-dimensional \emph{polyhedral} (or \emph{simplicial}) \emph{fan} $\CK$ is defined as a finite union of $k$-dimensional convex (or simplicial) polyhedral cones $\CK=\bigcup_{j=1}^l\CC_j$ such that $\CC_i\cap\CC_j$ is a common facet or face of both $\CC_i$ and $\CC_j$ when $\CC_i\cap\CC_j\ne\emptyset$ for $1\le i<j\le l$.
The procedure to partition a polyhedral cone into a polyhedral (or simplicial) fan is called a \emph{convex} (or \emph{simplicial}) \emph{subdivision} of the cone.

Let $V$ be the minimal generator set of a convex polyhedral cone $\CC (V)$.
A \emph{simplicial refinement} of $\CC (V)$ via a finite set of \emph{auxiliary} vectors $W\subset\CC (V)$ is defined as a simplicial subdivision $\CC (V)=\bigcup_{j=1}^l\CC (V_j)$ such that $\bigcup_{j=1}^lV_j=V\cup W$.
\end{definition}

The simplicial subdivision is well known in toric geometry whose terminologies are not a prerequisite to understand the paper.
Moreover, the following lemma is a classical result and presented here solely for the purpose of completeness.

\begin{lemma}\label{Lemma:Refinement}
Let $\CV\A$ be a vertex cone associated with a vertex $\A$ of a Newton polyhedron $\NP$.
Then there exists a simplicial refinement of $\CV\A$ via a finite set $\Lambda_\A\subseteq\CV\A$ of auxiliary vectors in $\DN n$ such that if $\CC (\LE\V 1n)$ is a simplicial cone in the simplicial refinement, then $|\det [\ME\V 1n]|=1$.
\end{lemma}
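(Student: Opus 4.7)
The plan is to produce the simplicial refinement in two stages: first obtain an arbitrary simplicial subdivision of the polyhedral cone $\CV\A$ using only its minimal generator set $\gv\A\subseteq\DN n$, and then iteratively replace each non-unimodular simplicial cone by a finer subdivision whose simplicial subcones have strictly smaller determinants. Since these absolute determinants are positive integers that strictly decrease at every reduction, the process will terminate after finitely many steps, and the union of all auxiliary vectors introduced along the way will constitute the required finite set $\Lambda_\A\subseteq\DN n$.

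For the first stage I would triangulate the section $\Sec{\CV\A}$, a convex polyhedron whose vertices lie on the rays generated by $\gv\A$, by any standard simplicial subdivision (for instance by iteratively choosing a vertex and coning it with every facet of $\Sec{\CV\A}$ not containing it). This yields a simplicial fan whose one-dimensional generators all lie in $\gv\A\subseteq\DN n$, although the resulting simplicial cones generally fail the unimodularity condition $|\det[\ME\V 1n]|=1$.

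For the second stage, let $\CC(\LE\V 1n)$ be such a simplicial cone with $d=|\det[\ME\V 1n]|\ge 2$. The half-open fundamental parallelepiped $\bigl\{\sum_{j=1}^n\lambda_j\V_j\colon 0\le\lambda_j<1\bigr\}$ meets $\BB Z^n$ in exactly $d\ge 2$ points, so I can select $\V_0\in\BB Z^n\setminus\{\bz\}$ of the form $\V_0=\sum_{j\in J}\lambda_j\V_j$ with $\emptyset\ne J\subseteq\dotc 1n$ and $\lambda_j\in(0,1)$; after dividing out any common factor of its nonzero components this yields an auxiliary vector in $\DN n\cap\CV\A$, which I place in $\Lambda_\A$. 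The star subdivision of $\CC(\LE\V 1n)$ centered at $\V_0$ replaces this cone by the $|J|$ simplicial cones obtained by substituting $\V_0$ for $\V_i$ in turn for each $i\in J$; a column-expansion of the determinant along the $i$-th position shows that each new cone has determinant $\lambda_i d\in\{1,\dotsc,d-1\}$ in absolute value. Recursing on the finitely many resulting subcones whose determinants still exceed one, the strict descent of a positive integer invariant guarantees termination.

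The main obstacle will be the compatibility of these local star subdivisions across adjacent simplicial cones so as to produce a globally well-defined simplicial fan in the sense of Definition \ref{Def:Simplicity}. When $J\subsetneq\dotc 1n$ the auxiliary vector $\V_0$ lies on the proper face spanned by $\{\V_j\colon j\in J\}$, which may be shared with neighboring simplicial cones of the current fan, so the star subdivision must be performed simultaneously on every cone containing that face. This is the classical iterated star subdivision used in the resolution of toric singularities, and a routine induction on the codimension of the shared face verifies that the resulting global refinement preserves the facet-or-face intersection condition required of a simplicial fan.
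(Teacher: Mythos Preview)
Your proof is correct and follows essentially the same route as the paper: choose a lattice point in the half-open fundamental parallelepiped of a non-unimodular simplicial cone, star-subdivide, and induct on the strictly decreasing positive-integer determinant. The paper's proof is terser and does not spell out the index set $J$ or the fan-compatibility issue across shared faces; your additional care on these points (simultaneous star subdivision when $\V_0$ lies on a proper face) fills in details the paper leaves implicit under the remark that the result is classical in toric geometry.
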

\begin{proof}
Let us construct the auxiliary vector set $\Lambda_\A$ one by one inductively starting with a simplicial subdivision of $\CV\A$.
If $\CC (\LE\V 1n)$ is a simplicial cone in the simplicial subdivision that satisfies $|\det [\ME\V 1n]|>1$, then the generator set $\{\LE\V 1n\}\subset\CV\A$ is not a basis for the integral lattice $\BB Z^n$.
Hence there exists a vector $\W\in\DN n$ that satisfies $\W=\sum_{j=1}^n\lambda_j\V_j$ with $\lambda_j\in [0,1)$ for $1\le j\le n$.
We add $\W$ into $\Lambda_\A$ and make a simplicial refinement of $\CC (\LE\V 1n)$ via $\{\W\}$.
Each simplicial cone in the simplicial refinement, e.g., $\CC (\W,\LE\V 2n)$, satisfies
\[
1\le |\det [\W~\ME\V 2n]|<|\det [\ME\V 1n]|.
\]
The conclusion of the lemma follows from a decreasing
induction on the absolute values of the integer-valued determinants.
\end{proof}

The simplicial refinement of $\CV\A$ in Lemma \ref{Lemma:Refinement} assumes implicitly that its generator set $\gv\A$ is minimal, which is required in Definition \ref{Def:Simplicity} and shall be proved in Lemma \ref{Lemma:IntersectLemma} \eqref{item:Minimality}.

\begin{definition}\label{Def:ExponentialMatrix}
{\upshape (Refined vertex cone $\CV\A$; exponential matrix $\TM$; monomial transformation $\CT\TM$)}

A simplicial cone $\CC (\LE\V 1n)$ obtained through a simplicial refinement of a vertex cone $\CV\A$ as in Lemma \ref{Lemma:Refinement} and satisfying $|\det [\ME\V 1n]|=1$ is called a \emph{refined} vertex cone of $\CV\A$.
Let us abuse the notations a bit and still denote a refined vertex cone of $\CV\A$ as $\CV\A$ henceforth.

Let $\CV\A=\CC (\LE\V 1n)$ be a refined vertex cone of $\NP$.
The $n$ by $n$ matrix $\TM:=[\ME\V 1n]$ satisfying $\det\TM=1$ with column vectors $\{\LE\V 1n\}=\gv\A$ is called the \emph{exponential matrix} associated with $\CV\A$.
The \emph{monomial transformation} $\CT\TM$ is defined as a coordinate change
$\X=\CT\TM(\Y):=\Y^\TM$ as in Definition \ref{Def:NewtonPolyhedron} with $\X=(\LE x 1n)$ and $\Y=(\LE y 1n)$.
\end{definition}

Let $\M_j$ and $\V_j$ denote the $j$-th row and column vectors of the exponential matrix $\TM$ for $1\le j\le n$.
Then we say that $\M_j$ and $\V_j$ correspond to the variables $x_j$ and $y_j$ respectively for $1\le j\le n$ since $x_j=\Y^{\M_j}$ and the components of $\V_j$ are the exponents of $y_j$ in $\Y^\TM$.

Let us start with a hypersurface $f(\X)=0$ with $\ord (f)>1$ that is represented in the following form
\begin{equation}\label{PrePrimaryForm}
f(\X)=c_\D x_1^d+\sum_{\Al\in\supp(f)\setminus\D}\sumsep c_\Al\X^\Al=0
\end{equation}
via a non-degenerate linear transformation $\CL\X$ such that $d=\ord (f)>1$.
Here $\D:=(d,\bz)$ denotes the exponent of the term $c_\D x_1^d$ with $c_\D\in\BB F^*$.
After invoking Weierstrass preparation theorem and completing perfect power, we can abuse the names of the variables $\X$ a bit and further represent $f(\X)$ in the following form:
\begin{equation}\label{PrimaryForm}
f(\X)=\biggl[x_1^d+\sum_{j=2}^dc_j(\LE x2n)x_1^{d-j}\biggr]
(c+r(\X)):=w(\X)(c+r(\X)),
\end{equation}
where $c\in\BB F^*$ and $c_j(\bz)=r(\bz)=0$ for $1<j\le d$.

\begin{definition}
{\upshape (Apex $\D$; apex form; singularity height $d$; Weierstrass form; Weierstrass polynomial; redundant function; primary variable $x_1$; linear modification $\CL\X$; preliminary and Weierstrass reductions)}

The exponent $\D=(d,\bz)$ of the term $c_\D x_1^d$ in \eqref{PrePrimaryForm} that satisfies $d=\ord (f)$ is called the \emph{apex} of $f(\X)$.
The representation form \eqref{PrePrimaryForm} is called an \emph{apex} form of $f(\X)$.
The order $d=\ord (f)$ is called the singularity \emph{height} of the apex form $f(\X)$.

The representation form \eqref{PrimaryForm} is called the \emph{Weierstrass} form of $f(\X)$ with the functions $w(\X)$ and $r(\X)$ in \eqref{PrimaryForm} called its \emph{Weierstrass} polynomial and \emph{redundant} function respectively.

The \emph{primary} variable refers to the variable $x_1$ of the Weierstrass polynomial $w(\X)$ in \eqref{PrimaryForm}.
The non-degenerate linear transformation $\CL\X$ to set up the apex $\D$ and primary variable $x_1$ in \eqref{PrePrimaryForm} is called a linear \emph{modification}.

The linear modification $\CL\X$ constitutes the procedure of \emph{preliminary} reduction; the invocation of Weierstrass preparation theorem and completion of perfect power constitute the \emph{Weierstrass} reduction.
\end{definition}

For the Newton polyhedron $\NP(w(\X))$ of the Weierstrass polynomial $w(\X)$ in \eqref{PrimaryForm}, let $\CV\A=\CC (\LE\V 1n)$ be a refined vertex cone of $\NP(w(\X))$ associated with a vertex $\A$ and $\TM=[\ME\V 1n]$ the exponential matrix associated with $\CV\A$.
Let us define $\supp(\TM):=\bigcup_{j=1}^n(\supp(w)\cap\face (\V_j))$.
For convenience, we reorganize the terms of $w(\X)$ in \eqref{PrimaryForm} as follows.
\begin{equation}\label{VertexForm}
w(\X)=c_\A\X^\A+\sum_{\Al\in\supp(\TM)\setminus\{\A\}}
\sumsep c_\Al\X^\Al
+\sum_{\Be\in\supp^c(\TM)}\sumsep c_\Be\X^\Be,
\end{equation}
where $c_\A\in\BB F^*$ since $\A\in\supp(w)$ is the vertex of $\NP(w(\X))$ associated with $\CV\A$ as above.
The complementary support $\supp^c(\TM):=\supp(w)\setminus\supp(\TM)$.

The monomial transformation
\begin{equation}\label{MonomialTransfmtn}
\X=\CT\TM(\Y)=\Y^\TM
\end{equation}
transforms $w(\X)$ in \eqref{VertexForm} into a new function $w_\TM(\Y):=w(\CT\TM(\Y))$ as following:
\begin{equation}\label{TotalTransform}
\begin{aligned}
w_\TM(\Y)&=\Y^{\A\cdot\TM}\cdot
\biggl(c_\A+\sum_{\Al\in \supp(\TM)\setminus\{\A\}}
\sumsep c_\Al\Y^{(\Al-\A)\cdot\TM}
+\sum_{\Be\in\supp^c(\TM)}\sumsep c_\Be
\Y^{(\Be-\A)\cdot\TM}\biggr)\\
&:=\Y^{\A\cdot\TM}\cdot p(\Y),
\end{aligned}
\end{equation}
where $\A\cdot\TM$ denotes the multiplication of the row vector $\A$ and matrix $\TM$, and the same for $(\Al-\A)\cdot\TM$ and $(\Be-\A)\cdot\TM$.

\begin{definition}\label{Def:ProperTransform}
{\upshape (Vertex form; total transform; partial transform $p(\Y)$; exceptional factors $\Y^{\A\cdot\TM}$; partial factorization; exceptional and non-exceptional variables $\Y_0$ and $\Y_*$; exceptional and non-exceptional index sets $\iI\Yo$ and $\iI{\Y_*}$; codimension and canonical form of $\iI\Yo$; primary row vector $\M_1$; remnant exponential submatrix $\lTM$)}

The representation form \eqref{VertexForm} is called the \emph{vertex} form of $w(\X)$ associated with a refined vertex cone $\CV\A$.
The function $w_\TM(\Y)$ and factor $p(\Y)$ in \eqref{TotalTransform} are called the \emph{total} and \emph{partial} transforms of $w(\X)$ under the monomial transformation $\CT\TM$ in \eqref{MonomialTransfmtn} respectively.
The factors $\Y^{\A\cdot\TM}$ in \eqref{TotalTransform} are called the \emph{exceptional} factors.
The factorization of the total transform $w_\TM(\Y)$ into the partial transform $p(\Y)$ and exceptional factors $\Y^{\A\cdot\TM}$ in \eqref{TotalTransform} is called a \emph{partial} factorization of the total transform $w_\TM(\Y)$.

We discriminate between two complementary subsets of variables among the variables $\Y=(\LE y1n)$ and call them the \emph{exceptional} and \emph{non-exceptional} variables that are denoted as $\Y_0$ and $\Y_*$ respectively.
Their corresponding index sets are called the \emph{exceptional} and \emph{non-exceptional} index sets and denoted as $\iI\Yo$ and $\iI{\Y_*}$ respectively.

In particular, the parameter $k:=|\iI{\Y_*}|=n-|\iI\Yo|$ satisfying $0\le k<n$ is called the \emph{codimension} of $\iI\Yo$.
An exceptional index set $\iI\Yo$ with codimension $k$ can always be reduced to a \emph{canonical} form $\dotc{k+1}n$ by a permutation and relabeling of the variables $\Y$ and their corresponding column vectors of the exponential matrix $\TM$ as below Definition \ref{Def:ExponentialMatrix}.

The first row vector $\M_1$ of an exponential matrix $\TM$ corresponding to the primary variable $x_1$ is called the \emph{primary} row vector of $\TM$ since $x_1=\Y^{\M_1}$ as per the monomial transformation $\CT\TM$ in \eqref{MonomialTransfmtn}.
The other row vectors of $\TM$ constitute the \emph{remnant} exponential submatrix of $\TM$ that is denoted as $\lTM$.
\end{definition}

\begin{definition}\label{Def:ExceptionalSupport}
{\upshape (Submatrix $\TA_{\iI{}\times\iJ}$; column submatrix $\Ev\TA{}$; exceptional submatrix $\Evv\TM\Yo$; exceptional and non-exceptional column submatrices $\Ev\TM\Yo$ and $\Ec\TM\Ya$; proper transform $p(\Y_*,\bz)$; exceptional branch $\EI\TM\Yo$; branch point $\R=(\R_*,\bz)$; exceptional support $\es\TM\Yo$)}

The submatrix $\TA_{\iI{}\times\iJ}$ of a matrix $\TA$ is composed of the elements of $\TA$ whose indices are in $\iI{}\times\iJ$.
The \emph{column} submatrix $\Ev\TA{}$ of $\TA$ is composed of the column vectors of $\TA$ whose indices are in $\iI{}$.

In particular, the submatrix $\Evv\TM\Yo$ of an exponential matrix $\TM$ is called the \emph{exceptional} submatrix of $\TM$ with respect to an exceptional index set $\iI\Yo$.
The column submatrices $\Ev\TM\Yo$ and $\Ev\TM\Ya$ are called the \emph{exceptional} and \emph{non-exceptional} column submatrices of $\TM$ with respect to $\iI\Yo$ and $\iI\Ya$ respectively.

For the partial transform $p(\Y)$ in \eqref{TotalTransform}, the function $p(\Y_*,\bz)$ is called the \emph{proper} transform of $w(\X)$ under $\CT\TM$ with respect to $\iI\Yo$.
The following set
\begin{equation}\label{ExceptionalBranch}
\EI\TM\Yo:=\{(\Y_*,\bz)\colon p(\Y_*,\bz)=0,\,\Y_*\in\DFs k\}
\end{equation}
is called the \emph{exceptional branch} associated with $\CT\TM$ and $\iI\Yo$.
A point $\R:=(\R_*,\bz)\in\EI\TM\Yo$ is called a \emph{branch point} of the origin $\X=\bz$.

The set of exponents $\es\TM\Yo:=\bigcap_{j\in\iI\Yo}(\supp(w)\cap\face (\V_j))$ is called the \emph{exceptional} support of $w(\X)$ with respect to $\TM=[\ME\V 1n]$ and $\iI\Yo$.
\end{definition}

Let us reorganize the terms in the partial transform $p(\Y)$ in \eqref{TotalTransform} as per the exceptional support $\es\TM\Yo$ and it is easy to deduce that the proper transform $p(\Y_*,\bz)$ as in Definition \ref{Def:ExceptionalSupport} is as follows.
\begin{equation}\label{ProperTransform-M}
p(\Y_*,\bz)=c_\A+\sum_{\Al\in \es\TM\Yo\setminus\{\A\}}\sumsep c_\Al\Y_*^{(\Al-\A)\cdot\TM_{\iI{\Y_*}}},
\end{equation}
where $\TM_{\iI{\Y_*}}$ denotes the non-exceptional column submatrix of $\TM$ with respect to $\iI{\Y_*}$ as in Definition \ref{Def:ExceptionalSupport}.

\begin{definition}\label{Def:PartialResolution}
{\upshape (Localized partial and proper transforms $\wt p(\wt\Y)$ and $p_*(\wt\Y_*)$; partial resolution of singularities)}

The partial transform $p(\Y)$ in \eqref{TotalTransform} can be localized as $p(\wt\Y_*+\R_*,\wt\Y_0):=\wt p(\wt\Y)$ with $\wt\Y:=(\wt\Y_*,\wt\Y_0):=(\Y_*-\R_*,\Y_0)$ in a neighborhood of a branch point $\R=(\R_*,\bz)$.
Accordingly the proper transform $p(\Y_*,\bz)$ in \eqref{ProperTransform-M} can be \emph{localized} as $p_*(\wt\Y_*):=\wt p(\wt\Y_*,\bz)$.

The preliminary and Weierstrass reductions, monomial transformation, partial factorization as well as localization at a branch point constitute the procedure of \emph{partial} resolution of singularities.
\end{definition}

\subsection{Canonical reduction of monomial transformations}
\label{Subsection:CanonicalReduction}

It readily follows that when the exceptional index set $\iI\Yo$ has codimension $k=0$, i.e., $\iI\Yo=\dotc 1n$, the exceptional variables $\Y_0=(\LE y1n)$ and the proper transform equals $p(\bz)=c_\A\in\BB F^*$ in \eqref{ProperTransform-M}.
Thus the exceptional branch $\EI\TM\Yo=\emptyset$ and the proper transform is nonsingular in this case.

When the exceptional index set $\iI\Yo$ has positive codimension $k>0$, that is, $\iI\Yo=\dotc{k+1}n$, let us consider a monomial transformation $\CT\TM$ associated with an exponential matrix $\TM=[\ME\V 1n]$ as in the form of \eqref{MonomialMatrix} whose exceptional submatrix $\Evv\TM\Yo$ is non-degenerate and satisfies $\det\Evv\TM\Yo\ne 0$.
Here $\V_j=(\LEs vjj1n)$ for $1\le j\le n$.

\begin{align}\label{MonomialMatrix}
\TM_{n\times n}&=\left[
        \begin{array}{ccccc@{}cccc}
        v_1^{1}     &       \cdots      &       v_j^1       &
        \cdots      &       v_k^1       &       \vline      &
        v_{k+1}^1   &       \cdots      &       v_n^1               \\
        \vdots      &       \ddots      &       \vdots      &
        \ddots      &       \vdots      &       \vline      &
        \vdots      &       \ddots      &       \vdots              \\
        \cline{3-3}\cline{7-9}
        v_1^i       &       \cdots      &
        \multicolumn{1}{|c|}{v_j^{i}}   &       \cdots      &
        v_k^i       &       \vline      &
        \multicolumn{1}{|c}{v_{k+1}^{i}}        &           \cdots
        &           \multicolumn{1}{c|}{v_n^i}                      \\
        \cline{3-3}\cline{7-9}
        \vdots      &       \ddots      &       \vdots      &
        \ddots      &       \vdots      &       \vline      &
        \vdots      &       \ddots      &       \vdots              \\
        v_1^k       &       \cdots      &       v_j^k       &
        \cdots      &       v_k^k       &       \vline      &
        v_{k+1}^k   &       \cdots      &       v_n^k               \\
        \hline      &       &           &       &           &
        \vline      &       &           &                           \\ [-2ex]
        \cline{3-3} \cline{7-9}
        v_1^{k+1}   &       \cdots      &
        \multicolumn{1}{|c|}{v_j^{k+1}} &       \cdots      &
        v_k^{k+1}   &       \vline      &
        \multicolumn{1}{|c}{v_{k+1}^{k+1}}      &           \cdots
        &           \multicolumn{1}{c|}{v_n^{k+1}}                  \\
        \vdots      &       \ddots      &
        \multicolumn{1}{|c|}{\vdots}    &       \ddots      &
        \vdots      &       \vline      &
        \multicolumn{1}{|c}{\vdots}     &       \ddots      &
        \multicolumn{1}{c|}{\vdots}                                 \\
        v_1^n       &       \cdots      &
        \multicolumn{1}{|c|}{v_j^n}     &       \cdots      &
        v_k^n       &       \vline      &
        \multicolumn{1}{|c}{v_{k+1}^n}  &       \cdots      &
        \multicolumn{1}{c|}{v_n^n}                                  \\
        \cline{3-3} \cline{7-9}
        \end{array}
    \right]\allowdisplaybreaks[4]\\ \label{SplitMonomialMatrix}
&:=\left[
        \begin{array}{l|l}
            \TA_{k\times k}&\TB_{k\times{(n-k)}}\\
            \hline
            \TC_{{(n-k)}\times k}&\TD_{{(n-k)}\times{(n-k)}}
        \end{array}
    \right]
\end{align}

Based on the partition of the exponential matrix $\TM$ in \eqref{SplitMonomialMatrix}, let us define a new exponential matrix $\TN$ as in \eqref{CanonicalMatrix}, in which $\E_j$ denotes the $j$-th standard basis vector of $\BB R^n$ for $1\le j\le k$.
The submatrices $\TE_{k\times k}$ and $\TO_{{(n-k)}\times k}$ of $\TN$ denote the unit matrix and zero matrix respectively.
In particular, the submatrix $\TD_{{(n-k)}\times{(n-k)}}$ denotes the exceptional submatrix $\Evv\TM\Yo$ and satisfies $\det\TD\ne 0$.

\begin{equation}\label{CanonicalMatrix}
\TN_{n\times n}:=[\ME\E 1k~\ME\V{k+1}n]=
    \left[
        \begin{array}{l|l}
            \TE_{k\times k}&\TB_{k\times{(n-k)}}\\
            \hline
            \TO_{{(n-k)}\times k}&\TD_{{(n-k)}\times{(n-k)}}
        \end{array}
    \right].
\end{equation}

The monomial transformation $\CT\TN$ associated with the new exponential matrix $\TN$ in \eqref{CanonicalMatrix} is defined as
\begin{equation}\label{NormalTransformation}
\X=\CT\TN(\Z):=\Z^\TN.
\end{equation}

The exceptional index set $\iI\Zo$ of the new variables $\Z$ in \eqref{NormalTransformation} satisfies $\iI\Zo=\iI\Yo$ such that the non-exceptional and exceptional variables of $\Z$ are written as $\Z_*:=(\LE z1k)$ and $\Z_0:=(\LE z{k+1}n)$ respectively.
Let $\Ga_*$ and $\Ga_0$ denote the respective exponents of $\Z_*$ and $\Z_0$.
Accordingly let us write $\X=(\X_*,\X_0)$ as per $\iI\Zo$ whose respective exponents are denoted as $\Al=(\Al_*,\Al_0)$.
Then the form of the exponential matrix $\TN$ in \eqref{CanonicalMatrix} indicates that $\CT\TN$ is a linear exponential transformation as follows.
\begin{equation}\label{ExponIdentity}
\Ga_*=\Al_*;\quad\Ga_0=\Al\cdot\Ev\TN\Zo
\end{equation}
with $\Ev\TN\Zo=\Ev\TM\Yo$ being the exceptional column submatrix as in Definition \ref{Def:ExceptionalSupport}.

Let $\TF_{ij}$ denote the $(n-k+1)$ by $(n-k+1)$ submatrix of the exponential matrix $\TM$ consisting of the elements in the framed boxes in \eqref{MonomialMatrix} with $1\le i,j\le k$.
Let us further define:
\begin{equation}\label{ReductionMatrix}
\TF_{k\times k}:=\left[
        \begin{array}{ccc}
            \det\TF_{11}&\cdots&\det\TF_{1k}\\
            \vdots&\ddots&\vdots\\
            \det\TF_{k1}&\cdots&\det\TF_{kk}
        \end{array}
    \right].
\end{equation}

\begin{definition}\label{Def:ReductionMatrix}
{\upshape (Reduced exponential matrix $\TN$ and its canonical form; reduced monomial transformation $\CT\TN$; reduction matrix $\TF$)}

For an exponential matrix $\TM$ being partitioned as in \eqref{SplitMonomialMatrix} with respect to the exceptional index set $\iI\Yo$, the matrix $\TN$ as defined in \eqref{CanonicalMatrix} is called a \emph{canonical reduction} of $\TM$ with respect to $\iI\Yo$, or simply the \emph{reduced} exponential matrix with respect to $\TM$ and $\iI\Yo$.
In particular, the form of $\TN$ in \eqref{CanonicalMatrix} satisfying $\det\TD\ne 0$ is called its \emph{canonical} form.
Accordingly we have the \emph{reduced} monomial transformation $\CT\TN$ associated with $\TN$ as defined in \eqref{NormalTransformation}.
The matrix $\TF$ in \eqref{ReductionMatrix} is called the \emph{reduction} matrix from $\CT\TM$ to $\CT\TN$.
\end{definition}

\begin{lemma}\label{Lemma:CanonicalReduction}
Let the exceptional submatrix $\Evv\TM\Yo=\TD$ satisfy $\det\TD\ne 0$ as in the partition of $\TM$ in \eqref{SplitMonomialMatrix}.
\begin{inparaenum}[(a)]
\item\label{item:NondegNormal}
The reduction matrix $\TF$ in \eqref{ReductionMatrix} is non-degenerate, i.e., $\det\TF\ne 0$;
\item\label{item:NormalIdentity} The monomial transformation $\X=\CT\TM(\Y)=\Y^\TM$ in \eqref{MonomialTransfmtn} and its canonical reduction $\X=\CT\TN(\Z)=\Z^\TN$ in \eqref{NormalTransformation} indicate that
\begin{equation}\label{CanonicalReduction}
\Z_*^{\det\TD\cdot\TE_k}=\Y_*^{\TF},
\end{equation}
where $\Z_*=(\LE z1k)$ and $\Y_*=(\LE y1k)$ are the non-exceptional variables with respect to $\iI\Zo=\iI\Yo$.
The matrix $\TE_k$ denotes the $k$ by $k$ unit matrix.
\end{inparaenum}
\end{lemma}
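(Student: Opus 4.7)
The plan is to recognize $\TF$ as arising from Schur complements of the block decomposition of $\TM$ in \eqref{SplitMonomialMatrix}, which simultaneously delivers the nondegeneracy in (a) and a clean matrix identity proving (b).

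Since $\det\TD\ne 0$, I would first apply the Schur complement formula to each $(n-k+1)\times(n-k+1)$ minor $\TF_{ij}$, whose block structure has $v_j^i$ as the $(1,1)$-scalar, the $i$-th row of $\TB$ as the $(1,2)$-block, the $j$-th column of $\TC$ as the $(2,1)$-block, and $\TD$ as the $(2,2)$-block. Schur expansion then yields $\det\TF_{ij} = \det\TD\cdot(\TA - \TB\TD^{-1}\TC)_{ij}$, i.e.\ the matrix identity
\[
\TF = \det\TD\cdot(\TA - \TB\TD^{-1}\TC).
\]
Part (a) follows at once: the refined-vertex-cone condition $\det\TM = 1$ from Definition \ref{Def:ExponentialMatrix}, combined with the Schur identity $\det\TM = \det\TD\cdot\det(\TA - \TB\TD^{-1}\TC)$, forces $\det(\TA - \TB\TD^{-1}\TC) = 1/\det\TD$, so $\det\TF = (\det\TD)^{k-1}$, which is nonzero by hypothesis.

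For part (b) my plan is to compare the two monomial transformations. Since $\Y^\TM = \X = \Z^\TN$, the functorial rule $(\Y^\TM)^{\TN^{-1}} = \Y^{\TN^{-1}\TM}$ (immediate from the row-vector definition of $\Y^\TM$ in Definition \ref{Def:NewtonPolyhedron}) yields $\Z = \Y^{\TN^{-1}\TM}$ as a monomial identity with rational exponents. A direct block computation using $\TN^{-1} = \bigl[\begin{smallmatrix} \TE & -\TB\TD^{-1} \\ \TO & \TD^{-1} \end{smallmatrix}\bigr]$ gives $\TN^{-1}\TM = \bigl[\begin{smallmatrix} \TA - \TB\TD^{-1}\TC & \TO \\ \TD^{-1}\TC & \TE \end{smallmatrix}\bigr]$; the first $k$ rows then read $z_i = \prod_{j=1}^k y_j^{(\TA - \TB\TD^{-1}\TC)_{ij}}$ for $1\le i\le k$. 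Raising to the $\det\TD$-th power clears the rational exponents and, by the Schur identity above, produces $z_i^{\det\TD} = \prod_{j=1}^k y_j^{\det\TF_{ij}}$, which is precisely \eqref{CanonicalReduction}.

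The main technical subtlety I anticipate is the bookkeeping around fractional exponents in the intermediate identity $\Z = \Y^{\TN^{-1}\TM}$, since $\TN^{-1}$ has entries in $(\det\TD)^{-1}\BB Z$. The cleanest remedy is to work throughout in the multiplicative group of Laurent monomials with exponents in $(\det\TD)^{-1}\BB Z$ over $\BB F$, where the intermediate identity is rigorous; raising to the $\det\TD$-th power then lands in the integer-exponent Laurent ring, which is the setting of the lemma's conclusion.
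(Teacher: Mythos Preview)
Your proof is correct and takes a genuinely different, more streamlined route than the paper.

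The paper proves (a) by contradiction: assuming a linear dependence among the rows of $\TF$, it unwinds the determinants $\det\TF_{ij}$ to produce a nontrivial linear relation among the columns of $\TM$, contradicting $\det\TM\neq 0$. For (b), the paper works entirely with the adjugate $\TD^*$ and Hadamard products, manipulating the integer-exponent identities $\Z_0^\TD=\Y^{[\TC~\TD]}$ and $\Z_*\circ\Z_0^\TB=\Y_*^\TA\circ\Y_0^\TB$ until they combine into $\Z_*^{\det\TD\cdot\TE_k}=\Y_*^{\det\TD\cdot\TA-\TB\TD^*\TC}$, and only then observes that $\TF=\det\TD\cdot\TA-\TB\TD^*\TC$.

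Your approach instead recognizes at the outset, via the Schur complement of $\TD$, that $\TF=\det\TD\,(\TA-\TB\TD^{-1}\TC)$. This immediately gives the sharper conclusion $\det\TF=(\det\TD)^{k-1}\det\TM=(\det\TD)^{k-1}$ for (a), and reduces (b) to the block computation of $\TN^{-1}\TM$. The trade-off is that your intermediate identity $\Z=\Y^{\TN^{-1}\TM}$ lives in the fractional-exponent regime $(\det\TD)^{-1}\BB Z$, which you correctly handle by raising to the $\det\TD$-th power; the paper's adjugate manipulations stay within integer Laurent exponents throughout, at the cost of a longer calculation. Your route is cleaner and more conceptual; the paper's is slightly more self-contained with respect to the ambient monomial algebra.
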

\begin{proof}
\begin{asparaenum}[(a)]
{\parindent=\saveparindent
\item Let us prove by contradiction.
Let $\F_i$ denote the $i$-th row vector of the reduction matrix $\TF$ in \eqref{ReductionMatrix} for $1\le i\le k$.
If $\det\TF=0$, then $\exists\La=(\LE\lambda 1k)\in\BB Q^k\setminus\{\bz\}$ such that $\sum_{i=1}^k\lambda_i\F_i=\bz$, that is, $\sum_{i=1}^k\lambda_i\det\TF_{ij}=0$ for $1\le j\le k$.
According to the definition of the element $\det\TF_{ij}$ of $\TF$ above \eqref{ReductionMatrix}, this is equivalent to the following condition in \eqref{LinearDepency} for $1\le j\le k$.
\begin{equation}\label{LinearDepency}
\det\left[
    \begin{array}{cccc}
    \sum_{i=1}^k\lambda_iv_j^i      ~\,     &       \,~
    \sum_{i=1}^k\lambda_iv_{k+1}^i  ~\,     &       \,~
    \cdots                          ~\,     &       \,~
    \sum_{i=1}^k\lambda_iv_n^i                                  \\
    v_j^{k+1}                       ~\,     &       \,~
    v_{k+1}^{k+1}                   ~\,     &       \,~
    \cdots                          ~\,     &       \,~
    v_n^{k+1}                                                   \\
    \vdots                          ~\,     &       \,~
    \vdots                          ~\,     &       \,~
    \ddots                          ~\,     &       \,~
    \vdots                                                      \\
    v_j^n                           ~\,     &       \,~
    v_{k+1}^n                       ~\,     &       \,~
    \cdots                          ~\,     &       \,~
    v_n^n
    \end{array}
    \right]=0
\end{equation}
As per $\det\TD\ne 0$, the above condition in \eqref{LinearDepency} indicates that $\exists\Mu_j:=(\LEs\mu jj{k+1}n)\in\BB Q^{n-k}$ for $1\le j\le k$ such that
\begin{equation}\label{BaseEq}
\sum_{i=1}^k\lambda_i(v_j^i,\LEs v{k+1}nii)
=\sum_{l=k+1}^n\mu_j^l(v_j^l,\LEs v{k+1}nll),
\end{equation}
from which we can deduce that $\Mu_1=\dotsb=\Mu_k:=\Mu:=(\LE\mu {k+1}n)\in\BB Q^{n-k}$ since $\det\TD\ne 0$.
Thus the equalities in \eqref{BaseEq} can be summarized into:
\[
\sum_{i=1}^k\lambda_i(\LEs v1nii)
=\sum_{l=k+1}^n\mu_l(\LEs v1nll).
\]
Evidently this leads to the contradiction that $\det\TM=0$.

\item The monomial transformation and its canonical reduction $\X=\Y^\TM=\Z^\TN$ indicate that
\begin{equation}\label{PreliminaryIdentity}
\Z_0^\TD=\Y^{[\TC~\TD]}
\end{equation}
with $[\TC~\TD]$ being the submatrix of the exponential matrix $\TM$ in \eqref{SplitMonomialMatrix}.
If we denote the adjoint matrix of $\TD$ as $\TD^*$ such that $\TD^*\cdot\TD=\det\TD\cdot\TE_{n-k}$ and hence $(\Z_0^\TD)^{\TD^*}=\Z_0^{\TD^*\cdot\TD}=\Z_0^{\det\TD\cdot\TE_{n-k}}$ with $\TE_{n-k}$ being the $(n-k)$ by $(n-k)$ unit matrix, then we have
\begin{equation}\label{AdjointConversion}
\Z_0^{\det\TD\cdot\TE_{n-k}}
=\Y^{[\TD^*\cdot\TC~\det\TD\cdot\TE_{n-k}]}
=\Y_*^{\TD^*\cdot\TC}\circ\Y_0^{\det\TD\cdot\TE_{n-k}},
\end{equation}
where the notation $\circ$ denotes the Hadamard (or Schur) product defined as $\V\circ\W:=(v_1\cdot w_1,\dotsc,v_l\cdot w_l)$ with $\V=(\LE v1l)$ and $\W=(\LE w1l)$.
Hence
\begin{equation}\label{NonExceptionalConstraint}
\Z_0^{\det\TD\cdot\TB}=\Y^{[\TB\cdot\TD^*\cdot\TC~\det\TD\cdot\TB]}
=\Y_*^{\TB\cdot\TD^*\cdot\TC}\circ\Y_0^{\det\TD\cdot\TB}.
\end{equation}

The monomial transformation and its canonical reduction $\X=\Y^\TM=\Z^\TN$ also indicate that $\Z^{[\TE_k~\TB]}=\Y^{[\TA~\TB]}$, which amounts to $\Z_*\circ\Z_0^\TB=\Y_*^\TA\circ\Y_0^\TB$.
Hence $\Z_*^{\det\TD\cdot\TE_k}=\Y_*^{\det\TD\cdot\TA}\circ\Y_0^{\det\TD\cdot\TB}
\circ\Z_0^{-\det\TD\cdot\TB}$.
Together with \eqref{NonExceptionalConstraint}, we obtain:
\begin{equation*}
\Z_*^{\det\TD\cdot\TE_k}=\Y_*^{\det\TD\cdot\TA-\TB\cdot\TD^*\cdot\TC}=\Y_*^\TF
\end{equation*}
based on the following straightforward identity:
\begin{equation*}
\TF=\det\TD\cdot\TA-\TB\cdot\TD^*\cdot\TC
\end{equation*}
with $\TF$ being the reduction matrix in \eqref{ReductionMatrix}.
\qedhere}
\end{asparaenum}
\end{proof}

\begin{definition}\label{Def:Consistency-1}
{\upshape (Consistency and inconsistency; canonical consistency form)}

The primary variable $x_1$ is said to be \emph{consistent} with the exponential matrix $\TM$ and exceptional index set $\iI\Yo$ if the exceptional column submatrix of the remnant exponential submatrix $\Ev{\lTM}\Yo$ as in Definition \ref{Def:ExceptionalSupport} and Definition \ref{Def:ProperTransform} satisfies $\rank(\Ev{\lTM}\Yo)=|\iI\Yo|$.
The primary variable $x_1$ is said to be \emph{inconsistent} with $\TM$ and $\iI\Yo$ otherwise.

If the primary variable $x_1$ is consistent with the exponential matrix $\TM$ and exceptional index set $\iI\Yo$ and the exceptional submatrix $\Evv\TM\Yo$ as in Definition \ref{Def:ExceptionalSupport} is non-degenerate as $\det\Evv\TM\Yo\ne 0$ with $\iI\Yo$ being in canonical form as in Definition \ref{Def:ProperTransform}, then we say that $\TM$ is in a \emph{canonical form} of consistency, or simply a \emph{canonical consistency form}, with respect to $\iI\Yo$.
Accordingly its canonical reduction $\TN$ is said to be in canonical consistency form with respect to the exceptional index set $\iI\Zo$ as well if the primary variable $x_1$ is consistent with $\TN$ and $\iI\Zo$ and $\det\Evv\TN\Zo\ne 0$ with $\iI\Zo$ being in canonical form.
\end{definition}

In the case when the primary variable $x_1$ is consistent with the exponential matrix $\TM$ and exceptional index set $\iI\Yo$ whereas the exceptional submatrix $\Evv\TM\Yo$ is degenerate as $\det\Evv\TM\Yo=0$, we make a permutation and relabeling of the row vectors of the remnant exponential matrix $\lTM$ and their corresponding variables $\lX:=(\LE x2n)$ of the Weierstrass polynomial $w(\X)$ in \eqref{PrimaryForm} simultaneously such that $\det\Evv\TM\Yo\ne 0$.
In this way we can always assume that the exponential matrix $\TM$ is in a canonical consistency form as in Definition \ref{Def:Consistency-1} whenever $x_1$ is consistent with $\TM$ and $\iI\Yo$.

In what follows and Section \ref{Section:RegularSing} let us assume that the exponential matrix $\TM$ is in a canonical consistency form with respect to $\iI\Yo$ as in Definition \ref{Def:Consistency-1}.
The inconsistency case shall be addressed in Section \ref{Section:Inconsistency}.

Let $\Ev\TM\Yo$ and $\Ev\TN\Zo$ denote the exceptional column submatrices of $\TM$ and $\TN$ with respect to $\iI\Yo=\iI\Zo$ respectively.
Evidently $\Ev\TN\Zo=\Ev\TM\Yo$ and hence the exceptional supports $\es\TN\Zo=\es\TM\Yo$, which are as in Definition \ref{Def:ExceptionalSupport}.
Let us rewrite the vertex form \eqref{VertexForm} of $w(\X)$ by organizing its terms with respect to $\es\TN\Zo$ as follows.
\begin{equation}\label{NormalForm}
w(\X)=c_\A\X^\A+\sum_{\Al\in\es\TN\Zo\setminus\{\A\}}\sumsep c_\Al\X^\Al
+\sum_{\Be\in\esc\TN\Zo}\sumsep c_\Be\X^\Be
\end{equation}
with the complementary support $\esc\TN\Zo:=\supp(w)\setminus\es\TN\Zo$.

The reduced monomial transformation $\CT\TN$ in \eqref{NormalTransformation} transforms $w(\X)$ in \eqref{NormalForm} into a total transform $w_\TN(\Z):=w(\CT\TN(\Z))$ that can be partially factorized into a partial transform $q(\Z)$ and the exceptional factors as follows.
\begin{equation}\label{NormalTransform}
\begin{aligned}
w_\TN(\Z)&=\Z_0^{\A\cdot\Ev\TN\Zo}\cdot
\biggl(c_\A\Z_*^{\A_*}+\sumsep\sum_{\Al\in\es\TN\Zo\setminus\{\A\}}
\sumsep c_\Al\Z_*^{\Al_*}+\sumsep\sum_{\Be\in\esc\TN\Zo}
\sumsep c_\Be\Z_*^{\Be_*}\Z_0^{(\Be-\A)\cdot\Ev\TN\Zo}\biggr)\\
&:=\Z_0^{\A\cdot\Ev\TN\Zo}\cdot q(\Z)
\end{aligned}
\end{equation}
with the exponents $\A=(\A_*,\A_0)$, $\Al=(\Al_*,\Al_0)$ and $\Be=(\Be_*,\Be_0)$ being defined as per the exceptional index set $\iI\Zo$.
It is easy to deduce that the proper transform $q(\Z_*,\bz)$ of $w(\X)$ under $\CT\TN$ is as follows.
\begin{equation}\label{ProperTransform-N}
q(\Z_*,\bz)=c_\A\Z_*^{\A_*}+\sum_{\Al\in \es\TN\Zo\setminus\{\A\}}\sumsep c_\Al\Z_*^{\Al_*}.
\end{equation}

The reduced monomial transformation $\CT\TN$ in \eqref{NormalTransformation} transforms the redundant function $r(\X)$ in the Weierstrass form \eqref{PrimaryForm} into a new function $r_\TN(\Z):=r(\CT\TN(\Z))$ as follows.
\begin{equation}\label{RedundantTransform}
r_\TN(\Z)=\sum_{\Al\in\supp(r)}\sumsep c_\Al\Z^{\Al\cdot\TN}=\sum_{\Al\in\supp(r)}\sumsep c_\Al\Z_*^{\Al_*}\Z_0^{\Al\cdot\Ev\TN\Zo}
\end{equation}
with $\Ev\TN\Zo$ being the exceptional column submatrix as in \eqref{NormalTransform}.
Please note that there is no partial factorization involved for $r_\TN(\Z)$ in \eqref{RedundantTransform} during the partial resolution of singularities based on $\CT\TN$.

\begin{definition}\label{Def:Localization}
{\upshape (Transformed apex $\ald$; redundant transform $r_\TN(\Z)$; reduced branch point $\bs=(\bs_*,\bz)$; localized partial and proper transforms $\wt q(\wt\Z)$ and $q_*(\wt\Z_*)$; regular and irregular branch points; partial resolution of singularities via canonical reduction)}

In accordance with the reduced monomial transformation $\CT\TN$ in \eqref{NormalTransformation}, if $\D$ is the apex in \eqref{PrimaryForm}, then the \emph{transformed} apex $\ald:=(d,\bz,(\D-\A)\cdot\Ev\TN\Zo)\in\supp (q(\Z))$ with $q(\Z)$ being the partial transform as in \eqref{NormalTransform}.

The function $r_\TN(\Z)$ in \eqref{RedundantTransform} is called the \emph{redundant} transform of the redundant function $r(\X)$ under the reduced monomial transformation $\CT\TN$.

For a branch point $\R=(\R_*,\bz)\in\EI\TM\Yo$ as in \eqref{ExceptionalBranch}, the set of points
\[
\{\bs\in\DF n\colon\bs=(\bs_*,\bz),\bs_*^{\det\TD\cdot\TE_k}=\R_*^{\TF}\}
\]
as per \eqref{CanonicalReduction} are called the \emph{canonically reduced} branch points of $\R=(\R_*,\bz)$, or simply the \emph{reduced} branch points or branch points.

The partial transform $q(\Z)$ in \eqref{NormalTransform} and proper transform $q(\Z_*,\bz)$ in \eqref{ProperTransform-N} can be \emph{localized} at a reduced branch point $(\bs_*,\bz)$ as $\wt q(\wt\Z):=q(\wt\Z_*+\bs_*,\wt\Z_0)$ and $q_*(\wt\Z_*):=\wt q(\wt\Z_*,\bz)$ respectively with $\wt\Z:=(\wt\Z_*,\wt\Z_0):=(\Z_*-\bs_*,\Z_0)$.

A reduced branch point $\bs=(\bs_*,\bz)$ is said to be \emph{regular} if the localized proper transform $q_*(\wt\Z_*)$ at $\bs$ with $\wt\Z_*=(\LE{\wt z}1k)$ satisfies $q_*(\wt z_1,\bz)\ne 0$; otherwise it is said to be an \emph{irregular} branch point.

The procedure of partial resolution of singularities based on the reduced monomial transformation $\CT\TN$ is called the partial resolution of singularities via \emph{canonical reduction}.
\end{definition}

\begin{lemma}\label{Lemma:TransformedExponents}
\begin{inparaenum}[(a)]
\item\label{item:UniqueMax} With $q(\Z)$ being the partial transform in \eqref{NormalTransform}, the transformed apex $\ald\in\supp(q)$ as in Definition \ref{Def:Localization} has the unique maximal degree $d$ in the variable $z_1$;
\item\label{item:Devoidance} For $\forall\Al=(\LE\alpha 1n)\in\supp(q)$, we have $\alpha_1\ne d-1$.
\end{inparaenum}
\end{lemma}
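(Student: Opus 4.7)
The plan is to compute the exponent of $z_1$ in every term of the partial transform $q(\Z)$ in \eqref{NormalTransform} and trace it back to the exponent of $x_1$ in the corresponding monomial of $w(\X)$, whereupon both parts follow from the defining structure of the Weierstrass form \eqref{PrimaryForm}.

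First I would exploit the canonical form of $\TN$ in \eqref{CanonicalMatrix}: since its first column is $\E_1$, every exponent $\Al = (\LE\alpha 1n) \in \supp(w)$ satisfies $(\Al\cdot\TN)_1 = \alpha_1$. The partial factorization in \eqref{NormalTransform} divides out only the monomial $\Z_0^{\A\cdot\Ev\TN\Zo}$ in the exceptional variables $\Z_0=(\LE z{k+1}n)$, which is disjoint from $\Z_* = (\LE z1k)$ and therefore does not affect the $z_1$-exponent. Consequently, whether the monomial of $w(\X)$ supported at $\Al$ feeds the sum over $\es\TN\Zo$ or the sum over $\esc\TN\Zo$ in the expansion of $q(\Z)$, the $z_1$-exponent of the resulting monomial in $q(\Z)$ is exactly $\alpha_1$.

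Next I would invoke the explicit shape of the Weierstrass form \eqref{PrimaryForm}: writing $w(\X) = x_1^d + \sum_{j=2}^d c_j(\LE x 2 n)\, x_1^{d-j}$ shows directly that $\supp(w) \subseteq \{\D\} \cup \{\Al \in \BB N^n : 0\le\alpha_1 \le d-2\}$, so the first coordinate $d$ is attained only at the apex $\D = (d,\bz)$ and the first coordinate $d-1$ is never attained in $\supp(w)$. Translating through the $z_1$-exponent identity of the previous step, the transformed apex $\ald$ is the unique exponent of $\supp(q)$ whose $z_1$-degree equals $d$, and $d$ is the maximal such degree, giving part (a); likewise no exponent of $\supp(q)$ has first coordinate $d-1$, giving part (b). No substantive obstacle arises: the canonical reduction to $\TN$ is engineered precisely so that the $x_1$-degree structure of $w$ transfers verbatim to the $z_1$-degree structure of $q$, and the uniqueness of the $d$-term together with the absence of a $(d-1)$-term are the hallmarks of the completion of perfect power performed during the Weierstrass reduction.
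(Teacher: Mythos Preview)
Your proof is correct and follows essentially the same approach as the paper. The paper's own proof simply reads ``These conclusions are evident based on the exponential identity \eqref{ExponIdentity}'', and what you have written is precisely the unpacking of that identity $\Ga_*=\Al_*$ together with the observation that the partial factorization touches only the exceptional coordinates and that the Weierstrass form \eqref{PrimaryForm} has no $x_1^{d-1}$ term after completion of perfect power.
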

\begin{proof}
These conclusions are evident based on the exponential identity \eqref{ExponIdentity}.
\end{proof}

The following relation between the localized non-exceptional variables $\wt\Y_*$ and $\wt\Z_*$ can be readily deduced from the canonical reduction \eqref{CanonicalReduction}:
\begin{equation}\label{LocalConversion}
(\wt\Z_*+\bs_*)^{\det\TD\cdot\TE_k}=(\wt\Y_*+\R_*)^{\TF}
\end{equation}
with $\bs_*,\R_*\in (\BB F^*)^k$.
Moreover, based on \eqref{AdjointConversion} we have the following straightforward relation between the exceptional variables $\Y_0$ and $\Z_0$:
\begin{equation}\label{ExceptionalRelation}
\Z_0^{\det\TD\cdot\TE_{n-k}}=(\wt\Y_*+\R_*)^{\TD^*\cdot\TC}
\circ\Y_0^{\det\TD\cdot\TE_{n-k}}.
\end{equation}

\begin{lemma}\label{Lemma:NonSingMap}
Let $\BB F=\BB C$ and $p_*(\wt\Y_*)$ and $q_*(\wt\Z_*)$ denote the localized proper transforms around a branch point $\R=(\R_*,\bz)$ and one of its canonical reductions $\bs=(\bs_*,\bz)$ respectively.
\begin{inparaenum}[(a)]
\item There exist respective neighborhoods $U$ and $V$ of $\R_*$ and $\bs_*\in\DFs k$ that are biholomorphically equivalent, i.e., there exists a unique biholomorphic map $\AM$ from $U$ to $V$ satisfying \eqref{LocalConversion} such that $\wt\Z_*(U)=V$;
\item The orders of the localized proper transforms $p_*(\wt\Y_*)$ and $q_*(\wt\Z_*)$ are invariant under the biholomorphic map $\AM$, that is, $\ord (q_*)=\ord (p_*)$;
\item\label{item:NonDegMap} There exist a neighborhood $U$ of $\R=(\R_*,\bz)$ and a finite number of neighborhoods $\{V_l\}$ of $\bs=(\bs_*,\bz)$ constituting a branched covering of $U$ such that each $V_l$ is biholomorphically equivalent to $U$, i.e., for each $V_l$ there exists a biholomorphic map $\wt\Z(\wt\Y)$ from $U$ to $V_l$ satisfying \eqref{LocalConversion} and \eqref{ExceptionalRelation} such that $\wt\Z(U)=V_l$.
\end{inparaenum}
\end{lemma}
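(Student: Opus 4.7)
The plan is to establish all three parts via the complex implicit function theorem applied to the canonical-reduction identity \eqref{LocalConversion} and the exceptional-variable identity \eqref{ExceptionalRelation}, exploiting $\BB F=\BB C$ together with the nonvanishing $\bs_*,\R_*\in\DFs k$. For part (a), the relation \eqref{LocalConversion} reads componentwise as $(\wt z_i+s_i)^{\det\TD}=\prod_{j=1}^k(\wt y_j+r_j)^{f_{ij}}$ for $1\le i\le k$, with $f_{ij}$ the entries of $\TF$. At the base point $(\wt\Z_*,\wt\Y_*)=(\bz,\bz)$ the equation is satisfied by the definition of a reduced branch point; the Jacobian with respect to $\wt\Z_*$ is the diagonal matrix with nonzero entries $\det\TD\cdot s_i^{\det\TD-1}$, while the Jacobian with respect to $\wt\Y_*$ factors as a product of invertible diagonal matrices with $\TF$, whose determinant is nonzero by Lemma~\ref{Lemma:CanonicalReduction}\eqref{item:NondegNormal}. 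The complex implicit function theorem then delivers the unique biholomorphism $\AM\colon U\to V$ between respective neighborhoods of $\R_*$ and $\bs_*$ satisfying \eqref{LocalConversion}.

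For part (b), I factor the localized proper transform \eqref{ProperTransform-N} at $\bs$ as
\[
q_*(\wt\Z_*)=(\wt\Z_*+\bs_*)^{\A_*}\cdot\biggl[c_\A+\sumsep\sum_{\Al\in\es\TN\Zo\setminus\{\A\}}\sumsep c_\Al(\wt\Z_*+\bs_*)^{(\Al-\A)_*}\biggr],
\]
whose prefactor $(\wt\Z_*+\bs_*)^{\A_*}$ is a nonvanishing holomorphic unit at $\wt\Z_*=\bz$ since $\bs_*\in\DFs k$; hence $\ord(q_*)$ equals the order of the bracketed expression. The bridging identity is: for $\Al\in\es\TM\Yo=\es\TN\Zo$ the face condition $(\Al-\A)\cdot\Ev\TM\Yo=\bz$ (from $\Al\in\face(\V_j)$ for every $j\in\iI\Yo$) permits the elimination of $(\Al-\A)_0$ and yields $\det\TD\cdot(\Al-\A)\cdot\Ev\TM\Ya=(\Al-\A)_*\cdot\TF$, via the identity $\TF=\det\TD\cdot\TA-\TB\cdot\TD^*\cdot\TC$ already established in the proof of Lemma~\ref{Lemma:CanonicalReduction}\eqref{item:NormalIdentity}. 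Raising \eqref{LocalConversion} componentwise to the power $(\Al-\A)_*$ therefore gives the exact equality $[(\wt\Z_*+\bs_*)^{(\Al-\A)_*}]^{\det\TD}=[(\wt\Y_*+\R_*)^{(\Al-\A)\cdot\Ev\TM\Ya}]^{\det\TD}$; extracting the unique holomorphic $\det\TD$-th root pinned down by $\AM$ transports the bracketed expression to $p_*(\wt\Y_*)$ along $\AM$, whence $\ord(q_*)=\ord(p_*)$.

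For part (c), apply the same implicit-function-theorem argument to \eqref{ExceptionalRelation}, which reads componentwise as $z_{k+j}^{\det\TD}=(\wt\Y_*+\R_*)^{(\TD^*\TC)_j}\cdot y_{k+j}^{\det\TD}$ for $1\le j\le n-k$, with nonzero $z_{k+j}$-derivative $\det\TD\cdot z_{k+j}^{\det\TD-1}$ once a local base value is fixed. Each such equation admits $|\det\TD|$ local holomorphic branches (one per $\det\TD$-th root of unity), and combining these finitely many choices with the biholomorphism of part (a) produces a finite collection of local biholomorphisms $\wt\Z(\wt\Y)\colon U\to V_l$ satisfying both \eqref{LocalConversion} and \eqref{ExceptionalRelation}, whose images $\{V_l\}$ constitute a branched covering of $U$. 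The chief obstacle throughout is branch consistency: the algebraic identities give agreement only after raising to the $\det\TD$-th power, so the single biholomorphism $\AM$ must be shown to select compatible holomorphic roots for every $\Al\in\es\TM\Yo$ and every $j$ simultaneously; this is enforced by holomorphic continuation from the base-point equation $\bs_*^{\det\TD\cdot\TE_k}=\R_*^\TF$ defining a reduced branch point together with \eqref{ExceptionalRelation}, once the $\det\TD$-th roots at $(\bz,\bz)$ are fixed they extend uniquely and analytically throughout $V$.
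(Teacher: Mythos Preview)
Your argument follows the same route the paper indicates (the paper's entire proof is the single sentence ``The conclusions are easy consequences of Lemma~\ref{Lemma:CanonicalReduction}\eqref{item:NondegNormal} and the inverse function theorem''), and your treatment of (a) and (c) via the implicit function theorem together with $\det\TF\ne 0$ is correct and is exactly what the paper has in mind.

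For (b), your exponential identity $\det\TD\cdot(\Al-\A)\cdot\Ev\TM\Ya=(\Al-\A)_*\cdot\TF$ is correct (it is precisely the Schur-complement identity $\TF=\det\TD\cdot\TA-\TB\TD^*\TC$ applied on the face $(\Al-\A)\cdot\Ev\TM\Yo=\bz$). The weak point is the sentence ``extracting the unique holomorphic $\det\TD$-th root pinned down by $\AM$ transports the bracketed expression to $p_*(\wt\Y_*)$.'' From the equality of $\det\TD$-th powers you only get $(\wt\Z_*+\bs_*)^{(\Al-\A)_*}=\zeta_\Al\,(\wt\Y_*+\R_*)^{(\Al-\A)\cdot\Ev\TM\Ya}$ along $\AM$ with $\zeta_\Al$ a $\det\TD$-th root of unity that a priori depends on $\Al$, and your continuation argument does not force $\zeta_\Al=1$. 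A cleaner way to close (b) is to note that the local branch of \eqref{LocalConversion} chosen in (a), together with a compatible branch of \eqref{ExceptionalRelation} as in (c), is a local section of the full relation $\Y^\TM=\Z^\TN$; along any such section one has the exact functional identity $q(\Z)=\Z_*^{\A_*}\,p(\Y)$ (compare \eqref{TotalTransform} with \eqref{NormalTransform} and use $\Y^{\A\cdot\TM}=\X^\A=\Z_*^{\A_*}\Z_0^{\A\cdot\Ev\TN\Zo}$), whence after localization $q_*=(\text{unit})\cdot p_*\circ\AM^{-1}$ and $\ord(q_*)=\ord(p_*)$ without any root extraction.
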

\begin{proof}
The conclusions are easy consequences of Lemma \ref{Lemma:CanonicalReduction} \eqref{item:NondegNormal} and the inverse function theorem.
\end{proof}

It is easy to see that the biholomorphic maps in Lemma \ref{Lemma:NonSingMap} can be similarly defined for a generic field $\BB F$ of characteristic zero such that the conclusions of Lemma \ref{Lemma:NonSingMap} still hold.

\subsection{Resolution of regular and consistent singularities}
\label{Section:RegularSing}

\begin{lemma}\label{Lemma:UnivariablePolynomial}
For $\ell\in\BB N^*$, every nonzero root of a univariate polynomial with $\ell$ terms has multiplicity at most $\ell-1$.
\end{lemma}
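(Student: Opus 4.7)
The plan is to argue by induction on $\ell$, the number of terms. The base case $\ell=1$ is vacuous, since a single monomial $c_1 x^{n_1}$ has no nonzero root.

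For the inductive step, consider $p(x)=\sum_{j=1}^\ell c_j x^{n_j}$ with $c_j\in\BB F^*$ and $n_1<n_2<\dotsb<n_\ell$, and let $\alpha\in\BB F^*$ be a nonzero root. First I would strip off the lowest power: write $p(x)=x^{n_1}g(x)$, where
\[
g(x)=c_1+c_2 x^{n_2-n_1}+\dotsb+c_\ell x^{n_\ell-n_1}.
\]
Because $\alpha\ne 0$, the factor $\alpha^{n_1}$ is nonzero, so the multiplicity of $\alpha$ in $p$ equals its multiplicity in $g$. Hence it suffices to bound the multiplicity of $\alpha$ in $g$.

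The crucial step is to differentiate. Since $\BB F$ has characteristic zero, each integer $n_j-n_1$ for $j\ge 2$ is nonzero in $\BB F$, so
\[
g'(x)=\sum_{j=2}^\ell c_j(n_j-n_1)\,x^{n_j-n_1-1}
\]
has exactly $\ell-1$ nonzero terms. If $\alpha$ is a root of $g$ of multiplicity $m\ge 1$, then $\alpha$ is a root of $g'$ of multiplicity $m-1$. By the induction hypothesis applied to $g'$, we have $m-1\le(\ell-1)-1=\ell-2$, hence $m\le\ell-1$, as required.

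The argument has no real obstacle beyond noting that (i) scaling by a unit power of $x$ preserves multiplicities at nonzero roots, and (ii) characteristic zero is essential, since in characteristic $p$ the derivative could annihilate more terms than expected and break the bookkeeping on the number of terms.
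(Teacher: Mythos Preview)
Your proof is correct and follows essentially the same approach as the paper: the paper's proof simply states that the result follows by induction on the number of terms, and your argument supplies exactly that induction in full detail, including the key observation that characteristic zero guarantees the derivative of the reduced polynomial $g$ has precisely $\ell-1$ terms.
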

\begin{proof}
The conclusion can be easily proved by an induction on the number of terms of the univariate polynomial.
\end{proof}

\begin{lemma}\label{Lemma:SimpleDecrease}
After a reduced monomial transformation $\CT\TN$ in canonical consistency form as \eqref{CanonicalMatrix} is applied to a Weierstrass polynomial $w(\X)$ in \eqref{PrimaryForm} whose singularity height equals $d$, the localized proper transform $q_*(\wt\Z_*)$ satisfies $\ord (q_*)<d$ at a regular reduced branch point $\bs=(\bs_*,\bz)$.
\end{lemma}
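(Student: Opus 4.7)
The plan is to reduce this multivariate order bound to a univariate root-multiplicity bound and then invoke Lemma \ref{Lemma:UnivariablePolynomial}, exploiting the fact that the Weierstrass form \eqref{PrimaryForm} forces the $z_1$-degree $d-1$ to be absent from $\supp(q)$.

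First I would restrict $q_*$ to the $\wt z_1$-axis. Since the Taylor monomials of $q_*(\wt z_1,\bz)$ form a subset of those of $q_*$ itself, the trivial inequality $\ord(q_*)\le\ord(q_*(\wt z_1,\bz))$ reduces the problem to bounding the right-hand side. By the localization in Definition \ref{Def:Localization}, this order equals the multiplicity of $z_1=s_1$ as a root of the univariate polynomial $\psi(z_1):=q(z_1,s_2,\dotsc,s_k,\bz)$, where $s_1\ne 0$ since $\bs_*\in(\BB F^*)^k$. Because the first $k$ columns of $\TN$ in \eqref{CanonicalMatrix} are standard basis vectors, the $z_1$-exponent of any $\Z$-monomial produced from $\X^\Al$ is simply $\alpha_1$; consequently the $z_1$-degrees appearing in $\psi$ are bounded above by $d$ via Lemma \ref{Lemma:TransformedExponents} \eqref{item:UniqueMax} and cannot equal $d-1$ by Lemma \ref{Lemma:TransformedExponents} \eqref{item:Devoidance}, so $\psi$ is supported on $z_1$-degrees in $\dotc{0}{d-2}\cup\{d\}$ and thus has at most $d$ nonzero terms.

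To conclude, the regularity of $\bs$ says that $q_*(\wt z_1,\bz)\not\equiv 0$, so $\psi$ is a nonzero polynomial and the number $\ell$ of its nonzero terms satisfies $1\le\ell\le d$. If $\psi(s_1)\ne 0$ the root multiplicity is $0$; otherwise $s_1\ne 0$ is an actual nonzero root, and Lemma \ref{Lemma:UnivariablePolynomial} yields multiplicity at most $\ell-1\le d-1$. Either way $\ord(q_*)\le\ord(q_*(\wt z_1,\bz))\le d-1<d$, as desired. The only step worth flagging is that specializing $z_2,\dotsc,z_k$ to the nonzero scalars $s_2,\dotsc,s_k$ cannot manufacture a $z_1^{d-1}$ term; this is automatic because specialization only recombines coefficients of monomials whose $z_1$-degrees are already present in $q(\Z_*,\bz)$, and by Lemma \ref{Lemma:TransformedExponents} \eqref{item:Devoidance} none of those degrees is $d-1$. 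This absence, ultimately traceable to the completion of perfect power in the Weierstrass reduction, is the real engine of the argument.
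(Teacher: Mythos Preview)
Your proof is correct and follows essentially the same route as the paper's: both reduce to the univariate polynomial $q_*(\wt z_1,\bz)$, use Lemma~\ref{Lemma:TransformedExponents}~\eqref{item:UniqueMax} and~\eqref{item:Devoidance} to bound its term count by $d$, and then invoke Lemma~\ref{Lemma:UnivariablePolynomial} on the nonzero root $s_1$. The paper's one-sentence proof simply omits the details you have spelled out.
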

\begin{proof}
The conclusion is an easy consequence of Lemma \ref{Lemma:TransformedExponents} \eqref{item:UniqueMax} and \eqref{item:Devoidance} based on Lemma \ref{Lemma:UnivariablePolynomial}.
\end{proof}

After the above partial resolution of singularities via canonical reduction at a regular reduced branch point to obtain the localized proper transform $q_*(\wt\Z_*)$, we proceed with the resolution algorithm by making preliminary and Weierstrass reductions of the localized partial transform $\wt q(\wt\Z)$ to acquire a Weierstrass form $f(\Xp)$.
More specifically, through a linear modification $\CL\Xp$ of the localized variables $\wt\Z$ such that $\Xp=\CL\Xp(\wt\Z)$, an apex form $f(\Xp)$ similar to that in \eqref{PrePrimaryForm} is acquired whose singularity height $d'$ equals $\ord (\wt q(\wt\Z))$.
This is followed by an invocation of Weierstrass preparation theorem and completion of perfect power such that $f(\Xp)$ is reduced into a Weierstrass form similar to that in \eqref{PrimaryForm}, which is still denoted as $f(\Xp)$ with $\Xp$ denoting the new variables and $x'_1$ the new primary variable.

After the above preliminary and Weierstrass reductions, let us continue with the partial resolution of singularities of the Weierstrass polynomial of the above Weierstrass form $f(\Xp)$, which is denoted as $w(\Xp)$ like in \eqref{PrimaryForm}.
More specifically, let us first define an $n$ by $n$ exponential matrix $\TM'$ via a refined vertex cone of the Newton polyhedron of $w(\Xp)$.
Then let us consider the monomial transformation
\begin{equation}\label{2ndMonomialTransform}
\Xp=\CT{\TM'}(\Yp):=\Yp^{\TM'}
\end{equation}
with the new variables $\Yp:=(\LEs y1n\prime\prime)$.
For an arbitrary set of exceptional variables among the variables $\Yp$, in what follows let us assume that the primary variable $x'_1$ is consistent with the exponential matrix $\TM'$ and exceptional index set $\iI\Yop$, and postpone the discussion on the inconsistency case until Section \ref{Section:Inconsistency}.
Without loss of generality, let us also assume that $\TM'$ is in a canonical consistency form as in Definition \ref{Def:Consistency-1}.
Suppose that the exceptional index set $\iI\Yop$ has codimension $l=n-|\iI\Yop|$.
The non-exceptional and exceptional variables of $\Yp$ are written as $\Y'_*:=(y'_1,\dotsc,y'_l)$ and $\Y'_0:=(y'_{l+1},\dotsc,y'_n)$ respectively with respect to $\iI\Yop$.
Hereafter the exceptional index set $\iI\Xop=\iI\Zo$ shall be referred to as the \emph{prior} exceptional index set of $\TM'$.

Similar to the matrix partition in \eqref{SplitMonomialMatrix}, we partition the exponential matrix $\TM'$ in \eqref{2ndMonomialTransform} into four submatrices $\TA',\TB',\TC'$ and $\TD'$ such that $\TD'$ is the exceptional submatrix $\Evv\TM\Yop'$ satisfying $\det\TD'\ne 0$.
Let us define a reduced exponential matrix $\TN'$ in the same way as the definition of $\TN$ in \eqref{CanonicalMatrix}, whose canonical form comprises the submatrices $\TB'$ and $\TD'$ as well as the unit matrix $\TE$ and zero matrix $\TO$.
The reduced monomial transformation $\CT{\TN'}$ associated with $\TN'$ can be written as:
\begin{equation}\label{2ndCanonicalMonomialTransform}
\Xp=\CT{\TN'}(\Zp):=\Zp^{\TN'}
\end{equation}
with $\Zp:=(z'_1,\dotsc,z'_n)$ being the new variables whose exceptional index set $\iI\Zop=\iI\Yop$.
The reduction matrix from $\CT{\TM'}$ to $\CT{\TN'}$ is similar to the reduction matrix $\TF$ in Definition \ref{Def:ReductionMatrix} and satisfies the same properties as in Lemma \ref{Lemma:CanonicalReduction}.
Furthermore, a conclusion similar to Lemma \ref{Lemma:SimpleDecrease} still holds here.
More specifically, let us write $q_*(\wt\Z_*')$ as the localized proper transform of the Weierstrass polynomial $w(\Xp)$ under $\CT{\TN'}$.
Then $\ord (q_*(\wt\Z_*'))<d'=\ord (\wt q(\wt\Z))$ at a regular reduced branch point with $\wt q(\wt\Z)$ being the localized partial transform in the prior resolution step associated with $\CT\TN$.

The resolution of singularities at a regular branch point is called the resolution of \emph{regular} singularities; otherwise it is called the resolution of \emph{irregular} singularities.

\begin{lemma}\label{Lemma:Prerequisite}
If the apex $\D=(d,\bz)$ in \eqref{PrimaryForm} satisfies $\D\in\es\TN\Zo$, then every reduced branch point is regular.
That is, a necessary condition for the irregular singularities is that $\D\notin\es\TN\Zo$.
\end{lemma}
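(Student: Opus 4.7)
The plan is to exploit the uniqueness statement in Lemma \ref{Lemma:TransformedExponents} \eqref{item:UniqueMax} together with the explicit form of the proper transform in \eqref{ProperTransform-N}. Since $\D=(d,\bz)$ has non-exceptional part $\D_*=(d,0,\dotsc,0)$, the hypothesis $\D\in\es\TN\Zo$ ensures that the apex contributes a term $c_\D\Z_*^{\D_*}=c_\D z_1^d$ with $c_\D\in\BB F^*$ to the proper transform $q(\Z_*,\bz)$. Hence $q(\Z_*,\bz)$ visibly contains a pure power $c_\D z_1^d$.

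Next I would invoke Lemma \ref{Lemma:TransformedExponents} \eqref{item:UniqueMax}: the transformed apex $\ald$ is the \emph{unique} element of $\supp(q)$ whose $z_1$-degree equals $d$, so every other exponent $\Al\in\es\TN\Zo$ in \eqref{ProperTransform-N} satisfies $\alpha_1<d$. Restricted to $\Z_0=\bz$, this identifies $c_\D z_1^d$ as the unique top-degree term of $q(\Z_*,\bz)$ in the variable $z_1$.

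Now I would localize at any reduced branch point $\bs=(\bs_*,\bz)$ by substituting $z_1=\wt z_1+s_1$ and $z_j=s_j$ for $2\le j\le k$. Since every term of $q(\Z_*,\bz)$ other than $c_\D z_1^d$ has $z_1$-degree strictly less than $d$, none of those terms can contribute to the coefficient of $\wt z_1^d$ after the substitution. Therefore
\[
q_*(\wt z_1,\bz)=c_\D\wt z_1^d+\text{(terms of $\wt z_1$-degree $<d$)},
\]
whose leading coefficient $c_\D$ is nonzero. In particular $q_*(\wt z_1,\bz)\not\equiv 0$, so by the definition preceding the lemma in Definition \ref{Def:Localization} the branch point $\bs$ is regular. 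The contrapositive then yields the necessary condition $\D\notin\es\TN\Zo$ for any irregular singularity.

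There is no real obstacle here: the whole argument is a one-line consequence of the uniqueness of the $z_1$-top-degree term together with the fact that the substitution at the reduced branch point is purely additive in $z_1$ and cannot increase $z_1$-degree. The only point that requires mild care is to confirm that the term $c_\D z_1^d$ in $q(\Z_*,\bz)$ genuinely survives the partial factorization in \eqref{NormalTransform}, i.e., that $c_\D$ is not absorbed into the exceptional factor $\Z_0^{\A\cdot\Ev\TN\Zo}$; this is immediate because the vertex $\A$ and the apex $\D$ are distinct exponents in $\es\TN\Zo$, and $\D_*=(d,\bz)$ is the non-exceptional part that labels the surviving monomial.
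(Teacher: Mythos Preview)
Your proof is correct and follows the same approach as the paper: both observe that $\D\in\es\TN\Zo$ forces the term $c_\D z_1^d$ into the proper transform $q(\Z_*,\bz)$ via \eqref{ProperTransform-N} and then invoke the uniqueness in Lemma~\ref{Lemma:TransformedExponents}\eqref{item:UniqueMax} to conclude that $q_*(\wt z_1,\bz)\ne 0$. Your final caveat that $\A\ne\D$ is unnecessary and not always true (the vertex $\A$ may well coincide with the apex $\D$), but this is harmless since the survival of $c_\D z_1^d$ is automatic either way: the factored-out exceptional monomial $\Z_0^{\A\cdot\Ev\TN\Zo}$ touches only the $\Z_0$-variables.
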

\begin{proof}
When the apex $\D=(d,\bz)$ in \eqref{PrimaryForm} satisfies $\D\in\es\TN\Zo$, the formula for the proper transform $q(\Z_*,\bz)$ as in \eqref{ProperTransform-N} indicates that the transformed apex $\ald$ as in Definition \ref{Def:Localization} equals $(d,\bz)\in\supp (q(\Z_*,\bz))$.
Moreover, Lemma \ref{Lemma:TransformedExponents} \eqref{item:UniqueMax} indicates that $\ald$ has the unique maximal degree $d$ in the variable $z_1$ in $q(\Z_*,\bz)$.
Thus every reduced branch point $\bs=(\bs_*,\bz)$ is regular in this case.
\end{proof}

\begin{definition}\label{Def:Deficiency}
{\upshape ($\npp\bs$; dominant neighborhood; deficient index set $\iI\bz(\TA)$ and its complement $\iI*(\TA)$; $\iI*(\Al)$; deficiency; deficient support $\supp_{\iI\bz}(r)$; deficient function $r_{\iI\bz}(\X)$; deficient variety $\RI\TN\Zo$)}

The Newton polyhedron of the localization $\wt \varphi(\X-\bs):=\varphi((\X-\bs)+\bs)$ of a function $\varphi(\X)$ in the variables $\X-\bs$ is defined as the Newton polyhedron of $\varphi(\X)$ at $\bs\in\DF n$ and denoted as $\npp\bs$.
By default the notation $\NP(\varphi)$ refers to the Newton polyhedron of $\varphi(\X)$ at the origin $\bz$.
A neighborhood $U$ of a point $\bs$ is called its \emph{dominant} neighborhood if $\npp\bs\subseteq\npp\X$ for $\forall\X\in U$.

The index set of the zero row vectors of a matrix $\TA$ is called the \emph{deficient} index set of $\TA$ and denoted as $\iI\bz(\TA)$ whose complementary set is denoted as $\iI*(\TA)$.
Similarly the index set of the nonzero components of an exponential vector $\Al$ is denoted as $\iI*(\Al)$.
Let $\Ev\TN\Zo$ denote the exceptional column submatrix of a reduced exponential matrix $\TN$ in canonical form.
When the deficient index set $\iI\bz(\Ev\TN\Zo)\ne\emptyset$, we say that $\Ev\TN\Zo$ is \emph{deficient}, or $\TN$ is \emph{deficient} with respect to $\iI\Zo$.

When the exceptional column submatrix $\Ev\TN\Zo$ is deficient, the \emph{deficient} support of the redundant function $r(\X)$ in \eqref{PrimaryForm} is defined as
\begin{equation}\label{DeficientSupport}
\supp\nolimits_{\iI\bz}(r):=\{\Al\in\supp(r)\colon\iI*(\Al)
\subseteq\iI\bz(\Ev\TN\Zo)\}
\end{equation}
and accordingly, the \emph{deficient} function is defined as
\begin{equation}\label{DeficientFunction}
r_{\iI\bz}(\X):=\sum_{\Al\in\supp_{\iI\bz}(r)}c_\Al\X^\Al
\end{equation}
whose terms conform with those in $r(\X)$ in \eqref{PrimaryForm}.
The variety
\begin{equation}\label{DeficientVariety}
\RI\TN\Zo:=\{\X\in\DF n\colon c+r_{\iI\bz}(\X)=0\}
\end{equation}
is called the \emph{deficient} variety of the deficient function $r_{\iI\bz}(\X)$, where the constant $c\in\BB F^*$ is the same as in \eqref{PrimaryForm}.
\end{definition}

It is evident that when the deficient support $\supp_{\iI\bz}(r)=\emptyset$, which includes the case when the exceptional column submatrix $\Ev\TN\Zo$ is not deficient, the deficient variety $\RI\TN\Zo=\emptyset$.

\begin{lemma}\label{Lemma:DeficientProperties}
\begin{inparaenum}[(a)]
\item\label{item:DeficientScope} For a reduced exponential matrix $\TN$ in canonical consistency form, the deficient index set of the exceptional column submatrix $\Ev\TN\Zo$ is a subset of the non-exceptional index set, that is, $\iI\bz(\Ev\TN\Zo)\subseteq\iI{\Z_*}$;
\item\label{item:DeficientVariables} We have $z_j=x_j$ for $\forall j\in\iI\bz(\Ev\TN\Zo)$ under the reduced monomial transformation $\CT\TN$.
\end{inparaenum}
\end{lemma}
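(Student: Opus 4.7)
The plan is to exploit directly the block structure of the canonical form of the reduced exponential matrix $\TN$ in \eqref{CanonicalMatrix}, which dictates the shape of its exceptional column submatrix.

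For part \eqref{item:DeficientScope}, I would first read off the exceptional column submatrix from \eqref{CanonicalMatrix} as
\[
\Ev\TN\Zo=\left[\begin{array}{c}\TB_{k\times(n-k)}\\ \hline \TD_{(n-k)\times(n-k)}\end{array}\right],
\]
since $\iI\Zo=\dotc{k+1}n$ picks out the last $n-k$ columns in canonical form (cf.\ Definition \ref{Def:ProperTransform}). Then for any index $j\in\iI\Zo$, the $j$-th row of $\Ev\TN\Zo$ is precisely the $(j-k)$-th row of $\TD$. By the canonical consistency hypothesis in Definition \ref{Def:Consistency-1} we have $\det\TD\ne 0$, so no row of $\TD$ vanishes. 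Consequently no $j\in\iI\Zo$ lies in $\iI\bz(\Ev\TN\Zo)$, giving $\iI\bz(\Ev\TN\Zo)\subseteq\iI{\Z_*}$.

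For part \eqref{item:DeficientVariables}, I would apply part \eqref{item:DeficientScope} to restrict attention to $j\in\iI{\Z_*}=\dotc 1k$. By \eqref{CanonicalMatrix} the $j$-th row of $\TN$ has the form $[\E_j\mid\B_j]$, where $\B_j$ is the $j$-th row of $\TB$, and the $j$-th row of $\Ev\TN\Zo$ is precisely $\B_j$. Thus $j\in\iI\bz(\Ev\TN\Zo)$ forces $\B_j=\bz$, so the $j$-th row of $\TN$ collapses to $[\E_j\mid\bz]$. Under the reduced monomial transformation $\CT\TN$ from \eqref{NormalTransformation} we have $x_j=\Z^{[\E_j\,\bz]}=z_j$, as required.

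The argument is essentially a matter of unwinding the canonical block form; there is no real obstacle. The only point to be careful about is keeping the row/column bookkeeping consistent with the canonical labelling $\iI\Zo=\dotc{k+1}n$ fixed in Definition \ref{Def:ProperTransform}, and making sure that the non-degeneracy condition $\det\TD\ne 0$ is invoked from the definition of canonical consistency form rather than from the mere rank hypothesis on $\Ev{\lTN}\Zo$.
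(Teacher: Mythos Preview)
Your proposal is correct and follows essentially the same approach as the paper's proof: both parts rest on reading off the block structure of $\TN$ in \eqref{CanonicalMatrix}, with part \eqref{item:DeficientScope} coming from the non-degeneracy $\det\TD\ne 0$ (so that no row of the $\TD$-block can vanish) and part \eqref{item:DeficientVariables} from the fact that a deficient row in the $\TB$-block leaves the corresponding full row of $\TN$ equal to the standard basis vector $\E_j$. The paper states this in two terse sentences; you have merely written out the bookkeeping explicitly, which is fine.
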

\begin{proof}
\begin{asparaenum}[(a)]
{\parindent=\saveparindent
\item The inclusion $\iI\bz(\Ev\TN\Zo)\subseteq\iI{\Z_*}$ directly follows from the consistency assumption, i.e., the non-degeneracy assumption on the exceptional submatrix as $\det\Evv\TN\Zo\ne 0$.

\item The conclusion readily follows from \eqref{item:DeficientScope} as well as the form of $\TN$ in \eqref{CanonicalMatrix}.
\qedhere}
\end{asparaenum}
\end{proof}

\begin{definition}\label{Def:Dominance}
{\upshape (Dominance $\Be\succeq\Al$; strict dominance $\Be\succ\Al$; exceptional dominance $\Be\succx\Al$; non-exceptional dominance $\Be\succn\Al$)}

For two exponential vectors $\Al,\Be\in\BB N^n$, the \emph{dominance} $\Be\succeq\Al$, that is, $\Be$ is \emph{dominated} by $\Al$, means that the difference $\Be-\Al\in\BB N^n$; whereas the \emph{strict} dominance $\Be\succ\Al$ means that $\Be-\Al\in\DN n$.
In particular, the \emph{exceptional} dominance $\Be\succx\Al$ with respect to an exceptional index set $\iJ$ means that their respective exceptional parts $\Be_0$ and $\Al_0$ in $\Be=(\Be_*,\Be_0)$ and $\Al=(\Al_*,\Al_0)$ with respect to $\iJ$ satisfy $\Be_0\succ\Al_0$.
The \emph{non-exceptional} dominance $\Be\succn\Al$ means that $\Be_0=\Al_0$ and $\Be_*\succ\Al_*$.
\end{definition}

\begin{lemma}\label{Lemma:RedundantDominance}
For $\forall\Al=(\LE\alpha 1n)\in\supp(r)$ in \eqref{RedundantTransform}, we have $\Al\cdot\TN\succx\bz$ unless $\Al\in\supp_{\iI\bz}(r)$ in which case $\Al\cdot\TN\succn\bz$.
\end{lemma}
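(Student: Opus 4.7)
The plan is to exploit the block-triangular canonical form of $\TN$ in \eqref{CanonicalMatrix} together with the non-negativity of all entries, and then split into the two declared cases. Writing $\Al = (\Al_*, \Al_0)$ per the exceptional index set $\iI\Zo = \dotc{k+1}n$, the block form immediately yields $\Al \cdot \TN = (\Al_*,\, \Al_* \cdot \TB + \Al_0 \cdot \TD) = (\Al_*,\, \Al \cdot \Ev\TN\Zo)$, so the non-exceptional part of $\Al \cdot \TN$ coincides with $\Al_*$ while its exceptional part equals $\Al \cdot \Ev\TN\Zo$. I would also record at the outset that $r(\bz) = 0$ by \eqref{PrimaryForm}, so every $\Al \in \supp(r)$ satisfies $\Al \ne \bz$.

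First, suppose $\Al \in \supp_{\iI\bz}(r)$. By the definition \eqref{DeficientSupport}, $\iI*(\Al) \subseteq \iI\bz(\Ev\TN\Zo)$, and by Lemma~\ref{Lemma:DeficientProperties}\,\eqref{item:DeficientScope} the latter is contained in $\iI{\Z_*}$. Hence every nonzero component of $\Al$ lies among the non-exceptional indices, which on one hand forces $\Al_0 = \bz$ and on the other hand makes each nonzero $\alpha_j$ multiply a zero row of $\Ev\TN\Zo$, so $\Al \cdot \Ev\TN\Zo = \bz$. Combined with $\Al \ne \bz$, this yields $\Al_* \ne \bz$, and therefore $\Al \cdot \TN = (\Al_*, \bz)$ with $\Al_* \succ \bz$, which is exactly the non-exceptional dominance $\Al \cdot \TN \succn \bz$.

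Second, suppose $\Al \notin \supp_{\iI\bz}(r)$. Then there exists an index $j \in \iI*(\Al)$ with $j \notin \iI\bz(\Ev\TN\Zo)$, i.e., $\alpha_j > 0$ and the $j$-th row of $\Ev\TN\Zo$ is a nonzero vector in $\BB N^{n-k}$. If $j \in \iI\Zo$, that row lies in $\TD$ and is automatically nonzero because $\det\TD \ne 0$ in the canonical consistency form; if $j \in \iI{\Z_*}$, this is the defining hypothesis on $j$. Since every entry of $\Ev\TN\Zo$ is non-negative, no cancellation is possible in $\Al \cdot \Ev\TN\Zo = \sum_i \alpha_i \cdot \mathrm{row}_i(\Ev\TN\Zo)$, so the sum acquires a strictly positive coordinate wherever the $j$-th row does. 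Thus the exceptional part of $\Al \cdot \TN$ lies in $\DN{n-k}$, giving $\Al \cdot \TN \succx \bz$.

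I do not anticipate a real obstacle: the argument is an elementary case split based on the location of $\iI*(\Al)$ relative to $\iI\bz(\Ev\TN\Zo)$, made possible by the block form of $\TN$ and the fact that $\BB N$-valued matrix-vector products admit no cancellation. The only nuances worth recording explicitly are the invocation of $\det\TD \ne 0$ to preclude zero rows indexed by $\iI\Zo$, and the observation $r(\bz) = 0$ from \eqref{PrimaryForm} which guarantees $\Al \ne \bz$ on $\supp(r)$.
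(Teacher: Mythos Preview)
Your proof is correct and is essentially a detailed unpacking of what the paper declares ``evident based on Definition \ref{Def:Deficiency} and Lemma \ref{Lemma:DeficientProperties} \eqref{item:DeficientScope}.'' The only minor redundancy is your sub-split on whether $j \in \iI\Zo$ or $j \in \iI{\Z_*}$ in the second case: once you have $j \notin \iI\bz(\Ev\TN\Zo)$, the $j$-th row of $\Ev\TN\Zo$ is nonzero by definition of the deficient index set, so no appeal to $\det\TD \ne 0$ is needed there.
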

\begin{proof}
The conclusion is evident based on Definition \ref{Def:Deficiency} and Lemma \ref{Lemma:DeficientProperties} \eqref{item:DeficientScope}.
\end{proof}

According to \eqref{RedundantTransform} and the definition of the deficient function $r_{\iI\bz}(\X)$ in \eqref{DeficientFunction}, as well as Lemma \ref{Lemma:DeficientProperties} and Lemma \ref{Lemma:RedundantDominance}, it is easy to deduce the following identity:
\begin{equation}\label{DeficientIdentity}
r_\TN(\Z_*,\bz)=r_{\iI\bz}(\Z_*)=r_{\iI\bz}(\X_*)
\end{equation}
with the variables $\X=(\X_*,\X_0)$ defined as per the exceptional index set $\iI\Zo$.

\begin{definition}\label{Def:DeficientContraction}
{\upshape (Deficient identity; deficient contraction)}

The identity \eqref{DeficientIdentity} is called the \emph{deficient} identity hereafter.
A \emph{deficient} contraction is an appropriate contraction of the dominant neighborhood of the origin $\X=\bz$ as in Definition \ref{Def:Deficiency} such that it has no intersection with the deficient variety $\RI\TN\Zo$ as in \eqref{DeficientVariety}.
\end{definition}

Let $p(\Z)$ denote the partial transform of the Weierstrass form $f(\X)$ in \eqref{PrimaryForm} under the reduced monomial transformation $\CT\TN$ in \eqref{NormalTransformation} and evidently
\begin{equation}\label{PartialTransformTotal}
p(\Z)=q(\Z)(c+r_\TN(\Z))
\end{equation}
with $r_\TN(\Z)$ being the redundant transform in \eqref{RedundantTransform} and satisfying $r_\TN(\bz)=0$ as per Lemma \ref{Lemma:RedundantDominance}.
The constant $c\in\BB F^*$ in \eqref{PartialTransformTotal} denotes the constant $c$ in \eqref{PrimaryForm}.
The function $q(\Z)$ is the partial transform of the Weierstrass polynomial $w(\X)$ as in \eqref{NormalTransform}.

The proper transform $p(\Z_*,\bz)$ of $f(\X)$ under $\CT\TN$ bears the following form as per \eqref{PartialTransformTotal} and the deficient identity \eqref{DeficientIdentity}.
\begin{equation}\label{TotalProperTransform}
p(\Z_*,\bz)=q(\Z_*,\bz)(c+r_{\iI\bz}(\X_*)).
\end{equation}

\begin{lemma}\label{Lemma:TotalTransformDecrease}
Let $p_*(\wt\Z_*):=p(\wt\Z_*+\bs_*,\bz)$ denote the localization of the proper transform $p(\Z_*,\bz)$ and $q_*(\wt\Z_*)$ that of $q(\Z_*,\bz)$ at a regular reduced branch point $\bs=(\bs_*,\bz)$ in \eqref{TotalProperTransform}.
After an appropriate deficient contraction, we have $\ord (p_*)=\ord (q_*)<d$, i.e., the conclusion of Lemma \ref{Lemma:SimpleDecrease} still holds for $p_*(\wt\Z_*)$.
\end{lemma}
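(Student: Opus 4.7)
The plan is to reduce the claim to Lemma \ref{Lemma:SimpleDecrease} by showing that the extra factor appearing in \eqref{TotalProperTransform} is a local unit at the branch point. Starting from \eqref{TotalProperTransform} and localizing at $\bs=(\bs_*,\bz)$, I would write
\[
p_*(\wt\Z_*)=q_*(\wt\Z_*)\cdot\eta(\wt\Z_*),\qquad
\eta(\wt\Z_*):=c+r_{\iI\bz}(\wt\Z_*+\bs_*),
\]
where I use the deficient identity \eqref{DeficientIdentity} to replace the redundant transform on $\Z_0=\bz$ by the deficient function, and Lemma \ref{Lemma:DeficientProperties} \eqref{item:DeficientVariables} so that the relevant variables in $r_{\iI\bz}$ are identified with the corresponding components of $\Z_*$. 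If I can show $\eta(0)\ne 0$, then $\eta$ is a unit in the local ring at $\wt\Z_*=0$, so multiplication by $\eta$ preserves the order, giving $\ord(p_*)=\ord(q_*)$; the bound $\ord(q_*)<d$ then comes directly from Lemma \ref{Lemma:SimpleDecrease}.

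The content of the lemma therefore lies in justifying $\eta(0)=c+r_{\iI\bz}(\bs_*)\ne 0$ via the deficient contraction. By definition of the deficient support \eqref{DeficientSupport}, $r_{\iI\bz}$ depends only on the coordinates $x_j$ with $j\in\iI\bz(\Ev\TN\Zo)$, so the deficient variety $\RI\TN\Zo$ as in \eqref{DeficientVariety} is a cylinder over its projection $V$ onto the deficient coordinate subspace. Consequently the non-vanishing of $\eta(0)$ is equivalent to the statement that the deficient image $\bs_{\iI\bz}$ of the branch point (viewed as a point of $\DF n$ via the identification $x_j=z_j$ of Lemma \ref{Lemma:DeficientProperties} \eqref{item:DeficientVariables}) does not lie in $V$.

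To obtain this, I would observe that $r_{\iI\bz}(\bz)=0$ because $r(\bz)=0$ in \eqref{PrimaryForm}, hence $c+r_{\iI\bz}(\bz)=c\in\BB F^*$, so the origin of $\X$-space is not on $\RI\TN\Zo$. By continuity (or algebraically, because $c+r_{\iI\bz}(\X)$ is a nonzero element of the local ring at $\bz$), the dominant neighborhood of $\bz$ in $\X$-space admits a deficient contraction in the sense of Definition \ref{Def:DeficientContraction} avoiding $\RI\TN\Zo$ altogether. Restricting attention to branch points whose deficient image $\bs_{\iI\bz}$ lies in this contracted neighborhood—which is the standing framework for local resolution near $\bz$—one concludes that $\eta(0)\ne 0$, and the two-line argument above then yields the lemma.

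The only real obstacle is the bookkeeping that the contraction, which is described in $\X$-space, actually controls the branch point, which lives in $\Z$-space: the key is the identification $x_j=z_j$ for $j\in\iI\bz(\Ev\TN\Zo)$ provided by Lemma \ref{Lemma:DeficientProperties} \eqref{item:DeficientVariables}, together with the fact that $\RI\TN\Zo$ is cylindrical in the non-deficient directions so that the cylinder test reduces purely to the deficient coordinates of $\bs_*$. Everything else is the direct factorization argument and an appeal to Lemma \ref{Lemma:SimpleDecrease}.
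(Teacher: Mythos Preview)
Your proposal is correct and follows essentially the same approach as the paper's proof: factor $p_*=q_*\cdot(c+r_{\iI\bz})$ via \eqref{TotalProperTransform} and the deficient identity, use the deficient contraction to make the second factor a unit at the branch point, and then invoke Lemma \ref{Lemma:SimpleDecrease}. The paper's argument is terser---it simply asserts the contraction and the order equality---whereas you have spelled out the unit-preserves-order step and the $\X$-space versus $\Z$-space bookkeeping, but the logic is identical.
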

\begin{proof}
When the exceptional column submatrix $\Ev\TN\Zo$ is deficient, we need to make an appropriate deficient contraction as in Definition \ref{Def:DeficientContraction} such that the deficient function $r_{\iI\bz}(\X_*)$ satisfies $c+r_{\iI\bz}(\X_*)\ne 0$.
Then \eqref{TotalProperTransform} indicates that the order of the localized proper transform $p_*(\wt\Z_*)$ satisfies $\ord (p_*)=\ord (q_*)<d$ as per Lemma \ref{Lemma:SimpleDecrease}.
When the exceptional column submatrix $\Ev\TN\Zo$ is not deficient, the conclusion holds without any deficient contraction.
\end{proof}

\subsection{Resolution of irregular singularities via latent variables}
\label{Section:SecondarySing}

In this section let us study the irregular singularities as in Definition \ref{Def:Localization} under the consistency assumption as in Definition \ref{Def:Consistency-1}, that is, resolution of irregular singularities when the primary variable is consistent with the exponential matrix and exceptional index set.

Let $k$ be the codimension of the exceptional index set $\iI\Zo$ as under \eqref{NormalTransformation} and $\wt\Z_*$ the localized non-exceptional variables at a reduced branch point $\bs=(\bs_*,\bz)$.
For $1<k<n$, let us define new variables $\tlZ_*:=(\LE{\wt z}2k)$ such that $\wt\Z_*=(\wt z_1,\tlZ_*)$.
Suppose that the localized proper transform $q_*(\wt\Z_*)$ of the Weierstrass polynomial $w(\X)$ as in Definition \ref{Def:Localization} satisfies $q_*(\wt z_1,\bz)=0$ when $\tlZ_*=\bz$, i.e., $\bs=(\bs_*,\bz)$ is an irregular branch point as in Definition \ref{Def:Localization}.
Let $q(\Z)=q(\Z_*,\Z_0)$ be the partial transform in \eqref{NormalTransform}.
With $\Z_*=(z_1,\lZ_*)$, let us postpone the localization of the variable $z_1$ and study a new function as follows.
\begin{equation}\label{PreVirtualization}
\vfz q1(\tlZ_*,\Z_0):=q(z_1,\tlZ_*+\lS_*,\Z_0)=q(z_1,\lZ_*,\Z_0)
\end{equation}
with $\lS_*:=(\LE s2k)$ such that $\bs_*=(s_1,\lS_*)\in\DFs k$.
Here $\vfz q1$ is treated as a function in the algebra $\BB F[z_1]\{\tlZ_*,\Z_0\}$ based on Lemma \ref{Lemma:TransformedExponents}.

Let us define a new set of variables $\Xp:=(\LEs x2n\prime\prime)$ that are partitioned into $\X'_*:=(\LEs x2k\prime\prime)$ and $\X'_0:=(\LEs x{k+1}n\prime\prime)$ according to the prior exceptional index set $\iI\Zo=\dotc{k+1}n$.
Let us consider a non-degenerate linear transformation $\CL{\X'_*}$ defined as
\begin{equation}\label{LinearModification}
\X'_*=\X'_*(\tlZ_*),\qquad
\X'_0=\Z_0
\end{equation}
that transforms the function $\vfz q1(\tlZ_*,\Z_0)$ in \eqref{PreVirtualization} into the following apex form:
\begin{equation}\label{InitialForm}
\vfz f1(\Xp):=c_{\D'}(z_1){x'_2}^{d'}
+\sum_{\Al'\in\supp(\vfz f1)\setminus\D'}c_{\Al'}(z_1)\Xp^{\Al'}
\end{equation}
such that the singularity height $d'=\ord (\vfz q1^*)$ with $\vfz q1^*(\tlZ_*):=\vfz q1(\tlZ_*,\bz)$ treated as a function in the algebra $\BB F[z_1]\{\tlZ_*\}$.
That is, $\ord (\vfz q1^*)$ refers to the order of $\vfz q1^*$ in the variables $\tlZ_*$.
Here $\Al'=(\LEs\alpha 2n\prime\prime)\in\DN{n-1}$.
The coefficients $c_{\D'}(z_1),c_{\Al'}(z_1)\in\BB F[z_1]$ and the apex $\D':=(d',\bz)\in\DN{n-1}$.

Let us reorganize the terms of $\vfz f1(\Xp)$ in \eqref{InitialForm} as follows.
\begin{equation}\label{PreWeierstrass}
\begin{aligned}
\vfz f1(\Xp)&=z_1^mf(\Xp)+\bar c_{\D'}(z_1){x'_2}^{d'}+\sum_{\Al'\in\supp(\vfz f1)\setminus\D'}\bar c_{\Al'}(z_1)\Xp^{\Al'}\\
&:=z_1^mf(\Xp)+\vfz\phi 1(\Xp)
\end{aligned}
\end{equation}
with $m:=\deg (c_{\D'}(z_1))\in\BB N$.
Neither the coefficient $\bar c_{\D'}(z_1)$ nor $\bar c_{\Al'}(z_1)$ in the function $\vfz\phi 1(\Xp)$ in \eqref{PreWeierstrass} contains a term with the monomial factor $z_1^m$.
The function $f(\Xp)$ in \eqref{PreWeierstrass} bears an apex form:
\begin{equation}\label{ParadigmFunction}
f(\Xp):=\tilde c_{\D'}{x'_2}^{d'}+\sum_{\Al'\in\supp(\vfz f1)\setminus\D'}\tilde c_{\Al'}\Xp^{\Al'}
\end{equation}
such that $\tilde c_{\D'} z_1^m+\bar c_{\D'}(z_1)=c_{\D'}(z_1)$ and $\tilde c_{\Al'}z_1^m+\bar c_{\Al'}(z_1)=c_{\Al'}(z_1)$ for $\forall\Al'\in\supp(\vfz f1)\setminus\D'$ with $\tilde c_{\D'}\in\BB F^*$ and $\tilde c_{\Al'}\in\BB F$.

The reorganization of the terms of $\vfz f1(\Xp)$ in \eqref{PreWeierstrass} amounts to a partial gradation of $\vfz f1(\Xp)$ by the degree $m$ of the variable $z_1$.

After invoking Weierstrass preparation theorem and completing perfect power, we can abuse the names of the variables $\Xp$ a bit and represent the function $f(\Xp)$ in \eqref{ParadigmFunction} into the following Weierstrass form:
\begin{equation}\label{MonoWeierstrass}
\begin{aligned}
f(\Xp)&=\biggl[{x'_2}^{d'}+\sum_{j=2}^{d'}c_j(\LEs x3n\prime\prime){x'_2}^{d'-j}\biggr]
(c+r(\Xp))\\
&:=w(\Xp)(c+r(\Xp)),
\end{aligned}
\end{equation}
where $c_j(\bz)=0$ for $1<j\le d'$ and $c\in\BB F^*$.
The Weierstrass polynomial is denoted as $w(\Xp)$ and the redundant function $r(\Xp)$ satisfies $r(\bz)=0$.

Let us abuse the notations a bit and assume that the function $f(\Xp)$ in \eqref{PreWeierstrass} bears the form in \eqref{MonoWeierstrass}, with which the terms of $\vfz f1(\Xp)$ in \eqref{InitialForm} conform as well.
The Newton polyhedron of the apex form $\vfz f1(\Xp)$ in \eqref{InitialForm} in terms of the variables $\Xp$ is denoted as $\NP(\vfz f1(\Xp))$.

\begin{definition}\label{Def:Virtualization}
{\upshape (Partial localization; latent variable $z_1$; active variables $\X'_*$; primary variable $x'_2$; linear modification $\CL{\X'_*}$; latency implementation; latent reducible and remainder functions $f(\Xp)$ and $\vfz\phi 1(\Xp)$; latent gradation; gradation index $m$; latent preliminary and Weierstrass reductions; latent apex $\Al'_\D$)}

The localization of the non-exceptional variables $\lZ_*=(\LE z2k)$ without the variable $z_1$ as in \eqref{PreVirtualization} is called a \emph{partial} localization of the partial transform $q(\Z)$ in \eqref{NormalTransform}.
The variable $z_1$ in the coefficients of $\vfz f1(\Xp)\in\BB F[z_1]\{\Xp\}$ in \eqref{InitialForm} is called the \emph{latent} variable of $\vfz f1(\Xp)$; whereas the variables $\X'_*=(\LEs x2k\prime\prime)$ are called the \emph{active} variables of $\vfz f1(\Xp)$ among which the variable $x'_2$ is called the \emph{primary} variable.
The non-degenerate linear transformation $\CL{\X'_*}$ to set up the apex $\D'$ and primary variable $x'_2$ in \eqref{LinearModification} is called a linear \emph{modification} of the localized non-exceptional variables $\tlZ_*$.
The treatment of the variable $z_1$ as a latent variable like in \eqref{PreVirtualization} and \eqref{InitialForm} is called a \emph{latency implementation} of the variable $z_1$.

The functions $f(\Xp)$ and $\vfz\phi 1(\Xp)$ in \eqref{PreWeierstrass} are called the latent \emph{reducible} and \emph{remainder} functions of the apex form $\vfz f1(\Xp)$ respectively.
The partial gradation of $\vfz f1(\Xp)$ into the latent reducible and remainder functions by the latent variable $z_1$ as in \eqref{PreWeierstrass} is called a \emph{latent gradation} of $\vfz f1(\Xp)$.
The degree $m$ of $z_1$ in \eqref{PreWeierstrass} is called the \emph{gradation index}.

The latency implementation, partial localization, linear modification and latent gradation constitute the procedure of \emph{latent} preliminary reduction; whereas the invocation of Weierstrass preparation theorem and completion of perfect power on the latent reducible function $f(\Xp)$ as in \eqref{MonoWeierstrass} constitute the \emph{latent} Weierstrass reduction.

With the transformed apex $\ald$ being defined as in Definition \ref{Def:Localization}, the \emph{latent} apex $\Al'_\D$ is defined as $\Al'_\D:=(\bz,(\D-\A)\cdot\Ev\TN\Zo)\in\DN{n-1}$ based on the irregularity assumption and Lemma \ref{Lemma:Prerequisite} such that $\ald=(d,\Al'_\D)$.
\end{definition}

\begin{lemma}\label{Lemma:PrimaryDegBnd}
Let $\D=(d,\bz)$ denote the apex of the Weierstrass polynomial $w(\X)$ in \eqref{PrimaryForm} and $\Al'_\D$ the latent apex as in Definition \ref{Def:Virtualization}.
\begin{inparaenum}[(a)]
\item\label{item:InitialDeg} The gradation index $m=\deg (c_{\D'}(z_1))$ as in \eqref{PreWeierstrass} satisfies $m<d-1$;
\item\label{item:TransformedApex} The latent remainder function $\vfz\phi 1(\Xp)$ contains the latent apex term $c_\D z_1^d\Xp^{\Al'_\D}$, i.e., $\Al'_\D\in\supp(\vfz\phi 1)$ and $c_\D\in\BB F^*$;
\item\label{item:StrictDegBnd} For $\forall\Al'\in\supp(\vfz f1)\setminus\{\Al'_\D\}$, its coefficient $c_{\Al'}(z_1)$ satisfies $\deg (c_{\Al'}(z_1))<d-1$.
\end{inparaenum}
\end{lemma}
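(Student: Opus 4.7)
The plan is to reduce all three parts to Lemma~\ref{Lemma:TransformedExponents}, exploiting the fact that neither the partial localization $\lZ_*=\tlZ_*+\lS_*$ in \eqref{PreVirtualization} nor the linear modification $\CL{\X'_*}$ in \eqref{LinearModification} acts on the variable $z_1$ or on the exceptional block $\Z_0=\X'_0$. Consequently the $z_1$-degree of any monomial of $\vfz f1(\Xp)$ matches the $z_1$-degree of the unique monomial of $q(\Z)$ from which it descends, and the $\X'_0$-exponents are rigid copies of the corresponding $\Z_0$-exponents.

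Step 1 (part (a)). By Lemma~\ref{Lemma:TransformedExponents}\eqref{item:UniqueMax} the only monomial in $q(\Z)$ of $z_1$-degree $d$ is the transformed apex contribution $c_\D z_1^d\,\Z_0^{(\D-\A)\cdot\Ev\TN\Zo}$; since its $\lZ_*$-part is trivial, it descends to $c_\D z_1^d\,\Xp^{\Al'_\D}$ in $\vfz f1(\Xp)$, which has $x'_2$-degree $0$. Hence it contributes nothing to the coefficient $c_{\D'}(z_1)$ of ${x'_2}^{d'}$, noting $d'\ge 1$ from the irregularity hypothesis. By Lemma~\ref{Lemma:TransformedExponents}\eqref{item:Devoidance} no monomial of $q(\Z)$ has $z_1$-degree $d-1$, hence none of $\vfz f1$ does either, so $c_{\D'}(z_1)$ has neither a $z_1^d$ nor a $z_1^{d-1}$ term, forcing $m=\deg c_{\D'}(z_1)\le d-2<d-1$.

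Step 2 (parts (b) and (c)). With $m<d$ established, the subtraction $\bar c_{\Al'_\D}(z_1)=c_{\Al'_\D}(z_1)-\tilde c_{\Al'_\D}z_1^m$ performed in \eqref{PreWeierstrass} cannot cancel the $c_\D z_1^d$ summand of $c_{\Al'_\D}(z_1)$, so $\bar c_{\Al'_\D}(z_1)\ne 0$, giving $\Al'_\D\in\supp(\vfz\phi 1)$ with the apex coefficient $c_\D\in\BB F^*$ inherited from \eqref{PrimaryForm}; this proves (b). For (c), any $\Al'\ne\Al'_\D$ in $\supp(\vfz f1)$ has $c_{\Al'}(z_1)$ without a $z_1^d$ term (by uniqueness of the top $z_1$-degree monomial from Step~1) and without a $z_1^{d-1}$ term (by Lemma~\ref{Lemma:TransformedExponents}\eqref{item:Devoidance}), so $\deg c_{\Al'}(z_1)\le d-2<d-1$.

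The only delicate point is to verify that the nondegenerate $\CL{\X'_*}$ cannot blend the unique top-$z_1$-degree monomial $c_\D z_1^d\Xp^{\Al'_\D}$ (which sits at $\X'_*$-degree $0$) with monomials of strictly positive $\X'_*$-degree. This is guaranteed by the block-triangular form of \eqref{LinearModification}, in which $\X'_*$ depends only on $\tlZ_*$ while $\X'_0=\Z_0$: the $\X'_0$-exponent of a transformed monomial equals its original $\Z_0$-exponent, so the $z_1^d$ contribution remains isolated at $\Al'_\D=(\bz,(\D-\A)\cdot\Ev\TN\Zo)$ and cannot be absorbed into coefficients of higher-degree $\X'_*$-monomials.
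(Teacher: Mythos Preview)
Your proof is correct and follows essentially the same approach as the paper, reducing everything to Lemma~\ref{Lemma:TransformedExponents} via the invariance of the $z_1$-exponent under the latent preliminary reduction. Your use of $d'\ge 1$ (from irregularity) to separate $\Al'_\D$ from $\D'$ is a clean variant of the paper's argument, which instead invokes Lemma~\ref{Lemma:Prerequisite} to get $\D\notin\es\TN\Zo$ and hence $(\D-\A)\cdot\Ev\TN\Zo\ne\bz$; both yield $\Al'_\D\ne\D'$.

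One minor omission: the paper explicitly notes in part~(b) that the Weierstrass reduction in \eqref{MonoWeierstrass} has no impact on the latent apex term. Recall that after the abuse of notation below \eqref{MonoWeierstrass}, the variables $\Xp$ are those arising from the completion of perfect power (a substitution in $x'_2$), and $\vfz\phi 1$ is read in these new variables. Your argument implicitly stops before this step. The fix is easy and in the spirit of your ``delicate point'' paragraph: the substitution touches neither $z_1$ nor terms of $x'_2$-degree zero, so the latent apex term $c_\D z_1^d\Xp^{\Al'_\D}$ is literally unchanged, and since every other monomial has $z_1$-degree $\le d-2$ by your Step~1, none can interfere with the $z_1^d$ coefficient at $\Al'_\D$ under the substitution. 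With that sentence added, your proof matches the paper's.
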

\begin{proof}
\begin{asparaenum}[(a)]
{\parindent=\saveparindent
\item The irregularity assumption and Lemma \ref{Lemma:Prerequisite} indicate that the apex $\D\notin\es\TN\Zo$.
Hence the latent apex $\Al'_\D\notin\supp(\vfz q1^*)$ with the function $\vfz q1^*(\tlZ_*)$ being defined as under \eqref{InitialForm}.
This shows that $m<d$ as per Lemma \ref{Lemma:TransformedExponents} \eqref{item:UniqueMax}.
Moreover, Lemma \ref{Lemma:TransformedExponents} \eqref{item:Devoidance} and the invariance of the exponent of the latent variable $z_1$ through the latent preliminary and Weierstrass reductions indicate that $m<d-1$.

\item Evidently neither the partial localization in \eqref{PreVirtualization} as $\lZ_*=\tlZ_*+\lS_*$ nor the linear modification $\CL{\X'_*}$ in \eqref{LinearModification} has impact on the variables $(z_1,\Z_0)=(z_1,\X'_0)$.
Moreover, the conclusion \eqref{item:InitialDeg} and Lemma \ref{Lemma:TransformedExponents} \eqref{item:UniqueMax} indicate that the Weierstrass reduction in \eqref{MonoWeierstrass} has no impact on the latent apex term $c_\D z_1^d\Xp^{\Al'_\D}$.
And $\Al'_\D\in\supp(\vfz\phi 1)$ readily follows from the conclusion \eqref{item:InitialDeg}.

\item A direct consequence of Lemma \ref{Lemma:TransformedExponents} and the invariance of the exponent of the latent variable $z_1$ through the procedure of latent preliminary and Weierstrass reductions.
\qedhere}
\end{asparaenum}
\end{proof}

Let $\vfz p1(\Xp)$ and $\vfz r1(\Xp)$ denote the respective functions obtained from the partial transform $p(\Z)$ and redundant transform $r_\TN(\Z)$ in \eqref{PartialTransformTotal} through the same procedure of latent preliminary and Weierstrass reductions as that to acquire the apex form $\vfz f1(\Xp)$ in \eqref{PreWeierstrass}.
According to the identity \eqref{PartialTransformTotal}, we have the following identity:
\begin{equation}\label{TotalApexForm}
\vfz p1(\Xp)=\vfz f1(\Xp)(c+\vfz r1(\Xp)),
\end{equation}
where $c\in\BB F^*$ is the constant $c$ in \eqref{PartialTransformTotal} and $c+\vfz r1(\bz)\ne 0$ due to the deficient contraction in Lemma \ref{Lemma:TotalTransformDecrease}.
Hence it follows that the function $c+\vfz r1(\Xp)$ in \eqref{TotalApexForm} can be disregarded and it suffices to study the apex form $\vfz f1(\Xp)$ as in \eqref{PreWeierstrass}.

Now we are ready for the next step of resolution of irregular singularities.
Based on a refined vertex cone of the Newton polyhedron $\NP(\vfz f1(\Xp))$ with generators $\LEs\W 2n\prime\prime$, let us define an $(n-1)$ by $(n-1)$ exponential matrix $\TM'=[\MEs\W 2n\prime\prime]$ whose associated monomial transformation $\CT{\TM'}$ is defined as:
\begin{equation}\label{VirtualMonomialTransform}
\Xp=\CT{\TM'}(\Yp):=\Yp^{\TM'}
\end{equation}
with the new variables $\Yp:=(\LEs y2n\prime\prime)$.
Suppose that the exceptional index set $\iI\Yop$ has codimension $l$ and is in canonical form $\dotc{l+1}n$ as in Definition \ref{Def:ProperTransform}.
Here we only consider the case when the primary variable $x'_2$ is consistent with the exponential matrix $\TM'$ and exceptional index set $\iI\Yop$ and leave the inconsistency case to Section \ref{Section:Inconsistency}.

Let $\TN'$ denote the reduced exponential matrix with respect to $\TM'$ and $\iI\Yop$, which is similar to the reduced exponential matrix $\TN$ in \eqref{CanonicalMatrix}.
The reduced monomial transformation $\CT{\TN'}$ associated with $\TN'$ is defined as
\begin{equation}\label{VirtualResolution}
\Xp=\CT{\TN'}(\Zp):=\Zp^{\TN'}
\end{equation}
with the new variables $\Zp:=(\LEs z2n\prime\prime)$ whose exceptional index set $\iI\Zop=\iI\Yop$.
The non-exceptional and exceptional variables of $\Zp$ are denoted as $\Z'_*:=(\LEs z2l\prime\prime)$ and $\Z'_0:=(\LEs z{l+1}n\prime\prime)$ respectively.
The exceptional support of the apex form $\vfz f1(\Xp)$ in \eqref{InitialForm} with respect to $\TN'$ and $\iI\Zop$ is defined as:
\begin{equation}\label{SecondarySupport}
\es{\TN'}\Zop:=\bigcap\nolimits_{j\in\iI\Zop}(\supp(\vfz f1)\cap\face (\W'_j))
\end{equation}
with $\W'_j$ being the column vector of the exponential matrix $\TM'$ in \eqref{VirtualMonomialTransform}.
Let $\vfz q1(\Zp)$ denote the partial transform of the apex form $\vfz f1(\Xp)$ in \eqref{InitialForm} under $\CT{\TN'}$ in \eqref{VirtualResolution}.
Let $\Ga'_*$ and $\Ga'_0$ denote the respective exponents of the non-exceptional and exceptional variables $\Z'_*$ and $\Z'_0$.
Let us also abuse the notations a bit and denote $\Al'=(\Al'_*,\Al'_0)$ as the exponents of the variables $\Xp=(\X'_*,\X'_0)$ as per $\iI\Zop$.
Then the reduced monomial transformation $\CT{\TN'}$ in \eqref{VirtualResolution} is a linear exponential transformation similar to \eqref{ExponIdentity} as follows.
\begin{equation}\label{2ndExponIdentity}
\Ga'_*=\Al'_*,\qquad
\Ga'_0=\Al'\cdot\Ev\TN\Zop'
\end{equation}
with $\Ev\TN\Zop'$ being the exceptional column submatrix of $\TN'$ with respect to $\iI\Zop$.

The identity \eqref{PreWeierstrass} is transformed into the following identity by $\CT{\TN'}$ in \eqref{VirtualResolution}:
\begin{equation}\label{ParadigmRemainder}
\vfz q1(\Zp)=z_1^mp(\Zp)+\vfz\psi 1(\Zp),
\end{equation}
where $p(\Zp)$ and $\vfz\psi 1(\Zp)$ denote the partial transforms of $f(\Xp)$ and $\vfz\phi 1(\Xp)$ respectively.
In particular, based on the Weierstrass form of the latent reducible function $f(\Xp)$ in \eqref{MonoWeierstrass}, we have
\begin{equation}\label{2ndPartialTransformTotal}
p(\Zp)=q(\Zp)(c+r_{\TN'}(\Zp)),
\end{equation}
which is similar to \eqref{PartialTransformTotal}.
Here $q(\Zp)$ and $r_{\TN'}(\Zp)$ denote the partial and redundant transforms of the Weierstrass polynomial $w(\Xp)$ and redundant function $r(\Xp)$ in \eqref{MonoWeierstrass} under $\CT{\TN'}$ respectively.
The definition of $r_{\TN'}(\Zp)$ is similar to that of the redundant transform $r_\TN(\Z)$ in \eqref{RedundantTransform}.
And similar to Lemma \ref{Lemma:RedundantDominance}, we have $r_{\TN'}(\bz)=0$.

\begin{lemma}\label{Lemma:2ndSimpleDecrease}
Let $q_*(\wt\Z'_*)$ and $p_*(\wt\Z'_*)$ denote the localized proper transforms of $q(\Zp)$ and $p(\Zp)$ in \eqref{2ndPartialTransformTotal} at a reduced branch point respectively.
If the branch point is regular, then similar to Lemma \ref{Lemma:SimpleDecrease}, we have $\ord (q_*(\wt\Z'_*))<d'$ and similar to Lemma \ref{Lemma:TotalTransformDecrease}, we have $\ord (p_*(\wt\Z'_*))=\ord (q_*(\wt\Z'_*))<d'$ after an appropriate deficient contraction.
\end{lemma}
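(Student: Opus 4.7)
The plan is to observe that, after the latent preliminary and Weierstrass reductions, the latent reducible function $f(\Xp)$ in \eqref{MonoWeierstrass} has precisely the structural shape of the original Weierstrass form \eqref{PrimaryForm}, with the active variables $\Xp=(x'_2,\dotsc,x'_n)$ replacing $\X$, the primary variable $x'_2$ replacing $x_1$, and singularity height $d'$ replacing $d$. The latent variable $z_1$ appears only in the coefficients of the latent remainder function $\vfz\phi 1$ in \eqref{PreWeierstrass} and is untouched by the monomial transformation $\CT{\TN'}$ of \eqref{VirtualResolution}. Consequently the machinery of Subsection~\ref{Subsection:CanonicalReduction} and Section~\ref{Section:RegularSing} applies verbatim to $f(\Xp)$ and $\CT{\TN'}$, and the two claimed inequalities will follow by invoking the renamed analogues of Lemma~\ref{Lemma:SimpleDecrease} and Lemma~\ref{Lemma:TotalTransformDecrease}.

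For the bound on $q_*(\wt\Z'_*)$, I would first transplant Lemma~\ref{Lemma:TransformedExponents} to the present setting: the exponential identity \eqref{2ndExponIdentity} has exactly the shape of \eqref{ExponIdentity}, so the transformed apex of $w(\Xp)$ has the unique maximal $z'_2$-degree $d'$ in the support of $q$, and no element of that support has $z'_2$-degree equal to $d'-1$. The regularity assumption at the reduced branch point then ensures that $q_*(\wt z'_2,\bz)$ is not identically zero, so this localized proper transform is a nonzero univariate polynomial in $\wt z'_2$ whose support contains the exponent $d'$ but misses $d'-1$. Applying Lemma~\ref{Lemma:UnivariablePolynomial} to that univariate polynomial, as in the proof of Lemma~\ref{Lemma:SimpleDecrease}, forces $\ord(q_*)<d'$.

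For the bound on $p_*(\wt\Z'_*)$, I would use the factorization \eqref{2ndPartialTransformTotal}, namely $p(\Zp)=q(\Zp)(c+r_{\TN'}(\Zp))$ with $r_{\TN'}(\bz)=0$ and $c\in\BB F^*$. Restricting to $\Z'_0=\bz$ and invoking the analogue of the deficient identity \eqref{DeficientIdentity} yields a factorization of the form $p(\Z'_*,\bz)=q(\Z'_*,\bz)\bigl(c+r_{\iI\bz}(\X'_*)\bigr)$. A deficient contraction about the reduced branch point, as in Definition~\ref{Def:DeficientContraction}, guarantees $c+r_{\iI\bz}(\X'_*)\ne 0$ there, so the second factor is a unit at the branch point and hence $\ord(p_*)=\ord(q_*)<d'$, exactly as in the proof of Lemma~\ref{Lemma:TotalTransformDecrease}.

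The main obstacle I anticipate is bookkeeping rather than new mathematics: one must verify that every auxiliary object entering the renamed lemmas, namely the apex of $f(\Xp)$, the reduced exceptional column submatrix $\Ev\TN\Zop'$, the deficient index set, and the deficient function, is now built from $f(\Xp)$ and $\TN'$ rather than from the original $w(\X)$ and $\TN$, and that the presence of $z_1$ as a parameter in the coefficients of $\vfz f1$ causes no interference because $z_1$ is untouched by $\CT{\TN'}$ and never enters any exponent manipulated by \eqref{2ndExponIdentity}. Once this renaming is confirmed, the two required inequalities are direct transcriptions of the corresponding statements in the regular consistent case.
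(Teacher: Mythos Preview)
Your proposal is correct and follows essentially the same approach as the paper. The paper's own proof is extremely terse: it simply asserts that the conclusions of Lemma~\ref{Lemma:TransformedExponents} carry over verbatim to $q(\Zp)$, and that the arguments of Lemma~\ref{Lemma:SimpleDecrease} and Lemma~\ref{Lemma:TotalTransformDecrease} can likewise be repeated verbatim. Your write-up expands exactly this sketch, correctly identifying the exponential identity \eqref{2ndExponIdentity} as the analogue of \eqref{ExponIdentity}, the role of the primary variable $x'_2$ and non-exceptional variable $z'_2$, and the deficient-contraction argument for the unit factor $c+r_{\TN'}$; your anticipated bookkeeping obstacle is precisely the content the paper leaves implicit.
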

\begin{proof}
It is evident that the conclusions in Lemma \ref{Lemma:TransformedExponents} for the partial transform $q(\Z)$ can be repeated verbatim here for the partial transform $q(\Zp)$ in \eqref{2ndPartialTransformTotal}.
The arguments for Lemma \ref{Lemma:SimpleDecrease} and Lemma \ref{Lemma:TotalTransformDecrease} can be repeated verbatim here as well.
\end{proof}

\begin{lemma}\label{Lemma:StrictApexDominance}
\begin{inparaenum}[(a)]
\item\label{item:SeparateBnd} Let $\vfz{\wt q}1(\wt\Z')$ and $\vfz\psi 1^*(\wt\Z'_*)$ denote the localized partial and proper transforms of $\vfz q1(\Zp)$ and $\vfz\psi 1(\Zp)$ in \eqref{ParadigmRemainder} respectively.
Then we have
\[
\ord (\vfz{\wt q}1(\wt\Z'))\le\min\{\ord (p_*(\wt\Z'_*)),\ord (\vfz\psi 1^*(\wt\Z'_*))\}
\]
with $p_*(\wt\Z'_*)$ denoting the localized proper transform of $p(\Zp)$ in \eqref{2ndPartialTransformTotal};
\item\label{item:StrictDominance} For $\forall\Ga'=(\LEs\gamma 2n\prime\prime)\in\supp(\vfz q1(\Zp))\setminus\{\alg\}$ with $\gamma'_2\ge d'$ and $\vfz q1(\Zp)$ being the partial transform in \eqref{ParadigmRemainder}, the exponents $\Ga'$ is strictly dominated by the transformed apex $\alg$ of the apex $\D'$ in \eqref{InitialForm} under $\CT{\TN'}$ in \eqref{VirtualResolution}, i.e., $\Ga'\succ\alg$.
\end{inparaenum}
\end{lemma}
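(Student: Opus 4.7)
For part (a), my plan is to localize the identity \eqref{ParadigmRemainder} at the reduced branch point, yielding $\vfz{\wt q}1(\wt\Z') = z_1^m \wt p(\wt\Z') + \vfz{\wt\psi}1(\wt\Z')$, and then restrict to $\wt\Z'_0 = \bz$ to obtain $\vfz{\wt q}1(\wt\Z'_*,\bz) = z_1^m p_*(\wt\Z'_*) + \vfz\psi 1^*(\wt\Z'_*)$. The crucial observation is that by the very construction of the latent gradation in \eqref{PreWeierstrass}, each coefficient $\bar c_{\D'}(z_1)$ and $\bar c_{\Al'}(z_1)$ appearing in $\vfz\phi 1(\Xp)$ has vanishing $z_1^m$-component, and since $\CT{\TN'}$ leaves $z_1$ untouched this property passes to every $\wt\Z'_*$-monomial coefficient of $\vfz\psi 1^*$; meanwhile every $\wt\Z'_*$-monomial of $z_1^m p_*$ carries $z_1$-degree exactly $m$ because $p$ has coefficients in $\BB F$ inherited from $f \in \BB F\{\Xp\}$. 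The two summands therefore occupy disjoint $z_1$-strata within every $\wt\Z'_*$-monomial and cannot cancel, so $\ord(\vfz{\wt q}1(\wt\Z'_*,\bz)) = \min\{\ord(p_*(\wt\Z'_*)), \ord(\vfz\psi 1^*(\wt\Z'_*))\}$. Combining this with $\ord(\vfz{\wt q}1(\wt\Z')) \le \ord(\vfz{\wt q}1(\wt\Z'_*,\bz))$, which holds since setting the exceptional variables $\wt\Z'_0$ to $\bz$ can only discard low-degree monomials, closes (a).

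For part (b), the partial factorization producing $\vfz q1(\Zp)$ expresses any $\Ga' \in \supp(\vfz q1)$ as $\Ga' = (\Al'_*, (\Al' - \A') \cdot \Ev\TN\Zop')$ for some $\Al' \in \supp(\vfz f1)$, with $\gamma'_2 = \alpha'_2$ by the exponential identity \eqref{2ndExponIdentity}. Since the transformed apex is $\alg = ((d',\bz), (\D' - \A') \cdot \Ev\TN\Zop')$ with $\D' = (d',\bz)$, we get $\Ga' - \alg = (\Al'_* - \D'_*, (\Al' - \D') \cdot \Ev\TN\Zop')$. The non-exceptional part $(\alpha'_2 - d', \alpha'_3, \ldots, \alpha'_l)$ is componentwise non-negative by the hypothesis $\gamma'_2 \ge d'$; the exceptional part expands as $(\alpha'_2 - d')$ times the first row of $\Ev\TN\Zop'$ plus $\sum_{j \ge 3} \alpha'_j$ times its $(j-1)$-th row, a non-negative linear combination of rows of a matrix whose entries lie in $\BB N$. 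Thus $\Ga' - \alg \succeq \bz$, and together with $\Ga' \ne \alg$ this gives $\Ga' \succ \alg$.

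The delicate step is the no-cancellation argument in (a); it rests on the fact that the latent gradation in \eqref{PreWeierstrass} was engineered precisely to make the coefficients of $\vfz\phi 1$ miss the $z_1^m$-stratum occupied by $z_1^m f$, so that the two summands have disjoint $z_1$-degree supports. Without this engineered disjointness one would only obtain $\ord(\vfz{\wt q}1(\wt\Z'_*,\bz)) \ge \min\{\ldots\}$, which is the wrong direction for the desired conclusion. Part (b) is a routine exponent chase once the non-negativity of $\TN'$ is invoked.
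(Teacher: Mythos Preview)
Your proof is correct and follows essentially the same approach as the paper's. For (a) you spell out in full the no-cancellation argument that the paper compresses into one sentence (``the coefficients of $\vfz\phi 1(\Xp)$ do not contain a term with the monomial factor $z_1^m$ and hence neither the coefficients of $\vfz\psi 1^*(\wt\Z'_*)$''), and you correctly add the step $\ord(\vfz{\wt q}1(\wt\Z'))\le\ord(\vfz{\wt q}1(\wt\Z'_*,\bz))$ that the paper leaves implicit; for (b) your explicit exponent chase via the non-negativity of the entries of $\Ev\TN\Zop'$ is exactly what the paper gestures at by referring to Lemma~\ref{Lemma:RedundantDominance}.
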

\begin{proof}
\begin{asparaenum}[(a)]
{\parindent=\saveparindent
\item The coefficients of the latent remainder function $\vfz\phi 1(\Xp)$ in \eqref{PreWeierstrass} do not contain a term with the monomial factor $z_1^m$ and hence neither the coefficients of its localized proper transform $\vfz\psi 1^*(\wt\Z'_*)$.
Thus follows the conclusion as per \eqref{ParadigmRemainder}.

\item The conclusion is easy to corroborate in a way similar to Lemma \ref{Lemma:RedundantDominance}.
\qedhere}
\end{asparaenum}
\end{proof}

\begin{lemma}\label{Lemma:TwoCases}
With the same notations as in Lemma \ref{Lemma:2ndSimpleDecrease} and Lemma \ref{Lemma:StrictApexDominance}, we have the following conclusions.
\begin{inparaenum}[(a)]
\item\label{item:CaseOne} When the reduced branch point is regular for the localized proper transform $q_*(\wt\Z'_*)$, we have $\ord (\vfz{\wt q}1(\wt\Z'))<d'$ after an appropriate deficient contraction in case of necessity;
\item\label{item:CaseTwo} When the reduced branch point is irregular for $q_*(\wt\Z'_*)$ whereas regular for the localized proper transform $\vfz\psi 1^*(\wt\Z'_*)$, we have $\ord (\vfz{\wt q}1(\wt\Z'))<d'$ as well.
\end{inparaenum}
\end{lemma}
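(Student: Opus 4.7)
The strategy is to exploit the additive decomposition $\vfz q1(\Zp)=z_1^m p(\Zp)+\vfz\psi 1(\Zp)$ of \eqref{ParadigmRemainder}, which after localization at a reduced branch point reads $\vfz{\wt q}1=z_1^m\wt p+\vfz{\wt\psi}1$, together with the upper bound $\ord(\vfz{\wt q}1)\le\min\{\ord(p_*),\ord(\vfz\psi 1^*)\}$ furnished by Lemma \ref{Lemma:StrictApexDominance}\eqref{item:SeparateBnd}. It then suffices in each sub-case to drive one of $\ord(p_*)$ or $\ord(\vfz\psi 1^*)$ strictly below $d'$.

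For \eqref{item:CaseOne}, the regularity of the branch point for $q_*$ engages Lemma \ref{Lemma:2ndSimpleDecrease} directly: after an appropriate deficient contraction one has $\ord(p_*)=\ord(q_*)<d'$, and Lemma \ref{Lemma:StrictApexDominance}\eqref{item:SeparateBnd} delivers $\ord(\vfz{\wt q}1)\le\ord(p_*)<d'$ at once.

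For \eqref{item:CaseTwo}, the irregularity of $q_*$ gives $q_*(\wt z'_2,\bz)\equiv 0$, and the factorization $p_*=q_*(c+r_{\iI\bz})$, which holds after an appropriate deficient contraction as in the analogue of Lemma \ref{Lemma:TotalTransformDecrease}, therefore forces $p_*(\wt z'_2,\bz)\equiv 0$ as well. Restricting $\vfz{\wt q}1$ to the slice $\wt z'_3=\cdots=\wt z'_l=0$, $\wt\Z'_0=\bz$ consequently annihilates the $z_1^m\wt p$ summand and leaves $\vfz\psi 1^*(\wt z'_2,\bz)$, which is nonzero by the regularity hypothesis on $\vfz\psi 1^*$. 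Since restriction to a slice can only raise the order, it suffices to show $\ord_{\wt z'_2}(\vfz\psi 1^*(\wt z'_2,\bz))<d'$, after which $\ord(\vfz{\wt q}1)\le\ord_{\wt z'_2}(\vfz\psi 1^*(\wt z'_2,\bz))<d'$ follows. I would obtain this univariate bound in the same spirit as Lemma \ref{Lemma:SimpleDecrease}: viewing $\vfz\psi 1^*(\wt z'_2,\bz)$ as a nonzero univariate polynomial in $\wt z'_2$ over $\BB F(z_1)$, and combining Lemma \ref{Lemma:StrictApexDominance}\eqref{item:StrictDominance} with the exponential identity \eqref{2ndExponIdentity} to cap its $\wt z'_2$-degree at $d'$ and limit its support to at most $d'$ distinct $\wt z'_2$-exponents, one applies Lemma \ref{Lemma:UnivariablePolynomial} to force the multiplicity of the nonzero root $z'_2=s'_2\in\BB F^*$ to be at most $d'-1$.

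The principal obstacle is the monomial-count bookkeeping for $\vfz\psi 1^*$ in \eqref{item:CaseTwo}. Since $\vfz\phi 1$ is not itself in Weierstrass form, the clean ``no $d'-1$ exponent'' clause of Lemma \ref{Lemma:TransformedExponents}\eqref{item:Devoidance} does not pass verbatim from $q$ to $\vfz\psi 1$. Its substitute is Lemma \ref{Lemma:StrictApexDominance}\eqref{item:StrictDominance}: any exponent $\Ga'\in\supp(\vfz q1)\setminus\{\alg\}$ with $\gamma'_2\ge d'$ strictly dominates the transformed apex $\alg$ componentwise, so imposing the slice condition $\Ga'_0=\bz$ cuts the count of admissible $\wt z'_2$-degrees in the resulting univariate polynomial down to at most $d'$, which is precisely the input required by Lemma \ref{Lemma:UnivariablePolynomial}.
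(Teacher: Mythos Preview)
Your argument for \eqref{item:CaseOne} is correct and matches the paper exactly.

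For \eqref{item:CaseTwo}, you have the right ingredients but are missing one explicit link, and you are taking an unnecessary detour. The paper's proof runs as follows: from the irregularity of the branch point for $q_*$, the analogue of Lemma~\ref{Lemma:Prerequisite} gives $\D'\notin\es{\TN'}\Zop$, so the transformed apex $\alg$ has \emph{nonzero} exceptional part. Combined with Lemma~\ref{Lemma:StrictApexDominance}\eqref{item:StrictDominance} and convexity, this forces the maximal $z'_2$-degree of the entire proper transform $\vfz q1(\Z'_*,\bz)$ to be \emph{strictly less than} $d'$. Since $\vfz\psi 1^*(\wt z'_2,\bz)$ is nonzero by regularity and has $\wt z'_2$-degree $<d'$, its order is at most its degree, hence $<d'$; Lemma~\ref{Lemma:StrictApexDominance}\eqref{item:SeparateBnd} then finishes.

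Your write-up omits the Prerequisite-style step $\D'\notin\es{\TN'}\Zop$. Without it, the slice argument does not go through: Lemma~\ref{Lemma:StrictApexDominance}\eqref{item:StrictDominance} only says that exponents $\Ga'\ne\alg$ with $\gamma'_2\ge d'$ dominate $\alg$; to exclude them from the slice $\Ga'_0=\bz$ you need $(\alg)_0\ne\bz$. Your phrasing ``cap its $\wt z'_2$-degree at $d'$'' (rather than strictly below $d'$) and the term count ``at most $d'$'' both signal that this step is missing---with only a degree cap of $d'$, the exponent set could be $\{0,\dotsc,d'\}$, giving $d'+1$ terms and a useless multiplicity bound of $d'$ from Lemma~\ref{Lemma:UnivariablePolynomial}.

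Once the strict degree bound $<d'$ is in hand, the appeal to Lemma~\ref{Lemma:UnivariablePolynomial} is superfluous: a nonzero polynomial of degree $<d'$ automatically has order $<d'$ at every point. The paper uses this direct route. Your slice-and-annihilate maneuver with $p_*(\wt z'_2,\bz)=0$ is correct but also unnecessary, since Lemma~\ref{Lemma:StrictApexDominance}\eqref{item:SeparateBnd} already compares $\ord(\vfz{\wt q}1)$ directly to $\ord(\vfz\psi 1^*)$.
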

\begin{proof}
\begin{asparaenum}[(a)]
{\parindent=\saveparindent
\item The conclusion readily follows from Lemma \ref{Lemma:2ndSimpleDecrease} and Lemma \ref{Lemma:StrictApexDominance} \eqref{item:SeparateBnd}.

\item Similar to Lemma \ref{Lemma:Prerequisite}, it follows from the reduced branch point being irregular for $q_*(\wt\Z'_*)$ that the apex $\D'$ in \eqref{InitialForm} satisfies $\D'\notin\es{\TN'}\Zop$.
Then Lemma \ref{Lemma:StrictApexDominance} \eqref{item:StrictDominance} and the convexity of Newton polyhedron indicate that the maximal degree of the variable $z'_2$ in the proper transform $\vfz q1(\Z'_*,\bz)$ is strictly less than $d'$.
Hence $\ord (\vfz\psi 1^*(\wt\Z'_*))<d'$ since the reduced branch point is regular for the localized proper transform $\vfz\psi 1^*(\wt\Z'_*)$.
The conclusion readily follows from Lemma \ref{Lemma:StrictApexDominance} \eqref{item:SeparateBnd}.
\qedhere}
\end{asparaenum}
\end{proof}

\begin{lemma}\label{Lemma:StarFalling}
Let $\vfz q1^*(\wt\Z'_*)$ denote the localized proper transform of $\vfz q1(\Zp)$ in \eqref{ParadigmRemainder}.
If $\vfz q1^*(\bz)$ is a nonzero univariate polynomial in $z_1$, then its order after a localization of $z_1$ at $s_1\in\BB F^*$ is strictly less than the prior singularity height $d$ of the Weierstrass polynomial $w(\X)$ in \eqref{PrimaryForm}, i.e., $\vft\ord 1(\vfz q1^*(\bz))<d$ with $\wt z_1:=z_1-s_1$.
In particular, this is the case when the latent apex $\Al'_\D$ as in Definition \ref{Def:Virtualization} satisfies $\Al'_\D\in\es{\TN'}\Zop$.
\end{lemma}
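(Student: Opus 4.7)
The plan is to apply Lemma \ref{Lemma:UnivariablePolynomial} to the univariate polynomial $\vfz q1^*(\bz) \in \BB F[z_1]$. Specifically, I would first show that $\vfz q1^*(\bz)$ has at most $d$ nonzero monomials by tracking the $z_1$-degree through every step of the latent reduction, and then conclude by the univariate polynomial lemma that any nonzero root of $\vfz q1^*(\bz)$ has multiplicity at most $d-1$, yielding $\vft\ord 1(\vfz q1^*(\bz)) \le d - 1 < d$ regardless of whether $s_1$ is actually a root.

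First I would observe that the latent variable $z_1$ is not among the active variables $\X'_*$: it is untouched by the linear modification $\CL{\X'_*}$ of \eqref{LinearModification}, does not appear in the Weierstrass reduction \eqref{MonoWeierstrass} (which only operates on $\X'_*$), and is not a variable of the reduced monomial transformation $\CT{\TN'}$ in \eqref{VirtualResolution}. Consequently the $z_1$-exponent of every monomial is preserved from the apex form $\vfz f1(\Xp)$ of \eqref{InitialForm} through the latent remainder $\vfz\phi 1(\Xp)$ of \eqref{PreWeierstrass} to the partial transform $\vfz q1(\Zp)$ of \eqref{ParadigmRemainder}, and on to $\vfz q1^*(\bz)$. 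Combining this invariance with Lemma \ref{Lemma:PrimaryDegBnd}\eqref{item:TransformedApex} and \eqref{item:StrictDegBnd}, the set of $z_1$-degrees appearing in $\vfz q1^*(\bz)$ is contained in $\{0, 1, \dotsc, d-2\} \cup \{d\}$: the value $d-1$ is forbidden by the strict inequality $\deg(c_{\Al'}(z_1)) < d - 1$, and the value $d$ is attained only by the latent apex. Hence $\vfz q1^*(\bz)$ has at most $d$ nonzero terms and the desired order bound follows at once.

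For the ``in particular'' clause, assume $\Al'_\D \in \es{\TN'}\Zop$. Because $\Al'_\D = (\bz, (\D - \A) \cdot \Ev\TN\Zo)$ has trivial non-exceptional part by Definition \ref{Def:Virtualization}, the argument used in Lemma \ref{Lemma:Prerequisite}, transplanted to the latent setting, shows that the latent apex term $c_\D z_1^d \Xp^{\Al'_\D}$ maps under $\CT{\TN'}$ to a monomial whose $\Z'_*$-part is trivial, and hence contributes the constant-in-$\Z'_*$ summand $c_\D z_1^d$ to the proper transform $\vfz q1(\Z'_*, \bz)$ after partial factorization. This summand survives intact under any localization in $\Z'_*$, so $\vfz q1^*(\bz)$ is a nonzero polynomial in $z_1$ of degree exactly $d$, and the universal bound just obtained applies.

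The hard part will be the bookkeeping verification that the latent apex term is genuinely untouched by the latent Weierstrass reduction \eqref{MonoWeierstrass}. By Lemma \ref{Lemma:PrimaryDegBnd}\eqref{item:InitialDeg} we have $m < d - 1$, so the latent apex term (whose $z_1$-degree is $d$) sits in the remainder $\vfz\phi 1(\Xp)$ rather than in the summand $z_1^m f(\Xp)$ on which the Weierstrass reduction operates; thus the Weierstrass manipulations cannot disturb it. Once this invariance is in hand, the remainder of the argument is the elementary pigeonhole count on $z_1$-degrees together with a single application of Lemma \ref{Lemma:UnivariablePolynomial}.
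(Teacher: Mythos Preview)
Your proposal is correct and follows essentially the same route as the paper: both arguments rest on the invariance of the $z_1$-exponent through the latent reductions and $\CT{\TN'}$, the degree bounds of Lemma~\ref{Lemma:PrimaryDegBnd} (which confine the $z_1$-degrees to $\{0,\dotsc,d-2\}\cup\{d\}$), and a final application of Lemma~\ref{Lemma:UnivariablePolynomial}. The only organizational difference is that the paper splits into the two cases $\Al'_\D\in\es{\TN'}\Zop$ and $\Al'_\D\notin\es{\TN'}\Zop$ (the latter giving the sharper bound $\vft\ord 1(\vfz q1^*(\bz))<d-1$ directly from the degree), whereas you derive a uniform term count first and then verify nonzeroness in the special case.
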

\begin{proof}
When the latent apex $\Al'_\D$ satisfies $\Al'_\D\in\es{\TN'}\Zop$, it is evident that the latent apex term $c_\D z_1^d\Xp^{\Al'_\D}=c_\D z_1^d{\X'_0}^{(\D-\A)\cdot\Ev\TN\Zo}$ in $\vfz f1(\Xp)$ as per Lemma \ref{Lemma:PrimaryDegBnd} \eqref{item:TransformedApex} is transformed into a term with coefficient $c_\D z_1^d$ in the proper transform $\vfz q1(\Z'_*,\bz)$ under the monomial transformation $\CT{\TN'}$ in \eqref{VirtualResolution}.
According to Lemma \ref{Lemma:PrimaryDegBnd} \eqref{item:StrictDegBnd}, the coefficient $c_{\Al'}(z_1)$ satisfies $\deg (c_{\Al'}(z_1))<d-1$ for $\forall\Al'\in\es{\TN'}\Zop\setminus\{\Al'_\D\}\subseteq\supp(\vfz f1)\setminus\{\Al'_\D\}$.
Hence the invariance of the exponent of the latent variable $z_1$ through $\CT{\TN'}$ and the localization of the non-exceptional variables $\Z'_*$ indicates that $\vfz q1^*(\bz)$ is a nonzero univariate polynomial in $z_1$ and let us denote it as $p(z_1)$.
We localize the latent variable $z_1$ at $s_1\in\BB F^*$ as following:
\begin{equation}\label{VirtualLocalization}
\wt p(\wt z_1):=p(\wt z_1+s_1),
\end{equation}
which had been postponed until now after the partial localization of the partial transform $q(\Z)$ in \eqref{PreVirtualization}.
It is easy to see that $\ord (\wt p(\wt z_1))<d$ as per Lemma \ref{Lemma:PrimaryDegBnd} and Lemma \ref{Lemma:UnivariablePolynomial}.

When the latent apex $\Al'_\D\notin\es{\TN'}\Zop$ but $\vfz q1^*(\bz)$ is a nonzero univariate polynomial in $z_1$, the conclusion $\vft\ord 1(\vfz q1^*(\bz))<d-1$ readily follows from Lemma \ref{Lemma:PrimaryDegBnd} \eqref{item:StrictDegBnd} and the invariance of the exponent of $z_1$ through $\CT{\TN'}$ and the localization of $\Z'_*$.
\end{proof}

After the localization of $z_1$ at $s_1\in\BB F^*$ as in \eqref{VirtualLocalization}, let us resume the latent variable $z_1$ and define:
\begin{equation}\label{Resumption}
\wt q_+(\wt z_1,\wt\Z'):=q_+(\wt z_1+s_1,\wt\Z')
\end{equation}
with $q_+(z_1,\wt\Z'):=\vfz{\wt q}1(\wt\Z')$, the localization of the partial transform $\vfz q1(\Zp)$ in \eqref{ParadigmRemainder}.
Evidently $\ord (\wt q_+)<d$ since $\ord (\wt p(\wt z_1))<d$ in \eqref{VirtualLocalization}.
We reduce $\wt q_+(\wt z_1,\wt\Z')$ to a Weierstrass form similar to \eqref{PrimaryForm} through preliminary and Weierstrass reductions and then repeat the resolution algorithm via the canonical reduction.

\begin{definition}\label{Def:ResidualOrder}
{\upshape (Revived localization; revival; residual polynomial; residual order)}

The localization of the latent variable $z_1$ in \eqref{VirtualLocalization} is called the \emph{revived} localization of $z_1$.
The process of resuming the latent variable $z_1$ like in \eqref{Resumption} is called the \emph{revival} of the latent variable $z_1$, or we say that the latent variable $z_1$ is \emph{revived}.
The nonzero univariate polynomial $p(z_1)$ in \eqref{VirtualLocalization} is called the \emph{residual} polynomial whose order after the revived localization is called the \emph{residual} order.
\end{definition}

It is easy to see that a sufficient condition for the revival of the latent variable is that the residual polynomial is a nonzero univariate polynomial.
In what follows let us assume that, contrary to the case in Lemma \ref{Lemma:StarFalling}, the latent variable $z_1$ is not revived, from which we can deduce that the latent apex $\Al'_\D$ satisfies $\Al'_\D\notin\es{\TN'}\Zop$.

In the cases of Lemma \ref{Lemma:TwoCases} \eqref{item:CaseOne}, the strict decrease of the singularity height leads to the ultimate scenario when the singularity height $d'=1$ in \eqref{InitialForm}.
It is trivial and evident when the Newton polyhedron $\NP(\vfz f1(\Xp))$ in \eqref{InitialForm} is an orthant $\D'+\Rp^{n-1}$ with $d'=1$.
In the generic case we have the following conclusion.

\begin{lemma}\label{Lemma:VirtualRecovery}
Suppose that the Newton polyhedron $\NP(\vfz f1(\Xp))$ is not an orthant $\D'+\Rp^{n-1}$ when the singularity height $d'=1$ in \eqref{InitialForm}.
If the primary variable $x_2'$ is consistent with the reduced exponential matrix $\TN'$ and exceptional index set $\iI\Zop$ in \eqref{VirtualResolution}, then the apex $\D'$ satisfies $\D'\in\es{\TN'}\Zop$.
\end{lemma}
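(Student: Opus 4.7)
The plan is to show that under the consistency hypothesis, every exceptional column $\W'_j$ with $j\in\iI\Zop$ satisfies $\D'\in\face(\W'_j)$; combined with $\D'\in\supp(\vfz f1)$ (the apex term $c_{\D'}(z_1){x'_2}^{d'}$ is a nonzero polynomial in $z_1$), this delivers $\D'\in\es{\TN'}\Zop$ via the definition in \eqref{SecondarySupport}.

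Let $\A'$ denote the vertex of $\NP(\vfz f1(\Xp))$ with which the refined vertex cone is associated. If $\A'=\D'$, the conclusion is immediate, so I assume $\A'\ne\D'$. The hypothesis $d'=1$ then forces $\A'$ to have zero first coordinate, because any support point with first coordinate at least $1$ lies in $\D'+\Rp^{n-1}$ and is therefore dominated by $\D'$, contradicting its being a distinct vertex. For $\V\in\CV{\A'}$ I would set $\delta(\V):=\langle\V,\D'\rangle-\langle\V,\A'\rangle$, which equals the first coordinate of $\V$ minus $\langle\V,\A'\rangle$. Since $\A'\in\face(\V)$ whenever $\V\in\CV{\A'}$, we have $\delta(\V)\ge 0$, and $\delta(\V)=0$ is precisely the condition $\D'\in\face(\V)$; call such $\V$ type (a) and the remainder type (b).

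The crux is then to prove that the only possible type-(b) generator of any refined subcone of $\CV{\A'}$ is $\E_2=(1,0,\dots,0)$. For an extreme ray $\V$ of $\CV{\A'}$, which is a facet normal of $\NP(\vfz f1)$ through $\A'$, being type (b) means $\D'\notin\face(\V)$; then every vertex of $\face(\V)$ has zero first coordinate, since otherwise it would be dominated by and hence equal to $\D'$. When the first coordinate of $\V$ is positive, the recession directions of $\face(\V)$ also have zero first coordinate, forcing $\face(\V)\subseteq\face(\E_2)$. Two facets of $\NP(\vfz f1)$ with one contained in the other must coincide, so $\V=\E_2$ by coprimality; the alternative $v_2=0$ would give $\delta(\V)\le 0$ and hence $\delta(\V)=0$, excluding type (b). For an auxiliary vector $\W=\sum_j\lambda_j\V_j$ produced by Lemma \ref{Lemma:Refinement} with $\lambda_j\in[0,1)$, an induction on refinement steps shows that every $\V_j$ is either $\E_2$ or type (a), so $\delta(\W)$ reduces to the coefficient of $\E_2$ in this expansion and therefore lies in $[0,1)$; integrality of $\W$ and $\A'$ forces $\delta(\W)\in\BB Z$, so $\delta(\W)=0$ and $\W$ is type (a).

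Consistency then completes the argument: the coordinates of $\E_2$ below the primary row all vanish, so if $\E_2$ were among the exceptional generators the corresponding column of the remnant exceptional submatrix $\Ev{\lTN'}{\Zop}$ would be zero, contradicting the rank-$|\iI\Zop|$ condition. Hence $\E_2\notin\{\W'_j:j\in\iI\Zop\}$, every exceptional generator is type (a), and $\D'\in\face(\W'_j)$ holds for all $j\in\iI\Zop$, which yields $\D'\in\es{\TN'}\Zop$. The main obstacle will be the polyhedral containment $\face(\V)\subseteq\face(\E_2)$ for a type-(b) extreme ray, which requires a careful analysis of both the vertices and the recession directions of $\face(\V)$; once this is in hand, the integrality trick for auxiliary vectors and the final consistency step are both immediate.
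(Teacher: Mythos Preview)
Your proposal is correct and follows essentially the same strategy as the paper: classify the generators of the refined vertex cone as either $\E_2$ or vectors $\V$ with $\D'\in\face(\V)$, then invoke consistency to rule out $\E_2$ as an exceptional column. The paper reaches the classification for facet normals by a case split on the facet level $p$ (and implicitly uses the same vertex/recession decomposition you make explicit), while for auxiliary vectors it argues that $\E_2$ can be omitted from the generating set; your integrality trick $\delta(\W)\in[0,1)\cap\BB Z=\{0\}$ is a cleaner substitute for that step.
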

\begin{proof}
Let us denote $\PI\NP$ as the normal vector set of the Newton polyhedron $\NP(\vfz f1(\Xp))$ and $\{\LE\E 2n\}$ as the standard basis of $\BB R^{n-1}$ such that $\E_2=(1,\bz)\in\BB N^{n-1}$.
We first show that the apex $\D'\notin\facet(\E_2)$ in the case of $d'=1$.
In fact, the apex $\D'\in\facet(\E_2)$ would indicate that for $\forall\Al'=(\LEs\alpha 2n\prime\prime)\in\NP(\vfz f1(\Xp))$, we would have $\alpha'_2=\langle\E_2,\Al'\rangle\ge\langle\E_2,\D'\rangle=1$ by the convexity of $\NP(\vfz f1(\Xp))$.
Hence $\Al'\succeq\D'$ for $\forall\Al'\in\NP(\vfz f1(\Xp))$ and $\NP(\vfz f1(\Xp))$ would be an orthant with a unique vertex $\D'$.
This would contradict the assumption on $\NP(\vfz f1(\Xp))$ in the lemma.

We prove next that for $\forall\W'=(\LEs w2n\prime\prime)\in\PI\NP\setminus\{\E_2\}$, the apex $\D'\in\facet (\W')$.
In fact, suppose that the $\facet (\W')$ has equation $\langle\W',\Al'\rangle=p\in\BB N$ with $\Al'\in\DN{n-1}$.
In the case when $p=0$, evidently $\W'\in\{\LE\E 3n\}$ and hence $\langle\W',\D'\rangle=0$, which means that $\D'\in\facet (\W')$.
In the case when $p>0$, the condition $d'=1$ as well as the convexity of Newton polyhedron $\NP(\vfz f1(\Xp))$ require that $w'_2=\langle\W',\D'\rangle\ge\langle\W',\Al'\rangle=p>0$ for $\forall\Al'\in\facet (\W')$ as in \eqref{FaceEq}.
This indicates that the hyperplane of $\facet (\W')$ has a positive intercept $p/w'_2>0$.
Hence $\exists\Al'_0=(\LEs\alpha 00{\prime 2}{\prime n})\in\supp(\vfz f1(\Xp))$ with $\alpha_0^{\prime 2}\ge 1$ that satisfies $\Al'_0\in\facet (\W')$ as per the definition of Newton polyhedron.
On the other hand, $\langle\W',\D'\rangle=w'_2\le w'_2\cdot\alpha_0^{\prime 2}\le\langle\W',\Al'_0\rangle=p$.
Consequently $\langle\W',\D'\rangle=p$ and $\D'\in\facet (\W')$ for $\forall\W'\in\PI\NP\setminus\{\E_2\}$.

Now let $\CV{\A'}$ be a refined vertex cone of $\NP(\vfz f1(\Xp))$ and $\U'\in\gv{\A'}\subseteq\DN n$ an auxiliary vector as in the proof of Lemma \ref{Lemma:Refinement}.
Then $\exists V\subseteq\gv{\A'}\setminus\{\E_2\}\subseteq\PI\NP\setminus\{\E_2\}$ with $|V|>1$ such that $\U'\in\CC^\circ(V)$ since $\E_2$ is a standard basis vector.
Thus $\face (\U')=\bigcap_{\W'\in V}\facet (\W')$ as per Lemma \ref{Lemma:IntersectLemma} \eqref{item:FaceIn} and the apex $\D'\in\face (\U')$ by an induction on the auxiliary vector set $\Lambda_\A$ as in the proof of Lemma \ref{Lemma:Refinement}.

Finally, suppose that the exceptional column submatrix $\Ev\TN\Zop'=[\MEs\W{l+1}n\prime\prime]$ with $\iI\Zop:=\dotc{l+1}n$.
For $\forall j\in\iI\Zop$, the normal or auxiliary vector $\W'_j$ satisfies $\W'_j\ne\E_2$ since the exceptional submatrix $\Evv\TN\Zop'$ would satisfy $\det\Evv\TN\Zop'=0$ otherwise, contradicting the consistency assumption of the lemma.
\end{proof}

When the singularity height $d'=1$ in the Weierstrass form \eqref{InitialForm}, it is easy to see that the latent reducible function in \eqref{MonoWeierstrass} bears the form
$f(\Xp)=x'_2(c+r(\Xp))$, i.e., the Weierstrass polynomial $w(\Xp)=x'_2$ after the completion of perfect power in \eqref{MonoWeierstrass}.
As per Lemma \ref{Lemma:VirtualRecovery} and the exponential identity \eqref{2ndExponIdentity}, the partial transform $q(\Zp)=z'_2$ in \eqref{2ndPartialTransformTotal}.
Thus the localized proper transform $q_*(\wt\Z'_*)$ satisfies $q_*(\bz)\in\BB F^*$.
Since the coefficients of the localized proper transform $\vfz\psi 1^*(\wt\Z'_*)$ of $\vfz\psi 1(\Zp)$ in \eqref{ParadigmRemainder} do not contain a term with the monomial factor $z_1^m$, it follows that $\vfz q1^*(\bz)$ as in Lemma \ref{Lemma:StarFalling} is a nonzero residual polynomial in $z_1$ whose residual order satisfies $\vft\ord 1(\vfz q1^*(\bz))<d$.
Subsequently we localize and then revive the latent variable $z_1$ as in \eqref{VirtualLocalization} and \eqref{Resumption}.

If the reduced branch point is irregular for both the localized proper transforms $q_*(\wt\Z'_*)$ and $\vfz\psi 1^*(\wt\Z'_*)$, which is contrary to the assumptions in Lemma \ref{Lemma:TwoCases} \eqref{item:CaseOne} and \eqref{item:CaseTwo}, then we render the variable $z'_2$ latent and localize the non-exceptional variables $\lZ'_*:=(\LEs z3l\prime\prime)$, which is similar to the latency implementation and partial localization in \eqref{PreVirtualization}.
Let us denote the function obtained from $\vfz q1(\Zp)$ in \eqref{ParadigmRemainder} in this way as $\vfze q(\tlZ'_*,\Z'_0)$ with $\Z'_\star:=(z_1,z'_2)$ being the latent variables and $\tlZ'_*$ the localization of the non-exceptional variables $\lZ'_*$ as above.
Similar to \eqref{LinearModification}, let us make a linear modification of the function $\vfze q\in\BB F\{\Z'_\star\}\{\tlZ'_*,\Z'_0\}$ that yields an apex form resembling \eqref{InitialForm} as follows.
\begin{equation}\label{3rdApexForm}
\vfze f(\Xq):=c_{\D''}(\Z'_\star){x''_3}^{d''}+\sum_{\Al''\in\supp(\vfze f)\setminus\D''}c_{\Al''}(\Z'_\star){\Xq}^{\Al''}
\end{equation}
with the apex $\D'':=(d'',\bz)$ such that $d''=\ord (\vfze q(\tlZ'_*,\bz))$, i.e., the order in the variables $\tlZ'_*$.
The new variables $\Xq$ are defined as $(\LEs x3n{\prime\prime}{\prime\prime})$ whose exponents $\Al'':=(\LEs\alpha 3n{\prime\prime}{\prime\prime})$.

Similar to the gradation index $m$ in \eqref{PreWeierstrass}, suppose that we have gradation indices $\M'_\star:=(m_1,m'_2)\in\supp (c_{\D''}(\Z'_\star))$.
In the case when $m_1\ne m$ with $m$ being the gradation index in \eqref{PreWeierstrass}, a latent gradation of the apex form $\vfze f(\Xq)$ in \eqref{3rdApexForm} into latent reducible and remainder functions by the gradation indices $\M'_\star$ is as follows.
\begin{equation}\label{2ndSeparation}
\vfze f(\Xq)={\Z'_\star}^{\M'_\star}f(\Xq)+\vfze\phi(\Xq),
\end{equation}
which is similar to \eqref{PreWeierstrass}.
Here the latent reducible function $f(\Xq)$ is defined in the same way as $f(\Xp)$ in \eqref{ParadigmFunction} such that the coefficients of the latent remainder function $\vfze\phi(\Xq)$ do not contain a term with the monomial factor ${\Z'_\star}^{\M'_\star}$, which resembles $\vfz\phi 1(\Xp)$ in \eqref{PreWeierstrass}.
Weierstrass preparation theorem and a completion of perfect power can be applied to the latent reducible function $f(\Xq)$ in \eqref{2ndSeparation} in a way similar to that in \eqref{MonoWeierstrass}.
After a reduced monomial transformation $\CT{\TN''}$ defined as $\Xq=\CT{\TN''}(\Zq):={\Zq}^{\TN''}$, which is similar to $\CT{\TN'}$ in \eqref{VirtualResolution}, the identity \eqref{2ndSeparation} is transformed into:
\begin{equation}\label{3rdPartialTransform}
\vfze q(\Zq)={\Z'_\star}^{\M'_\star}p(\Zq)+\vfze\psi (\Zq)
\end{equation}
resembling \eqref{ParadigmRemainder}, where $p(\Zq)$ and $\vfze\psi(\Zq)$ denote the partial transforms of $f(\Xq)$ and $\vfze\phi(\Xq)$ in \eqref{2ndSeparation} respectively.
It is easy to corroborate that the conclusions of Lemma \ref{Lemma:TwoCases} still hold for the localization $\vfze{\wt q}(\wt\Z'')$ of the partial transform $\vfze q(\Zq)$ in \eqref{3rdPartialTransform}.
That is, if a reduced branch point is regular for the localized proper transform of either $p(\Zq)$ or $\vfze\psi(\Zq)$ in \eqref{3rdPartialTransform}, then we have $\ord (\vfze{\wt q}(\wt\Z''))<d''$ up to an appropriate deficient contraction as in Lemma \ref{Lemma:2ndSimpleDecrease} with $d''$ being the singularity height in \eqref{3rdApexForm}.

Let $\vfze q^*(\wt\Z''_*)$ denote the localized proper transform of the partial transform $\vfze q(\Zq)$ in \eqref{3rdPartialTransform}.
Then the conclusion of Lemma \ref{Lemma:StarFalling} on residual order still applies here.
More specifically, if the residual polynomial $\vfze q^*(\bz)$ in $\Z'_\star$ is nonzero, then its residual order in $\wt z'_2$ is strictly less than the prior singularity height $d'$ of the apex form $\vfz f1(\Xp)$ in \eqref{InitialForm}, i.e., $\vftp\ord 2(\vfze q^*(\bz))<d'$.
In particular, let $\alg$ and $\lalg$ denote the transformed and latent apexes associated with the apex $\D'$ in \eqref{InitialForm} under $\CT{\TN'}$ in \eqref{VirtualResolution}, which resembles $\ald$ in Definition \ref{Def:Localization} and $\Al'_\D$ in Definition \ref{Def:Virtualization} respectively, such that $\alg:=(d',\lalg)$.
Then the above conclusion holds when the latent apex $\lalg\in\es{\TN''}\Zoq$.
In fact, according to the assumption in \eqref{2ndSeparation} that the gradation index $m_1\ne m$ with $m$ being the gradation index in \eqref{PreWeierstrass}, the latent remainder function $\vfze\phi(\Xq)$ in \eqref{2ndSeparation} can be written as:
\begin{equation}\label{PriorPerspective}
\vfze\phi(\Xq)=z_1^m\vfzp f2(\Xq)(c+\vfzp r2(\Xq))+\vfze{\ol\phi}(\Xq)
\end{equation}
as per \eqref{ParadigmRemainder} and \eqref{2ndPartialTransformTotal} with $\vfzp f2(\Xq)$ and $\vfzp r2(\Xq)$ denoting the latent preliminary and Weierstrass reductions of the partial and redundant transforms $q(\Zp)$ and $r_{\TN'}(\Zp)$ in \eqref{2ndPartialTransformTotal} respectively and $\vfze{\ol\phi}(\Xq)$ denoting the remaining terms.
Similar to Lemma \ref{Lemma:PrimaryDegBnd} \eqref{item:TransformedApex}, we can prove that the latent apex $\lalg\in\supp(\vfzp f2(\Xq))$.
Let $\vfzp q2^*(\wt\Z''_*)$ denote the localized proper transform of $\vfzp f2(\Xq)$ in \eqref{PriorPerspective} under $\CT{\TN''}$.
When $\lalg\in\es{\TN''}\Zoq$, it is easy to see that $\vfzp q2^*(\bz)$ is a nonzero residual polynomial in the latent variable $z'_2$ whose residual order is strictly less than $d'$ after the revived localization of $z'_2$ according to Lemma \ref{Lemma:UnivariablePolynomial}, which can be similarly proved as Lemma \ref{Lemma:StarFalling}.
Moreover, we have $c+\vfzp r2(\X''_*,\bz)\ne 0$ due to the deficient contraction in Lemma \ref{Lemma:2ndSimpleDecrease}.
Hence follows the conclusion $\vftp\ord 2(\vfze\psi^*(\bz))<d'$ with $\vfze\psi^*(\wt\Z''_*)$ denoting the localized proper transform of $\vfze\phi(\Xq)$ in \eqref{PriorPerspective} under $\CT{\TN''}$ and thus $\vftp\ord 2(\vfze q^*(\bz))<d'$ as per \eqref{3rdPartialTransform}.

According to Lemma \ref{Lemma:StrictApexDominance} \eqref{item:StrictDominance}, it is easy to deduce that for $\forall\Al''\in\supp(\vfze\phi(\Xq))$, if its coefficient $c_{\Al''}(\Z'_\star)$ satisfies $\vfzp\deg 2(c_{\Al''}(\Z'_\star))\ge d'$, then $\Al''$ is dominated by the latent apex $\lalg$, i.e., $\Al''\succeq\lalg$.
When the latent apex $\lalg\notin\es{\TN''}\Zoq$ but $\vfze\psi^*(\bz)$ is a nonzero residual polynomial, the conclusion $\vftp\ord 2(\vfze\psi^*(\bz))<d'$ readily follows from the above dominance of $\lalg$ due to the invariance of the exponent of the latent variable $z'_2$ through both $\CT{\TN''}$ and the localization of the non-exceptional variables $\Z''_*$.

In the case when the gradation index $m_1$ in \eqref{2ndSeparation} satisfies $m_1=m$ with $m$ being the gradation index in \eqref{PreWeierstrass}, let us denote $\vfzp f2(\Xq)$ as the latent preliminary and Weierstrass reductions of the partial transform $q(\Zp)$ in \eqref{2ndPartialTransformTotal} like in \eqref{PriorPerspective}.
The latent gradation of $\vfzp f2(\Xq)$ by the gradation index $m'_2$ into latent reducible and remainder functions resembling \eqref{PreWeierstrass} is as follows.
\begin{equation}\label{OldSeparation}
\vfzp f2(\Xq)={z'_2}^{m'_2}f(\Xq)+\vfzp\phi 2(\Xq),
\end{equation}
where the latent reducible function $f(\Xq)$ is the same as in \eqref{2ndSeparation} to which Weierstrass preparation theorem and a completion of perfect power can be applied.
It is easy to verify that the latent apex $\lalg\in\supp(\vfzp f2(\Xq))$ like in \eqref{PriorPerspective}.
The rest of the argument for the conclusion $\vftp\ord 2(\vfze q^*(\bz))<d'$ entails a discussion on whether or not the latent apex $\lalg\in\es{\TN''}{\Zoq}$, which is similar to the above case when the gradation index $m_1\ne m$.

When the reduced branch point is irregular for the localized proper transform $\vfze q^*(\wt\Z''_*)$ of the partial transform $\vfze q(\Zq)$ in \eqref{3rdPartialTransform}, we make the variable $z_3$ latent and repeat the above discussions in a similar fashion.
By a dimensional induction, it is evident that such incessant latency implementations without revival shall lead to a Weierstrass form with a single active variable in which case every branch point is regular.
It is easy to see that the singularity height strictly decreases in this case.
Furthermore, the revival of a latent variable is triggered when the singularity height is reduced to one like in Lemma \ref{Lemma:VirtualRecovery} under the consistency assumption.
And the residual order of the latent variable strictly decreases from the prior singularity height like in Lemma \ref{Lemma:StarFalling}.

\subsection{Resolution of inconsistent singularities via synthesis\\
of monomial transformations}
\label{Section:Inconsistency}

The canonical reduction of monomial transformation in Section \ref{Subsection:CanonicalReduction} and resolution of regular and irregular singularities in Section \ref{Section:RegularSing} and Section \ref{Section:SecondarySing} are based on the consistency assumption on the primary variable as in Definition \ref{Def:Consistency-1}.
The inconsistent singularities is addressed in this section and referred to as  the resolution of \emph{inconsistent} singularities.

With the same notations as in Definition \ref{Def:Consistency-1}, suppose that the primary variable $x_1$ is inconsistent with the exponential matrix $\TM$ and exceptional index set $\iI\Yo$, that is, $\rank(\Ev{\lTM}\Yo)<|\iI\Yo|$ with $\lTM$ being the remnant exponential submatrix as in Definition \ref{Def:ProperTransform} and $\iI\Yo$ in canonical form as $\dotc{k+1}n$.
In this case let us permute and relabel the row vectors of $\TM$ and their corresponding variables $\X$ simultaneously to acquire a new exponential matrix denoted as $\perm\TM$ such that the last row vector of $\perm\TM$ equals the primary row vector of $\TM$ and the exceptional submatrix of $\perm\TM$ is non-degenerate as $\det\hs{-1pt}\Evv{\perm\TM}\Yo\ne 0$.
Accordingly the canonical reduction of $\perm\TM$ with respect to $\iI\Yo$ is denoted as $\perm\TN$ that bears a canonical form as follows.
\begin{equation}\begin{aligned}\label{IncstnMnmlTrnsfrm}
    \perm\TN:=
    \begin{array}{r @{} c @{\hs{-0.5pt}} c @{\hs{-3pt}} p{4mm}
    @{\hs{-1mm}} *{3}c @{\hs{2pt}} c @{\hs{2pt}} c @{\hs{1pt}}
    c @{\hs{3pt}} *{3}c @{\hs{-0.8mm}} p{4mm} @{\hs{-2.5mm}}
    l}
    &           &           &           &           \E_1        &
    \cdots      &           \E_k        &           &           \V_{k+1}
    &           &           \V_{k+2}    &           \cdots      &
    \V_n        &           &                                               \\
    \mr{10}{$\rule{0mm}{24mm}$}         &           x_2         &
    \mr{10}{$\rule{0mm}{24mm}$}         &
    \raisebox{3pt}[0pt]{\mr{9}{$\left[\rule{0mm}{66.7pt}\right.$}}
    &           \mc{3}{c}{\mr{4}{$\TE$}}            &           \vline
    &           \mc{5}{c}{\mr{3}{$\TOm\TB$}}        &
    \raisebox{3pt}[0pt]{\mr{9}{$\left.\rule{0mm}{66.7pt}\right]$}}
    &           \mr{10}{$\rule{0mm}{24mm}$}                                 \\
    &           \vdots      &           &           &           &
    &           &           \vline      &           &           &
    &           &           &           &                                   \\
    &           x_k         &           &           &           &
    &           &           \vline      &           &           &
    &           &           &           &                                   \\
    \cline{2-2}\cline{8-14}
    &           x_{k+1}     &           &           &           &
    &           &           \vline      &
    \mc{5}{c}{\raisebox{-1pt}[0pt]{$\La\TB$}}       &           &           \\
    \cline{2-14}
    &           x_{k+2}     &           &           &
    \mc{3}{c}{\mr{4}{$\TO$}}            &           \vline      &
    \mr{3}{$\TD\bl$}        &           \vline      &
    \mc{3}{c}{\mr{3}{$\TD$}}            &           &                       \\
    &           \vdots      &           &           &           &
    &           &           \vline      &           &           \vline
    &           &           &           &           &                       \\
    &           x_n         &           &           &           &
    &           &           \vline      &           &           \vline
    &           &           &           &           &                       \\
    \cline{2-2}\cline{8-14}
    &           x_1         &           &           &           &
    &           &           \vline      &
    \mc{5}{c}{\raisebox{-1.5pt}[0pt]{$\N_1$}}       &           &           \\
    &           &           &           &           z_1         &
    \cdots      &           z_k         &           \vline      &
    z_{k+1}     &           \vline      &           z_{k+2}     &
    \cdots      &           z_n         &           &                       \\
    \end{array}
\end{aligned}\end{equation}
with the matrix $\TB$ denoting the submatrix $[\TD\bl~\TD]$.
The $(n-k-1)$ by $(n-k-1)$ submatrix $\TD$ satisfies $\det\TD\ne 0$ and exceptional index set $\iI\Zo=\iI\Yo$.
The vectors $\La$ and $\bl$ in \eqref{IncstnMnmlTrnsfrm} are $(n-k-1)$-dimensional row and column vectors in $\BB Q^{n-k-1}$ respectively.
The matrix $\TOm$ is of dimensions $(k-1)$ by $(n-k-1)$ with elements in $\BB Q$ as well.
The submatrices $\TE$ and $\TO$ in \eqref{IncstnMnmlTrnsfrm} denote the same unit and zero submatrices as in \eqref{CanonicalMatrix} respectively.
The exceptional submatrix $\Evv{\perm\TN}\Zo$ is non-degenerate as $\det\Evv{\perm\TN}\Zo\ne 0$ due to the non-degeneracy $\det\hs{-1pt}\Evv{\perm\TM}\Yo\ne 0$.
The row vector $\N_1\in\DN{n-k}$ is part of the primary row vector $\M_1$ of the original exponential matrix $\TM$.
Please note that we abuse the notations a bit here and use the same notations $\TB$ and $\TD$ for different submatrices of $\TN$ in \eqref{CanonicalMatrix} and $\perm\TN$ in \eqref{IncstnMnmlTrnsfrm} since the consistency and inconsistency cases are mutually exclusive.

The reduced exponential matrix $\perm\TN$ in \eqref{IncstnMnmlTrnsfrm} yields a reduced monomial transformation $\CT{\per\TN}$ on the Weierstrass polynomial $w(\perm\X)$ in \eqref{PrimaryForm} with $\perm\X$ denoting the new variables corresponding to the canonical form in \eqref{IncstnMnmlTrnsfrm}.
Similar to $w_\TN(\Z)$ in \eqref{NormalTransform}, let us denote the total transform of $w(\perm\X)$ under $\CT{\per\TN}$ as $w_{\per\TN}(\Z)$.
For $\forall\pAl\in\supp(w(\perm\X))$, the exponents $\Ga_0$ of the exceptional variables $\Z_0$ of $w_{\per\TN}(\Z)$ bear the following form:
\begin{equation}\label{LatentIdentity}
\Ga_0:=\pAl\cdot\Ev{\perm\TN}\Zo=
\bigg(\lAl_*\cdot\TOm+\Al_0\cdot
\begin{bmatrix}\La\\ \TE\end{bmatrix}\biggr)\cdot\TD\cdot [\,\bl~\TE]
+\alpha_1\N_1,
\end{equation}
where $\Ev{\perm\TN}\Zo$ denotes the exceptional column submatrix of $\perm\TN$ with respect to $\iI\Zo$.
And $\Al_0$ represent the exponents of the variables $\X_0$ whereas $\lAl_*:=(\LE\alpha 2k)$ those of the variables $\lX_*:=(\LE x2k)$ as in \eqref{ExponIdentity}.
The $(n-k)$-dimensional vector $\N_1$ is as in \eqref{IncstnMnmlTrnsfrm}.
Both the unit matrices $\TE$ in \eqref{LatentIdentity} are of dimensions $(n-k-1)$ by $(n-k-1)$.

It is easy to see that the non-degeneracy condition $\det\Evv{\perm\TN}\Zo\ne 0$ indicates the necessary condition:
\begin{equation}\label{PrimaryVector}
\N_1\cdot\begin{bmatrix}-1\\ \bl\end{bmatrix}\ne 0.
\end{equation}

\begin{lemma}\label{Lemma:ConstantPrimaryExp}
Let $\perm\TN$ denote the reduced exponential matrix as in \eqref{IncstnMnmlTrnsfrm}, and $\es{\perm\TN}\Zo$ the exceptional support with respect to $\perm\TN$ and $\iI\Zo$ as in Definition \ref{Def:ExceptionalSupport}.
For $\forall\pAl=(\lAl_*,\Al_0,\alpha_1)\in\es{\perm\TN}\Zo$ as in \eqref{LatentIdentity}, the exponent $\alpha_1$ of the primary variable $x_1$ is a constant.
\end{lemma}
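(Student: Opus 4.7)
The plan is to show that any two points of $\es{\perm\TN}\Zo$ have the same value of $\alpha_1$ by combining the linearity of identity \eqref{LatentIdentity} with the non-degeneracy condition \eqref{PrimaryVector}.  First I would pick $\pAl,\pAl'\in\es{\perm\TN}\Zo$ and observe, via Definition \ref{Def:ExceptionalSupport}, that both points lie in $\face(\V_j)$ for every $j\in\iI\Zo$.  Hence $\langle\V_j,\pAl\rangle=\langle\V_j,\pAl'\rangle$ for all such $j$, which is simply the matrix equality $\pAl\cdot\Ev{\perm\TN}\Zo=\pAl'\cdot\Ev{\perm\TN}\Zo$; in other words, the vector $\Ga_0$ appearing in \eqref{LatentIdentity} is constant on the exceptional support.

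Writing the difference $\pAl-\pAl'=(\Delta\lAl_*,\Delta\Al_0,\Delta\alpha_1)$ and setting
\[
\Delta\U:=\Delta\lAl_*\cdot\TOm+\Delta\Al_0\cdot\begin{bmatrix}\La\\ \TE\end{bmatrix},
\]
the linearity of \eqref{LatentIdentity} then yields
\[
\Delta\U\cdot\TD\cdot[\,\bl~\TE]+\Delta\alpha_1\cdot\N_1=\bz.
\]
Thus $\Delta\alpha_1\N_1$ lies in the row span of $\TD\cdot[\,\bl~\TE]$, which coincides with that of $[\,\bl~\TE]$ because $\det\TD\ne 0$.  The rows of $[\,\bl~\TE]$ are the vectors $(b_i,\E_i)\in\BB Q^{n-k}$ for $1\le i\le n-k-1$, and these span precisely the hyperplane orthogonal to the column vector $\begin{bmatrix}-1\\ \bl\end{bmatrix}$.

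Finally, the hypothesis \eqref{PrimaryVector} is exactly the statement that $\N_1$ is not orthogonal to this generator, so $\N_1$ lies outside the row span of $[\,\bl~\TE]$; the preceding equation therefore forces $\Delta\alpha_1=0$, which is the claim.  The only mildly subtle step I foresee is identifying the row span of $[\,\bl~\TE]$ with that particular hyperplane, but this is a one-line calculation exploiting the presence of the identity block $\TE$; every other ingredient follows directly from Definition \ref{Def:ExceptionalSupport}, the linearity of \eqref{LatentIdentity}, and the invertibility of $\TD$, so I do not anticipate a real obstacle.
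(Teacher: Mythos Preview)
Your proposal is correct and follows essentially the same route as the paper: both arguments take two points of $\es{\perm\TN}\Zo$, use that their difference annihilates $\Ev{\perm\TN}\Zo$ to obtain the identity $\Delta\U\cdot\TD\cdot[\,\bl~\TE]+\Delta\alpha_1\N_1=\bz$, and then invoke \eqref{PrimaryVector} to force $\Delta\alpha_1=0$. Your version merely makes explicit the hyperplane interpretation of the row span of $[\,\bl~\TE]$, which the paper leaves as ``an easy consequence.''
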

\begin{proof}
For $\forall\pAl_1,\pAl_2\in\es{\perm\TN}\Zo$, let us denote $\Delta\pAl:=\pAl_1-\pAl_2$.
Based upon $\Delta\pAl\cdot\Ev{\perm\TN}\Zo=\bz$ and \eqref{LatentIdentity}, we have the following straightforward identity:
\begin{equation}\label{OrthogonalIdentity}
\biggl(\Dl\Al_*\cdot\TOm+\Delta\Al_0\cdot
\begin{bmatrix}\La\\ \TE\end{bmatrix}\biggr)\cdot\TD\cdot [\,\bl~\TE]
+\Delta\alpha_1\N_1=\bm{0},
\end{equation}
where $\Delta\pAl=(\Dl\Al_*,\Delta\Al_0,\Delta\alpha_1)$.
The conclusion is an easy consequence of \eqref{PrimaryVector} and \eqref{OrthogonalIdentity}.
\end{proof}

\begin{definition}\label{Def:LatentPrimaryCompnt}
{\upshape (Latent primary variable $x_1$ and its exponent $\alpha_1$; exceptional and non-exceptional exponents; latent primary component $\wt\alpha_1$)}

The primary variable $x_1$ of the Weierstrass polynomial $w(\perm\X)$ is called the \emph{latent} primary variable with which the decreasing process of the singularity height associated becomes latent.
Accordingly its exponent $\alpha_1$ is called the \emph{latent} primary exponent.

Let $q(\Z)$ be the partial transform of the Weierstrass polynomial $w(\perm\X)$ as in \eqref{NormalTransform} under the reduced monomial transformation $\CT{\per\TN}$.
For $\forall\Ga=(\Ga_*,\Ga_0)\in\supp(q(\Z))$, the exponents $\Ga_0$ of the exceptional variables $\Z_0$ are called the \emph{exceptional} exponents of $q(\Z)$ whereas $\Ga_*$ of the non-exceptional variables $\Z_*$ are called the \emph{non-exceptional} exponents.
Due to the partial factorization as in \eqref{NormalTransform}, the exceptional exponents $\Ga_0=(\pAl-\perm\A)\cdot\Ev{\perm\TN}\Zo$.
The component $\wt\alpha_1:=\alpha_1-a_1$ of $\pAl-\perm\A$ is called the latent primary \emph{component} of the exceptional exponents $\Ga_0$.
\end{definition}

Let $q(\Z_*,\bz)$ be the proper transform of the Weierstrass polynomial $w(\perm\X)$ as in \eqref{ProperTransform-N} under the reduced monomial transformation $\CT{\per\TN}$ and partial factorization.
Lemma \ref{Lemma:ConstantPrimaryExp} indicates that all the latent primary components of the exceptional exponents of $q(\Z_*,\bz)$ equal zero, i.e., $\wt\alpha_1=0$.
Let us reorganize the terms of the partial transform $q(\Z)$ in \eqref{NormalTransform} with respect to the latent primary component $\wt\alpha_1$ as follows.
\begin{equation}\label{PreReorganize}
q(\Z)=q(\Z_*,\bz)+\ol q_0(\Z)+q_1(\Z):=q_0(\Z)+q_1(\Z)
\end{equation}
with the latent primary component $\wt\alpha_1$ of $\ol q_0(\Z)$ and $q_0(\Z)$ equaling zero whereas none of those of $q_1(\Z)$ equaling zero.

We proceed with the resolution algorithm by localizing the partial transform $q(\Z)$ at a reduced branch point $(\bs_*,\bz)$.
Let $\wt q(\wt\Z)$ and $q_*(\wt\Z_*)$ denote the localized partial and proper transforms respectively as in Definition \ref{Def:Localization}.
For a new set of variables $\Xp=(\LEs x1n\prime\prime)$ that are partitioned as $\X'_*$ and $\X'_0$ as per the prior exceptional index set $\iI\Zo=\dotc{k+1}n$, let us make a non-degenerate linear modification $\CL\Xp$ defined as
\begin{equation}\label{LatentLinearModif}
\X_*'=\X_*'(\wt\Z_*);\qquad
\X_0'=\Z_0,
\end{equation}
which transforms $\wt q(\wt\Z)$ into the following apex form denoted as $f(\Xp)$:
\begin{equation}\label{LtnApexForm}
f(\Xp)=c_{\D'}{x'_1}^{d'}+\sum_{\Al'\in\supp(f)\setminus\D'}c_{\Al'}\Xp^{\Al'}.
\end{equation}
The primary variable is $x'_1$ whose associated singularity height $d'=\ord (q_*(\wt\Z_*))$.
Here $\D'$ denotes the apex $\D'=(d',\bz)$ and $c_{\D'}\in\BB F^*$.
The singularity height might have a temporary increase at a regular reduced branch point due to the inconsistency in \eqref{IncstnMnmlTrnsfrm}, that is, $d'\ge d$ with $d$ being the singularity height associated with the primary variable $x_1$ in \eqref{PrimaryForm}.

It is easy to see that neither the localization of $q(\Z)$ nor linear modification $\CL\Xp$ in \eqref{LatentLinearModif} alters the exceptional exponents $\Ga_0$ of $q(\Z)$.
Hence for simplicity the exponents $\Al'_0$ of the variables $\X'_0$ as in \eqref{LatentLinearModif} are also called the exceptional exponents of $f(\Xp)$ in \eqref{LtnApexForm} hereafter.
Similar to \eqref{PreReorganize}, let us reorganize the terms of the apex form $f(\Xp)$ in \eqref{LtnApexForm} by the latent primary component $\wt\alpha_1$ of their exceptional exponents as in Definition \ref{Def:LatentPrimaryCompnt} according to \eqref{LatentIdentity} as follows.
\begin{equation}\label{ReorganizedFunc}
f(\Xp)=c_{\D'}{x'_1}^{d'}+\ol f_0(\Xp)+\phi(\Xp):=f_0(\Xp)+\phi(\Xp),
\end{equation}
where all the latent primary components of the exceptional exponents of $f_0(\Xp)$ equal zero whereas none of those of $\phi(\Xp)$ equal zero.

In order to leave the latent primary components of the exceptional exponents of all the functions in \eqref{ReorganizedFunc} unaltered through the Weierstrass reduction, let us invoke Weierstrass preparation theorem and then complete perfect power on the function $f_0(\Xp)$ the latent primary components of whose exceptional exponents equal zero.
In this way we can represent $f_0(\Xp)$ in \eqref{ReorganizedFunc} into a Weierstrass form as follows.
\begin{equation}\label{LatentWeierstrassForm}
\begin{aligned}
f_0(\Xp)&=\biggl[{x'_1}^{d'}+\sum_{j=2}^{d'}c_j(\lXp){x'_1}^{d'-j}\biggr]
(c+r(\Xp))\\
&:=w(\Xp)(c+r(\Xp)),
\end{aligned}
\end{equation}
where $c_j(\bz)=0$ for $1<j\le d'$ and $c\in\BB F^*$.
The variables $\lXp:=(\LEs x2n\prime\prime)$ and the redundant function $r(\Xp)$ satisfies $r(\bz)=0$.

Henceforth we abuse the notations a bit and assume that the function $f_0(\Xp)$ in \eqref{ReorganizedFunc} bears the Weierstrass form in \eqref{LatentWeierstrassForm} and $\phi(\Xp)$ in \eqref{ReorganizedFunc} conforms with the completion of perfect power in \eqref{LatentWeierstrassForm}.

\begin{definition}\label{Def:LatentReductions}
{\upshape (Latent reducible and remainder functions; latent gradation; latent preliminary and Weierstrass reductions)}

The functions $f_0(\Xp)$ and $\phi(\Xp)$ in \eqref{ReorganizedFunc} are called the latent \emph{reducible} and \emph{remainder} functions of the apex form $f(\Xp)$ respectively.
The partial gradation of $f(\Xp)$ into the latent reducible and remainder functions as in \eqref{ReorganizedFunc} by the latent primary component $\wt\alpha_1$ is called a \emph{latent gradation} of $f(\Xp)$.

The partial factorization in \eqref{NormalTransform}, localization of $q(\Z)$, linear modification $\CL\Xp$ in \eqref{LatentLinearModif} and latent gradation in \eqref{ReorganizedFunc} constitute the procedure of \emph{latent} preliminary reduction; whereas the invocation of Weierstrass preparation theorem and completion of perfect power on the latent reducible function $f_0(\Xp)$ in \eqref{LatentWeierstrassForm} constitute the \emph{latent} Weierstrass reduction.
\end{definition}

It is easy to see that after the latent preliminary and Weierstrass reductions, we have the following trivial identity for the exceptional variables and their exponents:
\begin{equation}\label{VariableIdentity}
\X_0'=\Z_0;\qquad\Al'_0=\Ga_0.
\end{equation}

Now we are ready for the next step of partial resolution of inconsistent singularities.
Let $f(\Xp)$ be the apex form in \eqref{ReorganizedFunc} after the latent preliminary and Weierstrass reductions with the latent reducible function $f_0(\Xp)$ bearing the Weierstrass form in \eqref{LatentWeierstrassForm}.
Based on a refined vertex cone of the Newton polyhedron $\NP(f(\Xp))$ whose generators are denoted as $\LEs\V1n\prime\prime$, let us define an $n$ by $n$ exponential matrix $\TM'=[\MEs\V1n\prime\prime]$ whose associated monomial transformation $\CT{\TM'}$ is defined as $\Xp=\CT{\TM'}(\Yp):=\Yp^{\TM'}$ with new variables $\Yp:=(\LEs y1n\prime\prime)$.
Let us assume that the exceptional index set $\iI\Yop$ is in a canonical form $\dotc{l+1}n$ satisfying $|\iI\Yop|=n-l<|\iI\Xop|=|\iI\Zo|=n-k$.

When the primary variable $x'_1$ is consistent with the exponential matrix $\TM'$ and exceptional index set $\iI\Yop$, suppose that $\TM'$ is in a canonical consistency form as in Definition \ref{Def:Consistency-1}, i.e., the exceptional submatrix $\Evv\TM\Yop'$ satisfies $\det\Evv\TM\Yop'\ne 0$.
Let $\TN'$ denote the canonical reduction of $\TM'$ with respect to $\iI\Yop$, which is similar to the reduced exponential matrix $\TN$ in \eqref{CanonicalMatrix} and bears the following canonical form with the exceptional index set $\iI\Zop=\iI\Yop$:
\begin{equation}\label{CstnExpntlMtrx}\begin{aligned}
\setlength{\arraycolsep}{10pt}
    \TN':=
    \begin{array}{r @{} c @{\hs{-0.5pt}} c @{\hs{-3.6pt}} p{4mm}
    @{\hs{-1mm}} *{3}c @{\hs{4pt}} c @{\hs{4pt}} *{3}c
    @{\hs{-0.8mm}} p{4mm} @{\hs{-2.5mm}} l}
    &           &           &           &           \E_1        &
    \cdots      &           \E_l        &           &
    \V'_{l+1}   &           \cdots      &           \V'_n       &
    &                                                                       \\
    \mr{12}{$\rule{0mm}{24mm}$}         &           x'_1        &
    \mr{12}{$\rule{0mm}{24mm}$}         &
    \raisebox{5mm}[0pt]{\mr{12}{$\left[\rule{0mm}{80pt}\right.$}}
    &           \mc{3}{c}{\mr{6}{$\TE$}}            &           \vline
    &
    \mc{3}{c}{\mr{3}{\raisebox{-4mm}[0pt]{$\TOm'_1\TD'$}}}
    &
    \raisebox{5mm}[0pt]{\mr{12}{$\left.\rule{0mm}{80pt}\right]$}}
    &           \mr{12}{$\rule{0mm}{24mm}$}                                 \\
    &           \vdots      &           &           &           &
    &           &           \vline      &           &           &
    &           &                                                           \\
    &           x'_k        &           &           &           &
    &           &           \vline      &           &           &
    &           &                                                           \\
    \cline{2-2}\cline{8-12}
    &           x'_{k+1}    &           &           &           &
    &           &           \vline      &
    \mc{3}{c}{\mr{3}{$\TOm'_2\TD'$}}    &           &                       \\
    &           \vdots      &           &           &           &
    &           &           \vline      &           &           &
    &           &                                                           \\
    &           x'_l        &           &           &           &
    &           &           \vline      &           &           &
    &           &                                                           \\
    \cline{2-12}
    &           x'_{l+1}    &           &           &
    \mc{3}{c}{\mr{4}{$\TO$}}            &           \vline      &
    \mc{3}{c}{\mr{4}{$\TD'$}}           &           &                       \\
    &           \vdots      &           &           &           &
    &           &           \vline      &           &           &
    &           &                                                           \\
    &           x'_n        &           &           &           &
    &           &           \vline      &           &           &
    &           &                                                           \\
    &           &           &           &           z'_1        &
    \cdots      &           z'_l        &           \vline      &
    z'_{l+1}    &           \cdots      &           z'_n        &
    &                                                                       \\
    \end{array}.
\end{aligned}\end{equation}
Here $\TD'$ denotes the $(n-l)$ by $(n-l)$ exceptional submatrix $\Evv\TN\Zop'=\Evv\TM\Yop'$ such that $\det\TD'\ne 0$.
The submatrices $\TOm'_1$ and $\TOm'_2$ whose elements are in $\BB Q$ are of dimensions $k$ by $(n-l)$ and $(l-k)$ by $(n-l)$ respectively.

When the primary variable $x'_1$ is inconsistent with the exponential matrix $\TM'$ and exceptional index set $\iI\Yop$, suppose that $\iI\Yop$ is in canonical form $\dotc{l+1}n$ and satisfies $|\iI\Yop|=n-l\le |\iI\Xop|=|\iI\Zo|=n-k$.
Same as the canonical reduction procedure to obtain the reduced exponential matrix $\perm\TN$ in \eqref{IncstnMnmlTrnsfrm}, we make a permutation and relabeling of the row vectors of $\TM'$ and their corresponding variables $\Xp$ to acquire a new exponential matrix $\perm\TM'$ whose exceptional submatrix satisfies $\det\hs{-1pt}\Evv{\perm\TM}\Yop'\ne 0$ and whose last row vector equals the primary row vector $\M'_1$ of $\TM'$.
The canonical reduction of $\perm\TM'$ with respect to $\iI\Yop$ is denoted as $\perm\TN'$ and bears the canonical form in \eqref{IncstnExpntlMtrx} with the exceptional index set $\iI\Zop=\iI\Yop$.
In particular, the $(n-l-1)$ by $(n-l-1)$ submatrix $\TD'$ in \eqref{IncstnExpntlMtrx} satisfies $\det\TD'\ne 0$.
The matrix $\TB'$ denotes the submatrix $[\TD'\bl'~\TD']$ in \eqref{IncstnExpntlMtrx}.
The vector $\bl'$ in \eqref{IncstnExpntlMtrx} is an $(n-l-1)$-dimensional column vector in $\BB Q^{n-l-1}$.
The $(k-1)$ by $(n-l-1)$ submatrix $\TOm'_1$ and $(l-k+1)$ by $(n-l-1)$ submatrix $\TOm'_2$ are composed of elements in $\BB Q$.
The vector $\N'_1\in\DN{n-l}$ resembles the vector $\N_1$ in \eqref{IncstnMnmlTrnsfrm} and is part of the primary row vector $\M'_1$ of the original exponential matrix $\TM'$.
Please note that we abuse the notations a bit here and use the same notations $\TOm'_1$ and $\TOm'_2$ for different matrices of $\TN'$ in \eqref{CstnExpntlMtrx} and $\perm\TN'$ in \eqref{IncstnExpntlMtrx} since the consistency and inconsistency cases are mutually exclusive.

The exceptional submatrix of $\perm\TN'$ is non-degenerate as $\det\hs{-1pt}\Evv{\perm\TN}\Zop'=\det\hs{-1pt}\Evv{\perm\TM}\Yop'\ne 0$, which, similar to \eqref{PrimaryVector}, indicates that
\begin{equation}\label{NonDegVector}
\N'_1\cdot\begin{bmatrix}-1\\ \bl'\end{bmatrix}\ne 0.
\end{equation}

\begin{equation}\label{IncstnExpntlMtrx}\begin{aligned}
\setlength{\arraycolsep}{8pt}
    \perm\TN':=
    \begin{array}{r @{} c @{\hs{-0.5pt}} c @{\hs{-4pt}} p{4mm}
    @{\hs{-1mm}} *{3}c @{\hs{2pt}} c @{\hs{2pt}} c @{} c
    @{\hs{4pt}} *{3}c @{\hs{-0.8mm}} p{4mm} @{\hs{-2.5mm}} l}
    &           &           &           &           \E_1        &
    \cdots      &           \E_l        &           &           \V'_{l+1}
    &           &           \V'_{l+2}   &           \cdots      &
    \V'_n       &           &                                               \\
    \mr{12}{$\rule{0mm}{24mm}$}         &           x'_2        &
    \mr{12}{$\rule{0mm}{24mm}$}         &
    \mr{11}{$\left[\rule{0mm}{83.75pt}\right.$}     &
    \mc{3}{c}{\mr{7}{$\TE$}}            &           \vline      &
    \mc{5}{c}{\mr{3}{\quad $\TOm'_1\TB'$}}          &
    \mr{11}{$\left.\rule{0mm}{83.75pt}\right]$}     &
    \mr{12}{$\rule{0mm}{24mm}$}                                             \\
    &           \vdots      &           &           &           &
    &           &           \vline      &           &           &
    &           &           &           &                                   \\
    &           x'_k        &           &           &           &
    &           &           \vline      &           &           &
    &           &           &           &                                   \\
    \cline{2-2}\cline{8-14}
    &           x'_{k+1}    &           &           &           &
    &           &           \vline      &
    \mc{5}{c}{\mr{3}{\quad $\TOm'_2\TB'$}}          &           &           \\
    &           \vdots      &           &           &           &
    &           &           \vline      &           &           &
    &           &           &           &                                   \\
    &           x'_{l+1}    &           &           &           &
    &           &           \vline      &           &           &
    &           &           &           &                                   \\
    \cline{2-14}
    &           x'_{l+2}    &           &           &
    \mc{3}{c}{\mr{4}{$\TO$}}            &           \vline      &
    \mr{3}{$\TD'\bl'$}      &           \vline      &
    \mc{3}{c}{\mr{3}{$\TD'$}}           &           &                       \\
    &           \vdots      &           &           &           &
    &           &           \vline      &           &           \vline
    &           &           &           &           &                       \\
    &           x'_n        &           &           &           &
    &           &           \vline      &           &           \vline
    &           &           &           &           &                       \\
    \cline{2-2}\cline{8-14}
    &           x'_1\rule{0mm}{9.5pt}   &           &           &
    &           &           &           \vline      &
    \mc{5}{c}{\N'_1}        &           &                                   \\
    &           &           &           &           z'_1        &
    \cdots      &           z'_l        &           \vline      &
    z'_{l+1}    &           \vline      &           z'_{l+2}    &
    \cdots      &           z'_n        &           &                       \\
    \end{array}
\end{aligned}\end{equation}

Let $\CT{\TN'}$ and $\CT{\per\TN'}$ denote the reduced monomial transformations associated with the reduced exponential matrices $\TN'$ in \eqref{CstnExpntlMtrx} and $\perm\TN'$ in \eqref{IncstnExpntlMtrx} respectively.
The exceptional index set $\iI\Zop=\iI\Yop$ with the non-exceptional and exceptional variables of $\Zp$ denoted as $\Z_*':=(\LEs z1l\prime\prime)$ and $\Z_0':=(\LEs z{l+1}n\prime\prime)$ respectively.
The total transform of the apex form $f(\Xp)$ in \eqref{ReorganizedFunc} under $\CT{\TN'}$ or $\CT{\per\TN'}$ is denoted as $f_{\TN'}(\Zp)$ or $f_{\per\TN'}(\Zp)$.

\begin{align}\label{FirstInterimMatrix}
    \perm\TS&:=
    \begin{array}{r @{} c @{\hs{-0.5pt}} c @{\hs{-3pt}} p{4mm}
    @{} *{3}c @{\hs{2pt}} c @{\hs{4pt}} *{3}c @{\hs{2pt}}
    c @{\hs{4pt}} c @{} p{4mm} @{\hs{-2.5mm}} l}
    &           &           &           &           \E_1        &
    \cdots      &           \E_k        &           &           &
    &           &           &           \E_n        &           &           \\
    \mr{10}{$\rule{0mm}{24mm}$}         &           x_2         &
    \mr{10}{$\rule{0mm}{24mm}$}         &
    \raisebox{1pt}[0pt]{\mr{9}{$\left[\rule{0mm}{67pt}\right.$}}
    &           \mc{3}{c}{\mr{4}{$\TE$}}            &           \vline
    &           \mc{3}{c}{\mr{3}{$\TOm\TD$}}        &           \vline
    &           \mr{7}{$\TO$}           &
    \raisebox{1pt}[0pt]{\mr{9}{$\left.\rule{0mm}{67pt}\right]$}}
    &           \mr{10}{$\rule{0mm}{24mm}$}                                 \\
    &           \vdots      &           &           &           &
    &           &           \vline      &           &           &
    &           \vline      &           &           &                       \\
    &           x_k         &           &           &           &
    &           &           \vline      &           &           &
    &           \vline      &           &           &                       \\
    \cline{2-2}\cline{8-11}
    &           x_{k+1}     &           &           &           &
    &           &           \vline      &
    \mc{3}{c}{\raisebox{-1.8pt}[0pt]{$\La\TD$}}     &           \vline
    &           &           &                                               \\
    \cline{2-11}
    &           x_{k+2}     &           &           &
    \mc{3}{c}{\mr{3}{$\TO$}}            &           \vline      &
    \mc{3}{c}{\mr{3}{$\TD$}}            &           \vline      &
    &           &                                                           \\
    &           \vdots      &           &           &           &
    &           &           \vline      &           &           &
    &           \vline      &           &           &                       \\
    &           x_n         &           &           &           &
    &           &           \vline      &           &           &
    &           \vline      &           &           &                       \\
    \cline{2-14}
    &           x_1         &           &           &
    \mc{7}{c}{\raisebox{-2pt}[0pt]{$\TO$}}          &           \vline
    &           \raisebox{-2pt}[0pt]{$1$}           &           &           \\
    &           &           &           &           z_1         &
    \cdots      &           z_k         &           \vline      &
    \zeta_{k+2} &           \cdots      &           \zeta_n     &
    \vline      &           x_1         &           &                       \\
    \end{array}
\end{align}

There is another perspective that can elucidate the above partial resolution of singularities.
The reduced exponential matrix $\perm\TN$ in \eqref{IncstnMnmlTrnsfrm} can be decomposed as:
\begin{equation}\label{MatrixDecomposition}
\perm\TN=\perm\TS\cdot\perm\TT
\end{equation}
with the interim exponential matrices $\perm\TS$ and $\perm\TT$ being defined as in \eqref{FirstInterimMatrix} and \eqref{SecondInterimMatrix}, in which the interim variables $(\LE\zeta{k+2}n):=\Ze$ are associated with the monomial transformations $\CT{\per\TS}$ and $\CT{\per\TT}$ in accordance with \eqref{MatrixDecomposition}.

The decomposition in \eqref{MatrixDecomposition} is essentially a decomposition of the exceptional column submatrix $\Ev{\perm\TN}\Zo$ in \eqref{IncstnMnmlTrnsfrm} and it adheres to the following principles.
The first one is that the latent primary exponent $\alpha_1$ of the latent primary variable $x_1$ should be unaltered by the exponential matrix $\perm\TS$ in \eqref{FirstInterimMatrix};
The second one is that the non-exceptional column submatrix $\Ec{\perm\TS}\Za$ in \eqref{FirstInterimMatrix} as in Definition \ref{Def:ExceptionalSupport} should be identical to $\Ec{\perm\TN}\Za$ in \eqref{IncstnMnmlTrnsfrm};
The third one is that the difference between the exponents of the interim variables $\Ze$ in \eqref{FirstInterimMatrix} and those of the exceptional variables $(\LE z{k+2}n)$ in \eqref{IncstnMnmlTrnsfrm} or \eqref{SecondInterimMatrix} should be a multiple of the latent primary exponent $\alpha_1$, which ensures that the decomposition in \eqref{MatrixDecomposition} is compatible with the latent gradation in \eqref{ReorganizedFunc}.

\begin{align}\label{SecondInterimMatrix}
    \perm\TT&:=
    \begin{array}{r @{} c @{\hs{-0.5pt}} c @{\hs{-3pt}} p{4mm} @{}
    *{3}c @{\hs{2pt}} c @{\hs{4pt}} c @{\hs{2pt}} c @{\hs{3pt}}
    *{3}c @{} p{4mm} @{\hs{-2.5mm}} l}
    &           &           &           &           \E_1        &
    \cdots      &           \E_k        &           &           &
    &           &           &           &           &                       \\
    \mr{8}{$\rule{0mm}{20mm}$}          &           z_1         &
    \mr{8}{$\rule{0mm}{20mm}$}          &
    \raisebox{1pt}[0pt]{\mr{8}{$\left[\rule{0mm}{60pt}\right.$}}
    &           \mc{3}{c}{\mr{3}{$\TE$}}            &           \vline
    &           \mc{5}{c}{\mr{3}{$\TO$}}            &
    \raisebox{1pt}[0pt]{\mr{8}{$\left.\rule{0mm}{60pt}\right]$}}
    &           \mr{8}{$\rule{0mm}{20mm}$}                                 \\
    &           \vdots      &           &           &           &
    &           &           \vline      &           &           &
    &           &           &           &                                   \\
    &           z_k         &           &           &           &
    &           &           \vline      &           &           &
    &           &           &           &                                   \\
    \cline{2-14}
    &           \zeta_{k+2} &           &           &
    \mc{3}{c}{\mr{4}{$\TO$}}            &           \vline      &
    \mr{3}{$\bl$}           &           \vline      &
    \mc{3}{c}{\mr{3}{$\TE$}}            &           &                       \\
    &           \vdots      &           &           &           &
    &           &           \vline      &           &           \vline
    &           &           &           &           &                       \\
    &           \zeta_n     &           &           &           &
    &           &           \vline      &           &           \vline
    &           &           &           &           &                       \\
    \cline{2-2}\cline{8-14}
    &           x_1\rule{0mm}{9.5pt}    &           &           &
    &           &           &           \vline      &
    \mc{5}{c}{\raisebox{-2pt}[0pt]{$\N_1$}}         &           &           \\
    &           &           &           &           z_1         &
    \cdots      &           z_k         &           \vline      &
    z_{k+1}     &           \vline      &           z_{k+2}     &
    \cdots      &           z_n         &           &                       \\
    \end{array}
\end{align}

The new perspective is to consider the above monomial transformation $\CT{\per\TS}$ in \eqref{FirstInterimMatrix} on the Weierstrass polynomial $w(\perm\X)$ in \eqref{NormalForm} with $\perm\X$ denoting the new variables corresponding to the canonical form $\perm\TN$ in \eqref{IncstnMnmlTrnsfrm}, from which a total transform $p(\Z_*,\Ze,x_1)$ is acquired as follows.
\begin{equation}\label{SyntheticTransform}
p(\Z_*,\Ze,x_1):=w(\CT{\per\TS}(\Z_*,\Ze,x_1)).
\end{equation}
Lemma \ref{Lemma:ConstantPrimaryExp} shows that the latent primary exponent $\alpha_1$ is a constant on the exceptional support $\es{\perm\TN}\Zo$.
More specifically, let $\A=(\LE a1n)\in\es{\perm\TN}\Zo$ be the vertex associated with the original exponential matrix $\TM$.
Then we have $\alpha_1=a_1$ for $\forall\pAl=(\LE\alpha 2n,\alpha_1)\in\es{\perm\TN}\Zo$.
Hence similar to \eqref{PreReorganize}, we make a partial gradation of $p(\Z_*,\Ze,x_1)$ in \eqref{SyntheticTransform} by the latent primary exponent $\alpha_1$ and as a result, the terms of $p(\Z_*,\Ze,x_1)$ are reorganized as follows.
\begin{equation}\label{InterimGradation}
p(\Z_*,\Ze,x_1)=x_1^{a_1}p_0(\Z_*,\Ze)+p_1(\Z_*,\Ze,x_1)
\end{equation}
such that $\alpha_1\ne a_1$ for $\forall (\Ga_*,\Al_\Ze,\alpha_1)\in\supp(p_1)$ with $\Ga_*$ and $\Al_\Ze$ denoting the exponents of the variables $\Z_*$ and $\Ze$ respectively.
Let $\iI\Ze$ denote the index set $\dotc{k+1}{n-1}$ whose corresponding column submatrix of $\perm\TS$ in \eqref{FirstInterimMatrix} is denoted as $\Ev{\perm\TS}\Ze$ as in Definition \ref{Def:ExceptionalSupport}.
Based on the partial factorization yielding the exceptional factors $\Z_0^{\A\cdot\Ev{\per\TN}\Zo}$ in \eqref{NormalTransform} as well as the above third principle for the decomposition in \eqref{MatrixDecomposition}, the function $p_0(\Z_*,\Ze)$ in \eqref{InterimGradation} can be partially factorized as follows.
\begin{equation}\label{InterimTransform}
\begin{aligned}
p_0&=\Ze^{\A\cdot\Ev{\per\TS}\Ze}\cdot
\biggl(c_\A\Z_*^{\A_*}+\sumsepm\sum_{\Al\in\es{\per\TN}\Zo\setminus\{\A\}}
\sumsepm c_\Al\Z_*^{\Al_*}+\sumsep\sum_{\Be\in\esc{\per\TN}\Zo}
\sumsepm c_\Be\Z_*^{\Be_*}\Ze^{(\Be-\A)\cdot\Ev{\per\TS}\Ze}\biggr)\\
&:=\Ze^{\A\cdot\Ev{\per\TS}\Ze}\cdot q_0(\Z_*,\Ze),
\end{aligned}
\end{equation}
where the complementary exponential set $\esc{\perm\TN}\Zo$ is defined as:
\[
\esc{\perm\TN}\Zo:=\{\Be=(\LE\beta 1n)\in\supp(w)\setminus\es{\perm\TN}\Zo
\colon\beta_1=a_1\}
\]
with $w$ denoting the Weierstrass polynomial $w(\perm\X)$ in \eqref{NormalForm} and $a_1$ the first component of the vertex $\A$ in \eqref{InterimGradation}.

From \eqref{InterimTransform} it is easy to see that the function $q_0(\Z_*,\bz)$ equals the proper transform $q(\Z_*,\bz)$ of $w(\perm\X)$ under $\CT{\per\TN}$ in \eqref{ProperTransform-N}.
Consequently, we can invoke the same localization of the non-exceptional variables $\Z_*$ and essentially the same linear modification as those in \eqref{LatentLinearModif} with $\X_*'=\X_*'(\wt\Z_*)$ that transform the above total transform $p(\Z_*,\Ze,x_1)$ in \eqref{InterimGradation} into a new apex form $f(\X'_*,\Ze,x_1)$.
According to the partial gradation in \eqref{InterimGradation} and partial factorization in \eqref{InterimTransform}, the apex form $f(\X'_*,\Ze,x_1)$ can be organized into the following form:
\begin{equation}\label{InterimReorgFunc}
f(\X'_*,\Ze,x_1)=x_1^{a_1}\Ze^{\A\cdot\Ev{\per\TS}\Ze}\cdot f_0(\X'_*,\Ze)
+\phi(\X'_*,\Ze,x_1)
\end{equation}
such that the latent primary exponent $\alpha_1$ in $\supp (\phi)$ satisfies $\alpha_1\ne a_1$, which is similar to the latent gradation in \eqref{ReorganizedFunc}.
In particular, the functions $f_0(\X'_*,\Ze)$ and $\phi(\X'_*,\Ze,x_1)$ in \eqref{InterimReorgFunc} are the respective localizations and linear modifications of $q_0(\Z_*,\Ze)$ in \eqref{InterimTransform} and $p_1(\Z_*,\Ze,x_1)$ in \eqref{InterimGradation}.
Similar to \eqref{LatentWeierstrassForm}, we invoke Weierstrass preparation theorem and then complete perfect power on $f_0(\X'_*,\Ze)$ so as to represent it in a Weierstrass form as follows.
\begin{equation}\label{InterimWeierstrassForm}
\begin{aligned}
f_0(\XIp)&=\biggl[{x'_1}^{d'}+\sum_{j=2}^{d'}c_j(\_\XIp){x'_1}^{d'-j}\biggr]
(c+r(\XIp))\\
&:=w(\XIp)(c+r(\XIp))
\end{aligned}
\end{equation}
with $c_j(\bz)=0$ for $1<j\le d'$ and $c\in\BB F^*$.
Here for brevity of notations, we define new variables $\XIp:=(\X'_*,\Ze)$ and write $f_0(\X'_*,\Ze)$ in \eqref{InterimReorgFunc} as $f_0(\XIp)$ in \eqref{InterimWeierstrassForm}.
The variables $\_\XIp:=(\LEs\xi 2n\prime\prime):=(\lX'_*,\Ze)$ with $\lX'_*:=(\LEs x2k\prime\prime)$ and the redundant function $r(\XIp)$ satisfies $r(\bz)=0$.
In particular, a deficient contraction as in Definition \ref{Def:DeficientContraction} might be necessary  so that the redundant function $r(\XIp)$ can be disregarded when the interim exponential matrix $\perm\TS$ in \eqref{FirstInterimMatrix} is deficient with respect to $\iI\Zo$.
Henceforth we just assume that the function $f_0(\XIp)$ in \eqref{InterimReorgFunc} bears the Weierstrass form in \eqref{InterimWeierstrassForm}.

Based on the identity \eqref{VariableIdentity} and interim exponential matrix $\perm\TT$ in \eqref{SecondInterimMatrix}, let us define a new interim exponential matrix $\perm\wh\TT$ as in \eqref{ModifiedInterimMatrix} resembling $\perm\TT$.

\begin{equation}\label{ModifiedInterimMatrix}\begin{aligned}
    \perm\wh\TT:=
    \begin{array}{r @{} c @{\hs{-0.5pt}} c @{\hs{-3pt}} p{4mm} @{}
    *{3}c @{\hs{2pt}} c @{\hs{4pt}} c @{} c @{\hs{3pt}} *{3}c @{}
    p{4mm} @{\hs{-2.5mm}} l}
    &           &           &           &           \E_1        &
    \cdots      &           \E_k        &           &           &
    &           &           &           &           &                       \\
    \mr{8}{$\rule{0mm}{20mm}$}          &           x'_1        &
    \mr{8}{$\rule{0mm}{20mm}$}          &
    \raisebox{1pt}[0pt]{\mr{8}{$\left[\rule{0mm}{60pt}\right.$}}
    &           \mc{3}{c}{\mr{3}{$\TE$}}            &           \vline
    &           \mc{5}{c}{\mr{3}{$\TO$}}            &
    \raisebox{1pt}[0pt]{\mr{8}{$\left.\rule{0mm}{60pt}\right]$}}
    &           \mr{8}{$\rule{0mm}{20mm}$}                                 \\
    &           \vdots      &           &           &           &
    &           &           \vline      &           &           &
    &           &           &           &                                   \\
    &           x'_k        &           &           &           &
    &           &           \vline      &           &           &
    &           &           &           &                                   \\
    \cline{2-14}
    &           \zeta_{k+2} &           &           &
    \mc{3}{c}{\mr{4}{$\TO$}}            &           \vline      &
    \mr{3}{$\bl$}           &           \vline      &
    \mc{3}{c}{\mr{3}{$\TE$}}            &           &                       \\
    &           \vdots      &           &           &           &
    &           &           \vline      &           &           \vline
    &           &           &           &           &                       \\
    &           \zeta_n     &           &           &           &
    &           &           \vline      &           &           \vline
    &           &           &           &           &                       \\
    \cline{2-2}\cline{8-14}
    &           x_1\rule{0mm}{9.5pt}    &           &           &
    &           &           &           \vline      &
    \mc{5}{c}{\raisebox{-2pt}[0pt]{$\N_1$}}         &           &           \\
    &           &           &           &           x'_1        &
    \cdots      &           x'_k        &           \vline      &
    x'_{k+1}    &           \vline      &           x'_{k+2}    &
    \cdots      &           x'_n        &           &                       \\
    \end{array}
\end{aligned}\end{equation}

\begin{definition}\label{Def:InterimReductions}
{\upshape (Interim preliminary and Weierstrass reductions; interim transform $f(\XIp,x_1)$; interim reducible and remainder functions $f_0(\XIp)$ and $\phi(\XIp,x_1)$)}

The partial gradation in \eqref{InterimGradation}, partial factorization in \eqref{InterimTransform} as well as the localization and linear modification leading to \eqref{InterimReorgFunc} constitute the procedure of \emph{interim} preliminary reduction; whereas the invocation of Weierstrass preparation theorem and completion of perfect power in \eqref{InterimWeierstrassForm} constitute the \emph{interim} Weierstrass reduction.

After the interim monomial transformation $\CT{\per\TS}$ in \eqref{FirstInterimMatrix} and interim preliminary and Weierstrass reductions, the apex form $f(\XIp,x_1)$ in \eqref{InterimReorgFunc} is called the \emph{interim} transform of the Weierstrass polynomial $w(\perm\X)$ in \eqref{NormalForm}.
The functions $f_0(\XIp)$ and $\phi(\XIp,x_1)$ in \eqref{InterimReorgFunc} are called the interim \emph{reducible} and \emph{remainder} functions of the apex form $f(\XIp,x_1)$ respectively.
\end{definition}

The relation between the interim transform $f(\XIp,x_1)$ in \eqref{InterimReorgFunc} and apex form $f(\Xp)$ in \eqref{ReorganizedFunc} can be summarized into the following commutative diagram.
\begin{equation}\label{InterimCommut}
\begin{aligned}
\xymatrix{
w(\perm\X)  \ar[0,3]^-{\CT{\per\TN}~\text{and reductions}}
\ar[2,0]_(0.35){\CT{\per\TS}}^(0.35){\text{and reductions}}     &
&           &           f(\Xp)                                              \\
&           &           &                                                   \\
f(\XIp,x_1) \ar[-2,3]|-(0.47)\hole
_-(0.6){\text{and partial factorization}}^-(0.36){\CT{\per\wh\TT}} &
&           &
}
\end{aligned}
\end{equation}
The reductions ensuing $\CT{\per\TS}$ in the diagram \eqref{InterimCommut} refer to the interim preliminary and Weierstrass reductions in Definition \ref{Def:InterimReductions} whereas those ensuing $\CT{\per\TN}$ in \eqref{InterimCommut} refer to the latent preliminary and Weierstrass reductions in Definition \ref{Def:LatentReductions}.
The partial factorization in \eqref{InterimCommut} resembles that in \eqref{NormalTransform} as per the identity $\X_0'=\Z_0$ in \eqref{VariableIdentity}.
The commutativity of the above diagram follows from the fact that the invocation of Weierstrass preparation theorem and subsequent completion of perfect power involve only exponential additions and exponential scalar multiplications whereas the interim transformations $\CT{\per\TT}$ and $\CT{\per\wh\TT}$ are linear exponential transformations.
Please refer to \cite{ZS} for the details of the proof of Weierstrass preparation theorem.

The monomial transformations $\CT{\per\wh\TT}$ in \eqref{ModifiedInterimMatrix} and $\CT{\TN'}$ in \eqref{CstnExpntlMtrx} can be synthesized into a new monomial transformation $\CT{\TQ'}$ such that the synthetic exponential matrix $\TQ'=\perm\wh\TT\cdot\TN'$ bears the form in \eqref{CstnSyntheticMatrix}.
The $(n-k)$ by $(n-l)$ submatrix $\TOm'_3:=\bigl[\begin{smallmatrix}\TOm'_2\\ \TE\end{smallmatrix}\bigr]$ is as in \eqref{CstnExpntlMtrx}.
The submatrix $\TA:=[\,\bl~\TE]$ is of dimensions $(n-k-1)$ by $(n-k)$ and is a submatrix of $\perm\wh\TT$ in \eqref{ModifiedInterimMatrix}.
The $(n-k)$ by $(l-k)$ submatrix $\TF':=\bigl[\begin{smallmatrix}\TE\\ \TO\end{smallmatrix}\bigr]$ with $\TE$ being the $(l-k)$ by $(l-k)$ unit matrix.
The $(n-l)$ by $(n-l)$ submatrix $\TD'$ is as in \eqref{CstnExpntlMtrx}.
The $(n-k)$-dimensional row vector $\N_1$ is as in \eqref{ModifiedInterimMatrix}.

\begin{equation}\label{CstnSyntheticMatrix}\begin{aligned}
    \TQ':=
    \begin{array}{r @{} c @{\hs{-0.5pt}} c @{\hs{-3.6pt}} p{4mm}
    @{\hs{-1mm}} *{3}c @{\hs{2pt}} c @{\hs{2pt}} *{3}c @{\hs{2pt}}
    c @{\hs{2pt}} *{3}c @{\hs{-0.8mm}} p{4mm} @{\hs{-2.5mm}} l}
    &           &           &           &           \E_1        &
    \cdots      &           \E_k        &           &           &
    &           &           &                                               \\
    \mr{12}{$\rule{0mm}{24mm}$}         &           x'_1        &
    \mr{12}{$\rule{0mm}{24mm}$}         &
    \mr{12}{$\left[\rule{0mm}{90pt}\right.$}        &
    \mc{3}{c}{\mr{4}{$\TE$}}            &           \vline      &
    \mc{3}{c}{\mr{4}{$\TO$}}            &           \vline      &
    \mc{3}{c}{\mr{4}{$\TOm'_1\TD'$}}    &
    \mr{12}{$\left.\rule{0mm}{90pt}\right]$}        &
    \mr{12}{$\rule{0mm}{24mm}$}                                             \\
    &           x'_2        &           &           &           &
    &           &           \vline      &           &           &
    &           \vline      &           &           &           &
    &                                                                       \\
    &           \vdots      &           &           &           &
    &           &           \vline      &           &           &
    &           \vline      &           &           &           &
    &                                                                       \\
    &           x'_k        &           &           &           &
    &           &           \vline      &           &           &
    &           \vline      &           &           &           &
    &                                                                       \\
    \cline{2-16}
    &           \zeta_{k+2} &           &           &
    \mc{3}{c}{\mr{7}{$\TO$}}            &           \vline      &
    \mc{3}{c}{\mr{6}{$\TA\TF'$}}        &           \vline      &
    \mc{3}{c}{\mr{6}{$\TA\TOm'_3\TD'$}} &           &                       \\
    &           \vdots      &           &           &           &
    &           &           \vline      &           &           &
    &           \vline      &           &           &           &
    &                                                                       \\
    &           \zeta_l     &           &           &           &
    &           &           \vline      &           &           &
    &           \vline      &           &           &           &
    &                                                                       \\
    \cline{2-2}
    &           \zeta_{l+1} &           &           &           &
    &           &           \vline      &           &           &
    &           \vline      &           &           &           &
    &                                                                       \\
    &           \vdots      &           &           &           &
    &           &           \vline      &           &           &
    &           \vline      &           &           &           &
    &                                                                       \\
    &           \zeta_n     &           &           &           &
    &           &           \vline      &           &           &
    &           \vline      &           &           &           &
    &                                                                       \\
    \cline{2-2}\cline{8-16}
    &           x_1         &           &           &           &
    &           &           \vline      &
    \mc{3}{c}{\raisebox{-0.8pt}[0pt]{$\N_1\TF'$}}   &           \vline
    &
    \mc{3}{c}{\raisebox{-0.8pt}[0pt]{$\N_1\TOm'_3\TD'$}}        &
    &                                                                       \\
    &           &           &           &           z'_1        &
    \cdots      &           z'_k        &           \vline      &
    z'_{k+1}    &           \cdots      &           z'_l        &
    \vline      &           z'_{l+1}    &           \cdots      &
    z'_n        &           &                                               \\
    \end{array}
\end{aligned}\end{equation}

Let $\Evv\TQ\Zo'$ denote the submatrix of the synthetic exponential matrix $\TQ'$ in \eqref{CstnSyntheticMatrix} as in Definition \ref{Def:ExceptionalSupport} with the prior exceptional index set $\iI\Zo=\dotc{k+1}n$.
The non-degeneracy of $\TQ'$ is evident as per the following matrix identity:
\begin{equation}\label{NondegSynthetic-1}
\Evv\TQ\Zo'=\Evv{\perm\wh\TT}\Zo\cdot\TN'_{\iI\Zo\times\iI\Zo},
\end{equation}
where $\perm\wh\TT$ and $\TN'$ are as in \eqref{ModifiedInterimMatrix} and \eqref{CstnExpntlMtrx} respectively.

Similarly the monomial transformations $\CT{\per\wh\TT}$ in \eqref{ModifiedInterimMatrix} and $\CT{\per\TN'}$ in \eqref{IncstnExpntlMtrx} can be synthesized into a new monomial transformation $\CT{\per\TQ'}$ such that the synthetic exponential matrix $\perm\TQ'=\perm\wh\TT\cdot\perm\TN'$ bears the following form in \eqref{IncstnSyntheticMatrix}:
\begin{equation}\label{IncstnSyntheticMatrix}\begin{aligned}
    \perm\TQ':=
    \begin{array}{r @{} c @{\hs{-0.5pt}} c @{\hs{-3.6pt}} p{4mm}
    @{\hs{-1mm}} *{3}c @{\hs{2pt}} c @{\hs{3pt}} *{3}c @{\hs{2pt}}
    c @{\hs{2pt}} *{3}c @{\hs{-0.8mm}} p{4mm} @{\hs{-2.5mm}} l}
    &           &           &           &           \E_1        &
    \cdots      &           \E_{k-1}    &           &           &
    &           &           &                                               \\
    \mr{12}{$\rule{0mm}{24mm}$}         &           x'_2        &
    \mr{12}{$\rule{0mm}{24mm}$}         &
    \mr{12}{$\left[\rule{0mm}{90pt}\right.$}        &
    \mc{3}{c}{\mr{3}{$\TE$}}            &           \vline      &
    \mc{3}{c}{\mr{3}{$\TO$}}            &           \vline      &
    \mc{3}{c}{\mr{3}{$\TOm'_1\TB'$}}    &
    \mr{12}{$\left.\rule{0mm}{90pt}\right]$}        &
    \mr{12}{$\rule{0mm}{24mm}$}                                             \\
    &           \vdots      &           &           &           &
    &           &           \vline      &           &           &
    &           \vline      &           &           &           &
    &                                                                       \\
    &           x'_k        &           &           &           &
    &           &           \vline      &           &           &
    &           \vline      &           &           &           &
    &                                                                       \\
    \cline{2-16}
    &           \zeta_{k+2} &           &           &
    \mc{3}{c}{\mr{8}{$\TO$}}            &           \vline      &
    \mc{3}{c}{\mr{6}{$\TA\TF'$}}        &           \vline      &
    \mc{3}{c}{\mr{6}{$\TA\TOm'_3\TB'$}} &           &                       \\
    &           \vdots      &           &           &           &
    &           &           \vline      &           &           &
    &           \vline      &           &           &           &
    &                                                                       \\
    &           \zeta_{l+1} &           &           &           &
    &           &           \vline      &           &           &
    &           \vline      &           &           &           &
    &                                                                       \\
    \cline{2-2}
    &           \zeta_{l+2}             &           &           &
    &           &           &           \vline      &           &
    &           &           \vline      &           &           &
    &           &                                                           \\
    &           \vdots      &           &           &           &
    &           &           \vline      &           &           &
    &           \vline      &           &           &           &
    &                                                                       \\
    &           \zeta_n     &           &           &           &
    &           &           \vline      &           &           &
    &           \vline      &           &           &           &
    &                                                                       \\
    \cline{2-2}\cline{8-16}
    &           x_1         &           &           &
    &           &           &           \vline      &
    \mc{3}{c}{\raisebox{-0.8pt}[0pt]{$\N_1\TF'$}}   &           \vline
    &
    \mc{3}{c}{\raisebox{-0.8pt}[0pt]{$\N_1\TOm'_3\TB'$}}        &
    &                                                                       \\
    \cline{2-2}\cline{8-16}
    &           x'_1        &           &           &           &
    &           &           \vline      &
    \mc{3}{c}{\raisebox{-2pt}[0pt]{$\TO$}}          &           \vline
    &           \mc{3}{c}{\raisebox{-1.5pt}[0pt]{$\N'_1$}}
    &           &                                                           \\
    &           &           &           &           z'_1        &
    \cdots      &           z'_{k-1}    &           \vline      &
    z'_k        &           \cdots      &           z'_l        &
    \vline      &           z'_{l+1}    &           \cdots      &
    z'_n        &           &                                               \\
    \end{array},
\end{aligned}\end{equation}
where the $(n-k)$ by $(n-l-1)$ submatrix $\TOm'_3:=\bigl[\begin{smallmatrix}\TOm'_2\\ \TE\end{smallmatrix}\bigr]$ is as in \eqref{IncstnExpntlMtrx}.
The submatrix $\TA:=[\,\bl~\TE]$ is of dimensions $(n-k-1)$ by $(n-k)$ and is a submatrix of $\perm\wh\TT$ in \eqref{ModifiedInterimMatrix}.
The $(n-k)$ by $(l-k+1)$ submatrix $\TF':=\bigl[\begin{smallmatrix}\TE\\ \TO\end{smallmatrix}\bigr]$ with $\TE$ being the $(l-k+1)$ by $(l-k+1)$ unit matrix.
The $(n-l-1)$ by $(n-l)$ submatrix $\TB'$ denotes the submatrix $[\TD'\bl'~\TD']$ in \eqref{IncstnExpntlMtrx}.
Please note that we use the same notations $\TF'$ and $\TOm'_3$ in \eqref{CstnSyntheticMatrix} and \eqref{IncstnSyntheticMatrix} for different matrices since the cases of $\TQ'$ and $\perm\TQ'$ are mutually exclusive.

Let the index sets $\iJ_\Z:=\dotc k{n-1}$ and $\iJ_1:=\dotc kl\cup\dotc{l+2}n$.
It is evident that the non-degeneracy of the synthetic exponential matrix $\perm\TQ'$ in \eqref{IncstnSyntheticMatrix} follows from that of its submatrix $\Evv{\perm\TQ}k^\prime$ with $\iI k=\dotc kn$, which can further be corroborated by the following matrix identity:
\begin{equation}\label{NondegSynthetic-2}
\perm\TQ'_{\iJ_\Z\times\iJ_1}=\Evv{\perm\wh\TT}\Zo
\cdot\perm\TN'_{\iJ_\Z\times\iJ_1}
\end{equation}
with $\perm\wh\TT$ and $\perm\TN'$ being as in \eqref{ModifiedInterimMatrix} and \eqref{IncstnExpntlMtrx} respectively.
The prior exceptional index set $\iI\Zo$ equals $\dotc{k+1}n$ as before.

The construction per se of the exponential matrices $\TQ'$ in \eqref{CstnSyntheticMatrix} and $\perm\TQ'$ in \eqref{IncstnSyntheticMatrix} indicates the commutativity of the following diagram.
\begin{equation}\label{SynCommut}
\begin{aligned}
\xymatrix{
&       &       &       &   f(\Xp)
\ar[2,0]_-{\CT{\TN'}}^-{\text{or~}\CT{\per\TN'}}                            \\
&       &       &       &                                                   \\
f(\XIp,x_1) \ar[-2,4]|-(0.51)\hole
^-(0.36){\CT{\per\wh\TT}\text{~and partial factorization}\hs{-10mm}}
\ar[0,4]^-{\CT{\TQ'}\text{~or~}\CT{\per\TQ'}}_-{\text{~and partial
factorization}} &       &       &       &
f_{\TN'}(\Zp)\text{~or~}f_{\per\TN'}(\Zp)
}
\end{aligned}
\end{equation}

Please note that the partial factorization ensuing $\CT{\TQ'}$ or $\CT{\per\TQ'}$ in the above diagram in \eqref{SynCommut} corresponds to the one ensuing $\CT{\per\wh\TT}$ so as to make the diagram commutative and is not part of the resolution algorithm.

\begin{definition}\label{Def:SyntheticDef}
{\upshape (Synthetic and interim exponential matrices and monomial transformations; interim variables $\Ze$ and their exponents $\Al_\Ze$; synthetic variables $(\XIp,x_1)$)}

The exponential matrices $\TQ'$ and $\perm\TQ'$ in \eqref{CstnSyntheticMatrix} and \eqref{IncstnSyntheticMatrix} are called the \emph{synthetic} exponential matrices whose associated monomial transformations $\CT{\TQ'}$ and $\CT{\per\TQ'}$ are called the \emph{synthetic} monomial transformations.
The $\perm\TS$ in \eqref{FirstInterimMatrix}, $\perm\TT$ in \eqref{SecondInterimMatrix} and $\perm\wh\TT$ in \eqref{ModifiedInterimMatrix} are called the \emph{interim} exponential matrices whose associated \emph{interim} monomial transformations are $\CT{\per\TS}$, $\CT{\per\TT}$ and $\CT{\per\wh\TT}$ respectively.

The variables $\Ze=(\LE\zeta{k+2}n)$ in \eqref{FirstInterimMatrix} and \eqref{SecondInterimMatrix} are called the \emph{interim} variables whose exponents are denoted as $\Al_\Ze$;
whereas the variables $(\X'_*,\Ze,x_1):=(\XIp,x_1)$ in \eqref{InterimReorgFunc} are called the \emph{synthetic} variables.
\end{definition}

Let us elucidate the change of singularity height as per the new perspective of synthetic exponential matrix.
Let $\M_{x_1}$ denote the row vector of the synthetic exponential matrix $\TQ'$ in \eqref{CstnSyntheticMatrix} or $\perm\TQ'$ in \eqref{IncstnSyntheticMatrix} associated with the latent primary variable $x_1$.
Similar to Definition \ref{Def:Consistency-1}, the latent primary variable $x_1$ is said to be consistent with $\TQ'$ and $\iI\Zop$, or $\perm\TQ'$ and $\iI\Zop$, if $\rank (\Ev{\lTQ}{\Zop}')=|\iI\Zop|$ or $\rank (\Ev{\perl\lTQ}{\Zop}')=|\iI\Zop|$ with $\lTQ'$ or $\perl\lTQ'$ denoting the remnant submatrix of $\TQ'$ or $\perm\TQ'$ excluding the latent primary row vector $\M_{x_1}$.
According to \eqref{NonDegVector}, it is evident that the primary variable $x'_1$ is inconsistent with $\perm\TQ'$ and $\iI\Zop$ as in \eqref{IncstnSyntheticMatrix}.

When the latent primary variable $x_1$ is consistent with the synthetic exponential matrix $\TQ'$ and $\iI\Zop$ in \eqref{CstnSyntheticMatrix}, we assume that $\TQ'$ is in a canonical consistency form as in Definition \ref{Def:Consistency-1} whose exceptional submatrix $\Evv\TQ\Zop'$ is non-degenerate, based on which $\TQ'$ has a canonical reduction in the form of $\TN_{\TQ'}$ in \eqref{CstnNrmlzdMtrx}.
In \eqref{CstnNrmlzdMtrx} the vector $\U_1$ denotes $\N_1\TOm'_3\TD'$ which is the exceptional part of the latent primary row vector $\M_{x_1}$ of $\TQ'$ in \eqref{CstnSyntheticMatrix}.
The submatrix $\TG'$ of $\TN_{\TQ'}$ in \eqref{CstnNrmlzdMtrx} denotes the non-degenerate exceptional submatrix $\Evv{\TQ}\Zop'$ in \eqref{CstnSyntheticMatrix}.
The matrix $\TOm'$ is of dimensions $(l-1)$ by $(n-l)$ with elements in $\BB Q$.

\begin{equation}\label{CstnNrmlzdMtrx}\begin{aligned}
    \TN_{\TQ'}=
    \begin{array}{r @{} c @{\hs{-0.5pt}} c @{\hs{-3.2pt}} p{4mm}
    @{\hs{-1mm}} *{3}c @{\hs{2pt}} c @{\hs{2pt}} *{3}c
    @{\hs{-0.8mm}} p{4mm} @{\hs{-2.5mm}} l}
    &           &           &           &           \E_1        &
    \cdots      &           \E_l        &           &           &
    &           &           &                                               \\
    \mr{12}{$\rule{0mm}{24mm}$}         &           x_1         &
    \mr{12}{$\rule{0mm}{24mm}$}         &
    \raisebox{5pt}[0pt]{\mr{12}{$\left[\rule{0mm}{82pt}\right.$}}
    &           \mc{3}{c}{\mr{7}{$\TE$}}            &           \vline
    &           \mc{3}{c}{\U_1}         &
    \raisebox{5pt}[0pt]{\mr{12}{$\left.\rule{0mm}{82pt}\right]$}}
    &           \mr{12}{$\rule{0mm}{24mm}$}                                 \\
    \cline{2-2}\cline{8-12}
    &           \xi'_1      &           &           &           &
    &           &           \vline      &
    \mc{3}{c}{\mr{6}{$\TOm'\TG'$}}      &           &                       \\
    &           \vdots      &           &           &           &
    &           &           \vline      &           &           &
    &           &                                                           \\
    &           \xi'_k      &           &           &           &
    &           &           \vline      &           &           &
    &           &                                                           \\
    \cline{2-2}
    &           \xi'_{k+1}  &           &           &           &
    &           &           \vline      &           &           &
    &           &                                                           \\
    &           \vdots      &           &           &           &
    &           &           \vline      &           &           &
    &           &                                                           \\
    &           \xi'_{l-1}  &           &           &           &
    &           &           \vline      &           &           &
    &           &                                                           \\
    \cline{2-12}
    &           \xi'_l      &           &           &
    \mc{3}{c}{\mr{3}{\raisebox{-2mm}[0pt]{$\TO$}}}
    &           \vline      &
    \mc{3}{c}{\mr{3}{\raisebox{-2mm}[0pt]{$\TG'$}}} &           &           \\
    &           \vdots      &           &           &           &
    &           &           \vline      &           &           &
    &           &                                                           \\
    &           \xi'_{n-1}  &           &           &           &
    &           &           \vline      &           &           &
    &           &                                                           \\
    &           &           &           &           t'_1        &
    \cdots      &           t'_l        &           \vline      &
    t'_{l+1}    &           \cdots      &           t'_n        &
    &                                                                       \\
    \end{array}
\end{aligned}\end{equation}

The above canonical form in \eqref{CstnNrmlzdMtrx} shows that if we take $t'_1$ as the primary variable, then similar to the discussion in Lemma \ref{Lemma:SimpleDecrease}, its singularity height shall strictly decrease from $d$, the prior singularity height associated with the latent primary variable $x_1$ as in \eqref{PrimaryForm}, at a regular reduced branch point.
This is also due to the fact that the interim preliminary and Weierstrass reductions as in Definition \ref{Def:InterimReductions} do not alter the latent primary exponent $\alpha_1$ of the latent primary variable $x_1$.

\begin{definition}\label{Def:Revival}
{\upshape (Revived primary variable)}

The primary variable like $t'_1$ as above that resumes the decreasing process of the singularity height associated with the latent primary variable $x_1$ is called the \emph{revived} primary variable associated with $x_1$.
We also say that the latent primary variable $x_1$ is \emph{revived} as $t'_1$ by the synthetic exponential matrix $\TQ'$ henceforth.
\end{definition}

\begin{equation}\label{IncstnNrmlzdMtrx}\begin{aligned}
    \TN_{\per\TQ'}=
    \begin{array}{r @{} c @{\hs{-0.5pt}} c @{\hs{-3.2pt}} p{4mm}
    @{\hs{-1mm}} *{3}c @{\hs{2pt}} c @{\hs{2pt}} *{3}c
    @{\hs{-0.8mm}} p{4mm} @{\hs{-2.5mm}} l}
    &           &           &           &           \E_1        &
    \cdots      &           \E_l        &           &           &
    &           &           &                                               \\
    \mr{12}{$\rule{0mm}{24mm}$}         &           x_1         &
    \mr{12}{$\rule{0mm}{24mm}$}         &
    \raisebox{-1pt}[0pt]{\mr{12}{$\left[\rule{0mm}{90pt}\right.$}}
    &           \mc{3}{c}{\mr{7}{$\TE$}}            &           \vline
    &           \mc{3}{c}{\U_1}         &
    \raisebox{-1pt}[0pt]{\mr{12}{$\left.\rule{0mm}{90pt}\right]$}}
    &           \mr{12}{$\rule{0mm}{24mm}$}                                 \\
    \cline{2-2}\cline{8-12}
    &           \xi'_2      &           &           &           &
    &           &           \vline      &
    \mc{3}{c}{\mr{6}{$\TOm'\TG'\TB'$}}  &           &                       \\
    &           \vdots      &           &           &           &
    &           &           \vline      &           &           &
    &           &                                                           \\
    &           \xi'_k      &           &           &           &
    &           &           \vline      &           &           &
    &           &                                                           \\
    \cline{2-2}
    &           \xi'_{k+1}  &           &           &           &
    &           &           \vline      &           &           &
    &           &                                                           \\
    &           \vdots      &           &           &           &
    &           &           \vline      &           &           &
    &           &                                                           \\
    &           \xi'_l      &           &           &           &
    &           &           \vline      &           &           &
    &           &                                                           \\
    \cline{2-12}
    &           \xi'_{l+1}  &           &           &
    \mc{3}{c}{\mr{4}{\raisebox{-2mm}[0pt]{$\TO$}}}
    &           \vline      &
    \mc{3}{c}{\mr{3}{\raisebox{-2mm}[0pt]{$\TG'\TB'$}}}         &
    &                                                                       \\
    &           \vdots      &           &           &           &
    &           &           \vline      &           &           &
    &           &                                                           \\
    &           \xi'_{n-1}  &           &           &           &
    &           &           \vline      &           &           &
    &           &                                                           \\
    \cline{2-2}\cline{8-12}
    &           x'_1        &           &           &           &
    &           &           \vline      &           \mc{3}{c}{\N'_1}
    &           &                                                           \\
    &           &           &           &           t'_1        &
    \cdots      &           t'_l        &           \vline      &
    t'_{l+1}    &           \cdots      &           t'_n        &
    &                                                                       \\
    \end{array}
\end{aligned}\end{equation}

Let $\M_{x_1}$ and $\M_{x'_1}$ denote the row vectors of the synthetic exponential matrix $\perm\TQ'$ corresponding to the latent primary variables $x_1$ and $x'_1$ respectively.
From $\TB'=[\TD'\bl'~\TD']$ in \eqref{IncstnSyntheticMatrix}, it is easy to deduce that the primary variable $x'_1$ is always inconsistent with $\perm\TQ'$ and exceptional index set $\iI\Zop$ in \eqref{IncstnSyntheticMatrix}.
Hence similar to the form of $\perm\TN$ in \eqref{IncstnMnmlTrnsfrm}, we take $\M_{x'_1}$ as the last row vector of $\perm\TQ'$.
When the latent primary variable $x_1$ is consistent with $\perm\TQ'$ and $\iI\Zop$ in \eqref{IncstnSyntheticMatrix}, via a permutation and relabeling of the row vectors of $\perm\TQ'$, we take $\M_{x_1}$ as the first row vector of $\perm\TQ'$ under the assumption that its exceptional submatrix $\Evv{\perm\TQ}\Zop'$ is non-degenerate.
In this way $\perm\TQ'$ has a canonical reduction in the form of $\TN_{\per\TQ'}$ in \eqref{IncstnNrmlzdMtrx}.
The $(n-l)$-dimensional row vector $\N'_1$ in \eqref{IncstnNrmlzdMtrx} is the same as in \eqref{IncstnSyntheticMatrix}.
The $(n-l-1)$ by $(n-l-1)$ dimensional submatrix $\TG'$ in \eqref{IncstnNrmlzdMtrx} is non-degenerate as $\det\TG'\ne 0$.
Similar to the case in \eqref{CstnNrmlzdMtrx}, the canonical form in \eqref{IncstnNrmlzdMtrx} shows that the latent primary variable $x_1$ is revived as $t'_1$ whose associated singularity height strictly decreases from the prior singularity height $d$ at a regular reduced branch point.

Now let us suppose that the latent primary variable $x_1$ is inconsistent with the synthetic exponential matrix $\TQ'$ and exceptional index set $\iI\Zop$ in \eqref{CstnSyntheticMatrix} or $\perm\TQ'$ and $\iI\Zop$ in \eqref{IncstnSyntheticMatrix}.
From $\rank (\TA)=n-k-1$ and $\rank (\TOm'_3)=n-l$ in \eqref{CstnSyntheticMatrix}, or $\rank (\TOm'_3)=n-l-1$ in \eqref{IncstnSyntheticMatrix}, it follows that $\rank (\TA\TOm'_3)\ge n-l-1$ for the submatrix $\TA\TOm'_3$ of $\TQ'$ in \eqref{CstnSyntheticMatrix}, or $\rank (\TA\TOm'_3)\ge n-l-2$ for $\perm\TQ'$ in \eqref{IncstnSyntheticMatrix}.
Hence we can make a permutation and relabeling of the interim variables $\Ze$ as well as their corresponding row vectors of $\TQ'$ in \eqref{CstnSyntheticMatrix} or $\perm\TQ'$ in \eqref{IncstnSyntheticMatrix} such that $\TQ'$ or $\perm\TQ'$ bears the form in \eqref{PermCstnSynMtrx} or \eqref{SmplIncnSynMtrx}.
The submatrix $\LAm'$ is a permutation of the submatrix $\TA\TF'$ in \eqref{CstnSyntheticMatrix} or \eqref{IncstnSyntheticMatrix} and the matrix $\TOm'$ is of dimensions $l$ by $(n-l-1)$ in \eqref{PermCstnSynMtrx} or $l$ by $(n-l-2)$ in \eqref{SmplIncnSynMtrx} with elements in $\BB Q$.
The exceptional submatrix $\Evv\TQ\Zop'$ in \eqref{PermCstnSynMtrx} is non-degenerate, i.e., $\det\Evv\TQ\Zop'\ne 0$, which amounts to the condition $\U_1\cdot\bigl[\begin{smallmatrix}-1\\ \bl'\end{smallmatrix}\bigr]\ne 0$.
The column vector $\bl'\in\BB Q^{n-l-1}$ and the submatrix $\TB'=[\TG'\bl'~\TG']$ in \eqref{PermCstnSynMtrx} is of dimensions $(n-l-1)$ by $(n-l)$ with elements in $\BB Q$.

The matrix $\TB'_2$ in \eqref{SmplIncnSynMtrx} denotes the $(n-l-2)$ by $(n-l)$ submatrix $[\TB'_1\bl'_2~\TB'_1]$ with elements in $\BB Q$ and $\TB'_1:=[\TG'_1\bl'_1~\TG'_1]$ such that $\bl'_1$ and $\bl'_2$ are $(n-l-2)$-dimensional and $(n-l-1)$-dimensional column vectors in $\BB Q^{n-l-2}$ and $\BB Q^{n-l-1}$ respectively.
The submatrix $\TG'_2$ in \eqref{SmplIncnSynMtrx} denotes the $(n-l-1)$ by $(n-l-1)$ submatrix $\bigl[\begin{smallmatrix}\TB'_1\\ \U'_1\end{smallmatrix}\bigr]$.
It is evident that we have the following nested non-degeneracies:
\begin{equation}\label{NestedNonDeg}
\det\TG'_1\cdot\det\TG'_2\cdot\det\Evv{\perm\TQ}\Zop'\ne 0
\end{equation}
in \eqref{SmplIncnSynMtrx} corresponding to the nested latency of the latent primary variables $x_1$ and $x'_1$.
The row vectors $\U'_2=\N'_1$ and $(\U'_1\cdot\bl'_2,\U'_1)=\N_1\TOm'_3\TB'$ are as in \eqref{IncstnSyntheticMatrix}, both of which are in $\DN{n-l}$.

\begin{equation}\label{PermCstnSynMtrx}\begin{aligned}
    \TQ':=
    \begin{array}{r @{} c @{\hs{-0.5pt}} c @{\hs{-3.6pt}} p{4mm}
    @{\hs{-1mm}} *{3}c @{\hs{2pt}} c @{\hs{3pt}} *{3}c @{\hs{2pt}}
    c @{\hs{3pt}} c @{} c @{\hs{3pt}} *{3}c @{\hs{-0.8mm}}
    p{4mm} @{\hs{-2.5mm}} l}
    &           &           &           &           \E_1        &
    \cdots      &           \E_k        &           &           &
    &           &           &           &                                   \\
    \mr{12}{$\rule{0mm}{24mm}$}         &           x'_1        &
    \mr{12}{$\rule{0mm}{24mm}$}         &
    \raisebox{7pt}[0pt]{\mr{12}{$\left[\rule{0mm}{83pt}\right.$}}
    &           \mc{3}{c}{\mr{3}{$\TE$}}            &           \vline
    &           \mc{3}{c}{\mr{3}{$\TO$}}            &           \vline
    &           \mc{5}{c}{\mr{6}{$\TOm'\TB'$}}      &
    \raisebox{7pt}[0pt]{\mr{12}{$\left.\rule{0mm}{83pt}\right]$}}
    &           \mr{12}{$\rule{0mm}{24mm}$}                                 \\
    &           \vdots      &           &           &           &
    &           &           \vline      &           &           &
    &           \vline      &           &           &           &
    &           &                                                           \\
    &           x'_k        &           &           &           &
    &           &           \vline      &           &           &
    &           \vline      &           &           &           &
    &           &                                                           \\
    \cline{2-11}
    &           \zeta_{k+2} &           &           &
    \mc{3}{c}{\mr{7}{$\TO$}}            &           \vline      &
    \mc{3}{c}{\mr{6}{$\LAm'$}}          &           \vline      &
    &           &           &           &           &                       \\
    &           \vdots      &           &           &           &
    &           &           \vline      &           &           &
    &           \vline      &           &           &           &
    &           &                                                           \\
    &           \zeta_{l+1} &           &           &           &
    &           &           \vline      &           &           &
    &           \vline      &           &           &           &
    &                                                                       \\
    \cline{2-2}\cline{12-18}
    &           \zeta_{l+2} &           &           &           &
    &           &           \vline      &           &           &
    &           \vline      &           \mr{3}{$\TG'\bl'$}      &
    \vline      &           \mc{3}{c}{\mr{3}{$\TG'$}}                       \\
    &           \vdots      &           &           &           &
    &           &           \vline      &           &           &
    &           \vline      &           &           \vline      &
    &           &           &                                               \\
    &           \zeta_n     &           &           &           &
    &           &           \vline      &           &           &
    &           \vline      &           &           \vline      &
    &           &           &                                               \\
    \cline{2-2}\cline{8-18}
    &           x_1         &           &           &           &
    &           &           \vline      &
    \mc{3}{c}{\N_1\TF'}     &           \vline      &
    \mc{5}{c}{\U_1}         &           &                                   \\
    &           &           &           &           z'_1        &
    \cdots      &           z'_k        &           \vline      &
    z'_{k+1}    &           \cdots      &           z'_l        &
    \vline      &           z'_{l+1}    &           \vline      &
    z'_{l+2}    &           \cdots      &           z'_n        &
    &                                                                       \\
    \end{array}
\end{aligned}\end{equation}

\begin{equation}\label{SmplIncnSynMtrx}\begin{aligned}
    \perm\TQ':=
    \begin{array}{r @{} c @{\hs{-0.5pt}} c @{\hs{-3.6pt}} p{4mm}
    @{\hs{-0.8mm}} *{3}c @{\hs{0.6mm}} c @{\hs{1mm}} *{3}c @{\hs{1mm}}
    c @{\hs{1mm}} c @{\hs{0.3mm}} c @{\hs{0.6mm}} c @{\hs{0.3mm}}
    c @{\hs{1mm}} *{3}c @{\hs{-0.8mm}} p{4mm} @{\hs{-2.5mm}} l}
    &           &           &           &           \E_1        &
    \cdots      &           \E_{k-1}    &           &           &
    &           &           &           &           &           &
    &           &           &           &           &                       \\
    \mr{12}{$\rule{0mm}{24mm}$}         &           x'_2        &
    \mr{12}{$\rule{0mm}{24mm}$}         &
    \mr{12}{$\left[\rule{0mm}{90pt}\right.$}        &
    \mc{3}{c}{\mr{3}{$\TE$}}            &           \vline      &
    \mc{3}{c}{\mr{3}{$\TO$}}            &           \vline      &
    \mc{7}{c}{\mr{6}{$\TOm'\hs{-1.5pt}\TB'_2$}}     &
    \mr{12}{$\left.\rule{0mm}{90pt}\right]$}        &
    \mr{12}{$\rule{0mm}{24mm}$}                                             \\
    &           \vdots      &           &           &           &
    &           &           \vline      &           &           &
    &           \vline      &           &           &           &
    &           &           &           &           &                       \\
    &           x'_k        &           &           &           &
    &           &           \vline      &           &           &
    &           \vline      &           &           &           &
    &           &           &           &           &                       \\
    \cline{2-11}
    &           \zeta_{k+2} &           &           &
    \mc{3}{c}{\mr{8}{$\TO$}}            &           \vline      &
    \mc{3}{c}{\mr{6}{$\LAm'$}}          &           \vline      &
    &           &           &           &           &           &
    &           &                                                           \\
    &           \vdots      &           &           &           &
    &           &           \vline      &           &           &
    &           \vline      &           &           &           &
    &           &           &           &           &                       \\
    &           \zeta_{l+2} &           &           &           &
    &           &           \vline      &           &           &
    &           \vline      &           &           &           &
    &           &           &           &           &                       \\
    \cline{2-2}\cline{12-20}
    &           \zeta_{l+3} &           &           &           &
    &           &           \vline      &           &           &
    &           \vline      &
    \mr{4}{\raisebox{-1mm}[0pt]{$\TG'_2\bl'_2$}}    &
    \vline      &           \mr{3}{$\TG'_1\bl'_1$}  &           \vline
    &           \mc{3}{c}{\mr{3}{$\TG'_1$}}         &           &           \\
    &           \vdots      &           &           &           &
    &           &           \vline      &           &           &
    &           \vline      &           &           \vline      &
    &           \vline      &           &           &           &
    &                                                                       \\
    &           \zeta_n     &           &           &           &
    &           &           \vline      &           &           &
    &           \vline      &           &           \vline      &
    &           \vline      &           &           &           &
    &                                                                       \\
    \cline{2-2}\cline{8-11}\cline{14-20}
    &           x_1         &           &           &           &
    &           &           \vline      &
    \mc{3}{c}{\raisebox{-1pt}[0pt]{$\N_1\TF'$}}     &           \vline
    &           &           \vline      &
    \mc{5}{c}{\raisebox{-0.6pt}[0pt]{$\U'_1$}}      &           &           \\
    \cline{2-2}\cline{8-20}
    &           x'_1        &           &           &           &
    &           &           \vline      &
    \mc{3}{c}{\raisebox{-1.8pt}[0pt]{$\TO$}}        &           \vline
    &           \mc{7}{c}{\raisebox{-1pt}[0pt]{$\U'_2$}}        &
    &                                                                       \\
    &           &           &           &           z'_1        &
    \cdots      &           z'_{k-1}    &           \vline      &
    z'_k        &           \cdots      &           z'_l        &
    \vline      &           z'_{l+1}    &           \vline      &
    z'_{l+2}    &           \vline      &           z'_{l+3}    &
    \cdots      &           z'_n        &           &                       \\
    \end{array}
\end{aligned}\end{equation}

\begin{definition}\label{Def:NestLatency}
{\upshape (Sustained latency; nested latency; nesting degree $\fnd$; nesting level)}

In the case when the latent primary variable $x_1$ is inconsistent with the synthetic exponential matrix $\TQ'$ and exceptional index set $\iI\Zop$ as in \eqref{PermCstnSynMtrx} or $\perm\TQ'$ and $\iI\Zop$ as in \eqref{SmplIncnSynMtrx}, we say that $x_1$ \emph{sustains} its latency with respect to $\TQ'$ and $\iI\Zop$ or $\perm\TQ'$ and $\iI\Zop$.

For the synthetic exponential matrix $\perm\TQ'$ in \eqref{SmplIncnSynMtrx}, the latent primary variables $x_1$ and $x'_1$ constitute the \emph{nested latency} of $\perm\TQ'$ with their number defined as the \emph{nesting degree} and denoted as $\fnd=2$.
In particular, $x_1$ and $x'_1$ are called the latent primary variables of \emph{nesting level} $1$ and $2$ respectively according to their order of appearance.
\end{definition}

Based on the non-degeneracy of the exceptional submatrix $\Evv\TQ\Zop'$ in \eqref{PermCstnSynMtrx}, the synthetic exponential matrix $\TQ'$ in \eqref{PermCstnSynMtrx} has a canonical reduction $\TN_{\TQ'}$ as in \eqref{FinalNrmlzdMtrx} resembling the canonical reduction $\perm\TN$ in \eqref{IncstnMnmlTrnsfrm}, both of which have $x_1$ as the latent primary variable.
Thus the discussions ensuing \eqref{IncstnMnmlTrnsfrm} can be repeated in a similar fashion here.
Moreover, the following conclusion holds for the new singularity height after the monomial transformation associated with $\TN_{\TQ'}$.

\begin{equation}\label{FinalNrmlzdMtrx}\begin{aligned}
    \TN_{\TQ'}:=
    \begin{array}{r @{} c @{\hs{-0.5pt}} c @{\hs{-3.6pt}} p{4mm}
    @{\hs{-1mm}} *{3}c @{\hs{2pt}} c @{\hs{3pt}} c @{} c
    @{\hs{3pt}} *{3}c @{\hs{-0.8mm}} p{4mm}
    @{\hs{-2.5mm}} l}
    &           &           &           &           \E_1        &
    \cdots      &           \E_l        &           &           &           \\
    \mr{12}{$\rule{0mm}{24mm}$}         &           x'_1        &
    \mr{12}{$\rule{0mm}{24mm}$}         &
    \raisebox{7pt}[0pt]{\mr{12}{$\left[\rule{0mm}{83pt}\right.$}}
    &           \mc{3}{c}{\mr{6}{$\TE$}}            &           \vline
    &           \mc{5}{c}{\mr{6}{$\TOm'\TB'$}}      &
    \raisebox{7pt}[0pt]{\mr{12}{$\left.\rule{0mm}{83pt}\right]$}}
    &           \mr{12}{$\rule{0mm}{24mm}$}                                 \\
    &           \vdots      &           &           &           &
    &           &           \vline      &           &           &
    &           &           &                                               \\
    &           x'_k        &           &           &           &
    &           &           \vline      &           &           &
    &           &           &                                               \\
    \cline{2-2}
    &           \zeta_{k+2} &           &           &           &
    &           &           \vline      &           &           &
    &           &           &                                               \\
    &           \vdots      &           &           &           &
    &           &           \vline      &           &           &
    &           &           &                                               \\
    &           \zeta_{l+1} &           &           &           &
    &           &           \vline      &           &           &
    &           &           &           &                                   \\
    \cline{2-14}
    &           \zeta_{l+2} &           &           &
    \mc{3}{c}{\mr{4}{$\TO$}}            &           \vline      &
    \mr{3}{$\TG'\bl'$}      &           \vline      &
    \mc{3}{c}{\mr{3}{$\TG'$}}                                               \\
    &           \vdots      &           &           &           &
    &           &           \vline      &           &           \vline
    &           &           &           &                                   \\
    &           \zeta_n     &           &           &           &
    &           &           \vline      &           &           \vline
    &           &           &           &                                   \\
    \cline{2-2}\cline{8-14}
    &           x_1         &           &           &           &
    &           &           \vline      &           \mc{5}{c}{\U_1}
    &           &                                                           \\
    &           &           &           &           t'_1        &
    \cdots      &           t'_l        &           \vline      &
    t'_{l+1}    &           \vline      &           t'_{l+2}    &
    \cdots      &           t'_n        &           &                       \\
    \end{array}
\end{aligned}\end{equation}

\begin{lemma}\label{Lemma:MiddleReviveDecrease}
After the monomial transformation associated with the canonical reduction $\TN_{\TQ'}$ in \eqref{FinalNrmlzdMtrx}, the singularity height associated with the primary variable $t'_1$ strictly decreases from the prior singularity height $d'$ in \eqref{ReorganizedFunc} and \eqref{InterimReorgFunc}.
\end{lemma}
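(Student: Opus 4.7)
The plan is to reduce this to a direct analog of Lemma \ref{Lemma:SimpleDecrease}, with $t'_1$ playing the role of the primary variable, $d'$ playing the role of the singularity height, and the interim reducible function $f_0(\XIp)$ in \eqref{InterimWeierstrassForm} playing the role of the Weierstrass polynomial. The first step is to read off the key structural feature of $\TN_{\TQ'}$ in \eqref{FinalNrmlzdMtrx}: its first column (corresponding to $t'_1$) equals $\E_1$, i.e.\ it has entry $1$ in the row of $x'_1$ and entry $0$ in every other row, including the rows of the interim variables $\Ze$ and of the latent primary variable $x_1$. Consequently the monomial transformation $\CT{\TN_{\TQ'}}$ induces the exponent identity analogous to \eqref{ExponIdentity}: for every monomial $\Xp^{\Al'}\Ze^{\Al_\Ze}x_1^{\alpha_1}$ in the support of the interim transform $f(\XIp,x_1)$ in \eqref{InterimReorgFunc}, the $t'_1$-exponent of its image equals the $x'_1$-exponent $\alpha'_1$ of the original monomial.

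The second step is to establish an analog of Lemma \ref{Lemma:TransformedExponents} for the partial transform of $f(\XIp,x_1)$ under $\CT{\TN_{\TQ'}}$. Since $\D'=(d',\bz)$ is the apex of $f(\Xp)$ in \eqref{LtnApexForm} and a vertex of $\NP(f(\Xp))$, the exponent identity above forces the transformed apex to have the unique maximal degree $d'$ in $t'_1$. Moreover, the Weierstrass polynomial of $f_0(\XIp)$ in \eqref{InterimWeierstrassForm} has no ${x'_1}^{d'-1}$ term after completion of perfect power, and the latent remainder $\phi(\XIp,x_1)$ inherits the same property via its construction in \eqref{InterimReorgFunc}, which in turn propagates from the analogous property of $f_0(\Xp)$ in \eqref{LatentWeierstrassForm} through the commutativity of the diagram in \eqref{InterimCommut}. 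Hence no term in the support of $f(\XIp,x_1)$ has $x'_1$-exponent equal to $d'-1$, and therefore no term in the support of its partial transform under $\CT{\TN_{\TQ'}}$ has $t'_1$-exponent equal to $d'-1$.

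At a regular reduced branch point associated with $\CT{\TN_{\TQ'}}$, localizing the non-exceptional variables other than $t'_1$ at the branch point yields a nonzero univariate polynomial in $t'_1$ of degree $d'$ whose support avoids the exponent $d'-1$, hence it has at most $d'$ terms. Lemma \ref{Lemma:UnivariablePolynomial} then bounds the multiplicity of every nonzero root by $d'-1<d'$. A deficient contraction as in Lemma \ref{Lemma:TotalTransformDecrease} ensures that the Weierstrass unit $c+r(\XIp)$ in \eqref{InterimWeierstrassForm} remains nonzero at the branch point, so the same strict inequality persists for the full localized proper transform associated with the new primary variable $t'_1$. The main technical obstacle is verifying that the completion-of-perfect-power property in the active primary variable $x'_1$ is genuinely preserved through the decomposition $\perm\TN=\perm\TS\cdot\perm\TT$ and the subsequent interim preliminary and Weierstrass reductions of Definition \ref{Def:InterimReductions}; this is precisely what the commutativity of \eqref{InterimCommut} guarantees, since Weierstrass preparation and completion of perfect power involve only exponential additions and scalar multiplications that commute with the linear exponential interim transformations $\CT{\per\TT}$ and $\CT{\per\wh\TT}$.
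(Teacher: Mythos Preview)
There is a genuine gap in your second step. You assert that the interim remainder $\phi(\XIp,x_1)$ (equivalently $\phi(\Xp)$ via the diagram \eqref{InterimCommut}) inherits the absence of ${x'_1}^{d'-1}$ terms from the completion of perfect power, and that the transformed apex retains the unique maximal $t'_1$-degree $d'$. But the completion of perfect power is applied only to the latent reducible function $f_0$; the remark after \eqref{LatentWeierstrassForm} that $\phi(\Xp)$ ``conforms'' with it means only that the variable substitution in $x'_1$ is carried through to $\phi$, not that the ${x'_1}^{d'-1}$ coefficient of $\phi$ is killed. Terms of $\phi$ are characterized solely by having latent primary component $\wt\alpha_1\ne 0$, with no constraint whatsoever on $\alpha'_1$: they may have $\alpha'_1=d'-1$ or $\alpha'_1\ge d'$. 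Hence neither part of your analog of Lemma \ref{Lemma:TransformedExponents} holds for the full interim transform $f(\XIp,x_1)$, and the Lemma \ref{Lemma:UnivariablePolynomial} endgame does not apply.

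The paper's argument handles exactly this obstruction by a case split on the constant value of $\alpha_1$ along the exceptional support $\es{\TN_{\TQ'}}\Top$ (this constant exists by the analog of Lemma \ref{Lemma:ConstantPrimaryExp}, since $x_1$ occupies the last row of $\TN_{\TQ'}$ in \eqref{FinalNrmlzdMtrx}). When that constant equals $a_1$, only $f_0$-terms contribute to the exceptional support and the Weierstrass form \eqref{InterimWeierstrassForm} lets the Lemma \ref{Lemma:SimpleDecrease} argument run cleanly. When the constant is $\ne a_1$, one shows instead that every term with $\alpha'_1\ge d'$ other than the apex lies in $\phi$ (hence has $\alpha_1\ne a_1$), and then the inconsistency of $x_1$ with $\TQ'$ and $\iI\Zop$ yields the exceptional dominance $\Ga'_0\succ\Ga'_0(\D')$, forcing $\alpha'_1<d'$ on the exceptional support directly. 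This second case is precisely the mechanism that disposes of the $\phi$-terms your argument does not control.
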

\begin{proof}
When the exceptional support $\es{\TN_{\TQ'}}\Top=\es{\TQ'}\Zop$ corresponds to the latent primary exponent $\alpha_1=a_1$ as in \eqref{InterimReorgFunc} according to Lemma \ref{Lemma:ConstantPrimaryExp}, an argument similar to Lemma \ref{Lemma:SimpleDecrease} based on the Weierstrass form $f_0(\XIp)$ in \eqref{InterimWeierstrassForm} yields the conclusion.

The interim exponential matrix $\perm\wh\TT$ in \eqref{ModifiedInterimMatrix} and the ensuing partial factorization constitute a non-degenerate linear transformation from $\supp (f(\XIp,x_1))$ to $\supp (f(\Xp))$, which is denoted as $\CL{}$ here.
Suppose that the latent primary exponent $\alpha_1$ of the exceptional support $\es{\TN_{\TQ'}}\Top$ satisfies $\alpha_1\ne a_1$.
For $\forall (\Al_\XIp,\alpha_1)\in\supp (f(\XIp,x_1))$ in \eqref{InterimReorgFunc} with $\alpha'_1\ge d'$ such that $\CL{}(\Al_\XIp,\alpha_1)\ne\D'$, we have $(\Al_\XIp,\alpha_1)\in\supp (\phi(\XIp,x_1))$ in \eqref{InterimReorgFunc}, i.e., $\alpha_1\ne a_1$, since the apex $(d',\bz)$ in \eqref{InterimWeierstrassForm} has the unique maximal degree $d'$ in the primary variable $x'_1$ in $\supp(f_0(\XIp))$ with the redundant function $r(\XIp)$ disregarded.
Thus similar to Lemma \ref{Lemma:ConstantPrimaryExp}, $\alpha_1\ne a_1$ indicates that $\Ga'_0(\Al')\succ\Ga'_0(\D')$ with $\Ga'_0$ denoting the exceptional exponents of the exceptional variables $\T'_0$ in \eqref{FinalNrmlzdMtrx}.
Hence for $\forall (\Al_\XIp,\alpha_1)\in\es{\TN_{\TQ'}}\Top$, we have $\alpha'_1<d'$.
\end{proof}

When the reduced exponential matrix $\TN'$ in \eqref{CstnExpntlMtrx} is deficient with respect to $\iI\Zop$ as in Definition \ref{Def:Deficiency}, we need to make an appropriate deficient contraction similar to Lemma \ref{Lemma:TotalTransformDecrease} so as to ensure the strict decrease of the singularity height as above.

In the case when the latent primary variable $x_1$ is inconsistent with $\perm\TQ'$ and $\iI\Zop$ in \eqref{SmplIncnSynMtrx}, the singularity height of the proper transform $q(\Z_*',\bz)$ of the apex form $f(\XIp,x_1)$ in \eqref{InterimReorgFunc} under $\CT{\per\TQ'}$ might have a temporary increase from the prior singularity height $d'$ in \eqref{InterimReorgFunc}.
Nonetheless in this case the nesting degree of $\perm\TQ'$ in \eqref{SmplIncnSynMtrx} satisfies $\fnd=2$, which is strictly more than that of $\perm\TN$ in \eqref{IncstnMnmlTrnsfrm} satisfying $\fnd=1$.
Please note that in this case we do not make a canonical reduction of $\perm\TQ'$ like $\TN_{\TQ'}$ in \eqref{FinalNrmlzdMtrx}.

Let $q(\Zp)$ denote the partial transform of the apex form $f(\XIp,x_1)$ in \eqref{InterimReorgFunc} under $\CT{\per\TQ'}$.
After the latent preliminary and Weierstrass reductions as in Definition \ref{Def:LatentReductions}, suppose that $q(\Zp)$ is reduced to an apex form with singularity height $d''$ as following:
\begin{equation}\label{NewApexForm}
f(\Xq)=f_0(\Xq)+\phi(\Xq)
\end{equation}
resembling \eqref{ReorganizedFunc} with $f_0(\Xq)$ and $\phi(\Xq)$ being the latent reducible and remainder functions respectively.
Here similar to the latent gradation in \eqref{ReorganizedFunc}, both the latent primary components $\wt\alpha_1$ and $\wt\alpha'_1$ of $f_0(\Xq)$ equal zero whereas not both of $\phi(\Xq)$ zero.

Suppose that in the next resolution step the reduced exponential matrix bears the form $\TN''$ in \eqref{MoreCstnExptlMtrx} or $\perm\TN''$ in \eqref{MoreIncnExptlMtrx} resembling $\TN'$ in \eqref{CstnExpntlMtrx} or $\perm\TN'$ in \eqref{IncstnExpntlMtrx} and depending on the consistency of the primary variable $x''_1$ with $\TN''$ and the exceptional index set $\iI\Zoq=\dotc{m+1}n$ with $m>l$ or $\perm\TN''$ and $\iI\Zoq$ with $m\ge l$ .
In particular, the matrix $\TB''$ in \eqref{MoreIncnExptlMtrx} denotes the submatrix $[\TD''\bl''~\TD'']$ of $\perm\TN''$ in \eqref{MoreIncnExptlMtrx}.

\begin{equation}\label{MoreCstnExptlMtrx}\begin{aligned}
\setlength{\arraycolsep}{10pt}
    \TN'':=
    \begin{array}{r @{} c @{\hs{-0.5pt}} c @{\hs{-3.6pt}} p{4mm}
    @{\hs{-1mm}} *{3}c @{\hs{4pt}} c @{\hs{4pt}} *{3}c
    @{\hs{-0.8mm}} p{4mm} @{\hs{-2.5mm}} l}
    &           &           &           &           \E_1        &
    \cdots      &           \E_m        &           &
    \V''_{m+1}  &           \cdots      &           \V''_n      &
    &                                                                       \\
    \mr{12}{$\rule{0mm}{24mm}$}         &           x''_1       &
    \mr{12}{$\rule{0mm}{24mm}$}         &
    \raisebox{5mm}[0pt]{\mr{12}{$\left[\rule{0mm}{80pt}\right.$}}
    &           \mc{3}{c}{\mr{6}{$\TE$}}            &           \vline
    &           \mc{3}{c}{\mr{3}{\raisebox{-4mm}[0pt]{$\TOm''_1\TD''$}}}
    &
    \raisebox{5mm}[0pt]{\mr{12}{$\left.\rule{0mm}{80pt}\right]$}}
    &           \mr{12}{$\rule{0mm}{24mm}$}                                 \\
    &           \vdots      &           &           &           &
    &           &           \vline      &           &           &
    &           &                                                           \\
    &           x''_l       &           &           &           &
    &           &           \vline      &           &           &
    &           &                                                           \\
    \cline{2-2}\cline{8-12}
    &           x''_{l+1}   &           &           &           &
    &           &           \vline      &
    \mc{3}{c}{\mr{3}{$\TOm''_2\TD''$}}  &           &                       \\
    &           \vdots      &           &           &           &
    &           &           \vline      &           &           &
    &           &                                                           \\
    &           x''_m       &           &           &           &
    &           &           \vline      &           &           &
    &           &                                                           \\
    \cline{2-12}
    &           x''_{m+1}   &           &           &
    \mc{3}{c}{\mr{4}{$\TO$}}            &           \vline      &
    \mc{3}{c}{\mr{4}{$\TD''$}}          &           &                       \\
    &           \vdots      &           &           &           &
    &           &           \vline      &           &           &
    &           &                                                           \\
    &           x''_n       &           &           &           &
    &           &           \vline      &           &           &
    &           &                                                           \\
    &           &           &           &           z''_1       &
    \cdots      &           z''_m       &           \vline      &
    z''_{m+1}   &           \cdots      &           z''_n       &
    &                                                                       \\
    \end{array}
\end{aligned},\end{equation}

\begin{equation}\label{MoreIncnExptlMtrx}\begin{aligned}
\setlength{\arraycolsep}{8pt}
    \perm\TN'':=
    \begin{array}{r @{} c @{\hs{-0.5pt}} c @{\hs{-4pt}} p{4mm}
    @{\hs{-1mm}} *{3}c @{\hs{2pt}} c @{\hs{2pt}} c @{} c
    @{\hs{4pt}} *{3}c @{\hs{-0.8mm}} p{4mm} @{\hs{-2.5mm}} l}
    &           &           &           &           \E_1        &
    \cdots      &           \E_m        &           &
    \V''_{m+1}  &           &           \V''_{m+2}  &           \cdots
    &           \V''_n      &           &                                   \\
    \mr{12}{$\rule{0mm}{24mm}$}         &           x''_2        &
    \mr{12}{$\rule{0mm}{24mm}$}         &
    \mr{11}{$\left[\rule{0mm}{83.75pt}\right.$}     &
    \mc{3}{c}{\mr{7}{$\TE$}}            &           \vline      &
    \mc{5}{c}{\mr{3}{$\TOm''_1\TB''$}}  &
    \mr{11}{$\left.\rule{0mm}{83.75pt}\right]$}     &
    \mr{12}{$\rule{0mm}{24mm}$}                                             \\
    &           \vdots      &           &           &           &
    &           &           \vline      &           &           &
    &           &           &           &                                   \\
    &           x''_l       &           &           &           &
    &           &           \vline      &           &           &
    &           &           &           &                                   \\
    \cline{2-2}\cline{8-14}
    &           x''_{l+1}   &           &           &           &
    &           &           \vline      &
    \mc{5}{c}{\mr{3}{$\TOm''_2\TB''$}}  &           &                       \\
    &           \vdots      &           &           &           &
    &           &           \vline      &           &           &
    &           &           &           &                                   \\
    &           x''_{m+1}   &           &           &           &
    &           &           \vline      &           &           &
    &           &           &           &                                   \\
    \cline{2-14}
    &           x''_{m+2}   &           &           &
    \mc{3}{c}{\mr{4}{\raisebox{-1mm}[0pt]{$\TO$}}}  &           \vline
    &           \mr{3}{$\TD''\bl''$}    &           \vline      &
    \mc{3}{c}{\mr{3}{$\TD''$}}          &           &                       \\
    &           \vdots      &           &           &           &
    &           &           \vline      &           &           \vline
    &           &           &           &           &                       \\
    &           x''_n       &           &           &           &
    &           &           \vline      &           &           \vline
    &           &           &           &           &                       \\
    \cline{2-2}\cline{8-14}
    &           \raisebox{-1pt}[0pt]{$x''_1$}       &           &
    &           &           &           &           \vline      &
    \mc{5}{c}{\raisebox{-1pt}[0pt]{$\N''_1$}}       &           &           \\
    &           &           &           &           z''_1       &
    \cdots      &           z''_m       &           \vline      &
    z''_{m+1}   &           \vline      &           z''_{m+2}   &
    \cdots      &           z''_n       &           &                       \\
    \end{array}.
\end{aligned}\end{equation}

Similar to the decomposition in \eqref{MatrixDecomposition}, we decompose the synthetic exponential matrix $\perm\TQ'$ in \eqref{SmplIncnSynMtrx} as follows.
\begin{equation}\label{NewMatrixDecomp}
\perm\TQ'=\perm\TS'\cdot\perm\TT'
\end{equation}
with $\perm\TS'$ and $\perm\TT'$ being defined as in \eqref{FirstSynInterimMtrx} and \eqref{SecondSynInterimMtrx} respectively.
The exponents of the new interim variables $\Ze'=(\LEs\zeta{l+3}n\prime\prime)$ are not fractional since none of those of the variables $(\LEs z{l+3}n\prime\prime)$ in \eqref{SmplIncnSynMtrx} are fractional and moreover, the row vectors $\U'_1\in\DN{n-l-1}$ and $\U'_2\in\DN{n-l}$ in \eqref{SmplIncnSynMtrx} and we have identical submatrices $\Evz{\perm\TS}12'=\Evz{\perm\TQ}13'$ with the index sets $\iI 1:=\dotc 1{n-2}$, $\iI 2:=\dotc{l+1}{n-2}$ and $\iI 3:=\dotc{l+3}n$.
The $(n-l)$-dimensional row vector $\ol\U'_1$ of $\perm\TT'$ is defined as $\ol\U'_1:=(\U'_1\cdot\bl'_2,\U'_1)\in\DN{n-l}$, same as in \eqref{SmplIncnSynMtrx}.
The matrix $\TA'_1$ of $\perm\TT'$ denotes the $(n-l-2)$ by $(n-l-1)$ submatrix $[\,\bl'_1~\TE]$ of $\perm\TT'$ in \eqref{SecondSynInterimMtrx}.
The exponent of the new variable $\ol x_1$ in $\perm\TS'$ equals the latent primary exponent $\alpha_1$ of the latent primary variable $x_1$.
For simplicity the new variable $\ol x_1$ and its exponent are still called the latent primary variable and exponent henceforth whose nesting level is designated as one.
The other notations are the same as in \eqref{SmplIncnSynMtrx}.

\begin{align}
    \perm\TS'&:=
    \begin{array}{r @{} c @{\hs{-0.5pt}} c @{\hs{-3.6pt}} p{4mm}
    @{\hs{-0.2mm}} *{3}c @{\hs{1mm}} c @{\hs{1mm}} *{3}c
    @{\hs{1mm}} c @{\hs{1mm}} *{3}c @{\hs{1mm}} c @{} c
    @{\hs{2pt}} c @{\hs{-0.2mm}} p{4mm} @{\hs{-2.5mm}} l}
    &           &           &           &           \E_1        &
    \cdots      &           \E_{k-1}    &           &           &
    &           &           &           &           &           &
    &           \E_{n-1}    &           \E_n        &           &           \\
    \mr{12}{$\rule{0mm}{24mm}$}         &           x'_2        &
    \mr{12}{$\rule{0mm}{24mm}$}         &
    \mr{12}{$\left[\rule{0mm}{90pt}\right.$}        &
    \mc{3}{c}{\mr{3}{$\TE$}}            &           \vline      &
    \mc{3}{c}{\mr{3}{$\TO$}}            &           \vline      &
    \mc{3}{c}{\mr{6}{\raisebox{-2.5mm}[0pt]{$\TOm'\TG'_1$}}}    &
    \vline      &
    \mc{2}{c}{\mr{9}{\raisebox{-2mm}[0pt]{$\TO$}}}  &
    \mr{12}{$\left.\rule{0mm}{90pt}\right]$}        &
    \mr{12}{$\rule{0mm}{24mm}$}                                             \\
    &           \vdots      &           &           &           &
    &           &           \vline      &           &           &
    &           \vline      &           &           &           &
    \vline      &           &           &           &                       \\
    &           x'_k        &           &           &           &
    &           &           \vline      &           &           &
    &           \vline      &           &           &           &
    \vline      &           &           &           &                       \\
    \cline{2-11}
    &           \zeta_{k+2} &           &           &
    \mc{3}{c}{\mr{8}{\raisebox{-3mm}[0pt]{$\TO$}}}  &           \vline
    &           \mc{3}{c}{\mr{6}{$\LAm'$}}          &           \vline
    &           &           &           &           \vline      &
    &           &           &                                               \\
    &           \vdots      &           &           &           &
    &           &           \vline      &           &           &
    &           \vline      &           &           &           &
    \vline      &           &           &           &                       \\
    &           \zeta_{l+2} &           &           &           &
    &           &           \vline      &           &           &
    &           \vline      &           &           &           &
    \vline      &           &           &           &                       \\
    \cline{2-2}\cline{12-15}
    &           \zeta_{l+3} &           &           &           &
    &           &           \vline      &           &           &
    &           \vline      &
    \mc{3}{c}{\mr{3}{\raisebox{-2mm}[0pt]{$\TG'_1$}}}           &
    \vline      &           &           &           &                       \\
    &           \vdots      &           &           &           &
    &           &           \vline      &           &           &
    &           \vline      &           &           &           &
    \vline      &           &           &           &                       \\
    &           \zeta_n     &           &           &           &
    &           &           \vline      &           &           &
    &           \vline      &           &           &           &
    \vline      &           &           &           &                       \\
    \cline{2-2}\cline{8-19}
    &           x_1         &           &           &           &
    &           &           \vline      &
    \mc{3}{c}{\N_1\TF'}     &           \vline      &
    \mc{3}{c}{\mr{2}{\raisebox{-1mm}[0pt]{$\TO$}}}  &           \vline
    &           \mc{2}{c}{\mr{2}{\raisebox{-1mm}[0pt]{$\TE$}}}  &
    &                                                                       \\
    \cline{2-2}\cline{8-11}
    &           x'_1        &           &           &           &
    &           &           \vline      &
    \mc{3}{c}{\raisebox{-1.5pt}[0pt]{$\TO$}}        &           \vline
    &           &           &           &           \vline      &
    &           &           &                                               \\
    &           &           &           &           z'_1        &
    \cdots      &           z'_{k-1}    &           \vline      &
    z'_k        &           \cdots      &           z'_l        &
    \vline      &           \zeta'_{l+3}            &           \cdots
    &           \zeta'_n    &           \vline      &           \ol x_1
    &           x'_1        &           &                                   \\
    \end{array};\label{FirstSynInterimMtrx}\allowdisplaybreaks[4]\\
    \perm\TT'&:=
    \begin{array}{r @{} c @{\hs{-0.5pt}} c @{\hs{-2.8pt}} p{4mm}
    @{\hs{-0.8mm}} *{3}c @{\hs{1mm}} c @{\hs{1mm}} c @{\hs{0.3mm}}
    c @{\hs{1mm}} c @{\hs{0.3mm}} c @{\hs{1mm}} *{3}c @{\hs{-0.8mm}}
    p{4mm} @{\hs{-2.5mm}} l}
    &           &           &           &           \E_1        &
    \cdots      &           \E_l        &           &           &
    &           &           &           &           &           &
    &                                                                       \\
    \mr{8}{$\rule{0mm}{24mm}$}          &           z'_1        &
    \mr{8}{$\rule{0mm}{24mm}$}          &
    \raisebox{2.5pt}[0pt]{\mr{9}{$\left[\rule{0mm}{66pt}\right.$}}
    &           \mc{3}{c}{\mr{3}{$\TE$}}            &           \vline
    &           \mc{7}{c}{\mr{3}{$\TO$}}            &
    \raisebox{2.5pt}[0pt]{\mr{9}{$\left.\rule{0mm}{66pt}\right]$}}
    &           \mr{8}{$\rule{0mm}{24mm}$}                                  \\
    &           \vdots      &           &           &           &
    &           &           \vline      &           &           &
    &           &           &           &           &           &           \\
    &           z'_l        &           &           &           &
    &           &           \vline      &           &           &
    &           &           &           &           &           &           \\
    \cline{2-16}
    &           \zeta'_{l+3}            &           &           &
    \mc{3}{c}{\mr{5}{$\TO$}}            &           \vline      &
    \mr{3}{$\TA'_1\bl'_2$}  &           \vline      &
    \mr{3}{$\bl'_1$}        &           \vline      &
    \mc{3}{c}{\mr{3}{$\TE$}}            &           &                       \\
    &           \vdots      &           &           &           &
    &           &           \vline      &           &           \vline
    &           &           \vline      &           &           &
    &           &                                                           \\
    &           \zeta'_n    &           &           &           &
    &           &           \vline      &           &           \vline
    &           &           \vline      &           &           &
    &           &                                                           \\
    \cline{2-2}\cline{8-16}
    &           \ol x_1     &           &           &           &
    &           &           \vline      &
    \mc{7}{c}{\raisebox{-1pt}[0pt]{$\ol\U'_1$}}     &           &           \\
    \cline{2-2}\cline{8-16}
    &           x'_1        &           &           &           &
    &           &           \vline      &           \mc{7}{c}{\U'_2}
    &           &                                                           \\
    &           &           &           &           z'_1        &
    \cdots      &           z'_l        &           \vline      &
    z'_{l+1}    &           \vline      &           z'_{l+2}    &
    \vline      &           z'_{l+3}    &           \cdots      &
    z'_n        &           &                                               \\
    \end{array}.\label{SecondSynInterimMtrx}
\end{align}

The decomposition in \eqref{NewMatrixDecomp} is essentially a decomposition of the exceptional column submatrix $\Ev{\perm\TQ}\Zop'$ in \eqref{SmplIncnSynMtrx} and complies with the following principles similar to those for \eqref{MatrixDecomposition}.
The first one is the invariance of the latent primary exponents $\Al'_\star:=(\alpha_1,\alpha'_1)$ of the latent primary variables $x_1$ or $\ol x_1$ and $x'_1$ under the interim exponential matrix $\perm\TS'$ in \eqref{FirstSynInterimMtrx};
The second one is that the non-exceptional column submatrix $\Ec{\perm\TS}\Zap'$ as in Definition \ref{Def:ExceptionalSupport} should be identical to $\Ec{\perm\TQ}\Zap'$ in \eqref{SmplIncnSynMtrx};
The third one is that the difference between the exponents of the interim variables $\Ze'$ in \eqref{FirstSynInterimMtrx} and those of the exceptional variables $(\LEs z{l+3}n\prime\prime)$ in \eqref{SmplIncnSynMtrx} or \eqref{SecondSynInterimMtrx} should be a linear combination of the latent primary exponents $\alpha_1$ and $\alpha'_1$.

Let $p(\Z'_*,\Ze',\X'_\star)$ be the total transform of the interim transform $f(\XIp,x_1)$ in \eqref{InterimReorgFunc} under the interim monomial transformation $\CT{\per\TS'}$ as in \eqref{FirstSynInterimMtrx}.
Here the latent primary variables $\X'_\star:=(\ol x_1,x'_1)$.
After the interim preliminary and Weierstrass reductions as in Definition \ref{Def:InterimReductions}, the total transform $p(\Z'_*,\Ze',\X'_\star)$ is reduced to an interim transform $f(\XIq,\X'_\star)$ resembling $f(\XIp,x_1)$ in \eqref{InterimReorgFunc} with the synthetic variables $\XIq:=(\X''_*,\Ze')$.
That is,
\begin{equation}\label{Dgr2LtnGrdtn}
f(\XIq,\X'_\star)=\ol x_1^{\alpha_1}{x'_1}^{a'_1}{\Ze'}^{\A'\cdot\Ev{\per\TS}{\Ze'}'}
f_0(\XIq)+\phi(\XIq,\X'_\star),
\end{equation}
which is similar to the partial gradation in \eqref{InterimReorgFunc}.
Here the exponent $\alpha_1$ of the latent primary variable $\ol x_1$ is associated with the exceptional support $\es{\perm\TQ'}\Zop$ as in \eqref{SmplIncnSynMtrx} and $a'_1$ denotes the first component of the vertex $\A'$ associated with the reduced exponential matrix $\perm\TN'$ in \eqref{IncstnExpntlMtrx}.
The exponents $\Al'_\star$ of the latent primary variables $\X'_\star$ satisfy $\Al'_\star\ne (\alpha_1,a'_1)$ for $\forall (\Al_\XIq,\Al'_\star)\in\supp(\phi(\XIq,\X'_\star))$.

Similar to the identities in \eqref{VariableIdentity}, we have $\X''_0=\Z'_0$ and $\Al''_0=\Ga'_0$ as well, based on which we can define a new interim exponential matrix $\perm\wh\TT'$ as in \eqref{ModifiedInterimSyn} resembling the interim exponential matrix $\perm\TT'$ in \eqref{SecondSynInterimMtrx}.

\begin{equation}\label{ModifiedInterimSyn}\begin{aligned}
    \perm\wh\TT'&:=
    \begin{array}{r @{} c @{\hs{-0.5pt}} c @{\hs{-2.8pt}} p{4mm}
    @{\hs{-0.8mm}} *{3}c @{\hs{1mm}} c @{\hs{1mm}} c @{\hs{0.3mm}}
    c @{\hs{1mm}} c @{\hs{0.3mm}} c @{\hs{1mm}} *{3}c @{\hs{-0.8mm}}
    p{4mm} @{\hs{-2.5mm}} l}
    &           &           &           &           \E_1        &
    \cdots      &           \E_l        &           &           &
    &           &           &           &           &           &
    &                                                                       \\
    \mr{8}{$\rule{0mm}{24mm}$}          &           x''_1       &
    \mr{8}{$\rule{0mm}{24mm}$}          &
    \raisebox{2.5pt}[0pt]{\mr{9}{$\left[\rule{0mm}{66pt}\right.$}}
    &           \mc{3}{c}{\mr{3}{$\TE$}}            &           \vline
    &           \mc{7}{c}{\mr{3}{$\TO$}}            &
    \raisebox{2.5pt}[0pt]{\mr{9}{$\left.\rule{0mm}{66pt}\right]$}}
    &           \mr{8}{$\rule{0mm}{24mm}$}                                  \\
    &           \vdots      &           &           &           &
    &           &           \vline      &           &           &
    &           &           &           &           &           &           \\
    &           x''_l       &           &           &           &
    &           &           \vline      &           &           &
    &           &           &           &           &           &           \\
    \cline{2-16}
    &           \zeta'_{l+3}            &           &           &
    \mc{3}{c}{\mr{5}{$\TO$}}            &           \vline      &
    \mr{3}{$\TA'_1\bl'_2$}  &           \vline      &
    \mr{3}{$\bl'_1$}        &           \vline      &
    \mc{3}{c}{\mr{3}{$\TE$}}            &           &                       \\
    &           \vdots      &           &           &           &
    &           &           \vline      &           &           \vline
    &           &           \vline      &           &           &
    &           &                                                           \\
    &           \zeta'_n    &           &           &           &
    &           &           \vline      &           &           \vline
    &           &           \vline      &           &           &
    &           &                                                           \\
    \cline{2-2}\cline{8-16}
    &           \ol x_1     &           &           &           &
    &           &           \vline      &
    \mc{7}{c}{\raisebox{-1pt}[0pt]{$\ol\U'_1$}}     &           &           \\
    \cline{2-2}\cline{8-16}
    &           x'_1        &           &           &           &
    &           &           \vline      &           \mc{7}{c}{\U'_2}
    &           &                                                           \\
    &           &           &           &           x''_1       &
    \cdots      &           x''_l       &           \vline      &
    x''_{l+1}   &           \vline      &           x''_{l+2}   &
    \vline      &           x''_{l+3}   &           \cdots      &
    x''_n       &           &                                               \\
    \end{array}.
\end{aligned}\end{equation}

The interim exponential matrix $\perm\wh\TT'$ in \eqref{ModifiedInterimSyn} and reduced exponential matrix $\TN''$ in \eqref{MoreCstnExptlMtrx} or $\perm\TN''$ in \eqref{MoreIncnExptlMtrx} can be synthesized into a new exponential matrix $\TQ''$ in \eqref{FinalCnstSynMtrx} or $\perm\TQ''$ in \eqref{FinalSyntheticMatrix}, in a way similar to the construction of the synthetic exponential matrix $\TQ'$ in \eqref{CstnSyntheticMatrix} or $\perm\TQ'$ in \eqref{IncstnSyntheticMatrix}.
In \eqref{FinalCnstSynMtrx} we have $m>l$ and the submatrix $\TF'':=\bigl[\begin{smallmatrix}\TE\\ \TO\end{smallmatrix}\bigr]$ is of dimensions $(n-l)$ by $(m-l)$.
The submatrix $\TOm''_3:=\bigl[\begin{smallmatrix}\TOm''_2\\ \TE\end{smallmatrix}\bigr]$ is of dimensions $(n-l)$ by $(n-m)$ with $\TOm''_2$ being defined as in \eqref{MoreCstnExptlMtrx}.
The $(n-l-2)$ by $(n-l)$ submatrix $\TA'_2$ is defined as $\TA'_2:=[\TA'_1\bl'_2~\TA'_1]$ with $\TA'_1:=[\,\bl'_1~\TE]$ being the $(n-l-2)$ by $(n-l-1)$ submatrix of $\perm\wh\TT'$ in \eqref{ModifiedInterimSyn}.
The $(n-l)$-dimensional row vectors $\ol\U'_1$ and $\U'_2$ are as in \eqref{SecondSynInterimMtrx}.

In \eqref{FinalSyntheticMatrix} we have $m\ge l$ and the $(n-l)$ by $(m-l+1)$ submatrix $\TF'':=\bigl[\begin{smallmatrix}\TE\\ \TO\end{smallmatrix}\bigr]$.
The $(n-l)$ by $(n-m-1)$ submatrix $\TOm''_3:=\bigl[\begin{smallmatrix}\TOm''_2\\ \TE\end{smallmatrix}\bigr]$ with $\TOm''_2$ being defined as in \eqref{MoreIncnExptlMtrx}.
The $(n-m-1)$ by $(n-m)$ submatrix $\TB''$ denotes the submatrix $[\TD''\bl''~\TD'']$ in \eqref{MoreIncnExptlMtrx}.
The $(n-m)$-dimensional row vector $\N''_1$ is as in \eqref{MoreIncnExptlMtrx}.
The other notations are the same as in \eqref{FinalCnstSynMtrx}.

\begin{equation}\label{FinalCnstSynMtrx}\begin{aligned}
    \TQ'':=
    \begin{array}{r @{} c @{\hs{-0.5pt}} c @{\hs{-3.6pt}} p{4mm}
    @{\hs{-1mm}} *{3}c @{\hs{2pt}} c @{\hs{2pt}} *{3}c @{\hs{2pt}}
    c @{\hs{2pt}} *{3}c @{\hs{-0.8mm}} p{4mm} @{\hs{-2.5mm}} l}
    &           &           &           &           \E_1        &
    \cdots      &           \E_l    &           &           &
    &           &           &                                               \\
    \mr{12}{$\rule{0mm}{24mm}$}         &           x''_1       &
    \mr{12}{$\rule{0mm}{24mm}$}         &
    \raisebox{11.4mm}[0pt]{\mr{13}{$\left[\rule{0mm}{70pt}\right.$}}
    &           \mc{3}{c}{\mr{3}{$\TE$}}            &           \vline
    &           \mc{3}{c}{\mr{3}{$\TO$}}            &           \vline
    &           \mc{3}{c}{\mr{3}{$\TOm_1''\TD''$}}  &
    \raisebox{11.4mm}[0pt]{\mr{13}{$\left.\rule{0mm}{70pt}\right]$}}
    &           \mr{12}{$\rule{0mm}{24mm}$}                                 \\
    &           \vdots      &           &           &           &
    &           &           \vline      &           &           &
    &           \vline      &           &           &           &
    &                                                                       \\
    &           x''_l       &           &           &           &
    &           &           \vline      &           &           &
    &           \vline      &           &           &           &
    &                                                                       \\
    \cline{2-16}
    &           \zeta'_{l+3}            &           &           &
    \mc{3}{c}{\mr{5}{\raisebox{-3mm}[0pt]{$\TO$}}}  &           \vline
    &           \mc{3}{c}{\mr{3}{$\TA'_2\TF''$}}    &           \vline
    &           \mc{3}{c}{\mr{3}{$\TA'_2\TOm''_3\TD''$}}        &
    &                                                                       \\
    &           \vdots      &           &           &           &
    &           &           \vline      &           &           &
    &           \vline      &           &           &           &
    &                                                                       \\
    &           \zeta'_n    &           &           &           &
    &           &           \vline      &           &           &
    &           \vline      &           &           &           &
    &                                                                       \\
    \cline{2-2}\cline{8-16}
    &           x'_1        &           &           &           &
    &           &           \vline      &
    \mc{3}{c}{\raisebox{-1pt}[0pt]{$\U'_2\TF''$}}   &           \vline
    &           \mc{3}{c}{\raisebox{-1pt}[0pt]{$\U'_2\TOm''_3\TD''$}}
    &           &                                                           \\
    \cline{2-2}\cline{8-16}
    &           \ol x_1     &           &           &           &
    &           &           \vline      &
    \mc{3}{c}{\raisebox{-1pt}[0pt]{$\ol\U'_1\TF''$}}            &
    \vline      &
    \mc{3}{c}{\raisebox{-1pt}[0pt]{$\ol\U'_1\TOm''_3\TD''$}}    &
    &                                                                       \\
    &           &           &           &           z''_1       &
    \cdots      &           z''_l       &           \vline      &
    z''_{l+1}   &           \cdots      &           z''_m       &
    \vline      &           z''_{m+1}   &           \cdots      &
    z''_n       &           &                                               \\
    \end{array}
\end{aligned}\end{equation}

When a synthetic exponential matrix like $\TQ''$ in \eqref{FinalCnstSynMtrx} or $\perm\TQ''$ in \eqref{FinalSyntheticMatrix} has more than one latent primary variables like $\ol x_1$ and $x'_1$, it is important to examine the consistency of $\ol x_1$ and $x'_1$ with $\TQ''$ and $\iI\Zoq$ or $\perm\TQ''$ and $\iI\Zoq$ in the order of their nesting levels as in Definition \ref{Def:NestLatency}.
That is, the priority is always given to the latent primary variable with a lower nesting level.
In this case $\ol x_1$ has priority over $x'_1$ since the nesting level of $\ol x_1$ equals one whereas that of $x'_1$ equals two.

In the case when the latent primary variable $\ol x_1$ is consistent with the synthetic exponential matrix $\TQ''$ in \eqref{FinalCnstSynMtrx} or $\perm\TQ''$ in \eqref{FinalSyntheticMatrix} and is revived, the discussion on the canonical reduction $\TN_{\TQ'}$ of $\TQ'$ in \eqref{CstnNrmlzdMtrx} or $\TN_{\per\TQ'}$ of $\perm\TQ'$ in \eqref{IncstnNrmlzdMtrx} can be repeated almost verbatim here for $\TQ''$ in \eqref{FinalCnstSynMtrx} or $\perm\TQ''$ in \eqref{FinalSyntheticMatrix}. In particular, the conclusion is partly due to the fact that the latent primary exponent of $\ol x_1$ equals that of $x_1$ as per the interim exponential matrix $\perm\TS'$ in \eqref{FirstSynInterimMtrx}.

\begin{equation}\label{FinalSyntheticMatrix}\begin{aligned}
    \perm\TQ'':=
    \begin{array}{r @{} c @{\hs{-0.5pt}} c @{\hs{-3.6pt}} p{4mm}
    @{\hs{-1mm}} *{3}c @{\hs{2pt}} c @{\hs{2pt}} *{3}c @{\hs{2pt}}
    c @{\hs{2pt}} *{3}c @{\hs{-0.8mm}} p{4mm} @{\hs{-2.5mm}} l}
    &           &           &           &           \E_1        &
    \cdots      &           \E_{l-1}    &           &           &
    &           &           &                                               \\
    \mr{12}{$\rule{0mm}{24mm}$}         &           x''_2       &
    \mr{12}{$\rule{0mm}{24mm}$}         &
    \raisebox{8.3mm}[0pt]{\mr{13}{$\left[\rule{0mm}{75pt}\right.$}}
    &           \mc{3}{c}{\mr{3}{$\TE$}}            &           \vline
    &           \mc{3}{c}{\mr{3}{$\TO$}}            &           \vline
    &           \mc{3}{c}{\mr{3}{$\TOm_1''\TB''$}}  &
    \raisebox{8.3mm}[0pt]{\mr{13}{$\left.\rule{0mm}{75pt}\right]$}}
    &           \mr{12}{$\rule{0mm}{24mm}$}                                 \\
    &           \vdots      &           &           &           &
    &           &           \vline      &           &           &
    &           \vline      &           &           &           &
    &                                                                       \\
    &           x''_l       &           &           &           &
    &           &           \vline      &           &           &
    &           \vline      &           &           &           &
    &                                                                       \\
    \cline{2-16}
    &           \zeta'_{l+3}            &           &           &
    \mc{3}{c}{\mr{6}{\raisebox{-3mm}[0pt]{$\TO$}}}  &           \vline
    &           \mc{3}{c}{\mr{3}{$\TA'_2\TF''$}}    &           \vline
    &           \mc{3}{c}{\mr{3}{$\TA'_2\TOm''_3\TB''$}}        &
    &                                                                       \\
    &           \vdots      &           &           &           &
    &           &           \vline      &           &           &
    &           \vline      &           &           &           &
    &                                                                       \\
    &           \zeta'_n    &           &           &           &
    &           &           \vline      &           &           &
    &           \vline      &           &           &           &
    &                                                                       \\
    \cline{2-2}\cline{8-16}
    &           x'_1        &           &           &           &
    &           &           \vline      &
    \mc{3}{c}{\raisebox{-1pt}[0pt]{$\U'_2\TF''$}}   &           \vline
    &           \mc{3}{c}{\raisebox{-1pt}[0pt]{$\U'_2\TOm''_3\TB''$}}
    &           &                                                           \\
    \cline{2-2}\cline{8-16}
    &           \ol x_1     &           &           &           &
    &           &           \vline      &
    \mc{3}{c}{\raisebox{-1pt}[0pt]{$\ol\U'_1\TF''$}}            &
    \vline      &
    \mc{3}{c}{\raisebox{-1pt}[0pt]{$\ol\U'_1\TOm''_3\TB''$}}    &
    &                                                                       \\
    \cline{2-2}\cline{8-16}
    &           x''_1       &           &           &           &
    &           &           \vline      &
    \mc{3}{c}{\raisebox{-1.5pt}[0pt]{$\TO$}}        &           \vline
    &           \mc{3}{c}{\raisebox{-1pt}[0pt]{$\N''_1$}}       &
    &                                                                       \\
    &           &           &           &           z''_1       &
    \cdots      &           z''_{l-1}   &           \vline      &
    z''_l       &           \cdots      &           z''_m       &
    \vline      &           z''_{m+1}   &           \cdots      &
    z''_n       &           &                                               \\
    \end{array}
\end{aligned}\end{equation}

In the case when the latent primary variable $\ol x_1$ is inconsistent with the synthetic exponential matrix $\TQ''$ and $\iI\Zoq$ in \eqref{FinalCnstSynMtrx} or $\perm\TQ''$ and $\iI\Zoq$ in \eqref{FinalSyntheticMatrix} and thus sustains its latency, whereas $x'_1$ is consistent with $\TQ''$ and $\iI\Zoq$ or $\perm\TQ''$ and $\iI\Zoq$ and thus revived, a conclusion similar to Lemma \ref{Lemma:ConstantPrimaryExp} holds for the latent primary exponent $\alpha_1$ and exceptional exponents $\Ga''_0$ of the partial transform $q(\Zq)$ under the synthetic monomial transformation $\CT{\TQ''}$ or $\CT{\per\TQ''}$.
That is, a difference in $\alpha_1$ corresponds to one in $\Ga''_0$ under $\TQ''$ in \eqref{FinalCnstSynMtrx} or $\perm\TQ''$ in \eqref{FinalSyntheticMatrix}, based on which we have the following conclusion.

\begin{lemma}\label{Lemma:MiddleVariableRevive}
When the latent primary variable $\ol x_1$ is inconsistent with the synthetic exponential matrix $\TQ''$ in \eqref{FinalCnstSynMtrx} or $\perm\TQ''$ in \eqref{FinalSyntheticMatrix} whereas $x'_1$ is consistent and revived as a primary variable $t''_1$, the singularity height associated with $t''_1$ strictly decreases from the prior singularity height $d'$ in \eqref{ReorganizedFunc} and \eqref{InterimReorgFunc} at a regular reduced branch point.
\end{lemma}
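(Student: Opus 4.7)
The plan is to adapt, step by step, the two-part argument in the proof of Lemma \ref{Lemma:MiddleReviveDecrease} to the nested-latency setting. By Definition \ref{Def:NestLatency} the latent primary variable $\ol x_1$ of nesting level one yields priority to $x'_1$ of nesting level two, so the goal is to treat $x'_1$, revived as $t''_1$, in the same way that $x_1$ was treated in Lemma \ref{Lemma:MiddleReviveDecrease}, using the synthetic matrix $\TQ''$ in \eqref{FinalCnstSynMtrx} or $\perm\TQ''$ in \eqref{FinalSyntheticMatrix}.

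First I would invoke the constancy assertion stated in the paragraph preceding the lemma, namely that the latent primary exponent $\alpha_1$ of $\ol x_1$ is constant on the exceptional support $\es{\TQ''}\Zoq$ or $\es{\perm\TQ''}\Zoq$. This is an immediate analogue of Lemma \ref{Lemma:ConstantPrimaryExp} whose justification amounts to reusing \eqref{PrimaryVector} and \eqref{NonDegVector} with the row of $\ol x_1$ in place of the original primary row; it lets us factor $\ol x_1^{\alpha_1}$ uniformly out of the proper transform $q(\Z''_*,\bz)$ and disregard it. With $\alpha_1$ thereby pinned, the remaining analysis is governed by the behaviour of $\alpha'_1$ under the canonical reduction of $\TQ''$ or $\perm\TQ''$ in which the row of $x'_1$ is singled out as the primary row, yielding the revived primary variable $t''_1$.

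Next I would split the exceptional support along the latent primary exponent $\alpha'_1$ of $x'_1$ according to whether $\alpha'_1$ equals $a'_1$, the first component of the vertex $\A'$ in \eqref{IncstnExpntlMtrx}, or not. On the sub-support where $\alpha'_1=a'_1$, the latent reducible function $f_0(\XIq)$ in \eqref{Dgr2LtnGrdtn} governs and the argument of Lemma \ref{Lemma:SimpleDecrease} based on Lemma \ref{Lemma:UnivariablePolynomial} applied to its Weierstrass form yields order strictly less than $d'$ at a regular reduced branch point, after a deficient contraction as in Definition \ref{Def:DeficientContraction} if required to discard the unit factor $(c+r(\XIq))$ produced by the latent Weierstrass reduction analogous to \eqref{InterimWeierstrassForm}. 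On the sub-support where $\alpha'_1\ne a'_1$, I would apply the analogue of Lemma \ref{Lemma:ConstantPrimaryExp} once more, now attached to the row of $x'_1$, to deduce the strict exponential dominance $\Ga''_0(\Al'')\succ\Ga''_0(\D')$ exactly as in the second half of the proof of Lemma \ref{Lemma:MiddleReviveDecrease}; this forces $\alpha''_1<d'$ for every such $\Al''$ in the exceptional support.

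The main obstacle will be the simultaneous bookkeeping of both latent primary exponents $\alpha_1$ of $\ol x_1$ and $\alpha'_1$ of $x'_1$ through the decomposition $\perm\TQ'=\perm\TS'\cdot\perm\TT'$ in \eqref{NewMatrixDecomp}, the interim matrix $\perm\wh\TT'$ in \eqref{ModifiedInterimSyn}, and the synthesis yielding $\TQ''$ or $\perm\TQ''$. One must verify the three design principles of $\perm\TS'$ listed after \eqref{SecondSynInterimMtrx}, namely invariance of $\Al'_\star=(\alpha_1,\alpha'_1)$, identification of the non-exceptional column submatrices, and compatibility of exponent differences with linear combinations of $\alpha_1$ and $\alpha'_1$, so as to ensure that the constancy-of-$\alpha_1$ argument and the dominance argument for $\alpha'_1$ act on genuinely complementary pieces of the exceptional support, thereby allowing both halves of the proof of Lemma \ref{Lemma:MiddleReviveDecrease} to import verbatim at nesting level two.
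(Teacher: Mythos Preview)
Your proposal has the right instinct---to mimic the two cases of Lemma \ref{Lemma:MiddleReviveDecrease}---but the case split and the Weierstrass form you invoke are both misidentified, and this makes the argument fail.

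The split that actually drives the proof is on the value of the level-one latent exponent $\alpha_1$ of $\ol x_1$ on the exceptional support $\es{\TQ''}\Zoq$ (constant there by the analogue of Lemma \ref{Lemma:ConstantPrimaryExp}, since $\ol x_1$ is the \emph{inconsistent} variable), namely whether $\alpha_1=a_1$ or $\alpha_1\ne a_1$. This is precisely the split between the $f_0$ and $\phi$ parts of \eqref{InterimReorgFunc}. In the case $\alpha_1=a_1$ one is back inside the Weierstrass form $f_0(\XIp)$ of \eqref{InterimWeierstrassForm}, whose primary variable is $x'_1$ and whose height is $d'$; the invariance of $(\alpha_1,\alpha'_1)$ under $\perm\TS'$ then lets Lemma \ref{Lemma:SimpleDecrease} run and gives order $<d'$. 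Your proposal instead splits on $\alpha'_1=a'_1$ and appeals to $f_0(\XIq)$ in \eqref{Dgr2LtnGrdtn}; but $f_0(\XIq)$ has primary variable $x''_1$ and height $d''$, so Lemma \ref{Lemma:SimpleDecrease} applied there yields $<d''$, not $<d'$. Also, you cannot ``factor $\ol x_1^{\alpha_1}$ out of $q(\Z''_*,\bz)$'': the proper transform is in the variables $\Z''_*$, and the constancy of $\alpha_1$ only tells you which terms of $f(\XIq,\X'_\star)$ contribute.

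In the complementary case $\alpha_1\ne a_1$ the paper does not re-apply Lemma \ref{Lemma:ConstantPrimaryExp} to $x'_1$ (which is consistent by hypothesis, so that lemma is unavailable). Instead it shows directly that every $(\Al_\XIq,\Al'_\star)$ with $\alpha'_1\ge d'$ and $\alpha_1\ne a_1$ is pushed off the exceptional support, i.e.\ has $\Ga''_0\succ\bz$. The key ingredients are the \emph{previous} inconsistencies: that of $x'_1$ with $\perm\TN'$ and $\iI\Zop$ (handling $\alpha'_1>d'$) and that of $x_1$ with $\perm\TQ'$ and $\iI\Zop$ (handling $\alpha'_1=d'$), traced back through the commutative diagrams to $\supp(\phi(\Xp))$ in \eqref{ReorganizedFunc}. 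This yields $\alpha'_1<d'$ for every exponent on $\es{\TQ''}\Zoq$ in this case. Your second case, by contrast, tries to attach the dominance argument to $x'_1$ at the current step, which has no footing since $x'_1$ is consistent there.
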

\begin{proof}
When the exceptional support $\es{\TQ''}\Zoq$ or $\es{\perm\TQ''}\Zoq$ corresponds to the latent primary exponent $\alpha_1=a_1$ as in \eqref{Dgr2LtnGrdtn}, the conclusion follows from an argument similar to Lemma \ref{Lemma:SimpleDecrease} based on the Weierstrass form $f_0(\XIp)$ in \eqref{InterimWeierstrassForm} and the invariance of the latent primary exponents $\Al'_\star:=(\alpha_1,\alpha'_1)$ under $\perm\TS'$ in \eqref{FirstSynInterimMtrx} and after the ensuing interim preliminary and Weierstrass reductions.

Let $q(\Zq)$ denote the partial transform of the apex form $f(\XIq,\X'_\star)$ in \eqref{Dgr2LtnGrdtn} under $\CT{\TQ''}$ or $\CT{\per\TQ''}$.
For $\forall (\Al_\XIq,\Al'_\star)\in\supp (f(\XIq,\X'_\star))$, let $\Ga''((\Al_\XIq,\Al'_\star))$ denote the corresponding exponent in $\supp (q(\Zq))$.
When $\es{\TQ''}\Zoq$ or $\es{\perm\TQ''}\Zoq$ corresponds to a latent primary exponent $\alpha_1\ne a_1$, the inconsistency of $\ol x_1$ with $\TQ''$ and $\iI\Zoq$ in \eqref{FinalCnstSynMtrx} or $\perm\TQ''$ and $\iI\Zoq$ in \eqref{FinalSyntheticMatrix} indicates that $\Ga''(\Al_\XIq,a_1,d')\succx\bz$ for $\forall (\Al_\XIq,a_1,d')\in\supp (f(\XIq,\X'_\star))$ in \eqref{Dgr2LtnGrdtn} that corresponds to the apexes $\D'\in\supp (f_0(\XIp))$ in \eqref{InterimWeierstrassForm} and $\D'\in\supp (f(\Xp))$ in \eqref{ReorganizedFunc}.
It also indicates that for $\forall (\Al_\XIq,\Al'_\star)\in\supp (f(\XIq,\X'_\star))$ with $\alpha_1\ne a_1$ and $\alpha'_1\ge d'$, which corresponds to $(\Al_\XIp,\alpha_1)\in\supp (\phi(\XIp,x_1))$ in \eqref{InterimReorgFunc} with $\alpha'_1\ge d'$ and $\Al'\in\supp (\phi(\Xp))$ in \eqref{ReorganizedFunc} with $\alpha'_1\ge d'$, the exceptional exponents $\Ga''_0((\Al_\XIq,\Al'_\star))\ne\Ga''_0((\Al_\XIq,a_1,d'))$ for $\forall (\Al_\XIq,a_1,d')\in\supp (f(\XIq,\X'_\star))$ in \eqref{Dgr2LtnGrdtn}.
Thus it suffices to further prove that $\Ga''((\Al_\XIq,\Al'_\star))\succx\Ga''((\Al_\XIq,a_1,d'))$, which readily follows from $\Ga'(\Al')\succx\Ga'(\D')$ due to the inconsistency of $x'_1$ with $\perm\TN'$ and $\iI\Zop$ in the case of $\alpha'_1>d'$ and that of $x_1$ with $\perm\TQ'$ and $\iI\Zop$ in the case of $\alpha'_1=d'$.
Here $\D'\in\supp (f_0(\Xp))$ and $\Al'\in\supp (\phi(\Xp))$ as in \eqref{ReorganizedFunc} and $\Ga'$ denote the exponents of the partial transform $q(\Zp)$ of $f(\Xp)$ under $\CT{\per\TN'}$ as in \eqref{IncstnExpntlMtrx}.
Hence for $\forall (\Al_\XIq,\Al'_\star)\in\es{\TQ''}\Zoq$ or $\es{\perm\TQ''}\Zoq$, we have $\alpha'_1<d'$.
\end{proof}

\begin{equation}\label{GenericFinalSynMtrx}\begin{aligned}
    \perm\TQ^\ro:=
    \begin{array}{r @{} c @{\hs{-0.5pt}} c @{\hs{-3.6pt}} p{4mm}
    @{\hs{-0.8mm}} c @{\hs{0.3mm}} c @{\hs{1.3mm}} c @{\hs{0.3mm}}
    c @{\hs{1mm}} c @{\hs{0.5mm}} c @{\hs{1mm}} c @{\hs{0.5mm}} c
    @{\hs{1mm}} c @{\hs{0.5mm}} c @{\hs{1mm}} c @{\hs{0.5mm}} c
    @{\hs{0.7mm}} c @{\hs{0.5mm}} c @{\hs{1mm}} c @{} c
    @{\hs{-0.8mm}} p{4mm} @{\hs{-2.5mm}} l}
    &           &           &           &           \E_1        &
    \cdots      &           \E_{\kappa-1}           &           &
    &           &           &           &           &           &
    &           &           &           &           &           &
    &                                                                       \\
    \mr{12}{$\rule{0mm}{24mm}$}         &           x^\ro_2     &
    \mr{12}{$\rule{0mm}{24mm}$}         &
    \raisebox{1.6mm}[0pt]{\mr{13}{$\left[\rule{0mm}{96pt}\right.$}}
    &           \mc{3}{c}{\mr{4}{$\TE$}}            &           \vline
    &           \mr{4}{$\TO$}           &           \vline      &
    \mc{10}{c}{\mr{4}{$\TOm_\rho$}}     &
    \raisebox{1.6mm}[0pt]{\mr{13}{$\left.\rule{0mm}{96pt}\right]$}}
    &           \mr{12}{$\rule{0mm}{24mm}$}                                 \\
    &           \vdots      &           &           &           &
    &           &           \vline      &           &           \vline
    &           &           &           &           &           &
    &           &           &           &                                   \\
    &           x^\ro_\kappa            &           &           &
    &           &           &           \vline      &           &
    \vline      &           &           &           &           &
    &           &           &           &           &                       \\
    \cline{2-7}\cline{8-9}
    &           \ol x^{(\rho-1)}_1      &           &           &
    \mc{3}{c}{\mr{8}{$\TO$}}            &           \vline      &
    \mr{8}{$\W$}            &           \vline      &           &
    &           &           &           &           &           &
    &           &                                                           \\
    \cline{2-2}\cline{10-21}
    &           \ol x^{(\rho-2)}_1      &           &           &
    &           &           &           \vline      &           &
    \vline      &           \mr{6}{$\TG_\rho\bl_\rho$}          &
    \vline      &           \mr{5}{$\cdots$}        &           \vline
    &           \mr{3}{$\TG_4\bl_4$}    &           \vline      &
    \mr{2}{$\TG_3\bl_3$}    &           \vline      &
    \mc{2}{c}{\U_2}         &           &                                   \\
    \cline{2-2}\cline{18-21}
    &           \ol x^{(\rho-3)}_1      &           &           &
    &           &           &           \vline      &           &
    \vline      &           &           \vline      &           &
    \vline      &           &           \vline      &           &
    \vline      &           \mc{2}{c}{\U_3}         &           &           \\
    \cline{2-2}\cline{16-21}
    &           \ol x^{(\rho-4)}_1      &           &           &
    &           &           &           \vline      &           &
    \vline      &           &           \vline      &           &
    \vline      &           &           \vline      &
    \mc{4}{c}{\U_4}         &           &                                   \\
    \cline{2-2}\cline{14-21}
    &           \ol x^{(\rho-5)}_1      &           &           &
    &           &           &           \vline      &           &
    \vline      &           &           \vline      &           &
    \vline      &           \mc{6}{c}{\U_5}         &           &           \\
    \cline{2-2}\cline{14-21}
    &           \vdots      &           &           &           &
    &           &           \vline      &           &           \vline
    &           &           \vline      &           \mc{8}{c}{\vdots}
    &           &                                                           \\
    \cline{2-2}\cline{12-21}
    &           \ol x_1     &           &           &           &
    &           &           \vline      &           &           \vline
    &           &           \vline      &
    \mc{8}{c}{\U_\rho}      &           &                                   \\
    \cline{2-2}\cline{10-21}
    &           x^\ro_1     &           &           &           &
    &           &           \vline      &           &           \vline
    &           \mc{10}{c}{\U_0}        &           &                       \\
    &           &           &           &           z^\ro_1     &
    \cdots      &           z^\ro_{\kappa-1}        &           \vline
    &           z^\ro_\kappa            &           \vline      &
    z^\ro_{\kappa+1}        &           \vline      &           \cdots
    &           \vline      &           z^\ro_{n-3} &           \vline
    &           z^\ro_{n-2} &           \vline      &           z^\ro_{n-1}
    &           z^\ro_n     &           &                                   \\
    \end{array}
\end{aligned}\end{equation}

In the case of inconsistency of both the latent primary variables $x_1$ and $x'_1$ with $\TQ''$ and $\iI\Zoq$ in \eqref{FinalCnstSynMtrx} or $\perm\TQ''$ and $\iI\Zoq$ in \eqref{FinalSyntheticMatrix} such that $x_1$ and $x'_1$ sustain their latency, the synthetic exponential matrix $\TQ''$ or $\perm\TQ''$ has nesting degree $\fnd=2$ or $\fnd=3$.
A canonical reduction reduces $\TQ''$ to the scenario of $\perm\TQ'$ with $\fnd=2$ in \eqref{SmplIncnSynMtrx}.
Nevertheless the nesting degree $\fnd=3$ of $\perm\TQ''$ in \eqref{FinalSyntheticMatrix} is a strict increase from $\fnd=2$ of $\perm\TQ'$ in \eqref{SmplIncnSynMtrx}.
It is easy to see that if the nesting degree continues to increase strictly in this way, it shall lead to a scenario when there are no interim variables and the synthetic variables are composed of latent primary variables and non-exceptional variables, as the exemplar form $\perm\TQ^\ro$ in \eqref{GenericFinalSynMtrx} shows in which case $\rho+\kappa=n$ with the exceptional index set $\iI{\Z^\ro}=\dotc{\kappa+1}n$.
In particular, suppose that the variables $\X^\ro_\star:=(\ol x_1,\LEs{\ol x}11\prime{(\rho-1)},x^\ro_1)$ are the latent primary variables.
For $3<j\le\rho$, the submatrix $\TG_j$ in \eqref{GenericFinalSynMtrx} denotes the $(j-1)$ by $(j-1)$ submatrix $\bigl[\begin{smallmatrix}\TB_j\\ \U_j\end{smallmatrix}\bigr]$ with $\TB_j:=[\TG_{j-1}\bl_{j-1}~\TG_{j-1}]$.
And the submatrix $\TG_3$ in \eqref{GenericFinalSynMtrx} denotes the $2$ by $2$ submatrix $\bigl[\begin{smallmatrix}\U_2\\ \U_3\end{smallmatrix}\bigr]$.
Altogether we have the following nested non-degeneracies:
\[
\TG_0\cdot\prod_{j=3}^\rho\det\TG_j\ne 0,
\]
where the submatrix $\TG_0$ denotes the $\rho$ by $\rho$ submatrix $\bigl[\begin{smallmatrix}\TB_0\\ \U_0\end{smallmatrix}\bigr]$ with $\TB_0:=[\TG_\rho\bl_\rho~\TG_\rho]$.

For the latent primary variables $\X^\ro_\star$ as in \eqref{GenericFinalSynMtrx}, we examine their consistency with $\perm\TQ^\ro$ and $\iI{\Z^\ro}$ in the order of their nesting levels.
And the priority is given to the latent primary variables with lower nesting levels.
The exemplar form $\perm\TQ^\ro$ in \eqref{GenericFinalSynMtrx} shows that one of the latent primary variables in $\X^\ro_\star$ can be revived through a canonical reduction.
Similar to Lemma \ref{Lemma:MiddleVariableRevive}, we can prove that the strictly decreasing process of the singularity height associated with the revived primary variable continues.

Although all the above discussions are based on the assumptions that $|\iI\Zoq|=n-m\le |\iI\Zop|=n-l$ and $|\iI\Zop|=n-l\le |\iI\Zo|=n-k$, i.e., the codimensions of the exceptional index sets are increasing as $k\le l\le m$, it is easy to see that the discussions for the cases of $k>l$ and $l>m$ make no essential difference.
And in these cases the singularity heights strictly decrease ultimately as well.

\subsection{Resolution of heterogeneous and generic singularities}
\label{Section:Generic}

In the generic case when the irregular and inconsistent singularities are intermingled, there arises neither essential difference nor added complication in the resolution algorithm.

In the case when an irregular singularities is ensued by an inconsistent one, the latent gradations for both resolutions of singularities should be implemented together before the invocation of Weierstrass preparation theorem.
More specifically, let $\vfz f1(\Xp)$ be the apex form in \eqref{PreWeierstrass} whose latent reducible function $f(\Xp)$ bears the Weierstrass form in \eqref{MonoWeierstrass} and whose primary and latent variables are $x'_2$ and $z_1$ respectively.
Now suppose that the primary variable $x'_2$ is inconsistent with the exponential matrix $\TM'$ and exceptional index set $\iI\Yop$ in \eqref{VirtualMonomialTransform} whose canonical reduction $\perm\TN'$ resembles the form of $\perm\TN$ in \eqref{IncstnMnmlTrnsfrm}.
Let $\CT{\per\TN'}$ denote the reduced monomial transformation associated with $\perm\TN'$ and $\vfz q1(\Zp)$ the partial transform of the apex form $\vfz f1(\Xp)$ in \eqref{PreWeierstrass} under $\CT{\per\TN'}$.
Also let $\wt\alpha'_2$ denote the latent primary component associated with the latent primary variable $x'_2$.
Similar to \eqref{PreReorganize}, a latent gradation of the partial transform $\vfz q1(\Zp)$ can be implemented by $\wt\alpha'_2$ being zero or not.
This is followed by a localization and non-degenerate linear modification leading to the following apex form $\vfz f1(\Xq)$ and its gradation similar to \eqref{ReorganizedFunc}:
\begin{equation}\label{PreLatentGradation}
\vfz f1(\Xq)=f_{z_1,0}(\Xq)+\vfz\phi 1(\Xq),
\end{equation}
where the latent primary component $\wt\alpha'_2$ as in Definition \ref{Def:LatentPrimaryCompnt} of the exceptional exponents of the latent reducible function $f_{z_1,0}(\Xq)$ satisfies $\wt\alpha'_2=0$ whereas none of those of the latent remainder function $\vfz\phi 1(\Xq)$ equal zero.
The reduced exponential matrix $\perm\TN'$ can be decomposed as $\perm\TN'=\perm\TS'\cdot\perm\TT'$ similar to that in \eqref{MatrixDecomposition}.
Under $\CT{\per\TS'}$ and the subsequent interim preliminary and Weierstrass reductions, the apex form $\vfz f1(\Xq)$ in \eqref{PreLatentGradation} is transformed and then reduced to an interim transform $\vfz f1(\XIq,x'_2)$ as in Definition \ref{Def:InterimReductions} that is similar to \eqref{InterimReorgFunc}:
\begin{equation}\label{PreInterimGradation}
\vfz f1(\XIq,x'_2)={x'_2}^{a'_2}{\Ze'}^{\A'\cdot\Ev{\per\TS}{\Ze'}'}\cdot
f_{z_1,0}(\XIq)+\vfz\phi 1(\XIq,x'_2)
\end{equation}
with $\XIq:=(\X''_*,\Ze')$ and $\A':=(\LEs a2n\prime\prime)$ being the vertex associated with the reduced exponential matrix $\perm\TN'$.
Subsequently a latent gradation of the latent and interim reducible functions $f_{z_1,0}(\Xq)$ in \eqref{PreLatentGradation} and $f_{z_1,0}(\XIq)$ in \eqref{PreInterimGradation} by the latent variable $z_1$ can be implemented in a way similar to that in \eqref{PreWeierstrass}:
\begin{equation}\label{FinalVirtualGrad}\begin{aligned}
f_{z_1,0}(\Xq)&=z_1^{m'}f(\Xq)+\vfz\psi 1(\Xq);\\
f_{z_1,0}(\XIq)&=z_1^{m'}f(\XIq)+\vfz\psi 1(\XIq)
\end{aligned}\end{equation}
such that no coefficients in the latent remainder functions $\vfz\psi 1(\Xq)$ and $\vfz\psi 1(\XIq)$ contain a term with the monomial factor $z_1^{m'}$.
Here two functions with different variables are deemed as being different albeit they might share the same name.
The latent and interim reducible functions $f(\Xq)$ and $f(\XIq)$ in \eqref{FinalVirtualGrad} bear an apex form resembling the latent and interim reducible functions $f(\Xp)$ in \eqref{ParadigmFunction} and $f_0(\XIp)$ in \eqref{InterimWeierstrassForm} respectively such that Weierstrass preparation theorem and completion of perfect power can be invoked on $f(\Xq)$ and $f(\XIq)$ in \eqref{FinalVirtualGrad} in the same way as in \eqref{MonoWeierstrass} and \eqref{InterimWeierstrassForm}.

In Lemma \ref{Lemma:VirtualRecovery} when the primary variable $x'_2$ is inconsistent with the reduced exponential matrix $\TN'$ and exceptional index set $\iI\Zop$, that is, when the consistency assumption does not hold, the primary variable $x'_2$ becomes latent and the singularity height might have a temporary increase.
In this case we invoke the above procedure of simultaneous latent gradations as well as the procedure of resolution of inconsistent singularities in Section \ref{Section:Inconsistency}.
After the latent primary variable $x'_2$ is revived eventually, the singularity height is reduced to zero and we can show that the residue order of the latent variable $z_1$ strictly decreases from the singularity height $d$ of the Weierstrass polynomial $w(\X)$ in \eqref{PrimaryForm} after $z_1$ is revived.

When an inconsistent singularities is ensued by an irregular one immediately, the irregular one can be ignored.
In fact, in this case the inconsistent one already renders the primary variable latent and hence after the subsequent localization the singularity height might have a temporary increase anyway.
Thus the new primary variable is determined instead by a linear modification as per the order of the localized proper transform in the latent preliminary reduction.

Finally, we would like to point out that the resolution of irregular singularities in Section \ref{Section:SecondarySing} can be amalgamated with that for inconsistent singularities in Section \ref{Section:Inconsistency} to form a uniform algorithm for inconsistent singularities.
In fact, the linear modification $\CL{\X'_*}$ in \eqref{LinearModification} can be augmented to entail the trivial transformation $x'_1=z_1$.
As a result, the $(n-1)$ by $(n-1)$ reduced exponential matrix $\TN'$ in \eqref{VirtualResolution} is augmented into an $n$ by $n$ reduced exponential matrix $\perm\TN'$ bearing the same form as in \eqref{IncstnExpntlMtrx} except that the column vector $\bl'=\bz$ and $(n-l)$-dimensional row vector $\N'_1=\E_1=(1,\dotsc,0)$ in this case.
In this way it is obvious that the latent variable $z_1$ becomes a latent primary variable $x'_1$ that is inconsistent with $\perm\TN'$ and exceptional index set $\iI\Zop$.
Nevertheless it is unnecessary to invoke the method of synthetic exponential matrices as in Section \ref{Section:Inconsistency} in this case since the reduced exponential matrices after augmentations act on the exponents of the latent primary variables directly.

The resolution algorithm for the generic case of ideals can be reduced to the hypersurface case directly.
More specifically, suppose that an ideal $\CI$ is generated by functions $\{f_1(\X),\dotsc,f_m(\X)\}$.
In this case we can study the hypersurface generated by the product $f(\X):=f_1(\X)\cdots f_m(\X)$, which amounts to applying the resolution algorithm for the hypersurface case to each generator $f_j(\X)$ of $\CI$ for $1\le j\le m$ simultaneously.
This naive approach to the resolution algorithm is based on the fact that the Newton polyhedron can ``automatically" choose the appropriate resolution center in each resolution step, which is vindicated by the conclusions in \cite{MZ}.

\section{A finite partition of unity in a neighborhood\\
of a singular point}
\label{Section:Partition}

By default we regard the origin as the singular point of a function $f(\X)$ under investigation.
A couple of conclusions in this section are either well known in toric geometry or obtained already in \cite{MZ}.
They are presented here either for completeness or with a few improvements.
Let us start with the following well known result whose proof is omitted.

\begin{lemma}\label{Lemma:HalfSpaces}
Let $\PI\NP$ denote the normal vector set of a Newton polyhedron $\NP$ as in Definition \ref{Def:NewtonPolyhedron}.
Then $\NP=\bigcap_{\V\in\PI\NP}\sU (\V)$ with the upper half space $\sU (\V)$ associated with $\V$ being defined as $\sU (\V):=\{\Al\in\BB R^n\colon\langle\V,\Be\rangle\le\langle\V,\Al\rangle~\text{for}~\forall\Be\in\facet (\V)\}$, which is equivalent to the convex hull definition of Newton polyhedron in Definition \ref{Def:NewtonPolyhedron}.
\end{lemma}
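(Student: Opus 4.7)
The plan is to establish the two inclusions separately, with the forward one immediate from the definitions and the reverse one following a standard Minkowski--Weyl separation argument.

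For the forward inclusion $\NP\subseteq\bigcap_{\V\in\PI\NP}\sU(\V)$, I would simply unravel the definitions. Since $\facet(\V)=\face(\V)$ for every $\V\in\PI\NP$ in the sense of \eqref{FaceEq}, for any $\Al\in\NP$ and any $\Be\in\facet(\V)$ the defining inequality of $\face(\V)$ yields $\langle\V,\Be\rangle\le\langle\V,\Al\rangle$, which puts $\Al$ in $\sU(\V)$.

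For the reverse inclusion I would argue by contradiction. Given $\Al_0\notin\NP$, the closedness and convexity of $\NP$ permit the Hahn--Banach separation theorem to furnish $\W\in\BB R^n\setminus\{\bz\}$ and $c\in\BB R$ with $\langle\W,\Al_0\rangle<c\le\langle\W,\Be\rangle$ for every $\Be\in\NP$. Because $\NP+\Rp^n=\NP$, the functional $\langle\W,\cdot\rangle$ must remain bounded below along each translate by $t\E_j$, forcing $w_j\ge 0$ for every $j$; hence $\W\in\DR$ and the infimum is attained on the face $\face(\W)$ defined in \eqref{FaceEq}. Next I would use that $\NP$ is a finite-sided polyhedron, so $\face(\W)$ equals the intersection of the facets containing it, and $\W$ lies in the normal cone $\CC(\gv{\face(\W)})$. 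Writing $\W=\sum_j\lambda_j\V_j$ with $\V_j\in\gv{\face(\W)}\subseteq\PI\NP$ and $\lambda_j\ge 0$, and fixing any $\Be\in\face(\W)$, the strict inequality $\langle\W,\Al_0\rangle<\langle\W,\Be\rangle$ rewrites as $\sum_j\lambda_j(\langle\V_j,\Be\rangle-\langle\V_j,\Al_0\rangle)>0$. At least one summand must be strictly positive; for that index $j$, the inclusion $\Be\in\face(\W)\subseteq\facet(\V_j)$ witnesses $\langle\V_j,\Be\rangle>\langle\V_j,\Al_0\rangle$, hence $\Al_0\notin\sU(\V_j)$, a contradiction.

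The main obstacle will be the Minkowski--Weyl step used above: justifying that a Newton polyhedron, despite possibly infinite support, has only finitely many facets and that each of its exposed faces is the intersection of the facets containing it, with the facet normals (rescaled to lie in $\DN n$ with coprime nonzero components) generating the outward normal cone at that face. This reduces to standard finite-dimensional polyhedral duality once one observes that $\NP=\conv(V)+\Rp^n$ for a finite vertex set $V\subseteq\supp(f)$, a finiteness guaranteed by Dickson's lemma applied to $\supp(f)\subseteq\BB N^n$.
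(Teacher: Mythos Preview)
The paper does not actually prove this lemma: it is introduced as ``the following well known result whose proof is omitted.'' So there is no paper proof to compare against, and your task reduces to giving a self-contained argument, which you do correctly via the standard separation plus normal-cone decomposition route.

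One caution worth flagging: the step where you write $\W\in\CC(\gv{\face(\W)})$ is, in the paper's internal logic, essentially the content of Lemma~\ref{Lemma:IntersectLemma}\eqref{item:UbiquitousVertex} and \eqref{item:BasicConclusion}, whose proofs in turn invoke Lemma~\ref{Lemma:HalfSpaces}. You avoid circularity by grounding that step externally in Minkowski--Weyl/polyhedral duality and Dickson's lemma rather than in the paper's later lemmas, and you explicitly identify this as the point requiring independent justification---that is the right call. Just make sure your final write-up keeps that step genuinely independent of Lemma~\ref{Lemma:IntersectLemma}.
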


\begin{lemma}\label{Lemma:IntersectLemma}
For $\forall\W\in\DR$, let $\face (\W):=\sF$ be a face of a Newton polyhedron $\NP$ associated with $\W$ as in \eqref{FaceEq}.
Then the following basic conclusions hold for $\face (\W)$.
\begin{inparaenum}[(a)]
\item\label{item:FaceIn}
For a finite vector set $V\subseteq\DR$, if $\W\in\CC^\circ(V)$ and $\bigcap_{\V\in V}\face (\V)\ne\emptyset$, then $\face (\W)=\bigcap_{\V\in V}\face (\V)$;
\item\label{item:Minimality}
The generator set $\gv\sF$ of the facial cone $\CV\sF$ as in Definition \ref{Def:VertexCone} is minimal, i.e., $\V\notin\CC (\gv\sF\setminus\{\V\})$ for $\forall\V\in\gv\sF$.
Moreover, $\CV\sF\cap\PI\NP=\gv\sF$;
\item\label{item:UbiquitousVertex}
There exists a vertex $\A$ of $\NP$ such that $\A\in\face (\W)$ and $\W\in\CV\A$;
\item\label{item:BasicConclusion}
$\face (\W)=\bigcap_{\V\in\gv\sF}\facet (\V)$;
\item\label{item:FacialAll}
The facial cone $\CV\sF$ satisfies $\CV\sF^\circ=\{\W\in\DR\colon\face (\W)=\sF\}$;
\item\label{item:Conjugacy}
Let $\SF 1$ and $\SF 2$ be two faces of $\NP$.
Then $\SF 1\subseteq\SF 2$ if and only if $\VF 1\supseteq\VF 2$;
\item\label{item:NoInteriInter} Let $\SF 1$ and $\SF 2$ be two faces of $\NP$.
Then $\VF 1^\circ\cap\VF 2^\circ\ne\emptyset$ if and only if $\SF 1=\SF 2$ if and only if $\VF 1=\VF 2$;
\item\label{item:VertexIn}
Let $\sF=\face (\W)$ as above and $\W'\in\DR$.
Then $\W'\in\CV\sF$ if and only if $\sF\subseteq\face (\W')$.
\end{inparaenum}
\end{lemma}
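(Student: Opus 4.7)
The eight items amount to the standard structure theorem for the normal fan of $\NP$, and I would prove them in the order (a), (d), forward direction of (e), forward direction of (h), (b), (f), (c), reverse direction of (e), (g), reverse direction of (h), with item (a) serving as the workhorse throughout. For (a), writing $\W=\sum_i\lambda_i\V_i$ with $\lambda_i>0$, one inclusion is a direct positive combination of the minimizing inequalities defining each $\face (\V_i)$. The subtle reverse inclusion exploits the nonempty intersection hypothesis: pick any $\Al'\in\bigcap_i\face (\V_i)$ and any $\Al\in\face (\W)$, both of which minimize $\langle\W,\cdot\rangle$ on $\NP$; rearranging yields $\sum_i\lambda_i(\langle\V_i,\Al\rangle-\langle\V_i,\Al'\rangle)=0$, and nonnegativity of each summand together with $\lambda_i>0$ forces termwise equality, placing $\Al$ in every $\face (\V_i)$.

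For (d), Lemma \ref{Lemma:HalfSpaces} together with the standard characterization of faces of a polyhedron as intersections of the facets containing them identifies $\sF$ with $\bigcap_{\V\in\gv\sF}\facet (\V)$. Taking any $\W\in\CV\sF^\circ$ and applying (a) then yields $\face (\W)=\bigcap_{\V\in\gv\sF}\facet (\V)=\sF$, which simultaneously delivers (d) and the forward direction of (e). The forward direction of (h) is the same positive-combination argument as the easy half of (a): if $\W'=\sum_{\V\in\gv\sF}\mu_\V\V$ with $\mu_\V\ge 0$, then any $\Al\in\sF\subseteq\facet (\V)$ minimizes every $\langle\V,\cdot\rangle$ and hence $\langle\W',\cdot\rangle$, so $\sF\subseteq\face (\W')$.

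For the minimality half of (b), if some $\V\in\gv\sF$ admitted $\V=\sum_i\lambda_i\V_i$ with $\V_i\in\gv\sF\setminus\{\V\}$ and $\lambda_i>0$, then (a) would force $\facet (\V)=\bigcap_i\facet (\V_i)$; an intersection of two or more distinct facets of $\NP$ has codimension at least two, contradicting the $(n-1)$-dimensionality of $\facet (\V)$, while a single $\V_i$ would be a positive scalar multiple of $\V$ and hence equal to $\V$ by primitivity in $\PI\NP$. The identity $\CV\sF\cap\PI\NP=\gv\sF$ follows by expanding any $\V\in\CV\sF\cap\PI\NP$ as a positive combination of $\gv\sF$ and applying (a) to get $\facet (\V)\supseteq\sF$, so $\V\in\gv\sF$. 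For (f), the forward direction is a definition chase: $\SF 1\subseteq\SF 2$ gives $\gv{\SF 2}\subseteq\gv{\SF 1}$ and hence $\VF 2\subseteq\VF 1$; the reverse identifies each $\VF i\cap\PI\NP$ with $\gv{\SF i}$ via (b) and uses (d) to descend to the faces. For (c), any $\W\in\DR$ minimizes $\langle\W,\cdot\rangle$ on $\NP$ at a face that contains a vertex $\A$; then $\A\in\face (\W)$, and Farkas' lemma identifies the cone of linear functionals minimized at $\A$ with $\CC(\gv\A)=\CV\A$, so $\W\in\CV\A$.

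The reverse direction of (e) is the main obstacle. Assuming $\face (\W)=\sF$, (c) gives $\W=\sum_{\V\in\gv\A}\lambda_\V\V$ with $\A\in\sF$ and $\lambda_\V\ge 0$; (a) applied to $V':=\{\V\in\gv\A\colon\lambda_\V>0\}$ yields $\sF=\face (\W)=\bigcap_{\V\in V'}\facet (\V)$, so each $\V\in V'$ satisfies $\facet (\V)\supseteq\sF$, forcing $V'\subseteq\gv\sF$ and hence $\W\in\CV\sF$. Promoting this to $\W\in\CV\sF^\circ$ invokes the minimality of $\gv\sF$ from (b) together with the standard convex-analytic fact that for a minimally generated polyhedral cone the topological relative interior coincides with the set of strict positive combinations of the generators; any boundary point of $\CV\sF$ would then lie in a proper face of the form $\CV{\sF'}$ for some $\sF'\supsetneq\sF$ (by (b), (d), (f)), contradicting $\face (\W)=\sF$ via the forward direction of (h). Item (g) is then immediate from (e): any $\W\in\VF 1^\circ\cap\VF 2^\circ$ satisfies $\SF 1=\face (\W)=\SF 2$. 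For the reverse direction of (h), if $\sF\subseteq\face (\W')$, the (now established) (e) gives $\W'\in\CV{\face (\W')}^\circ$ and (f) gives $\CV{\face (\W')}\subseteq\CV\sF$, so $\W'\in\CV\sF$.
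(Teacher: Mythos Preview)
Your argument is correct and shares the paper's central strategy of using (a) as the workhorse throughout. The principal differences are organizational and in what is taken as ``standard''. The paper proves (c) before (d) and uses (c) to derive (d): having located a vertex $\A\in\sF$ with $\W\in\CV\A$, it writes $\W\in\CC^\circ(V)$ for some $V\subseteq\gv\A$ and applies (a) to obtain $\sF=\bigcap_{\V\in V}\facet(\V)$, whence $V\subseteq\gv\sF$ and the nontrivial inclusion of (d) follows. You instead cite the standard polyhedral fact that every face equals the intersection of the facets containing it to get (d) directly, and for the second half of (c) you invoke Farkas' lemma where the paper gives an explicit sign argument on the coefficients $\lambda_\U$. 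For the reverse inclusion in (e), the paper argues via a rank comparison (from $\sF=\bigcap_{\V\in V}\facet(\V)=\bigcap_{\V\in\gv\sF}\facet(\V)$ it infers $\rank(V)=\rank(\gv\sF)$ and hence $\W\in\CC^\circ(\gv\sF)$), while you route through the normal-fan structure (proper faces of $\CV\sF$ are of the form $\CV{\sF'}$ for $\sF'\supsetneq\sF$) and close with the forward direction of (h). Both of these last steps lean on a piece of standard convex geometry that is not fully spelled out in either version; your treatment makes the external dependence more explicit, while the paper's is more internally self-contained but terser at that juncture. Parts (b), (f), (g), and (h) are handled essentially identically.
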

\begin{proof}
\begin{asparaenum}[(a)]
{\parindent=\saveparindent
\item For $\forall\X\in\bigcap_{\V\in V}\face (\V)\ne\emptyset$
and $\forall\Y\in\NP$, we have $\langle\V,\X\rangle\le\langle\V,\Y\rangle$ for $\forall\V\in V$ as per \eqref{FaceEq}.
Hence $\langle\W,\X\rangle\le\langle\W,\Y\rangle$.
Thus $\X\in\face (\W)$ and $\bigcap_{\V\in V}\face (\V)\subseteq\face (\W)$.
Conversely, for $\forall\Y\in\NP\setminus\bigcap_{\V\in V}\face (\V)$, evidently $\exists\U\in V$ such that $\Y\notin\face (\U)$. That is, for $\forall\X\in\bigcap_{\V\in V}\face (\V)\ne\emptyset$, we have $\langle\U,\X\rangle<\langle\U,\Y\rangle$ as per \eqref{FaceEq}.
Hence $\langle\W,\X\rangle<\langle\W,\Y\rangle$ and $\Y\notin\face (\W)$.
Thus $\face (\W)\subseteq\bigcap_{\V\in V}\face (\V)$.

\item For $\forall\V\in\gv\sF$, if $\V\in\CC (\gv\sF\setminus\{\V\})$, then there exists a generator subset $V\subseteq\gv\sF\setminus\{\V\}$ with $|V|>1$ such that $\V\in\CC^\circ(V)$.
As per \eqref{item:FaceIn}, $\facet (\V)=\bigcap_{\U\in V}\facet (\U)$ since $\sF\subseteq\bigcap_{\U\in V}\facet (\U)\ne\emptyset$. This contradicts the dimension of $\facet (\V)$.

Now let us prove that $\CV\sF\cap\PI\NP=\gv\sF$.
For $\forall\X\in\sF$, $\forall\Y\in\NP$, we have $\langle\V,\X\rangle\le\langle\V,\Y\rangle$ for $\forall\V\in\gv\sF$ as per \eqref{FaceEq} since $\sF\subseteq\facet (\V)$.
Hence $\langle\W,\X\rangle\le\langle\W,\Y\rangle$ for $\forall\W\in\CC (\gv\sF)\cap\PI\NP$ and thus $\sF\subseteq\facet (\W)$, from which we deduce that $\W\in\gv\sF$ as per Definition \ref{Def:VertexCone}.
This shows that $\CV\sF\cap\PI\NP\subseteq\gv\sF$ since $\CV\sF=\CC (\gv\sF)$.
The other direction is evident.

\item The conclusion $\exists\,\A\in\sF$ is evident when $\sF$ is a vertex of $\NP$.
Suppose $\dim (\sF)>0$ henceforth.
Let $\sH (\V)$ denote the hyperplane with a normal vector $\V\in\DR$ such that $\face (\V)\subseteq\sH (\V)$.
Let us also denote $\sP:=(\bigcap_{\V\in\gv\sF}\sH (\V))\cap\sH (\W)$.
It follows from \eqref{FaceEq} and Lemma \ref{Lemma:HalfSpaces} that
\[
\sF=\NP\cap\sH (\W)=\hs{-1mm}\bigcap_{\V\in\gv\sF}\hs{-1mm}(\sU (\V)\cap\sH (\W))
\cap\hs{-2mm}\bigcap_{\V\in\PI\NP\setminus\gv\sF}\hs{-2mm}\sU (\V)
=\sP\cap\hs{-2mm}\bigcap_{\V\in\PI\NP\setminus\gv\sF}\hs{-2mm}\sU (\V).
\]
Hence $\exists\U\in\PI\NP\setminus\gv\sF$ such that $\sF\cap\facet (\U)\ne\emptyset$ since $\sP\setminus\Rp^n\ne\emptyset$.
Let $\V\in\CC^\circ(\U,\W)$.
Then $\face (\V)=\face (\W)\cap\facet (\U)$ as per \eqref{item:FaceIn} and $\dim (\face(\V))<\dim (\sF)$ since $\U\notin\gv\sF$.
A decreasing induction on the dimensions of the intersecting faces such as $\face (\V)$ leads to the conclusion that there exists a vertex $\A\in\NP$ such that $\A\in\face (\W)$.

Now we are ready to prove that $\A\in\face (\W)$ yields $\W\in\CV\A$.
Without loss of generality, suppose that $\rank (\gv\A)=n=|\gv\A|$ and $\W=\sum_{\U\in\gv\A}\lambda_\U\U$.
If $\exists V\subseteq\gv\A$ such that $\lambda_\U<0$ for $\forall\U\in V$, then it is evident that $\gv\A\setminus V\ne\emptyset$.
For $\forall\Be\in (\bigcap_{\U\in\gv\A\setminus V}\facet (\U))\setminus (\bigcup_{\U\in V}\facet (\U))\ne\emptyset$, it follows that
\[
\langle\W,\A\rangle=\sumsepm\sum_{\U\in\gv\A\setminus V}\sumsep\lambda_\U\langle\U,\A\rangle
+\sum_{\U\in V}\lambda_\U\langle\U,\A\rangle
>\sumsepm\sum_{\U\in\gv\A\setminus V}\sumsep\lambda_\U\langle\U,\Be\rangle
+\sum_{\U\in V}\lambda_\U\langle\U,\Be\rangle=\langle\W,\Be\rangle
\]
since for $\forall\U\in V$, we have $\langle\U,\A\rangle<\langle\U,\Be\rangle$ as per \eqref{FaceEq} and $\lambda_\U<0$.
This contradicts $\A\in\face (\W)$ as per \eqref{FaceEq}.

\item First of all, it is obvious that $\sF\subseteq\bigcap_{\V\in\gv\sF}\facet (\V)$ as per Definition \ref{Def:VertexCone}.
According to \eqref{item:UbiquitousVertex}, there exists a vertex $\A\in\face (\W)$ such that $\W\in\CV\A$.
Hence $\exists V\subseteq\gv\A$ such that $\W\in\CC^\circ (V)$.
As per \eqref{item:FaceIn}, we have $\sF=\face (\W)=\bigcap_{\V\in V}\facet (\V)$.
This shows that $V\subseteq\gv\sF$ and thus $\bigcap_{\V\in\gv\sF}\facet (\V)\subseteq\sF$.

\item The conclusion is evident when $\sF$ is a facet of $\NP$ since $\gv\sF$ comprises a unique normal vector in this case.
When $\sF$ is not a facet of $\NP$, let us denote $U_\sF:=\{\U\in\DR\colon\face (\U)=\sF\}$.
It readily follows from \eqref{item:FaceIn} and \eqref{item:BasicConclusion} that $\CV\sF^\circ\subseteq U_\sF$.
Conversely for $\forall\U\in U_\sF$, the proof of \eqref{item:BasicConclusion} indicates that $\exists V\subseteq\gv\sF$ with $|V|>1$ such that $\U\in\CC^\circ(V)$ and $\sF=\bigcap_{\V\in V}\facet (\V)$.
Further, $\sF=\bigcap_{\V\in\gv\sF}\facet (\V)$ as per \eqref{item:BasicConclusion}.
Hence $\rank (V)=\rank (\gv\sF)$ and thus $\U\in\CC^\circ(\gv\sF)=\CV\sF^\circ$.

\item By Definition \ref{Def:VertexCone}, it is easy to see that $\SF 1\subseteq\SF 2$ indicates $\gv{\SF 1}\supseteq\gv{\SF 2}$ and hence $\VF 1\supseteq\VF 2$.
The converse readily follows from $\CV\sF\cap\PI\NP=\gv\sF$ in \eqref{item:Minimality} and \eqref{item:BasicConclusion}.

\item Suppose $\V\in\VF 1^\circ\cap\VF 2^\circ$.
It follows from \eqref{item:FacialAll} that $\face (\V)=\SF 1=\SF 2$.
The other conclusions readily follow from \eqref{item:Conjugacy} and \eqref{item:FacialAll}.

\item Suppose that $\W'\in\CV\sF$.
Then $\exists W\subseteq\gv\sF$ such that $\W'\in\CC^\circ(W)$.
Hence $\face (\W')=\bigcap_{\V\in W\subseteq\gv\sF}\facet ({\V})$ according to \eqref{item:FaceIn} and thus $\sF\subseteq\face (\W')$ as per \eqref{item:BasicConclusion}.
Conversely, it follows from $\sF\subseteq\face (\W'):=\sF'$ that $\CV\sF\supseteq\CV{\sF'}$ as per \eqref{item:Conjugacy}.
Moreover, $\W'\in\CV{\sF'}^\circ$ as per \eqref{item:FacialAll}.
Hence $\W'\in\CV\sF$.
\qedhere}
\end{asparaenum}
\end{proof}

\begin{definition}\label{Def:AmbientCone}
{\upshape ($\ON$; $\ONs$; $\FV\A$; $\gfv\A$; ambient cone $\DR$; $\partial\Rp^n$)}

The sets of all the vertex cones and refined vertex cones of a Newton polyhedron $\NP$ as in Definition \ref{Def:VertexCone} and Definition \ref{Def:ExponentialMatrix} are denoted as $\ON$ and $\ONs$ respectively.
A facet of a vertex cone $\CV\A$ is denoted as $\FV\A$ whose generator set is denoted as $\gfv\A:=\gv\A\cap\FV\A$.
With $\{\LE\E 1n\}$ denoting the standard basis of $\BB R^n$, the \emph{ambient} cone of $\NP$ refers to $\DR=\CC (\LE\E 1n)$ whose boundary is denoted as $\partial\Rp^n$.
\end{definition}

\begin{lemma}\label{Lemma:AllAreFans}
The following hold for a Newton polyhedron $\NP$.
\begin{inparaenum}[(a)]
\item\label{item:AmbientFan} The set of vertex cones $\ON$ constitutes the ambient cone $\DR=\bigcup_{\CV\A\in\ON}\CV\A$ such that $\DR$ is a polyhedral fan as in Definition \ref{Def:Simplicity};
\item\label{item:SimplicialFan} The set of refined vertex cones $\ONs$ constitutes the ambient cone $\DR=\bigcup_{\CV\A\in\ONs}\CV\A$ such that $\DR$ is a simplicial fan as in Definition \ref{Def:Simplicity}.
\end{inparaenum}
\end{lemma}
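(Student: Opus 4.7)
The strategy is to settle \eqref{item:AmbientFan} by a direct application of Lemma~\ref{Lemma:IntersectLemma}, then bootstrap to \eqref{item:SimplicialFan} by a dimension-inductive refinement. For coverage in \eqref{item:AmbientFan}, the inclusion $\bigcup_{\CV\A\in\ON}\CV\A\subseteq\DR$ is automatic since $\gv\A\subseteq\PI\NP\subseteq\DR$, while the reverse inclusion is precisely Lemma~\ref{Lemma:IntersectLemma}~\eqref{item:UbiquitousVertex}, which furnishes, for every $\W\in\DR$, a vertex $\A$ of $\NP$ with $\W\in\CV\A$.

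For the fan axiom in \eqref{item:AmbientFan}, I would take distinct vertices $\A_1,\A_2$ with $\CV{\A_1}\cap\CV{\A_2}\ne\emptyset$ and show that $\CV{\A_1}\cap\CV{\A_2}=\CV{\sF_0}$, where $\sF_0$ is the smallest face of $\NP$ containing both $\A_1$ and $\A_2$ (such a minimum exists because faces of a polyhedron are closed under intersection). For any $\W$ in the intersection, Lemma~\ref{Lemma:IntersectLemma}~\eqref{item:VertexIn} applied with the vertex faces $\{\A_i\}$ gives $\A_i\in\face(\W)$ for $i=1,2$, whence $\sF_0\subseteq\face(\W)$; a second application of \eqref{item:VertexIn}, now with $\sF_0$ itself, yields $\W\in\CV{\sF_0}$. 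The reverse inclusion $\CV{\sF_0}\subseteq\CV{\A_1}\cap\CV{\A_2}$ is Conjugacy~\eqref{item:Conjugacy} since $\{\A_i\}\subseteq\sF_0$. To verify that $\CV{\sF_0}$ is a common face of the two cones in the sense of Definition~\ref{Def:Simplicity}, I would exhibit a supporting linear functional: for $\Be$ in the relative interior of $\sF_0$, the functional $L(\V):=\langle\V,\Be\rangle-\langle\V,\A_i\rangle$ is nonnegative on $\gv{\A_i}$, vanishes exactly on $\gv{\sF_0}$, and is strictly positive on $\gv{\A_i}\setminus\gv{\sF_0}$ (since for such $\V$, $\sF_0\not\subseteq\facet(\V)$ forces $\Be\notin\facet(\V)$), whence $\CV{\sF_0}$ is the face of $\CV{\A_i}$ cut out by $\{L=0\}$.

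For \eqref{item:SimplicialFan}, I would produce a compatible simplicial refinement of the entire fan $\ON$ by inducting on the dimension of facial cones. The $1$-dimensional facial cones are already simplicial and unimodular because their generators in $\PI\NP$ are primitive by Definition~\ref{Def:NewtonPolyhedron}. Inductively, assuming every facial cone of dimension less than $d$ has been simplicially refined with unimodular determinants, for each facial cone $\CV\sF$ of dimension $d$ I would run the determinant-decreasing induction of Lemma~\ref{Lemma:Refinement}, but whenever its auxiliary vector $\W=\sum\lambda_j\V_j\in\DN n$ with $\lambda_j\in[0,1)$ happens to have some $\lambda_j=0$ (so that $\W$ lies in a proper facial subcone $\CV{\sF'}\subsetneq\CV\sF$), I would instead pick $\W$ from the refinement of $\CV{\sF'}$ already fixed by the inductive hypothesis. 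The determinant-decreasing argument still applies, and the resulting $\ONs$ is a simplicial fan by construction.

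The main obstacle is precisely this compatibility condition in \eqref{item:SimplicialFan}: naively refining each vertex cone of $\ON$ independently via Lemma~\ref{Lemma:Refinement} would in general produce distinct simplicial subdivisions on a shared facial cone $\CV{\sF_0}=\CV{\A_1}\cap\CV{\A_2}$, violating the fan axiom of Definition~\ref{Def:Simplicity}. The dimension-inductive scheme avoids this by completely settling the refinement on all lower-dimensional facial cones before any higher-dimensional one is touched.
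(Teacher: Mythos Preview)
Your proof is correct. For part~\eqref{item:AmbientFan} you and the paper take the same route in spirit, both relying on Lemma~\ref{Lemma:IntersectLemma}; the difference is in how the intersection $\CV{\A_1}\cap\CV{\A_2}$ is identified. The paper builds the generator set $\CG{\A\B}:=\bigcup_{\W\in\CV\A\cap\CV\B}\gv{\face(\W)}$ and shows the intersection equals $\CC(\CG{\A\B})$, whereas you name the face $\sF_0$ directly as the smallest face of $\NP$ containing both vertices and identify the intersection as $\CV{\sF_0}$. These are the same object (one checks $\CG{\A\B}=\gv{\sF_0}$), but your explicit linear-functional verification that $\CV{\sF_0}$ is a face of each $\CV{\A_i}$ is a bit more transparent than the paper's step ``$\CC(\CG{\A\B})$ is a common facet or face since $\CG{\A\B}\subseteq\gv\A\cap\gv\B$ and $\CC(\CG{\A\B})$ misses both interiors,'' which leans on the face-lattice duality without spelling it out.

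For part~\eqref{item:SimplicialFan} the paper writes only ``evident based on \eqref{item:AmbientFan},'' tacitly invoking the standard existence of a global unimodular simplicial refinement of a fan. You correctly flag the compatibility issue---that Lemma~\ref{Lemma:Refinement} as stated refines a single vertex cone, and applying it independently to each $\CV\A\in\ON$ could produce mismatched subdivisions on shared facial cones---and your dimension-inductive scheme (fix refinements on all lower-dimensional facial cones before touching higher ones, and feed those choices into the determinant-decreasing induction) is a valid way to make this precise. This is extra care beyond what the paper supplies.
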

\begin{proof}
\begin{asparaenum}[(a)]
{\parindent=\saveparindent
\item For $\forall\W\in\DR$, there exists a vertex $\A\in\face (\W)$ such that $\W\in\CV\A$ as per Lemma \ref{Lemma:IntersectLemma} \eqref{item:UbiquitousVertex}.
Hence $\DR\subseteq\bigcup_{\CV\A\in\ON}\CV\A$.

Now let $\CV\A,\CV\B\in\ON$ with $\CV\A\cap\CV\B\ne\emptyset$ and the two vertices $\A\ne\B$.
For $\forall\W\in\CV\A\cap\CV\B$, Lemma \ref{Lemma:IntersectLemma} \eqref{item:VertexIn} and then \eqref{item:Conjugacy} indicate that the facial cone $\CV\sF\subseteq\CV\A\cap\CV\B$ with $\sF$ denoting $\face (\W)$.
As per $\CV\sF\cap\PI\NP=\gv\sF$ in Lemma \ref{Lemma:IntersectLemma} \eqref{item:Minimality}, we have $\gv\sF\subseteq\gv\A\cap\gv\B$.
Now consider the normal vector set $\CG{\A\B}:=\bigcup_{\W\in\CV\A\cap\CV\B}\{\gv\sF\colon\sF=\face ({\W})\}\subseteq\gv\A\cap\gv\B$.
It follows from Lemma \ref{Lemma:IntersectLemma} \eqref{item:NoInteriInter} that $\CC (\CG{\A\B})\cap(\CV\A^\circ\cup\CV\B^\circ)=\emptyset$.
In fact, if $\W\in\CC (\CG{\A\B})\cap\CV\A^\circ$, then Lemma \ref{Lemma:IntersectLemma} \eqref{item:FaceIn} or \eqref{item:FacialAll} indicates that $\sF:=\face (\W)=\A$ and hence $\CV\A=\CV\sF\subseteq\CV\A\cap\CV\B$ as above.
Thus $\CV\A\subseteq\CV\B$ and by Lemma \ref{Lemma:IntersectLemma} \eqref{item:NoInteriInter}, we have $\A=\B$ which constitutes a contradiction.
Similarly $\CC (\CG{\A\B})\cap\CV\B^\circ=\emptyset$.
Hence $\CC (\CG{\A\B})$ is a common facet or face of both $\CV\A$ and $\CV\B$ since $\CG{\A\B}\subseteq\gv\A\cap\gv\B$.
Moreover, for $\forall\W\in\CV\A\cap\CV\B$, it readily follows from Lemma \ref{Lemma:IntersectLemma} \eqref{item:FacialAll} that $\W\in\CV\sF^\circ$ with $\sF=\face (\W)$.
Since $\gv\sF\subseteq\CG{\A\B}$, we have $\W\in\CC (\CG{\A\B})$.
Thus $(\CV\A\cap\CV\B)\subseteq\CC (\CG{\A\B})$ and hence $(\CV\A\cap\CV\B)=\CC (\CG{\A\B})$.
This shows that $\ON$ constitutes a polyhedral fan.

\item The proof is evident based on \eqref{item:AmbientFan}.
\qedhere}
\end{asparaenum}
\end{proof}

\begin{definition}\label{Def:AdjointVector}
{\upshape (Dual vector of $\FV\A$; exterior product $\CP\U 1{n-1}$; adjoint variables $\YV\A$; adjoint vector $\V_{j_0}^*$ and adjoint variable $y_{j_0}$ of $\FV\A$ with respect to $\TM$; adjoint facet; $\Mnorm\T$; $\mnorm\T$)}

For a facet $\FV\A$ of $\CV\A$, if $\gv\A\setminus\FV\A=\{\V_{j_0}\}$ with $1\le j_0\le n$, then the vector $\V_{j_0}$ is called the \emph{dual} vector of $\FV\A$.

The \emph{exterior product} of $n-1$ vectors $\{\LE\U 1{n-1}\}$ with $\U_l\in\DN n$ for $1\le l<n$ is a multilinear and antisymmetric map from $(\BB N^n)^{n-1}$ to $\BB N^n$ defined in almost the same fashion as the algebraic cofactor elements of $\U_n$ in the matrix $[\ME\U 1{n-1}~\U_n]$, i.e.,
\begin{equation*}
\CP\U 1{n-1}:=(-\det\MP 1,\dotsc,(-1)^n\det\MP n),
\end{equation*}
where the $n$ by $(n-1)$ matrix $\TP:=[\ME\U 1{n-1}]$ has column vectors $\{\LE\U 1{n-1}\}$ and the matrix $\MP l$ for $1\le l\le n$ denotes the $(n-1)$ by $(n-1)$ submatrix of $\TP$ obtained by deleting the $l$-th row of $\TP$.

Let $\TM=[\ME\V 1n]$ be an exponential matrix associated with a refined vertex cone $\CV\A$ as in Definition \ref{Def:ExponentialMatrix} such that $\det\TM=1$.
Let $\X=\CT\TM(\Y):=\Y^\TM$ be the monomial transformation associated with $\TM$.
To discriminate the variables among different vertex cones, we also write the variables $\Y$ as $\YV\A$ if $\TM$ is associated with a refined vertex cone $\CV\A$ and call them the \emph{adjoint} variables of the vertex cone $\CV\A$ with respect to $\TM$.
If we denote the adjoint matrix of $\TM$ as $\TM^*:=\lt [\V_1^*~\cdots~\V_n^*]$ such that $\langle\V_l^*,\V_l\rangle=\det\TM=1$ for $1\le l\le n$, then $\V_{j_0}^*$ is defined as the \emph{adjoint} vector of $\FV\A$ with respect to $\TM$.
Moreover, the variable $y_{j_0}$  satisfying $y_{j_0}=\X^{\V_{j_0}^*}$ with $\X\in\DFs n$ is referred to as the \emph{adjoint} variable of $\V_{j_0}^*$ or $\FV\A$ with respect to $\TM$.
Reciprocally $\FV\A$ is called the \emph{adjoint} facet of $y_{j_0}$.

For a set of variables $\T=(\LE t1{\ell})$, its norm $\Mnorm\T:=\max\{|t_j|\colon 1\le j\le\ell\}$.
Further, we have a notation $\mnorm\T:=\min\{|t_j|\colon 1\le j\le\ell\}$.
\end{definition}

\begin{lemma}\label{Lemma:ConjugateVariables}
Suppose that $\CV\A$ and $\CV\B$ are two refined vertex cones of a Newton polyhedron $\NP$ sharing a common facet $\CV\A\cap\CV\B=\FV\A=\FV\B$.
Suppose also that $\TM_\A:=[\ME\V 1n]$ and $\TM_\B:=[\ME{\ol\V}1n]$ are the exponential matrices associated with $\CV\A$ and $\CV\B$ satisfying $\det\TM_\A=\det\TM_\B=1$ and whose associated monomial transformations are $\X=\CT{\TM_\A}(\Y)=\Y^{\TM_\A}$ and $\X=\CT{\TM_\B}(\ol\Y)=\ol\Y^{\TM_\B}$ respectively.
Let $\V_{j_0}$ and $\ol\V_{k_0}$ be the dual vectors of $\FV\A$ and $\FV\B$ whose adjoint variables with respect to $\TM_\A$ and $\TM_\B$ are $y_{j_0}$ and $\ol y_{k_0}$ respectively.
Then we have $\V_{j_0}^*=-\ol\V_{k_0}^*$ and hence $y_{j_0}=\ol y_{k_0}^{-1}$.
\end{lemma}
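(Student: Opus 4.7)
The plan is to exploit the fact that both adjoint vectors are characterized, up to a scalar, as normals to the common facet hyperplane, and then pin down the scalar using determinants of the two exponential matrices and a geometric sign argument.

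First I would observe that since $\FV\A=\FV\B$, the two generator sets $\{\V_l:l\ne j_0\}$ and $\{\ol\V_l:l\ne k_0\}$ must coincide as unordered sets. Indeed, both are primitive lattice generators of the rays of the simplicial cone $\FV\A=\FV\B$: primitivity of each $\V_l$ follows from $\det\TM_\A=1$ (any non-primitive column would inflate the determinant), and similarly for the $\ol\V_l$. Let us write this common set as $\{\U_1,\dotsc,\U_{n-1}\}$. Next, from $\TM_\A^*\cdot\TM_\A=\det\TM_\A\cdot\TE=\TE$, the adjoint vector $\V_{j_0}^*$ satisfies $\langle\V_{j_0}^*,\U_l\rangle=0$ for $1\le l\le n-1$ and $\langle\V_{j_0}^*,\V_{j_0}\rangle=1$, and analogously for $\ol\V_{k_0}^*$ with respect to $\ol\V_{k_0}$. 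Since both $\V_{j_0}^*$ and $\ol\V_{k_0}^*$ lie in the one-dimensional orthogonal complement of $\mathrm{span}\{\U_1,\dotsc,\U_{n-1}\}$, there exists $\lambda\in\BB R$ with $\V_{j_0}^*=\lambda\ol\V_{k_0}^*$.

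To determine $\lambda$, I would expand $\ol\V_{k_0}=a\V_{j_0}+\sum_{l=1}^{n-1}c_l\U_l$ in the basis $\{\U_1,\dotsc,\U_{n-1},\V_{j_0}\}$ of $\BB R^n$. Pairing with $\V_{j_0}^*$ gives $\langle\V_{j_0}^*,\ol\V_{k_0}\rangle=a$, while pairing the same vector with $\ol\V_{k_0}^*$ gives $1$; together with $\V_{j_0}^*=\lambda\ol\V_{k_0}^*$, this yields $a=\lambda$. Comparing determinants then fixes $|\lambda|$: the matrix obtained from $\TM_\A$ by permuting $\V_{j_0}$ to the last column has determinant $\pm 1$, and likewise for $\TM_\B$ with $\ol\V_{k_0}$ last; substituting the expansion of $\ol\V_{k_0}$ into $\det[\U_1~\cdots~\U_{n-1}~\ol\V_{k_0}]$ and using column multilinearity gives $\pm 1=a\cdot(\pm 1)$, forcing $|a|=1$.

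For the sign, I would use that $\CV\A$ and $\CV\B$ meet only along $\FV\A=\FV\B$ per Lemma \ref{Lemma:AllAreFans} \eqref{item:SimplicialFan}, so $\V_{j_0}\in\CV\A^\circ$ and $\ol\V_{k_0}\in\CV\B^\circ$ lie in opposite open half-spaces cut out by the hyperplane through the common facet. Since $\V_{j_0}^*$ is the outward conormal that pairs positively with $\V_{j_0}$, it pairs negatively with $\ol\V_{k_0}$, hence $a=\langle\V_{j_0}^*,\ol\V_{k_0}\rangle<0$. Combined with $|a|=1$, this forces $a=\lambda=-1$, i.e., $\V_{j_0}^*=-\ol\V_{k_0}^*$. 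The identity $y_{j_0}=\ol y_{k_0}^{-1}$ then follows immediately from $y_{j_0}=\X^{\V_{j_0}^*}$ and $\ol y_{k_0}=\X^{\ol\V_{k_0}^*}$ on $\DFs n$.

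The main obstacle I anticipate is the very first step, establishing that the two facet-generator sets are literally equal (not just generating the same cone); everything else is then routine linear algebra. The argument above rests on primitivity, which in turn uses $\det\TM_\A=\det\TM_\B=1$ from Lemma \ref{Lemma:Refinement} and Definition \ref{Def:ExponentialMatrix}. If primitivity were to fail, the geometric picture would still force parallelism of $\V_{j_0}^*$ and $\ol\V_{k_0}^*$ but the unit scaling $|\lambda|=1$ would no longer be automatic, so this is the load-bearing ingredient that deserves the most care in the writeup.
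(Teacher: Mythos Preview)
Your proposal is correct and follows essentially the same approach as the paper's proof: both identify the common facet generator set $\gfv\A=\gfv\B$, use orthogonality to force $\V_{j_0}^*$ and $\ol\V_{k_0}^*$ to be parallel, and then fix the sign via the geometric observation that the dual vectors $\V_{j_0}$ and $\ol\V_{k_0}$ lie on opposite sides of the facet hyperplane. The paper is terser---it expands $\ol\V_{k_0}=\sum_l\lambda_l\V_l$, computes $1=\lambda_{j_0}\langle\ol\V_{k_0}^*,\V_{j_0}\rangle$ directly, and leaves the magnitude $|\lambda|=1$ implicit in ``readily follows''---whereas you spell out the determinant comparison and the primitivity argument for the facet generators, which is a welcome clarification rather than a different route.
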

\begin{proof}
Evidently $\ol\V_{k_0}=\sum_{l=1}^n\lambda_l\V_l$ such that $\lambda_{j_0}<0$ since $\V_{j_0}$ and $\ol\V_{k_0}$ are dual vectors of $\FV\A=\FV\B$ respectively.
Further, $1=\det\TM_\B=\langle\ol\V_{k_0}^*,\ol\V_{k_0}\rangle
=\lambda_{j_0}\langle\ol\V_{k_0}^*,\V_{j_0}\rangle$ since $\gfv\A=\gfv\B$.
Hence $\langle\ol\V_{k_0}^*,\V_{j_0}\rangle<0$.
Therefore from $\langle\V_{j_0}^*,\V_{j_0}\rangle=\det\TM_\A=1>0$ and $\gfv\A=\gfv\B$, it readily follows that $\V_{j_0}^*=-\ol\V_{k_0}^*$.
Thus $y_{j_0}=\X^{\V_{j_0}^*}=\X^{-\ol\V_{k_0}^*}=\ol y_{k_0}^{-1}$.
\end{proof}

Please note that the above requirement on $\det\TM_\A=\det\TM_\B=1$ does not incur a global orientation problem for the refined vertex cones in $\ONs$ since the orders of the column vectors in $\TM_\A$ and $\TM_\B$ are independent of each other.

\begin{definition}\label{Def:EulerChar}
{\upshape (Set of facets $\delta\ONs$ and interior facets $\FFs$; conjugate variables; interior variable; simplex; simplicial complex; section $\SO$ and its boundary $\partial\SO$ and face; interior complex $\SO^\circ$; number of $j$-dimensional faces $\Bt\Gamma j$; Euler characteristic $\chi(\Gamma)$; adjoint sector $\SV\A$; exterior condition)}

For a Newton polyhedron $\NP$, the set of all the facets of the refined vertex cones in $\ONs$ is denoted as $\delta\ONs$.
In particular, its subset $\FFs:=\{\cF\in\delta\ONs\colon\cF\setminus\partial\Rp^n\ne\emptyset\}$ is called the set of \emph{interior} facets of $\ONs$.

The two adjoint variables of a common facet of two refined vertex cones such as $y_{j_0}$ and $\ol y_{k_0}$ in Lemma \ref{Lemma:ConjugateVariables} are called a pair of \emph{conjugate} variables associated with the common interior facet with respect to $\TM_\A$ and $\TM_\B$.
For simplicity both of them are also called the \emph{interior} variables.

Recall that for $n>1$, an $(n-1)$-dimensional \emph{simplex} refers to an $(n-1)$-dimensional polyhedron with $n$ vertices.
An $(n-1)$-dimensional \emph{simplicial complex} is a finite union of simplexes that only intersect at common facets or faces.

The \emph{section} of $\ONs$ is defined as the simplicial complex $\SO:=\bigcup_{\CV\A\in\ONs}\Sec{\CV\A}$ as per Lemma \ref{Lemma:AllAreFans} \eqref{item:SimplicialFan} with the simplex $\Sec{\CV\A}$ denoting the section of a refined vertex cone $\CV\A$ as in Definition \ref{Def:Simplicity}.
For an $n$-dimensional Newton polyhedron $\NP$, the \emph{boundary} of $\SO$ is an $(n-2)$-dimensional simplicial complex denoted as $\partial\SO$ and comprising the $j$-dimensional simplexes of $\SO$ for $0\le j<n-1$ that are contained in the boundary of the ambient cone $\partial\Rp^n$ as in Definition \ref{Def:AmbientCone}.
Moreover, a face of $\Sec{\CV\A}$ is also referred to as a \emph{face} of $\SO$ for $\forall\CV\A\in\ONs$ henceforth.
The \emph{interior complex} $\SO^\circ$ is defined as the $(n-1)$-dimensional simplicial complex $\SO\setminus\partial\SO$.

For an $m$-dimensional simplicial complex $\Gamma$, the number of its $j$-dimensional faces is denoted as $b_j(\Gamma)$ for $0\le j\le m$.
Its Euler characteristic number is denoted as $\chi(\Gamma)=\sum_{j=0}^m(-1)^j\Bt\Gamma j$.

The sector $\SV\A:=\{\X\in\DFs n\colon\Mnorm{\YV\A}\le 1\}$ is called the \emph{adjoint} sector associated with a refined vertex cone $\CV\A$ with $\YV\A$ being the adjoint variables of $\CV\A$ as in Definition \ref{Def:AdjointVector}.

If $y$ is an interior variable, the condition $|y|>1$ is called the \emph{exterior} condition imposed on $y$.
\end{definition}

Evidently for an $n$-dimensional Newton polyhedron $\NP$, the number of interior facets $|\FFs|=\Bti{n-2}$, which is the number of $(n-2)$-dimensional facets of the interior complex $\SO^\circ$.
The number of interior variables equals $2|\FFs|$ due to the conjugation as per Lemma \ref{Lemma:ConjugateVariables}.

\begin{lemma}
Let $\DE 0$ denote the set $\{\X\in\DFs n\colon\Mnorm\X\le 1\}$.
For every Newton polyhedron $\NP$, the following identity holds.
\begin{equation}\label{GlobalCovering}
\DE 0=\bigcup_{\CV\A\in\ONs}\SV\A
\end{equation}
with $\SV\A$ being the adjoint sector associated with $\CV\A$ as in Definition \ref{Def:EulerChar} and $\ONs$ the set of refined vertex cones as in Definition \ref{Def:AmbientCone}.
\end{lemma}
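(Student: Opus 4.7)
The plan is to convert the pointwise condition $\Mnorm\X\le 1$ (resp.~$\Mnorm{\YV\A}\le 1$) into a linear condition on logarithms, and then invoke the fan decomposition of the ambient cone in Lemma \ref{Lemma:AllAreFans} \eqref{item:SimplicialFan}.  For $\X\in\DFs n$ set $\W(\X):=-(\log|x_1|,\dotsc,\log|x_n|)\in\BB R^n$; then $\X\in\DE 0$ is equivalent to $\W(\X)\in\Rp^n$.  Likewise, if $\TM=[\ME\V 1n]$ is the exponential matrix attached to $\CV\A$ (so that $\det\TM=1$ and $\YV\A=\X^{\TM^*}$ with $\TM^*=\lt[\V^*_1~\cdots~\V^*_n]$ the adjoint matrix), the identity $\langle\V^*_j,\V_i\rangle=\delta_{ij}$ gives $\log|y_j|=-\langle\V^*_j,\W(\X)\rangle$, so the condition $\X\in\SV\A$ becomes $\langle\V^*_j,\W(\X)\rangle\ge 0$ for every $1\le j\le n$, i.e., $\W(\X)$ lies in the simplicial cone $\CC (\LE\V 1n)=\CV\A$.

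For the inclusion $\subseteq$, take $\X\in\DE 0$.  If $\W(\X)=\bz$ then $|y_j|=1$ for every adjoint variable and every refined vertex cone, so $\X\in\SV\A$ for any $\CV\A\in\ONs$.  Otherwise $\W(\X)\in\DR$, and Lemma \ref{Lemma:AllAreFans} \eqref{item:SimplicialFan} supplies a refined vertex cone $\CV\A\in\ONs$ with $\W(\X)\in\CV\A$.  Writing $\W(\X)=\sum_{j=1}^n\lambda_j\V_j$ with $\lambda_j\in\Rp$ and using $\langle\V^*_j,\V_i\rangle=\delta_{ij}$ yields $\langle\V^*_j,\W(\X)\rangle=\lambda_j\ge 0$ for all $j$, hence $|y_j|\le 1$ and $\X\in\SV\A$.

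For the inclusion $\supseteq$, assume $\X\in\SV\A$ for some $\CV\A\in\ONs$.  The monomial transformation $\X=\Y^\TM$ gives $x_i=\Y^{\M_i}$, where $\M_i\in\BB N^n$ is the $i$-th row of $\TM$ (the entries of an exponential matrix lie in $\BB N$ by Definition \ref{Def:NewtonPolyhedron} and Definition \ref{Def:ExponentialMatrix}).  Taking absolute values, $\log|x_i|=\langle\M_i,\log|\YV\A|\rangle$, and every component on the right is a non-negative integer times a non-positive number; hence $|x_i|\le 1$ for all $i$, i.e., $\X\in\DE 0$.

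The forward direction is where the real content sits, and the only nontrivial ingredient is the fan-covering statement of Lemma \ref{Lemma:AllAreFans} \eqref{item:SimplicialFan}; once $\W(\X)$ has been placed inside some $\CV\A$, the duality $\TM\cdot\TM^*=\TE$ (which is available because $\det\TM=1$, by Definition \ref{Def:ExponentialMatrix}) turns the membership into the bound $|y_j|\le 1$ mechanically.  The reverse inclusion uses only the non-negativity of the entries of $\TM$.  No new constructions are required, and the argument is valid over an arbitrary field $\BB F$ of characteristic zero since all inequalities are on absolute values.
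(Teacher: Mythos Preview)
Your argument is correct and considerably shorter than the paper's.  Both proofs dispatch the inclusion $\SV\A\subseteq\DE 0$ the same way (via $\X=\YV\A^\TM$ with $\TM$ having non-negative entries).  For the non-trivial inclusion $\DE 0\subseteq\bigcup_{\CV\A}\SV\A$, however, the approaches diverge sharply.  You observe that the logarithm map $\X\mapsto\W(\X)=-(\log|x_1|,\dotsc,\log|x_n|)$ converts the multiplicative condition $\Mnorm{\YV\A}\le 1$ into the linear condition $\W(\X)\in\CV\A\cup\{\bz\}$ (using the duality $\langle\V_j^*,\V_i\rangle=\delta_{ij}$ available from $\det\TM=1$), after which Lemma \ref{Lemma:AllAreFans} \eqref{item:SimplicialFan} finishes the job immediately.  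The paper instead argues by contradiction: if $\X\in\DE 0$ escaped every adjoint sector, then each refined vertex cone would impose an ``exterior condition'' $|y_j|>1$ on some interior variable, and an alternating count of constraints among these conditions---organized by the face lattice of the interior complex $\SO^\circ$ and collapsed via its Euler characteristic---shows that more conditions are demanded than can be satisfied.  Your route is the natural toric one and is what one would expect once Lemma \ref{Lemma:AllAreFans} is in hand; the paper's route is more elaborate but makes the combinatorial structure of the conjugate variable pairs explicit, which is the viewpoint the paper uses immediately afterwards to build the finite partition of unity.  One small caveat: your final sentence about ``an arbitrary field $\BB F$ of characteristic zero'' should really say ``any $\BB F$ equipped with a real-valued absolute value,'' since you need $\log|\cdot|$; the paper's own use of $\Mnorm\cdot$ is making the same implicit assumption.
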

\begin{proof}
For $\forall\CV\A\in\ONs$, the inclusion $\SV\A\subseteq\DE 0$ readily follows from the monomial transformation $\X=\YV\A^\TM$ with $\TM$ being the exponential matrix associated with $\CV\A$.
Conversely let us prove that $\DE 0\subseteq\bigcup_{\CV\A\in\ONs}\SV\A$.

Consider an interior facet $\FV\A\in\FFs$ with $\gfv\A=\{\LE\V 1{n-1}\}$ and $\X=\YV\A^\TM$ with $\TM=[\ME\V 1n]$ satisfying $\det\TM=1$.
The adjoint vector of $\FV\A$ with respect to $\TM$ equals
\begin{equation}\label{AdjointProd}
\V_n^*=(-1)^n\CP\V 1{n-1}
\end{equation}
and the corresponding adjoint variable $y_n$ satisfies $y_n=\X^{\V_n^*}$.
When $n\ge 3$, the adjoint vectors of $\FFs$ such as $\V_n^*$ in \eqref{AdjointProd} are not mutually independent.
In fact, consider an $(n-3)$-dimensional face of the interior complex $\SO^\circ$ that is the intersection of at least three $(n-2)$-dimensional facets of $\SO^\circ$ and corresponds to the intersection of cones as $\CC (\LE\V 1{n-2})=\bigcap_{k=1}^3\CC (\LE\V 1{n-2},\V_{n-1,k})$.
Here we abuse the notations in \eqref{AdjointProd} for $\V_n^*$ a bit and consider three adjoint vectors
\begin{equation}\label{AdjointVectors}
\V_{n,k}^*=(-1)^{\delta_k}(\CP\V 1{n-2})\wedge\V_{n-1,k}
\end{equation}
with $\delta_k\in\BB Z/2\BB Z$ whose value is determined by the adjoint vector $\V_{n,k}^*$ for $1\le k\le 3$.
Suppose that we have an expansion
\begin{equation}\label{1stExpansion}
\V_1=\sum_{k=1}^3\lambda_k\V_{n-1,k}+\sum_{l=2}^{n-2}\bar\lambda_l\V_l
\end{equation}
with $\lambda_k,\bar\lambda_l\in\BB Q^*$.
Then we have an identity as follows based on \eqref{1stExpansion}:
\begin{equation}\label{1stConstraint}
\begin{aligned}
\sum_{k=1}^3(-1)^{\delta_k}\lambda_k\V_{n,k}^*
&=\sum_{k=1}^3\lambda_k(\CP\V 1{n-2})\wedge\V_{n-1,k}\\
&=(\CP\V 1{n-2})\wedge\V_1=\bz.
\end{aligned}\end{equation}
Let $y_k$ be the adjoint variable corresponding to the adjoint vector $\V_{n,k}^*$ as $y_k=\X^{\V_{n,k}^*}$.
We choose between $y_k$ and its conjugate variable $\ol y_k$ such that $(-1)^{\delta_k}\lambda_k=|\lambda_k|>0$ in \eqref{1stConstraint} since $\delta_k$ is determined by $\V_{n,k}^*$ as in \eqref{AdjointVectors}.
In this way \eqref{1stConstraint} amounts to a constraint for the exterior condition imposed on the interior variables as follows.
\begin{equation}\label{VariableConstraint}
\prod_{k=1}^3z_k^{|\lambda_k|}=1
\end{equation}
with $z_k=y_k$ or $\ol y_k$ as above.
That is, the exterior condition as in Definition \ref{Def:EulerChar} can be imposed on at most two variables among $\{\Le z\}$ in \eqref{VariableConstraint}.
It is easy to see that each $(n-3)$-dimensional face of the interior complex $\SO^\circ$ corresponds to a constraint identity like \eqref{VariableConstraint} and altogether there are $\Bti{n-3}$ such constraints for the number of interior variables upon which the exterior condition can be imposed.

Nonetheless when $n\ge 4$, neither are these constraint identities like \eqref{VariableConstraint} mutually independent.
In fact, each $(n-4)$-dimensional face of the interior complex $\SO^\circ$ is the intersection of at least four $(n-3)$-dimensional faces of $\SO^\circ$.
More specifically, suppose that the intersection corresponds to the intersection of cones as $\CC (\LE\V 1{n-3})=\bigcap_{j=1}^4\CC (\LE\V 1{n-3},\V_{n-2,j})$.
In this case we have four linear expansions resembling \eqref{1stExpansion}:
\begin{equation*}
\V_1=\sum_{k=1}^3\lambda_{k,j}\V_{n-1,k,j}+\sum_{l=2}^{n-3}\bar\lambda_{l,j}\V_l
+\bar\lambda_{n-2,j}\V_{n-2,j}.
\end{equation*}
for $1\le j\le 4$.
Accordingly we have four constraint identities resembling \eqref{1stConstraint}:
\begin{equation}\label{Pre2ndIdentity}
\begin{aligned}
\sum_{k=1}^3(-1)^{\delta_{k,j}}\lambda_{k,j}\V_{n,k,j}^*&=\sum_{k=1}^3\lambda_{k,j}(\CP\V 1{n-3}\wedge\V_{n-2,j})\wedge\V_{n-1,k,j}\\
&=(\CP\V 1{n-3}\wedge\V_{n-2,j})\wedge\V_1=\bz
\end{aligned}
\end{equation}
for $1\le j\le 4$ with $\delta_{k,j}$ being defined in the same way as $\delta_k$ in \eqref{AdjointVectors}.
Consequently there are four constraint identities on the interior variables resembling \eqref{VariableConstraint}:
\begin{equation}\label{2ndVariableConstraint}
\prod_{k=1}^3z_{k,j}^{|\lambda_{k,j}|}=1
\end{equation}
for $1\le j\le 4$.
Suppose that we have an expansion $\V_1=\sum_{j=1}^4\mu_j\V_{n-2,j}+\sum_{l=2}^{n-3}\bar\mu_l\V_l$ with $\mu_j,\bar\mu_l\in\BB Q^*$, from which we can easily deduce a dependency identity among the four constraint identities in \eqref{Pre2ndIdentity} for $1\le j\le 4$, that is,
\begin{equation}\label{2ndIdentity}
\begin{aligned}
\sum_{j=1}^4\sum_{k=1}^3(-1)^{\delta_{k,j}}\mu_j\lambda_{k,j}\V_{n,k,j}^*&=\sum_{j=1}^4\mu_j(\CP\V 1{n-3}\wedge\V_{n-2,j})\wedge\V_1\\
&=(\CP\V 1{n-3})\wedge\V_1\wedge\V_1=\bz.
\end{aligned}
\end{equation}
Accordingly the dependency identity among the four constraint identities in \eqref{2ndVariableConstraint} is as follows.
\begin{equation}\label{3rdVariableConstraint}
\prod_{j=1}^4\Bigl(\prod_{k=1}^3z_{k,j}^{|\lambda_{k,j}|}\Bigr)^{\mu_j}=1.
\end{equation}
Please note that we do not require that $\mu_j$ be positive for $1\le j\le 4$ in \eqref{3rdVariableConstraint}.
Altogether there are $\Bti{n-4}$ such dependency identities as \eqref{3rdVariableConstraint} among those constraint identities like \eqref{2ndVariableConstraint}.
Furthermore, neither are the dependency identities like \eqref{3rdVariableConstraint} mutually independent and we can proceed inductively in this way until we reach the dependency identities corresponding to vertices, i.e., the $0$-dimensional faces of the interior complex $\SO^\circ$.
There are altogether $\Bti 0$ such dependency identities corresponding to the vertices of $\SO^\circ$.

In summary, for $\forall\X\in\DFs n$, the number of independent constraints for the number of exterior conditions that can be imposed on the interior variables equals
\begin{equation}\label{ConstraintNumb}
\crC^\circ:=\sum_{j=3}^n(-1)^{j-1}\Bti{n-j}.
\end{equation}

Now we can prove $\DE 0\subseteq\bigcup_{\CV\A\in\ONs}\SV\A$ by contradiction.
Suppose that $\X\in\DE 0$ but $\X\notin\SV\A$ for $\forall\CV\A\in\ONs$.
Then for $\forall\CV\A\in\ONs$, its adjoint variables $\YV\A=(\LE y1n)$ have at least one component $y_j$ satisfying the exterior condition $|y_j|>1$ with $1\le j\le n$.
Moreover, $y_j$ has to be an interior variable of an interior facet of $\CV\A$.
In fact, suppose that $\cF$ is not an interior facet such that $\cF\in\delta\ONs\setminus\FFs$.
If $\cF\subseteq\CC (\LE\E 1{n-1})$, then its adjoint vector $\V_\cF^*=\E_n$ and hence its adjoint variable $y_\cF=\X^{\V_\cF^*}=x_n$.
Thus $\X\in\DE 0$ indicates that $|y_\cF|=|x_n|\le 1$ and the same holds for every adjoint variable of the facets contained in $\delta\ONs\setminus\FFs$.
Hence altogether $\X\notin\SV\A$ for $\forall\CV\A\in\ONs$ imposes $\Bti{n-1}$ exterior conditions on the interior variables since there are $\Bts{n-1}$ vertex cones in $\ONs$.
There are $2\Bti{n-2}$ interior variables on which $\Bti{n-2}$ exterior conditions can be imposed due to the conjugation as per Lemma \ref{Lemma:ConjugateVariables}.
Taking into account the number of constraints in \eqref{ConstraintNumb}, we can impose at most $\Bti{n-2}-\crC^\circ$ exterior conditions on the interior variables.
Thus the number of exterior conditions imposed minus the number that can be imposed equals:
\begin{equation}\label{ConstraintsDiff}
\Bti{n-1}-\bigl(\Bti{n-2}-\crC^\circ\bigr)
=\sum_{j=1}^n(-1)^{j-1}\Bti{n-j}:=\crD^\circ.
\end{equation}

Finally, the boundary $\partial\SO$ is an $(n-2)$-dimensional simplicial complex as per Lemma \ref{Lemma:AllAreFans} \eqref{item:SimplicialFan}.
It is easy to calculate its Euler characteristic as:
\begin{equation}\label{BoundaryChar}
\chi(\partial\SO)=\sum_{j=0}^{n-2}(-1)^j\Btb j=1+(-1)^n.
\end{equation}
Thus based on \eqref{BoundaryChar} and \eqref{ConstraintsDiff} we have
\[
1=\chi(\SO)=\chi(\partial\SO)+\sum_{j=0}^{n-1}(-1)^j\Bti j=1+(-1)^n+(-1)^{n-1}\crD^\circ,
\]
from which we can deduce that $\crD^\circ=1>0$.
This constitutes a contradiction.
\end{proof}

\begin{corollary}
For a Newton polyhedron $\NP$, let $\varepsilon\in (0,1)$ be an adjustable parameter and $d_j:=\max\bigl\{\sum_{\V\in\gv\A}\langle\V,\E_j\rangle\colon\CV\A\in\ONs\bigr\}>0$ for $1\le j\le n$.
Let us define $\DE\varepsilon:=\{\X\in\DFs n\colon |x_j|\le \varepsilon^{d_j},1\le j\le n\}$ and $\SV\A^\varepsilon:=\SV\A\setminus\SV\A^-$ with the adjoint sector $\SV\A$ being as in Definition \ref{Def:EulerChar} and $\SV\A^-:=\{\X\in\DFs n\colon\mnorm{\YV\A}>\varepsilon\}$.
Then we have:
\begin{equation}\label{DominantPartition}
\DE\varepsilon\subseteq\bigcup_{\CV\A\in\ONs}\SV\A^\varepsilon.
\end{equation}
\end{corollary}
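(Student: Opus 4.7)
The plan is to combine the global covering identity \eqref{GlobalCovering} with a direct estimate on the adjoint variables. Since $\varepsilon\in(0,1)$ and $d_j>0$ for every $j$, one has $\varepsilon^{d_j}<1$, so $\DE\varepsilon\subseteq\DE 0$. The identity \eqref{GlobalCovering} then guarantees that for every $\X\in\DE\varepsilon$ there exists a refined vertex cone $\CV\A\in\ONs$ with $\X\in\SV\A$, i.e., $\Mnorm{\YV\A}\le 1$. What remains is to verify that this $\CV\A$ also satisfies $\X\in\SV\A^\varepsilon=\SV\A\setminus\SV\A^-$, which amounts to showing that some component of $\YV\A$ has absolute value at most $\varepsilon$.

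I would proceed by contradiction. Suppose every component $y_i$ of $\YV\A=(\LE y1n)$ satisfies $|y_i|>\varepsilon$. Let $\TM=[\ME\V 1n]$ be the exponential matrix associated with $\CV\A$, so that $\det\TM=1$ and $x_j=\YV\A^{\M_j}=\prod_{i=1}^n y_i^{m_{j,i}}$, where $m_{j,i}=\langle\V_i,\E_j\rangle$ is the $(j,i)$-entry of $\TM$. Since $\det\TM=1$ no row of $\TM$ can be identically zero, and because all entries of $\TM$ lie in $\BB N$, each row must contain at least one positive integer. Combined with $|y_i|>\varepsilon$ this yields the strict inequality
\[
|x_j|=\prod_{i=1}^n|y_i|^{m_{j,i}}>\prod_{i=1}^n\varepsilon^{m_{j,i}}=\varepsilon^{\sum_i m_{j,i}}=\varepsilon^{\sum_{\V\in\gv\A}\langle\V,\E_j\rangle}
\]
for every $j\in\dotc 1n$.

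From the definition of $d_j$ one has $\sum_{\V\in\gv\A}\langle\V,\E_j\rangle\le d_j$, and since $\varepsilon\in(0,1)$ the map $t\mapsto\varepsilon^t$ is decreasing, so $\varepsilon^{\sum_{\V\in\gv\A}\langle\V,\E_j\rangle}\ge\varepsilon^{d_j}$. Chaining the two estimates gives $|x_j|>\varepsilon^{d_j}$ for every $j$, contradicting $\X\in\DE\varepsilon$. The argument is essentially bookkeeping; the only delicate points are keeping the direction of the exponential inequalities consistent (since $\varepsilon<1$ reverses monotonicity under exponentiation) and noting that the unimodularity $\det\TM=1$ forces each row of $\TM$ to carry a positive entry, which is precisely what drives the strict inequality in the key estimate.
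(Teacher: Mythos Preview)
Your proof is correct and follows essentially the same approach as the paper's: reduce to \eqref{GlobalCovering} via the inclusion $\DE\varepsilon\subseteq\DE 0$, and then show the contrapositive inclusion $\SV\A^-\cap\SV\A\subseteq\SV\A\setminus\DE\varepsilon$ by estimating $|x_j|=\prod_i|y_i|^{m_{j,i}}$ through the monomial transformation. The paper compresses this last step into a single sentence (``readily follows from the monomial transformation $\X=\YV\A^\TM$''), whereas you spell out the inequality chain explicitly; the content is the same.
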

\begin{proof}
According to \eqref{GlobalCovering}, it suffices to show that $(\DE\varepsilon\cap\SV\A)\subseteq\SV\A^\varepsilon$ for $\forall\CV\A\in\ONs$, which amounts to showing that $(\SV\A^-\cap\SV\A)\subseteq\SV\A\setminus\DE\varepsilon$.
This readily follows from the monomial transformation $\X=\YV\A^\TM$ with $\TM$ being the exponential matrix associated with the refined vertex cone $\CV\A\in\ONs$.
\end{proof}

According to \eqref{DominantPartition}, it is easy to construct a finite partition of unity
\begin{equation}\label{Partition}
\sum_{\CV\A\in\ONs}\psi_{\CV\A}(\YV\A)=1
\end{equation}
in a neighborhood of $\oDE\varepsilon$.
The smooth functions $\psi_{\CV\A}:=\varphi_{\CV\A}\big/\bigl(\sum_{\CV\A\in\ONs}\varphi_{\CV\A}\bigr)$ and $\varphi_{\CV\A}$ is supported in a neighborhood of the compact set $\oSV\A^\varepsilon$.
Here $\oDE\varepsilon$ and $\oSV\A^\varepsilon$ denote the closures of the sets $\DE\varepsilon$ and $\SV\A^\varepsilon$ in \eqref{DominantPartition} respectively.

It is easy to see that every singular point has a dominant neighborhood as in Definition \ref{Def:Deficiency}.
As a result, if $\oDE\varepsilon$ is a dominant neighborhood of the origin $\bz$, then according to the partition of unity in \eqref{Partition}, it suffices to consider the intersection $\EI\TM\Yo\cap\oSV\A^\varepsilon$ of the exceptional branch $\EI\TM\Yo$ as in Definition \ref{Def:ExceptionalSupport} and compact set $\oSV\A^\varepsilon$ for $\forall\iI\Yo$ and $\forall\CV\A\in\ONs$.
Therefore we only need to consider branch points $\R=(\R_*,\bz)$ satisfying $\Mnorm{\R_*}\le 1$ due to the definition of the adjoint sector $\SV\A$ in Definition \ref{Def:EulerChar}.
Every such branch point $\R$ has a neighborhood $U(\R)$ whose image $\wt\Z(U(\R))$ under the map $\wt\Z(\wt\Y)$ defined as in Lemma \ref{Lemma:NonSingMap} \eqref{item:NonDegMap} constitutes a dominant neighborhood of each reduced branch point $\bs=(\bs_*,\bz)$ of $\R$.
The compactness of $\oSV\A^\varepsilon$ indicates that a finite number of such neighborhoods $U(\R)$ can cover all the exceptional branches $\EI\TM\Yo$, from which a finite partition of unity can be constructed.
In each of such dominant neighborhoods as $\wt\Z(U(\R))$ of a reduced branch point $\bs=(\bs_*,\bz)$, we repeat the finite partition of unity as in \eqref{Partition} based on the Newton polyhedron $\NP(w(\Xp))$ of the Weierstrass polynomial $w(\Xp)$ as above \eqref{2ndMonomialTransform}, which is followed by a further step of partial resolution of singularities based on the reduced monomial transformation $\CT{\TN'}$ in \eqref{2ndCanonicalMonomialTransform}.

In the case of irregular singularities with a latent variable being implemented, a partition of unity like \eqref{Partition} in the lower dimensional space without the latent variable can be constructed based on the pertinent Newton polyhedron.
After the revived localization of the latent variable, the partition of unity in the higher dimensional space involving the latent variable continues.
We proceed in this way along with the resolution algorithm such that a finite partition of unity can be constructed in a neighborhood of the origin $\bz$.

In the case of inconsistent singularities, the finite partition of unity has no essential difference from that in the consistency case.
More specifically, suppose that the primary variable $x_1$ is inconsistent with the exponential matrix $\TM$ and exceptional index set $\iI\Yo$ such that a permutation and relabeling of the row vectors of $\TM$ and their corresponding variables $\X$ lead to the reduced exponential matrix $\perm\TN$ in \eqref{IncstnMnmlTrnsfrm} instead of $\TN$ in \eqref{CanonicalMatrix}.
Nonetheless it is evident that the permutation and relabeling as above have no impact on the identities \eqref{LocalConversion} and \eqref{ExceptionalRelation} and thus the conclusions of Lemma \ref{Lemma:NonSingMap} still hold in this case.
As a result, the above discussion on the dominant neighborhood like $\wt\Z(U(\R))$ of a reduced branch point $(\bs_*,\bz)$ still applies here for the inconsistency case.

\section{An example on analytic surfaces}\label{Section:SurfaceExample}

With $\BB F=\BB C$, $n=3$ and $\X=(\Le x)$, let us consider a surface that is defined by an analytic function $f(\X)=0$ represented in a Weierstrass form like \eqref{PrimaryForm} as follows.
\begin{equation}\label{3-WeierstrassPolynomial}
f(\X)=\biggl[x_1^d+\sum_{j=2}^dc_j(x_2,x_3)x_1^{d-j}\biggr]
(c+r(\X)):=w(\X)(c+r(\X))
\end{equation}
with singularity height $d$ and primary variable $x_1$.
As per a refined vertex cone $\CV\A=\CC (\Le\V)$ of the Newton polyhedron $\NP(w(\X))$ of the Weierstrass polynomial $w(\X)$ in \eqref{3-WeierstrassPolynomial}, the monomial transformation $\X=\CT\TM(\Y)=\Y^\TM$ associated with the exponential matrix $\TM=[\Me\V]$ satisfying $\det\TM=1$ transforms its vertex form as in \eqref{VertexForm} into a total transform as in \eqref{TotalTransform} with dimension $n=3$.
A finite partition of unity like \eqref{Partition} can be constructed in a dominant neighborhood of the origin $\X=\bz$.
Let us assume that the exceptional index set $\iI\Yo$ is in canonical form of either $\{2,3\}$ or $\{3\}$ as in Definition \ref{Def:ProperTransform} hereafter.

In the case when the exceptional index set $\iI\Yo=\{3\}$ and the primary variable $x_1$ is consistent with the exponential matrix $\TM$ and $\iI\Yo$ as in Definition \ref{Def:Consistency-1}, the canonical reduction $\TN$ of $\TM$ with respect to $\iI\Yo$ as in Definition \ref{Def:ReductionMatrix} bears the following form:
\begin{equation}\label{Cstn3ExpMtrx}\begin{aligned}
    \TN:=
    \begin{array}{r @{} c @{\hs{-0.5pt}} c @{} p{4mm} @{} c
    @{\hs{1.5mm}} c @{\hs{1.5mm}} c @{\hs{1.5mm}} c
    @{\hs{0.8mm}} p{4mm} @{\hs{-2.5mm}} l}
    &           &           &           &           \E_1
    &           \E_2        &           &           \V_3
    &           &                                                   \\
    \mr{4}{$\rule{0mm}{9mm}$}           &           x_1
    &           \mr{4}{$\rule{0mm}{9mm}$}           &
    \raisebox{2.3mm}[0pt]{\mr{4}{$\left[\rule{0mm}{9mm}\right.$}}
    &           1           &           0           &
    \vline      &           b           &
    \raisebox{2.3mm}[0pt]{\mr{4}{$\left.\rule{0mm}{9mm}\right]$}}
    &           \mr{6}{$\rule{0mm}{24mm}$}                          \\
    &           x_2         &           &           &
    0           &           1           &           \vline
    &           a           &           &                           \\
    \cline{2-9}
    &           x_3         &           &           &
    0           &           0           &           \vline
    &           g           &           &                           \\[-2.5pt]
    &           &           &           &           z_1
    &           z_2         &           \vline      &
    z_3         &           &                                       \\
    \end{array}
\end{aligned}\end{equation}
with $g\in\BB N^*$.
Let $q_*(\wt z_1,\wt z_2)$ denote the localized proper transform as in Definition \ref{Def:Localization} of the Weierstrass polynomial $w(\X)$ in \eqref{3-WeierstrassPolynomial} under the monomial transformation $\CT\TN$ in accordance with the reduced exponential matrix $\TN$ in \eqref{Cstn3ExpMtrx}.
Same as in Lemma \ref{Lemma:SimpleDecrease}, the exponential identity \eqref{ExponIdentity} indicates that $\ord (q_*)<d$ at a regular reduced branch point $(s_1,s_2,0)$ with $d$ being the singularity height of $w(\X)$ in \eqref{3-WeierstrassPolynomial}.
Here the reduced branch point $(s_1,s_2,0)$ being regular means that the localized proper transform $q_*(\wt z_1,\wt z_2)$ satisfies $q_*(\wt z_1,0)\ne 0$ when $\wt z_2=0$ as in Definition \ref{Def:Localization}.

When the above reduced branch point $(s_1,s_2,0)$ is irregular as in Definition \ref{Def:Localization}, i.e., $q_*(\wt z_1,0)=0$, let us make the variable $z_1$ latent and localize the non-exceptional variable $z_2$ so as to study a new function $\vfz q1(\wt z_2,z_3):=q(z_1,\wt z_2+s_2,z_3)$ like in \eqref{PreVirtualization} with $q(\Z)$ being the partial transform as in \eqref{NormalTransform} with $\Z=(\Le z)$.
For clarity, let us define the active and exceptional variables as $(x'_2,x'_3):=(\wt z_2,z_3)$ respectively and write the function $\vfz q1(\wt z_2,z_3)$ into an apex form $\vfz f1(\Xp)$ as in \eqref{InitialForm} with the variables $\Xp:=(x'_2,x'_3)$ and their exponents $\Al':=(\alpha'_2,\alpha'_3)$.
Here the singularity height $d'=\ord (\vfz q1^*(\wt z_2))$ with $\vfz q1^*(\wt z_2):=\vfz q1(\wt z_2,0)$ and the apex $\D':=(d',0)$.
The function $\vfz f1(\Xp)$ is treated as a function in the algebra $\BB C[z_1]\{\Xp\}$.
After a latent gradation of $\vfz f1(\Xp)$ into a reducible and remainder function like in \eqref{PreWeierstrass} as following:
\begin{equation}\label{3-PreWeierstrass}
\vfz f1(\Xp)=z_1^mf(\Xp)+\vfz\phi 1(\Xp)
\end{equation}
with $m:=\deg (c_{\D'}(z_1))\in\BB N$ such that the coefficients of $\vfz\phi 1(\Xp)$ do not contain a term in the form $cz_1^m$ with $c\in\BB C^*$, we invoke Weierstrass preparation theorem and complete perfect power like in \eqref{MonoWeierstrass} so that the reducible function $f(\Xp)$ in \eqref{3-PreWeierstrass} is reduced into a Weierstrass form as follows:
\begin{equation}\label{3-MonoWeierstrass}
f(\Xp)=\biggl[{x'_2}^{d'}+\sum_{j=2}^{d'}c_j(x'_3){x'_2}^{d'-j}\biggr]
(c+r(\Xp))
\end{equation}
with $c_j(0)=0$ for $1<j\le d'$ and $c\in\BB C^*$.
The redundant function $r(\Xp)$ satisfies $r(\bz)=0$.
Let us abuse the notations a bit and assume that the terms of the function $\vfz f1(\Xp)$ in \eqref{3-PreWeierstrass} consist with the Weierstrass form in \eqref{3-MonoWeierstrass}.
Let us resolve the singularity of $\vfz f1(\Xp)$ in \eqref{3-PreWeierstrass} by the monomial transformation $\CT{\TM'}$ as in \eqref{VirtualMonomialTransform} whose canonical reduction $\CT{\TN'}$ or $\CT{\per\TN'}$ as in \eqref{VirtualResolution} bears the following form depending on the consistency of the primary variable $x'_2$ with $\TM'$ and the exceptional index set $\iI\Yop=\{3\}$:
\begin{equation}\label{Reduced2XptlMtrx}
\begin{aligned}
    \TN':=
    \begin{array}{r @{} c @{\hs{-0.5pt}} c @{} p{4mm} @{} c
    @{\hs{1.5mm}} c @{\hs{1.5mm}} c @{\hs{0.8mm}}
    p{4mm} @{\hs{-2.5mm}} l}
    &           &           &           &           \E_1
    &           &           &           &                           \\
    \mr{4}{$\rule{0mm}{9mm}$}           &           x'_2
    &           \mr{4}{$\rule{0mm}{9mm}$}           &
    \raisebox{4.7mm}[0pt]{\mr{4}{$\left[\rule{0mm}{5.5mm}\right.$}}
    &           1           &           \vline      &
    a'          &
    \raisebox{4.7mm}[0pt]{\mr{4}{$\left.\rule{0mm}{5.5mm}\right]$}}
    &           \mr{6}{$\rule{0mm}{24mm}$}                          \\
    \cline{2-8}
    &           x'_3        &           &           &
    0\rule{0mm}{4.5mm}      &           \vline      &
    g'          &           &                                       \\[-2pt]
    &           &           &           &           z'_2
    &           \vline      &           z'_3        &
    &                                                               \\
    \end{array};
    &\hs{10mm}
    \perm\TN':=
    \begin{array}{r @{} c @{\hs{-0.5pt}} c @{} p{4mm} @{} c
    @{\hs{1.5mm}} c @{\hs{1.5mm}} c @{\hs{0.8mm}}
    p{4mm} @{\hs{-2.5mm}} l}
    &           &           &           &           \E_1
    &           &           \E_2        &           &               \\
    \mr{4}{$\rule{0mm}{9mm}$}           &           x'_3
    &           \mr{4}{$\rule{0mm}{9mm}$}           &
    \raisebox{4.7mm}[0pt]{\mr{4}{$\left[\rule{0mm}{5.5mm}\right.$}}
    &           1           &           \vline      &
    0           &
    \raisebox{4.7mm}[0pt]{\mr{4}{$\left.\rule{0mm}{5.5mm}\right]$}}
    &           \mr{6}{$\rule{0mm}{24mm}$}                          \\
    \cline{2-8}
    &           x'_2        &           &           &
    0\rule{0mm}{4.5mm}      &           \vline      &
    1           &           &                                       \\[-2pt]
    &           &           &           &           z'_2
    &           \vline      &           z'_3        &
    &                                                               \\
    \end{array}
\end{aligned}
\end{equation}
with $g'\in\BB N^*$.
Like in \eqref{ParadigmRemainder}, let $\vfz q1(\Zp)$ with $\Zp:=(z'_2,z'_3)$ denote the partial transform of the apex form $\vfz f1(\Xp)$ in \eqref{3-PreWeierstrass} under $\CT{\TN'}$ or $\CT{\per\TN'}$ as in \eqref{Reduced2XptlMtrx}.
After the partial resolution associated with $\CT{\TN'}$ as in \eqref{Reduced2XptlMtrx} with $a'\in\BB N^*$, every reduced branch point is regular since there is only a single non-exceptional variable $z'_2$ in the new variables $\Zp$.
Hence Lemma \ref{Lemma:SimpleDecrease} can be applied to the partial transform of the reducible function $f(\Xp)$ in \eqref{3-MonoWeierstrass} such that the singularity height of the localized partial transform $\vfz{\wt q}1(\wt z'_2,z'_3)$ at a reduced branch point $(s'_2,0)$ with $s'_2\in\BB C^*$ strictly decreases from the prior singularity height $d'$ of the apex form $\vfz f1(\Xp)$ in \eqref{3-PreWeierstrass} like in Lemma \ref{Lemma:TwoCases}.
A partition of unity on the $(x'_2,x'_3)$-plane based on the Newton polygon $\NP(\vfz f1(\Xp))$ can be constructed in the way as in \eqref{Partition}.
When the element $a'$ of $\TN'$ satisfies $a'=0$ in \eqref{Reduced2XptlMtrx}, $\TN'$ can be reduced to the unit matrix and it is easy to see that the exceptional branch
\begin{equation}\label{3-ExceptionalBranch}
\EI{\TN'}\Zop:=\{(z'_2,0)\colon\vfz q1(z'_2,0)=0,z'_2\in\BB C^*\}
\end{equation}
like in \eqref{ExceptionalBranch} is irrelevant to the irregular reduced branch point $(s_1,s_2,0)$ as above \eqref{3-PreWeierstrass} due to the identity $\Zp=\Xp=(\wt z_2,z_3)$ and can be disregarded since the singular points of $\vfz f1(\Xp)$ are isolated on $\BB C^2$.
Nevertheless we can still study part of the singularities of the exceptional branch $\EI{\TN'}\Zop$ in \eqref{3-ExceptionalBranch} when $a'=0$ via a deficient contraction as in Definition \ref{Def:DeficientContraction}.
More specifically, we make an appropriate contraction of the neighborhood of $\Xp=\bz$ such that it has no intersection with the deficient variety $\RI{\TN'}\Zp:=\{(x'_2,0)\in\BB C^2\colon c+r(x'_2,0)=0\}$ with $c+r(\Xp)$ being the redundant function as in \eqref{3-MonoWeierstrass}.
Then the discussion on the singularity height associated with the reduced branch point $(s'_2,0)$ in the contracted neighborhood of $\Xp=\bz$ is the same as above.

In the case of $\CT{\per\TN'}$ as in \eqref{Reduced2XptlMtrx}, the singularity height at $(s'_2,0)$ might temporarily increase from the prior singularity height $d'$ of $\vfz f1(\Xp)$ in \eqref{3-PreWeierstrass}.
Suppose that the localized partial transform $\vfz{\wt q}1(\wt z'_2,z'_3)$ is written into an apex form $\vfz f1(x''_2,x''_3)$ with singularity height $d''$ and the variables $\Xq:=(x''_2,x''_3):=(\wt z'_2,z'_3)$ as follows.
\begin{equation}\label{3-VirtualApexForm}
\vfz f1(\Xq):=c_{\D''}(z_1){x''_2}^{d''}
+\sum_{\Al''\in\supp(\vfz f1)\setminus\D''}c_{\Al''}(z_1)\Xq^{\Al''}
\end{equation}
such that the singularity height $d''=\ord (\vfz f1(x''_2,0))$.
A latent gradation similar to \eqref{3-PreWeierstrass} can be implemented as follows.
\begin{equation}\label{3-VirtualGradation}
\vfz f1(\Xq):=z_1^{m'}f(\Xq)+\vfz\phi 1(\Xq)
\end{equation}
with $m':=\deg (c_{\D''}(z_1))$.
Nonetheless in this special case we reorganize the terms of $f(\Xq)$ in \eqref{3-VirtualGradation} as follows.
\begin{equation}\label{3-NoWeierstrass}
f(\Xq):={x''_2}^{d''}(c+r(x''_2))+\ol f(\Xq)
\end{equation}
such that $c\in\BB C^*$, $r(0)=0$ and $\ol f(x''_2,0)=0$ instead of an invocation of Weierstrass preparation theorem and completion of perfect power as in \eqref{3-MonoWeierstrass}.
The reduced exponential matrix $\TN''$ or $\perm\TN''$ based on the Newton polygon of the apex form $\vfz f1(\Xq)$ in \eqref{3-VirtualApexForm} bears the following canonical form similar to those in \eqref{Reduced2XptlMtrx}, depending on the consistency of the primary variable $x''_2$ with $\TN''$ and the exceptional index set $\iI\Zoq=\{3\}$ or $\perm\TN''$ and $\iI\Zoq$.
\begin{equation}\label{Reduced2XptlMtrx-2}
\begin{aligned}
    \TN'':=
    \begin{array}{r @{} c @{\hs{-0.5pt}} c @{} p{4mm} @{} c
    @{\hs{1.5mm}} c @{\hs{1.5mm}} c @{\hs{0.8mm}}
    p{4mm} @{\hs{-2.5mm}} l}
    &           &           &           &           \E_1
    &           &           &           &                           \\
    \mr{4}{$\rule{0mm}{9mm}$}           &           x''_2
    &           \mr{4}{$\rule{0mm}{9mm}$}           &
    \raisebox{4.7mm}[0pt]{\mr{4}{$\left[\rule{0mm}{5.5mm}\right.$}}
    &           1           &           \vline      &
    a''         &
    \raisebox{4.7mm}[0pt]{\mr{4}{$\left.\rule{0mm}{5.5mm}\right]$}}
    &           \mr{6}{$\rule{0mm}{24mm}$}                          \\
    \cline{2-8}
    &           x''_3       &           &           &
    0\rule{0mm}{4.5mm}      &           \vline      &
    g''         &           &                                       \\[-2pt]
    &           &           &           &           z''_2
    &           \vline      &           z''_3       &
    &                                                               \\
    \end{array};
    &\hs{10mm}
    \perm\TN'':=
    \begin{array}{r @{} c @{\hs{-0.5pt}} c @{} p{4mm} @{} c
    @{\hs{1.5mm}} c @{\hs{1.5mm}} c @{\hs{0.8mm}}
    p{4mm} @{\hs{-2.5mm}} l}
    &           &           &           &           \E_1
    &           &           \E_2        &           &               \\
    \mr{4}{$\rule{0mm}{9mm}$}           &           x''_3
    &           \mr{4}{$\rule{0mm}{9mm}$}           &
    \raisebox{4.7mm}[0pt]{\mr{4}{$\left[\rule{0mm}{5.5mm}\right.$}}
    &           1           &           \vline      &
    0           &
    \raisebox{4.7mm}[0pt]{\mr{4}{$\left.\rule{0mm}{5.5mm}\right]$}}
    &           \mr{6}{$\rule{0mm}{24mm}$}                          \\
    \cline{2-8}
    &           x''_2       &           &           &
    0\rule{0mm}{4.5mm}      &           \vline      &
    1           &           &                                       \\[-2pt]
    &           &           &           &           z''_2
    &           \vline      &           z''_3       &
    &                                                               \\
    \end{array}
\end{aligned}
\end{equation}
with $g''\in\BB N^*$ and $x''_3=z'_3=x'_2$ as per $\perm\TN'$ in \eqref{Reduced2XptlMtrx}.
When the element $a''$ of $\TN''$ satisfies $a''=0$ in \eqref{Reduced2XptlMtrx-2}, $\TN''$ can be reduced to the unit matrix and the singularity height of the localized proper transform $\vfz q1^*(\wt z''_2)$ becomes zero at a reduced branch point $(s''_2,0)$ with $s''_2\in\BB C^*$ after an appropriate deficient contraction based on the redundant function $c+r(x''_2)$ in \eqref{3-NoWeierstrass}.
This amounts to $\vfz q1^*(\bz)$ being a nonzero univariate residual polynomial in $z_1$ as in Lemma \ref{Lemma:StarFalling} whose residual order strictly decreases from the singularity height $d$ of the Weierstrass form $f(\X)$ in \eqref{3-WeierstrassPolynomial}.
When the element $a''$ of $\TN''$ satisfies $a''\in\BB N^*$ in \eqref{Reduced2XptlMtrx-2}, $x'_2=x''_3$ as above is the latent primary variable and we can write $\TN''$ into a trivial synthetic exponential matrix $\TQ''$ as follows.
\begin{equation}\label{2-SyntheticMatrix}
\begin{aligned}
    \TQ'':=
    \begin{array}{r @{} c @{\hs{-0.5pt}} c @{} p{4mm} @{} c
    @{\hs{1.5mm}} c @{\hs{1.5mm}} c @{\hs{0.8mm}}
    p{4mm} @{\hs{-2.5mm}} l}
    &           &           &           &           \E_1
    &           &           &           &                           \\
    \mr{4}{$\rule{0mm}{9mm}$}           &           x''_2
    &           \mr{4}{$\rule{0mm}{9mm}$}           &
    \raisebox{4.7mm}[0pt]{\mr{4}{$\left[\rule{0mm}{5.5mm}\right.$}}
    &           1           &           \vline      &
    a''         &
    \raisebox{4.7mm}[0pt]{\mr{4}{$\left.\rule{0mm}{5.5mm}\right]$}}
    &           \mr{6}{$\rule{0mm}{24mm}$}                          \\
    \cline{2-8}
    &           x'_2        &           &           &
    0\rule{0mm}{4.5mm}      &           \vline      &
    g''         &           &                                       \\[-2pt]
    &           &           &           &           z''_2
    &           \vline      &           z''_3       &
    &                                                               \\
    \end{array};
    &\hs{10mm}
    \perm\TQ'':=
    \begin{array}{r @{} c @{\hs{-0.5pt}} c @{} p{4mm} @{} c
    @{\hs{1.5mm}} c @{\hs{1.5mm}} c @{\hs{0.8mm}}
    p{4mm} @{\hs{-2.5mm}} l}
    &           &           &           &           \E_1
    &           &           \E_2        &           &               \\
    \mr{4}{$\rule{0mm}{9mm}$}           &           x'_2
    &           \mr{4}{$\rule{0mm}{9mm}$}           &
    \raisebox{4.7mm}[0pt]{\mr{4}{$\left[\rule{0mm}{5.5mm}\right.$}}
    &           1           &           \vline      &
    0           &
    \raisebox{4.7mm}[0pt]{\mr{4}{$\left.\rule{0mm}{5.5mm}\right]$}}
    &           \mr{6}{$\rule{0mm}{24mm}$}                          \\
    \cline{2-8}
    &           x''_2       &           &           &
    0\rule{0mm}{4.5mm}      &           \vline      &
    1           &           &                                       \\[-2pt]
    &           &           &           &           z''_2
    &           \vline      &           z''_3       &
    &                                                               \\
    \end{array},
\end{aligned}
\end{equation}
which is similar to \eqref{CstnSyntheticMatrix}.
In this case we have the following identity on the exceptional support $\es{\TQ''}\Zoq$:
\begin{equation}\label{Identity2Dim}
a''\alpha''_2+g''\alpha'_2=p
\end{equation}
with $(\alpha''_2,\alpha'_2)$ being the exponents of the variables $(x''_2,x'_2)$ and the constant $p\in\BB N^*$.
The above identity \eqref{Identity2Dim} synchronizes the changes of the exponents $\alpha''_2$ and $\alpha'_2$ and shows that the latent primary variable $x'_2$ of $\TQ''$ in \eqref{2-SyntheticMatrix} is revived as $z''_2$ essentially as in Definition \ref{Def:Revival}.
Hence the singularity height of the localized partial transform $\vfz{\wt q}1(\wt z''_2,z''_3)$ strictly decreases from the prior singularity height $d'$ of the apex form $\vfz f1(\Xp)$ in \eqref{3-PreWeierstrass}.

In the inconsistency case when the reduced exponential matrix $\perm\TN''$ bears the form in \eqref{Reduced2XptlMtrx-2}, the synthetic exponential matrix $\perm\TQ''$ is as in \eqref{2-SyntheticMatrix} with the latent primary variable $x'_2$ being revived as $z''_2$ as well.

After the singularity height $d'$ of the apex form $\vfz f1(\Xp)$ in \eqref{3-PreWeierstrass} is reduced to $d'=1$, in the case of the reduced exponential matrix $\TN'$ in \eqref{Reduced2XptlMtrx} with $g'\in\BB N^*$, similar to the conclusions in Lemma \ref{Lemma:VirtualRecovery} and Lemma \ref{Lemma:StarFalling}, the apex $(1,0)$ is always on the exceptional support such that the residual order of the residual polynomial in $z_1$ as in Definition \ref{Def:ResidualOrder} is strictly less than the prior singularity height $d$ of the Weierstrass polynomial $w(\X)$ in \eqref{3-WeierstrassPolynomial}.
When the reduced exponential matrix bears the form of $\perm\TN'$ in \eqref{Reduced2XptlMtrx} in the case of inconsistency, the discussions on \eqref{3-VirtualApexForm} thenceforth can be repeated verbatim here to show that the residual order in $\wt z_1$ is strictly less than the prior singularity height $d$ of $w(\X)$ in \eqref{3-WeierstrassPolynomial} as well.

In the case when the exceptional index set $\iI\Yo=\{2,3\}$ rather than $\{3\}$ as above \eqref{Cstn3ExpMtrx}, let us study the exceptional branch $\EI\TM\Yo:=\{(y_1,0,0)\colon p(y_1,0,0)=0,y_1\in\BB C^*\}$ with $p(y_1,0,0)$ being the proper transform of $w(\X)$ in \eqref{3-WeierstrassPolynomial} under $\CT\TM$.
The finite partition of unity in \eqref{Partition} indicates that it suffices to study its compact subset $\EI\TM\Yo\cap\{(y_1,0,0)\colon |y_1|\le 1\}$.
That is, it suffices to consider the branch points in the form $(r_1,0,0)$ with $r_1\in\BB C^*$ satisfying $|r_1|\le 1$.

When the primary variable $x_1$ is consistent with the exponential matrix $\TM$ and exceptional index set $\iI\Yo=\{2,3\}$, the reduced exponential matrix in \eqref{Cstn3ExpMtrx} now bears the form:
\begin{equation}\label{Cstn3ExpMtrx-2}
\begin{aligned}
    \TN:=
    \begin{array}{r @{} c @{\hs{-0.5pt}} c @{} p{4mm} @{} c
    @{\hs{1.5mm}} c @{\hs{1.5mm}} c @{\hs{2.5mm}} c
    @{\hs{0.8mm}} p{4mm} @{\hs{-2.5mm}} l}
    &           &           &           &           \E_1
    &           &           \V_2        &           \V_3
    &           &                                                   \\
    \mr{4}{$\rule{0mm}{9mm}$}           &           x_1
    &           \mr{4}{$\rule{0mm}{9mm}$}           &
    \raisebox{2.9mm}[0pt]{\mr{4}{$\left[\rule{0mm}{9mm}\right.$}}
    &           1           &           \vline      &
    \mc{2}{c}{\La\TD}       &
    \raisebox{2.9mm}[0pt]{\mr{4}{$\left.\rule{0mm}{9mm}\right]$}}
    &           \mr{6}{$\rule{0mm}{24mm}$}                          \\
    \cline{2-9}
    &           x_2         &           &           &
    0           &           \vline      &
    \mc{2}{c}{\mr{2}{$\TD$}}            &           &               \\
    &           x_3         &           &           &
    0           &           \vline      &           &
    &           &                                                   \\[-2pt]
    &           &           &           &           z_1
    &           \vline      &           z_2         &
    z_3         &           &                                       \\
    \end{array}
\end{aligned}
\end{equation}
with $\det\TD\ne 0$ and the row vector $\La\in\BB Q^2$.
Its associated monomial transformation $\X=\Z^\TN$ transforms $w(\X)$ in \eqref{3-WeierstrassPolynomial} into the total transform in \eqref{NormalTransform}.
The reduction matrix $\TF$ in \eqref{ReductionMatrix} now equals $\det\TM=1$.
Hence a reduced branch point like $(s_1,0,0)$ with $s_1\in\BB C^*$ satisfies $s_1^{\det\TD}=r_1$ as in Definition \ref{Def:Localization}.
A dominant neighborhood of the branch point $(r_1,0,0)$ is mapped onto that of one of its reduced branch points like $(s_1,0,0)$ via the map $\wt\Z(\wt\Y)$ as in Lemma \ref{Lemma:NonSingMap} \eqref{item:NonDegMap}.
The proper transform $q(z_1,0,0)$ of $\CT\TN$ in \eqref{ProperTransform-N} is a univariate polynomial in $z_1$ since $\iI\Zo=\iI\Yo=\{2,3\}$, which ensures that every branch point is regular as in Definition \ref{Def:Localization}.
The exponential identity \eqref{ExponIdentity} and Weierstrass polynomial $w(\X)$ in \eqref{3-WeierstrassPolynomial} indicate that the localized proper transform $q_*(\wt z_1)$ at $(s_1,0,0)$ satisfies $\ord (q_*)<d$ like in Lemma \ref{Lemma:SimpleDecrease}, which is a strict decrease from the prior singularity height $d$ of $w(\X)$ in \eqref{3-WeierstrassPolynomial}.

When the primary variable $x_1$ is inconsistent with the exponential matrix $\TM$ and exceptional index set $\iI\Yo=\{2,3\}$, the canonical reduction $\perm\TN$ of $\TM$ with respect to $\iI\Yo$ bears the same form as in \eqref{IncstnMnmlTrnsfrm}:
\begin{equation}\label{Incstn3Matrix}\begin{aligned}
    \perm\TN:=
    \begin{array}{r @{} c @{\hs{-0.5pt}} c @{} p{4mm} @{} c
    @{\hs{1.5mm}} c @{\hs{1.5mm}} c @{\hs{2.5mm}} c
    @{\hs{0.8mm}} p{4mm} @{\hs{-2.5mm}} l}
    &           &           &           &           \E_1
    &           &           \V_2        &           \V_3
    &           &                                                   \\
    \mr{4}{$\rule{0mm}{9mm}$}           &           x_2
    &           \mr{4}{$\rule{0mm}{9mm}$}           &
    \raisebox{2.5mm}[0pt]{\mr{4}{$\left[\rule{0mm}{9mm}\right.$}}
    &           1           &           \vline      &
    \lambda c   &           \lambda g   &
    \raisebox{2.5mm}[0pt]{\mr{4}{$\left.\rule{0mm}{9mm}\right]$}}
    &           \mr{6}{$\rule{0mm}{24mm}$}                          \\
    \cline{2-9}
    &           x_3         &           &           &
    0           &           \vline      &           c
    &           g           &           &                           \\
    &           x_1         &           &           &
    0           &           \vline      &           a
    &           b           &           &                           \\[-2.5pt]
    &           &           &           &           z_1
    &           \vline      &           z_2         &
    z_3         &           &                                       \\
    \end{array}
\end{aligned}\end{equation}
such that $\lambda\in\BB Q_{\ge 0}$ and its exceptional submatrix $\Evv{\perm\TN}\Zo=\bigl[\begin{smallmatrix}c&g\\ a&b\end{smallmatrix}\bigr]:=\TG$ satisfies $\det\TG\ne 0$, which amounts to the condition in \eqref{PrimaryVector}.
Hence we assume that $g\in\BB N^*$ up to a permutation and relabeling of the exceptional variables $\Z_0=(z_2,z_3)$ and their corresponding column vectors $\V_2$ and $\V_3$ in \eqref{Incstn3Matrix}.
The reduced exponential matrix $\perm\TN$ in \eqref{Incstn3Matrix} yields the following exponential identity resembling \eqref{LatentIdentity}:
\begin{equation}\label{Latent3Identity}
\Ga_0:=(\gamma_2,\gamma_3)=\alpha_1(a,b)+(\alpha_2,\alpha_3)\cdot
\begin{bmatrix}\lambda\\ 1\end{bmatrix}\cdot (c,g),
\end{equation}
where $\Ga_0$ denotes the exponents of the exceptional variables $\Z_0=(z_2,z_3)$ and $(\Le\alpha)$ those of the variables $(\Le x)$.
The other notations are the same as those of $\perm\TN$ in \eqref{Incstn3Matrix}.

From \eqref{Latent3Identity} as well as $\det\TG=\det\bigl[\begin{smallmatrix}c&g\\ a&b\end{smallmatrix}\bigr]\ne 0$, it is easy to deduce that the conclusion of Lemma \ref{Lemma:ConstantPrimaryExp} still holds, that is, for $\forall\pAl=(\alpha_2,\alpha_3,\alpha_1)\in\es{\perm\TN}\Zo$, the latent primary exponent $\alpha_1$ in \eqref{Latent3Identity} is a constant.
Here the definition of the exceptional support $\es{\perm\TN}\Zo$ is as in Definition \ref{Def:ExceptionalSupport}.
Hence we can implement a latent gradation as
\[
q(\Z)=q_0(\Z)+q_1(\Z)
\]
according to the latent primary component $\wt\alpha_1$ as in Definition \ref{Def:LatentPrimaryCompnt} being zero or not like in \eqref{PreReorganize}.
Here $q(\Z)$ denotes the partial transform of the Weierstrass polynomial $w(\X)$ in \eqref{3-WeierstrassPolynomial} under the reduced monomial transformation $\CT{\per\TN}$.
Since there is only one non-exceptional variable $z_1$ here, there arises no irregular singularities after the localization of $q(\Z)$ and the linear modification $\CL\Xp$ is also trivial as $\X'=\wt\Z$.
After the latent preliminary and Weierstrass reductions as in Definition \ref{Def:LatentReductions}, we obtain an apex form:
\begin{equation}\label{Reorganized3Func}
f(\Xp)=f_0(\Xp)+\phi(\Xp)
\end{equation}
like in \eqref{ReorganizedFunc} with the latent reducible function $f_0(\Xp)$ being in Weierstrass form as in \eqref{LatentWeierstrassForm} with singularity height $d'$.
In particular, all the latent primary components $\wt\alpha_1$ of $f_0(\Xp)$ are zero whereas none of those of the latent remainder function $\phi(\Xp)$ zero.

After the above resolution step based on the canonical reduction $\perm\TN$ in \eqref{Incstn3Matrix}, the exceptional index set $\iI\Zop$ is either $\{2,3\}$ or $\{3\}$ in the next resolution step ensued.
In the case when $\iI\Zop=\{2,3\}$, the reduced exponential matrix bears the following form similar to $\TN$ in \eqref{Cstn3ExpMtrx-2} or $\perm\TN$ in \eqref{Incstn3Matrix} depending on the consistency of the primary variable $x'_1$.
\begin{equation}\label{Reduced3XptlMtrx}
\begin{aligned}
    \TN':=
    \begin{array}{r @{} c @{\hs{-0.5pt}} c @{} p{4mm} @{} c
    @{\hs{1.5mm}} c @{\hs{1.5mm}} c @{\hs{2.5mm}} c
    @{\hs{0.8mm}} p{4mm} @{\hs{-2.5mm}} l}
    &           &           &           &           \E_1
    &           &           \V'_2       &           \V'_3
    &           &                                                   \\
    \mr{4}{$\rule{0mm}{9mm}$}           &           x'_1
    &           \mr{4}{$\rule{0mm}{9mm}$}           &
    \raisebox{2.9mm}[0pt]{\mr{4}{$\left[\rule{0mm}{9mm}\right.$}}
    &           1           &           \vline      &
    \mc{2}{c}{\La'\TD'}     &
    \raisebox{2.9mm}[0pt]{\mr{4}{$\left.\rule{0mm}{9mm}\right]$}}
    &           \mr{6}{$\rule{0mm}{24mm}$}                          \\
    \cline{2-9}
    &           x'_2        &           &           &
    0           &           \vline      &
    \mc{2}{c}{\mr{2}{$\TD'$}}           &           &               \\
    &           x'_3        &           &           &
    0           &           \vline      &           &
    &           &                                                   \\[-2pt]
    &           &           &           &           z'_1
    &           \vline      &           z'_2        &
    z'_3        &           &                                       \\
    \end{array};
    &\hs{10mm}
    \perm\TN':=
    \begin{array}{r @{} c @{\hs{-0.5pt}} c @{} p{4mm} @{} c
    @{\hs{1.5mm}} c @{\hs{1.5mm}} c @{\hs{1.5mm}} c
    @{\hs{0.8mm}} p{4mm} @{\hs{-2.5mm}} l}
    &           &           &           &           \E_1
    &           &           \V'_2       &           \V'_3
    &           &                                                   \\
    \mr{4}{$\rule{0mm}{9mm}$}           &           x'_2
    &           \mr{4}{$\rule{0mm}{9mm}$}           &
    \raisebox{2.9mm}[0pt]{\mr{4}{$\left[\rule{0mm}{9mm}\right.$}}
    &           1           &           \vline      &
    \lambda'c'  &           \lambda'g'  &
    \raisebox{2.9mm}[0pt]{\mr{4}{$\left.\rule{0mm}{9mm}\right]$}}
    &           \mr{6}{$\rule{0mm}{24mm}$}                          \\
    \cline{2-9}
    &           x'_3        &           &           &
    0           &           \vline      &           c'
    &           g'          &           &                           \\
    &           x'_1        &           &           &
    0           &           \vline      &           a'
    &           b'          &           &                           \\[-2pt]
    &           &           &           &           z'_1
    &           \vline      &           z'_2        &
    z'_3        &           &                                       \\
    \end{array},
\end{aligned}
\end{equation}
where we have non-degenerate exceptional submatrices $\det\TD'\cdot\det\TG'\ne 0$ with $\TG':=\bigl[\begin{smallmatrix}c'&g'\\ a'&b'\end{smallmatrix}\bigr]$.
Let us assume that $g'\in\BB N^*$ in $\perm\TN'$ up to a permutation of the exceptional variables $\Z'_0=(z'_2,z'_3)$ and their corresponding column vectors $\V'_2$ and $\V'_3$.
Moreover, $\La'$ is a $2$-dimensional row vector in $\BB Q^2$ and $\lambda'\in\BB Q_{\ge 0}$.

From the perspective of synthetic monomial transformations, the reduced exponential matrix $\perm\TN$ in \eqref{Incstn3Matrix} can be decomposed as $\perm\TN=\perm\TS\cdot\perm\TT$ like in \eqref{MatrixDecomposition} such that
\begin{equation}\label{Interim3Matrix}\begin{aligned}
    \perm\TS:=
    \begin{array}{r @{} c @{\hs{-0.5pt}} c @{} p{4mm} @{} c
    @{\hs{1.5mm}} c @{\hs{1.5mm}} c @{\hs{2.5mm}} c
    @{\hs{0.8mm}} p{4mm} @{\hs{-2.5mm}} l}
    &           &           &           &           \E_1
    &           &           \U          &           \E_3
    &           &                                                   \\
    \mr{4}{$\rule{0mm}{9mm}$}           &           x_2
    &           \mr{4}{$\rule{0mm}{9mm}$}           &
    \raisebox{2.9mm}[0pt]{\mr{4}{$\left[\rule{0mm}{9mm}\right.$}}
    &           1           &           \vline      &
    \lambda g   &           0           &
    \raisebox{2.9mm}[0pt]{\mr{4}{$\left.\rule{0mm}{9mm}\right]$}}
    &           \mr{6}{$\rule{0mm}{24mm}$}                          \\
    \cline{2-9}
    &           x_3         &           &           &
    0           &           \vline      &           g
    &           0           &           &                           \\
    &           x_1         &           &           &
    0           &           \vline      &           0
    &           1           &           &                           \\[-2pt]
    &           &           &           &           z_1
    &           \vline      &           \zeta       &
    x_1         &           &                                       \\
    \end{array};
    &\hs{10mm}
    \perm\TT:=
    \begin{array}{r @{} c @{\hs{-0.5pt}} c @{} p{4mm} @{} c
    @{\hs{1.5mm}} c @{\hs{1.5mm}} c @{\hs{2.5mm}} c
    @{\hs{0.8mm}} p{4mm} @{\hs{-2.5mm}} l}
    &           &           &           &           \E_1
    &           &           &           &           &               \\
    \mr{4}{$\rule{0mm}{9mm}$}           &           z_1
    &           \mr{4}{$\rule{0mm}{9mm}$}           &
    \raisebox{2.9mm}[0pt]{\mr{4}{$\left[\rule{0mm}{9mm}\right.$}}
    &           1           &           \vline      &
    0           &           0           &
    \raisebox{2.9mm}[0pt]{\mr{4}{$\left.\rule{0mm}{9mm}\right]$}}
    &           \mr{6}{$\rule{0mm}{24mm}$}                          \\
    \cline{2-9}
    &           \zeta       &           &           &
    0           &           \vline      &           \frac cg
    &           1           &           &                           \\
    &           x_1         &           &           &
    0           &           \vline      &           a
    &           b           &           &                           \\[-2pt]
    &           &           &           &           z_1
    &           \vline      &           z_2         &
    z_3         &           &                                       \\
    \end{array}
\end{aligned}\end{equation}
with $\zeta$ being the interim variable.
The decomposition adheres to the same principles as those for the one in \eqref{MatrixDecomposition} including the invariance of the latent primary exponent $\alpha_1$ under $\perm\TS$ and coincidence of the non-exceptional column submatrices of $\perm\TS$ and $\perm\TN$ in \eqref{Incstn3Matrix}, as well as the requirement that the difference between $\alpha_\zeta$ and $\gamma_3$ be an integral multiple of $\alpha_1$, which are the respective exponents of the variables $\zeta$, $z_3$ and $x_1$ in \eqref{Interim3Matrix}.
Let $p(z_1,\zeta,x_1)$ be the total transform of the Weierstrass polynomial $w(\X)$ in \eqref{3-WeierstrassPolynomial} under $\CT{\per\TS}$ like in \eqref{SyntheticTransform}.
A latent gradation of $p(z_1,\zeta,x_1)$ by the latent primary variable $x_1$, which is ensued by a localization of $z_1$, yields an apex form:
\begin{equation}\label{3-InterimReorgFunc}
f(x'_1,\zeta,x_1)=x_1^{a_1}\zeta^{\A\cdot\U}f_0(x'_1,\zeta)+\phi(x'_1,\zeta,x_1)
\end{equation}
that is similar to \eqref{InterimReorgFunc}.
Here we have $x'_1=\wt z_1$ and $\A=(\Le a)$ is the vertex associated with the exponential matrix $\TM$ that was reduced to $\perm\TN$ in \eqref{Incstn3Matrix}.
The vector $\U$ is the second column vector of $\perm\TS$ in \eqref{Interim3Matrix}.
Weierstrass preparation theorem and a completion of perfect power can be invoked on $f_0(x'_1,\zeta)$ in \eqref{3-InterimReorgFunc} such that it bears a Weierstrass form with singularity height $d'$, similar to $f(x'_2,x'_3)$ in \eqref{3-MonoWeierstrass}.

Based on $\perm\TT$ in \eqref{Interim3Matrix}, a new interim exponential matrix $\perm\wh\TT$ like in \eqref{ModifiedInterimMatrix} can be defined as follows.
\begin{equation}\label{3-NewInterimMtrx}
    \perm\wh\TT:=
    \begin{array}{r @{} c @{\hs{-0.5pt}} c @{} p{4mm} @{} c
    @{\hs{1.5mm}} c @{\hs{1.5mm}} c @{\hs{2.5mm}} c
    @{\hs{0.8mm}} p{4mm} @{\hs{-2.5mm}} l}
    &           &           &           &           \E_1
    &           &           &           &           &               \\
    \mr{4}{$\rule{0mm}{9mm}$}           &           x'_1
    &           \mr{4}{$\rule{0mm}{9mm}$}           &
    \raisebox{2.9mm}[0pt]{\mr{4}{$\left[\rule{0mm}{9mm}\right.$}}
    &           1           &           \vline      &
    0           &           0           &
    \raisebox{2.9mm}[0pt]{\mr{4}{$\left.\rule{0mm}{9mm}\right]$}}
    &           \mr{6}{$\rule{0mm}{24mm}$}                          \\
    \cline{2-9}
    &           \zeta       &           &           &
    0           &           \vline      &           \frac cg
    &           1           &           &                           \\
    &           x_1         &           &           &
    0           &           \vline      &           a
    &           b           &           &                           \\[-2pt]
    &           &           &           &           x'_1
    &           \vline      &           x'_2        &
    x'_3        &           &                                       \\
    \end{array}
\end{equation}
such that the commutative diagram in \eqref{InterimCommut} holds with $\XIp=(x'_1,\zeta)$ here.
As in \eqref{Incstn3Matrix} we have $c\in\BB N$ and $g\in\BB N^*$.
The new interim exponential matrix $\perm\wh\TT$ in \eqref{3-NewInterimMtrx} and reduced exponential matrix $\TN'$ or $\perm\TN'$ in \eqref{Reduced3XptlMtrx} can be synthesized into a synthetic exponential matrix $\TQ'$ or $\perm\TQ'$ similar to \eqref{CstnSyntheticMatrix} or \eqref{IncstnSyntheticMatrix} as follows.
\begin{equation}\label{Syn3XptlMtrx}
\begin{aligned}
    \TQ':=
    \begin{array}{r @{} c @{\hs{-0.5pt}} c @{} p{4mm} @{} c
    @{\hs{1.5mm}} c @{\hs{1.5mm}} c @{\hs{2.5mm}} c
    @{\hs{0.8mm}} p{4mm} @{\hs{-2.5mm}} l}
    &           &           &           &           \E_1
    &           &           &           &           &               \\
    \mr{4}{$\rule{0mm}{9mm}$}           &           x'_1
    &           \mr{4}{$\rule{0mm}{9mm}$}           &
    \raisebox{2.9mm}[0pt]{\mr{4}{$\left[\rule{0mm}{9mm}\right.$}}
    &           1           &           \vline      &
    \mc{2}{c}{\La'\hs{-0.8mm}\TD'}      &
    \raisebox{2.9mm}[0pt]{\mr{4}{$\left.\rule{0mm}{9mm}\right]$}}
    &           \mr{6}{$\rule{0mm}{24mm}$}                          \\
    \cline{2-9}
    &           \zeta       &           &           &
    0           &           \vline      &
    \mc{2}{c}{\mr{2}{$\Th\hs{-0.4mm}\TD'$}}         &
    &                                                               \\
    &           x_1         &           &           &
    0           &           \vline      &           &
    &           &                                                   \\[-2pt]
    &           &           &           &           z'_1
    &           \vline      &           z'_2        &
    z'_3        &           &                                       \\
    \end{array};
    &\hs{10mm}
    \perm\TQ':=
    \begin{array}{r @{} c @{\hs{-0.5pt}} c @{} p{4mm} @{} c
    @{\hs{1.5mm}} c @{\hs{1.5mm}} c @{\hs{2.5mm}} c
    @{\hs{0.8mm}} p{4mm} @{\hs{-2.5mm}} l}
    &           &           &           &           &
    &           &           &           &                           \\
    \mr{4}{$\rule{0mm}{9mm}$}           &           \zeta
    &           \mr{4}{$\rule{0mm}{9mm}$}           &
    \raisebox{2.9mm}[0pt]{\mr{4}{$\left[\rule{0mm}{9mm}\right.$}}
    &           \frac cg    &           \vline      &
    \mu c'      &           \mu g'      &
    \raisebox{2.9mm}[0pt]{\mr{4}{$\left.\rule{0mm}{9mm}\right]$}}
    &           \mr{6}{$\rule{0mm}{24mm}$}                          \\
    \cline{2-9}
    &           x_1         &           &           &
    a           &           \vline      &           \nu c'
    &           \nu g'      &           &                           \\
    &           x'_1        &           &           &
    0           &           \vline      &           a'
    &           b'          &           &                           \\[-2pt]
    &           &           &           &           z'_1
    &           \vline      &           z'_2        &
    z'_3        &           &                                       \\
    \end{array},
\end{aligned}
\end{equation}
where the submatrix $\TD'$ is as in $\TN'$ in \eqref{Reduced3XptlMtrx} and $\Th$ denotes the submatrix $\bigl[\begin{smallmatrix}c/g&1\\ a&b\end{smallmatrix}\bigr]$ of $\perm\wh\TT$ in \eqref{3-NewInterimMtrx}.
The vector $(\mu,\nu)$ in $\perm\TQ'$ is defined as:
\begin{equation}\label{CoeffAnalysis}
\begin{bmatrix}\mu\\ \nu\end{bmatrix}
=\Th\cdot\begin{bmatrix}\lambda'\\ 1\end{bmatrix}
=\begin{bmatrix}\frac cg&1\\ a&b\end{bmatrix}\cdot\begin{bmatrix}\lambda'\\ 1\end{bmatrix}
\end{equation}
with $\lambda'$ being as in $\perm\TN'$ in \eqref{Reduced3XptlMtrx}.
Let $f_{\TN'}(\Zp)$ or $f_{\per\TN'}(\Zp)$ denote the total transform of the apex form $f(\Xp)$ in \eqref{Reorganized3Func} under the reduced monomial transformation $\CT{\TN'}$ or $\CT{\per\TN'}$ associated with $\TN'$ or $\perm\TN'$ in \eqref{Reduced3XptlMtrx} respectively.
It is easy to see that the commutative diagram in \eqref{SynCommut} holds here.

When the latent primary variable $x_1$ is consistent with the synthetic exponential matrix $\TQ'$ in \eqref{Syn3XptlMtrx} and exceptional index set $\iI\Zop$, similar to \eqref{CstnNrmlzdMtrx}, $\TQ'$ has a canonical reduction as follows.
\begin{equation}\label{3-NrmlzdSynMtrx}
    \TN_{\TQ'}:=
    \begin{array}{r @{} c @{\hs{-0.5pt}} c @{} p{4mm} @{} c
    @{\hs{1.5mm}} c @{\hs{1.5mm}} c @{\hs{2.5mm}} c
    @{\hs{0.8mm}} p{4mm} @{\hs{-2.5mm}} l}
    &           &           &           &           \E_1
    &           &           &           &           &               \\
    \mr{4}{$\rule{0mm}{9mm}$}           &           x_1
    &           \mr{4}{$\rule{0mm}{9mm}$}           &
    \raisebox{2.9mm}[0pt]{\mr{4}{$\left[\rule{0mm}{9mm}\right.$}}
    &           1           &           \vline      &
    \mc{2}{c}{\U'}          &
    \raisebox{2.9mm}[0pt]{\mr{4}{$\left.\rule{0mm}{9mm}\right]$}}
    &           \mr{6}{$\rule{0mm}{24mm}$}                          \\
    \cline{2-9}
    &           \zeta       &           &           &
    0           &           \vline      &
    \mc{2}{c}{\mr{2}{$\Th'$}}           &           &               \\
    &           x'_1        &           &           &
    0           &           \vline      &           &
    &           &                                                   \\[-2pt]
    &           &           &           &           t'_1
    &           \vline      &           t'_2        &
    t'_3        &           &                                       \\
    \end{array},
\end{equation}
where $\bigl[\begin{smallmatrix}\U'\\ \Th'\end{smallmatrix}\bigr]$ is the exceptional column submatrix $\Ev\TQ\Zop'$ of $\TQ'$ in \eqref{Syn3XptlMtrx} as in Definition \ref{Def:ExceptionalSupport} with $\det\Th'\ne 0$.
It is easy to see that the latent primary variable $x_1$ is revived as $t'_1$ in this case and the singularity height strictly decreases from the prior singularity height $d$ of the Weierstrass polynomial $w(\X)$ in \eqref{3-WeierstrassPolynomial}, which is similar to the discussion in Lemma \ref{Lemma:SimpleDecrease}.

In the case of inconsistency of the latent primary variable $x_1$ with synthetic exponential matrix $\TQ'$ and $\iI\Zop$ in \eqref{Syn3XptlMtrx}, $\TQ'$ bears a form similar to $\perm\TN'$ in \eqref{Reduced3XptlMtrx} as follows.
\begin{equation}\label{3-SpecialSynMtrx}
    \TQ':=
    \begin{array}{r @{} c @{\hs{-0.5pt}} c @{} p{4mm} @{} c
    @{\hs{1mm}} c @{\hs{1mm}} c @{\hs{1mm}} c @{\hs{0.8mm}}
    p{4mm} @{\hs{-2.5mm}} l}
    &           &           &           &           \E_1
    &           &           &           &           &               \\
    \mr{4}{$\rule{0mm}{9mm}$}           &           x'_1
    &           \mr{4}{$\rule{0mm}{9mm}$}           &
    \raisebox{2.9mm}[0pt]{\mr{4}{$\left[\rule{0mm}{9mm}\right.$}}
    &           1           &           \vline      &
    \lambda'c'  &           \lambda'g'  &
    \raisebox{2.9mm}[0pt]{\mr{4}{$\left.\rule{0mm}{9mm}\right]$}}
    &           \mr{6}{$\rule{0mm}{24mm}$}                          \\
    \cline{2-9}
    &           \zeta       &           &           &
    0           &           \vline      &           c'
    &           g'          &           &                           \\
    &           x_1         &           &           &
    0           &           \vline      &           a'
    &           b'          &           &                           \\[-2pt]
    &           &           &           &           z'_1
    &           \vline      &           z'_2        &
    z'_3        &           &                                       \\
    \end{array},
\end{equation}
where we abuse the notations a bit and use the elements of $\perm\TN'$ in \eqref{Reduced3XptlMtrx} for those of $\TQ'$.
Similar to Lemma \ref{Lemma:ConstantPrimaryExp}, the latent primary exponent $\alpha_1$ of $x_1$ is a constant on the exceptional support $\es{\TQ'}\Zop$.
Hence based upon the latent gradation and Weierstrass form $f_0(x'_1,\zeta)$ in \eqref{3-InterimReorgFunc}, we can discuss like in Lemma \ref{Lemma:MiddleReviveDecrease} to show that the singularity height strictly decreases from the singularity height $d'$ of $f_0(x'_1,\zeta)$.

It follows from the assumptions $c\in\BB N$ and $g\in\BB N^*$ in \eqref{3-NewInterimMtrx} and $\lambda'\in\BB Q_{\ge 0}$ in \eqref{Reduced3XptlMtrx} that $\mu>0$ in \eqref{CoeffAnalysis}.
Thus the latent primary variable $x_1$ is always consistent with the synthetic exponential matrix $\perm\TQ'$ and $\iI\Zop$ in \eqref{Syn3XptlMtrx} which has a canonical reduction as follows.
\begin{equation}\label{3-NormalSynMtrx}
    \TN_{\per\TQ'}:=
    \begin{array}{r @{} c @{\hs{-0.5pt}} c @{} p{4mm} @{} c
    @{\hs{1.5mm}} c @{\hs{1.5mm}} c @{\hs{2.5mm}} c
    @{\hs{0.8mm}} p{4mm} @{\hs{-2.5mm}} l}
    &           &           &           &           &
    &           &           &           &                           \\
    \mr{4}{$\rule{0mm}{9mm}$}           &           x_1
    &           \mr{4}{$\rule{0mm}{9mm}$}           &
    \raisebox{2.9mm}[0pt]{\mr{4}{$\left[\rule{0mm}{9mm}\right.$}}
    &           1           &           \vline      &
    \nu c'      &           \nu g'      &
    \raisebox{2.9mm}[0pt]{\mr{4}{$\left.\rule{0mm}{9mm}\right]$}}
    &           \mr{6}{$\rule{0mm}{24mm}$}                          \\
    \cline{2-9}
    &           \zeta       &           &           &
    0           &           \vline      &           \mu c'
    &           \mu g'      &           &                           \\
    &           x'_1        &           &           &
    0           &           \vline      &           a'
    &           b'          &           &                           \\[-2pt]
    &           &           &           &           t'_1
    &           \vline      &           t'_2        &
    t'_3        &           &                                       \\
    \end{array}.
\end{equation}
In this case the latent primary variable $x_1$ is revived as the primary variable $t'_1$ as above with a strict decrease of the singularity height from $d$ of $w(\X)$ in \eqref{3-WeierstrassPolynomial}.

Now consider the case when the exceptional index set $\iI\Zop=\{3\}$ rather than $\{2,3\}$ in \eqref{Reduced3XptlMtrx}.
In this case the reduced exponential matrix bears the form of either $\TN'$ or $\perm\TN'$ in \eqref{Reduced3XptlMtrx-2} with $g'\in\BB N^*$, depending on the consistency of the primary variable $x'_1$ with $\TN'$ and $\iI\Zop$ or $\perm\TN'$ and $\iI\Zop$ as in \eqref{Reduced3XptlMtrx-2}.
\begin{equation}\label{Reduced3XptlMtrx-2}
\begin{aligned}
    \TN':=
    \begin{array}{r @{} c @{\hs{-0.5pt}} c @{} p{4mm} @{} c
    @{\hs{1.5mm}} c @{\hs{1mm}} c @{\hs{1.7mm}} c @{\hs{0.8mm}}
    p{4mm} @{\hs{-2.5mm}} l}
    &           &           &           &           \E_1
    &           \E_2        &           &           \V'_3
    &           &                                                   \\
    \mr{4}{$\rule{0mm}{9mm}$}           &           x'_1
    &           \mr{4}{$\rule{0mm}{9mm}$}           &
    \raisebox{2.9mm}[0pt]{\mr{4}{$\left[\rule{0mm}{9mm}\right.$}}
    &           1           &           0           &
    \vline      &           b'          &
    \raisebox{2.9mm}[0pt]{\mr{4}{$\left.\rule{0mm}{9mm}\right]$}}
    &           \mr{6}{$\rule{0mm}{24mm}$}                          \\
    &           x'_2        &           &           &
    0           &           1           &           \vline
    &           a'          &           &                           \\
    \cline{2-9}
    &           x'_3        &           &           &
    0           &           0           &           \vline
    &           g'          &           &                           \\[-2pt]
    &           &           &           &           z'_1
    &           z'_2        &           \vline      &
    z'_3        &           &                                       \\
    \end{array};
    &\hs{10mm}
    \perm\TN':=
    \begin{array}{r @{} c @{\hs{-0.5pt}} c @{} p{4mm} @{} c
    @{\hs{1.5mm}} c @{\hs{1mm}} c @{\hs{1.5mm}} c
    @{\hs{0.8mm}} p{4mm} @{\hs{-2.5mm}} l}
    &           &           &           &           \E_1
    &           \E_2        &           &           \V'_3
    &           &                                                   \\
    \mr{4}{$\rule{0mm}{9mm}$}           &           x'_2
    &           \mr{4}{$\rule{0mm}{9mm}$}           &
    \raisebox{2.9mm}[0pt]{\mr{4}{$\left[\rule{0mm}{9mm}\right.$}}
    &           1           &           0           &
    \vline      &           0           &
    \raisebox{2.9mm}[0pt]{\mr{4}{$\left.\rule{0mm}{9mm}\right]$}}
    &           \mr{6}{$\rule{0mm}{24mm}$}                          \\
    &           x'_3        &           &           &
    0           &           1           &           \vline
    &           0           &           &                           \\
    \cline{2-9}
    &           x'_1        &           &           &
    0           &           0           &           \vline
    &           g'          &           &                           \\[-2pt]
    &           &           &           &           z'_1
    &           z'_2        &           \vline      &
    z'_3        &           &                                       \\
    \end{array}.
\end{aligned}
\end{equation}

The interim exponential matrix $\perm\wh\TT$ in \eqref{3-NewInterimMtrx} and $\TN'$ or $\perm\TN'$ in \eqref{Reduced3XptlMtrx-2} can be synthesized into a synthetic exponential matrix $\TQ'$ or $\perm\TQ'$ as following:
\begin{equation}\label{Syn3XptlMtrx-2}
\begin{aligned}
    \TQ':=
    \begin{array}{r @{} c @{\hs{-0.5pt}} c @{} p{4mm} @{} c
    @{\hs{1.5mm}} c @{\hs{1mm}} c @{\hs{1.5mm}} c
    @{\hs{0.8mm}} p{4mm} @{\hs{-2.5mm}} l}
    &           &           &           &           \E_1
    &           &           &           &           &               \\
    \mr{4}{$\rule{0mm}{9mm}$}           &           x'_1
    &           \mr{4}{$\rule{0mm}{9mm}$}           &
    \raisebox{2.9mm}[0pt]{\mr{4}{$\left[\rule{0mm}{9mm}\right.$}}
    &           1           &           0           &
    \vline      &           b'          &
    \raisebox{2.9mm}[0pt]{\mr{4}{$\left.\rule{0mm}{9mm}\right]$}}
    &           \mr{6}{$\rule{0mm}{24mm}$}                          \\
    &           \zeta       &           &           &
    0           &           \frac cg    &           \vline
    &           \wt g       &           &                           \\
    \cline{2-9}
    &           x_1         &           &           &
    0           &           a           &           \vline
    &           h           &           &                           \\[-2pt]
    &           &           &           &           z'_1
    &           z'_2        &           \vline      &
    z'_3        &           &                                       \\
    \end{array};
    &\hs{10mm}
    \perm\TQ':=
    \begin{array}{r @{} c @{\hs{-0.5pt}} c @{} p{4mm} @{} c
    @{\hs{1.5mm}} c @{\hs{1mm}} c @{\hs{1.5mm}} c
    @{\hs{0.8mm}} p{4mm} @{\hs{-2.5mm}} l}
    &           &           &           &           &
    &           &           &           &                           \\
    \mr{4}{$\rule{0mm}{9mm}$}           &           \zeta
    &           \mr{4}{$\rule{0mm}{9mm}$}           &
    \raisebox{2.9mm}[0pt]{\mr{4}{$\left[\rule{0mm}{9mm}\right.$}}
    &           \frac cg    &           1           &
    \vline      &           0           &
    \raisebox{2.9mm}[0pt]{\mr{4}{$\left.\rule{0mm}{9mm}\right]$}}
    &           \mr{6}{$\rule{0mm}{24mm}$}                          \\
    &           x_1         &           &           &
    a           &           b           &           \vline
    &           0           &           &                           \\
    \cline{2-9}
    &           x'_1        &           &           &
    0           &           0           &           \vline
    &           g'          &           &                           \\[-2pt]
    &           &           &           &           z'_1
    &           z'_2        &           \vline      &
    z'_3        &           &                                       \\
    \end{array},
\end{aligned}
\end{equation}
where the $2$ by $2$ submatrix $\bigl[\begin{smallmatrix}c/g&\wt g\\ a&h\end{smallmatrix}\bigr]$ of $\TQ'$ is defined as $\TG\cdot\bigl[\begin{smallmatrix}1&a'\\ 0&g'\end{smallmatrix}\bigr]$ with $\TG=\bigl[\begin{smallmatrix}c/g&1\\ a&b\end{smallmatrix}\bigr]$ being the $2$ by $2$ submatrix of $\perm\wh\TT$ in \eqref{3-NewInterimMtrx}.
In particular, the element $\wt g=\frac{ca'}g+g'>0$ since $c,a'\in\BB N$ and $g,g'\in\BB N^*$.
Thus the canonical reduction of $\TQ'$ or $\perm\TQ'$ in \eqref{Syn3XptlMtrx-2} bears the following form:
\begin{equation}\label{RedSyn3XptlMtrx}
\begin{aligned}
    \TN_{\TQ'}:=
    \begin{array}{r @{} c @{\hs{-0.5pt}} c @{} p{4mm} @{} c
    @{\hs{1.5mm}} c @{\hs{0.7mm}} c @{\hs{1.5mm}} c
    @{\hs{0.8mm}} p{4mm} @{\hs{-2.5mm}} l}
    &           &           &           &           \E_1
    &           \E_2        &           &           &
    &                                                               \\
    \mr{4}{$\rule{0mm}{9mm}$}           &           x'_1
    &           \mr{4}{$\rule{0mm}{9mm}$}           &
    \raisebox{2.9mm}[0pt]{\mr{4}{$\left[\rule{0mm}{9mm}\right.$}}
    &           1           &           0           &
    \vline      &           b'          &
    \raisebox{2.9mm}[0pt]{\mr{4}{$\left.\rule{0mm}{9mm}\right]$}}
    &           \mr{6}{$\rule{0mm}{24mm}$}                          \\
    &           x_1         &           &           &
    0           &           1           &           \vline
    &           h           &           &                           \\
    \cline{2-9}
    &           \zeta       &           &           &
    0           &           0           &           \vline
    &           \wt g       &           &                           \\
    &           &           &           &           t'_1
    &           t'_2        &           \vline      &
    t'_3        &           &                                       \\
    \end{array};
    &\hs{10mm}
    \TN_{\per\TQ'}:=
    \begin{array}{r @{} c @{\hs{-0.5pt}} c @{} p{4mm} @{} c
    @{\hs{1.5mm}} c @{\hs{1mm}} c @{\hs{1.5mm}} c
    @{\hs{0.8mm}} p{4mm} @{\hs{-2.5mm}} l}
    &           &           &           &           \E_1
    &           \E_2        &           &           &
    &                                                               \\
    \mr{4}{$\rule{0mm}{9mm}$}           &           \zeta
    &           \mr{4}{$\rule{0mm}{9mm}$}           &
    \raisebox{2.9mm}[0pt]{\mr{4}{$\left[\rule{0mm}{9mm}\right.$}}
    &           1           &           0           &
    \vline      &           0           &
    \raisebox{2.9mm}[0pt]{\mr{4}{$\left.\rule{0mm}{9mm}\right]$}}
    &           \mr{6}{$\rule{0mm}{24mm}$}                          \\
    &           x_1         &           &           &
    0           &           1           &           \vline
    &           0           &           &                           \\
    \cline{2-9}
    &           x'_1        &           &           &
    0           &           0           &           \vline
    &           g'          &           &                           \\
    &           &           &           &           t'_1
    &           t'_2        &           \vline      &
    t'_3        &           &                                       \\
    \end{array}.
\end{aligned}
\end{equation}
It is evident that we can take $t'_2$ as the revived primary variable in both the above cases in \eqref{RedSyn3XptlMtrx} such that the latent primary variable $x_1$ is revived.
In this way the singularity height strictly decreases from the prior singularity height $d$ of the Weierstrass polynomial $w(\X)$ in \eqref{3-WeierstrassPolynomial} at every regular reduced branch point.
When the reduced branch point is irregular, the resolution of irregular singularities of $\vfz f1(\Xp)$ in \eqref{3-PreWeierstrass} like in \eqref{Reduced2XptlMtrx} thenceforth can be repeated almost verbatim here to show that eventually the singularity height strictly decreases from $d$ as well.

When the primary variable $x_1$ is inconsistent with the exponential matrix $\TM$ and exceptional index set $\iI\Yo=\{3\}$, the canonical reduction $\perm\TN$ of $\TM$ bears the following form instead of that in \eqref{Incstn3Matrix}:
\begin{equation}\label{Incstn3ExpMtrx-2}\begin{aligned}
    \perm\TN:=
    \begin{array}{r @{} c @{\hs{-0.5pt}} c @{} p{4mm} @{} c
    @{\hs{1.5mm}} c @{\hs{1.5mm}} c @{\hs{1.5mm}} c
    @{\hs{0.8mm}} p{4mm} @{\hs{-2.5mm}} l}
    &           &           &           &           \E_1
    &           \E_2        &           &           \V_3
    &           &                                                   \\
    \mr{4}{$\rule{0mm}{9mm}$}           &           x_2
    &           \mr{4}{$\rule{0mm}{9mm}$}           &
    \raisebox{2.4mm}[0pt]{\mr{4}{$\left[\rule{0mm}{9mm}\right.$}}
    &           1           &           0           &
    \vline      &           0           &
    \raisebox{2.4mm}[0pt]{\mr{4}{$\left.\rule{0mm}{9mm}\right]$}}
    &           \mr{6}{$\rule{0mm}{24mm}$}                          \\
    &           x_3         &           &           &
    0           &           1           &           \vline
    &           0           &           &                           \\
    \cline{2-9}
    &           x_1         &           &           &
    0           &           0           &           \vline
    &           g           &           &                           \\[-2.5pt]
    &           &           &           &           z_1
    &           z_2         &           \vline      &
    z_3         &           &                                       \\
    \end{array}
\end{aligned}\end{equation}
with $g\in\BB N^*$.
The canonical reduction $\perm\TN$ can be further reduced to the unit matrix since its column vector $\V_3$ can be reduced to the unit vector $\E_3$.
Hence we just take $\perm\TN=\TE$ henceforth rather than the form in \eqref{Incstn3ExpMtrx-2}.
We make latent preliminary and Weierstrass reductions like in \eqref{Reorganized3Func} as in Definition \ref{Def:LatentReductions} with $(\wt z_1,\wt z_2,z_3)=(x'_1,x'_2,x'_3)$ so as to obtain an apex form $f(\Xp)$ with singularity height $d'$ as in \eqref{Reorganized3Func}.

In the next resolution step ensued, the exceptional index set $\iI\Zop$ is either $\{2,3\}$ or $\{3\}$.
In the case when $\iI\Zop=\{2,3\}$, the reduced exponential matrix bears the form $\TN'$ or $\perm\TN'$ in \eqref{Reduced3XptlMtrx}, depending on the consistency of the primary variable $x'_1$.
Similar to \eqref{Interim3Matrix}, the reduced exponential matrix $\perm\TN=\TE$ acquired from the form in \eqref{Incstn3ExpMtrx-2} can be decomposed as $\perm\TN=\perm\TS\cdot\perm\TT$ with

\begin{equation}\label{Interim3Matrix-2}
\begin{aligned}
    \perm\TS:=
    \begin{array}{r @{} c @{\hs{-0.5pt}} c @{} p{4mm} @{} c
    @{\hs{1.5mm}} c @{\hs{1mm}} c @{\hs{1.5mm}} c
    @{\hs{0.8mm}} p{4mm} @{\hs{-2.5mm}} l}
    &           &           &           &           \E_1
    &           \E_2        &           &           \E_3
    &           &                                                   \\
    \mr{4}{$\rule{0mm}{9mm}$}           &           x_2
    &           \mr{4}{$\rule{0mm}{9mm}$}           &
    \raisebox{2.9mm}[0pt]{\mr{4}{$\left[\rule{0mm}{9mm}\right.$}}
    &           1           &           0           &
    \vline      &           0           &
    \raisebox{2.9mm}[0pt]{\mr{4}{$\left.\rule{0mm}{9mm}\right]$}}
    &           \mr{6}{$\rule{0mm}{24mm}$}                          \\
    &           x_3         &           &           &
    0           &           1           &           \vline
    &           0           &           &                           \\
    \cline{2-9}
    &           x_1         &           &           &
    0           &           0           &           \vline
    &           1           &           &                           \\[-2pt]
    &           &           &           &           z_1
    &           z_2         &           \vline      &
    x_1         &           &                                       \\
    \end{array};
    &\hs{10mm}
    \perm\TT:=
    \begin{array}{r @{} c @{\hs{-0.5pt}} c @{} p{4mm} @{} c
    @{\hs{1.5mm}} c @{\hs{1mm}} c @{\hs{1.5mm}} c
    @{\hs{0.8mm}} p{4mm} @{\hs{-2.5mm}} l}
    &           &           &           &           \E_1
    &           \E_2        &           &           \E_3
    &           &                                                   \\
    \mr{4}{$\rule{0mm}{9mm}$}           &           z_1
    &           \mr{4}{$\rule{0mm}{9mm}$}           &
    \raisebox{2.9mm}[0pt]{\mr{4}{$\left[\rule{0mm}{9mm}\right.$}}
    &           1           &           0           &
    \vline      &           0           &
    \raisebox{2.9mm}[0pt]{\mr{4}{$\left.\rule{0mm}{9mm}\right]$}}
    &           \mr{6}{$\rule{0mm}{24mm}$}                          \\
    &           z_2         &           &           &
    0           &           1           &           \vline
    &           0           &           &                           \\
    \cline{2-9}
    &           x_1         &           &           &
    0           &           0           &           \vline
    &           1           &           &                           \\[-2pt]
    &           &           &           &           z_1
    &           z_2         &           \vline      &
    z_3         &           &                                       \\
    \end{array}.
\end{aligned}
\end{equation}

Let $p(z_1,z_2,x_1)$ be the total transform of the Weierstrass polynomial $w(\X)$ in \eqref{3-WeierstrassPolynomial} under $\CT{\per\TS}$ as defined in \eqref{SyntheticTransform}.
Similar to \eqref{3-InterimReorgFunc}, the interim preliminary and Weierstrass reductions as in Definition \ref{Def:InterimReductions} reduce $p(z_1,z_2,x_1)$ into an apex form:
\begin{equation}\label{3-Apexform}
f(x'_1,x'_2,x_1)=x_1^{a_1}f_0(x'_1,x'_2)+\phi(x'_1,x'_2,x_1),
\end{equation}
where $f_0(x'_1,x'_2)$ is in Weierstrass form with singularity height $d'$ like $f(x'_2,x'_3)$ in \eqref{3-MonoWeierstrass}.
Similar to $\perm\wh\TT$ in \eqref{3-NewInterimMtrx}, a new interim exponential matrix can be defined as follows according to $\perm\TT$ in \eqref{Interim3Matrix-2}:
\begin{equation}\label{3-NewInterimMtrx-2}
    \perm\wh\TT:=
    \begin{array}{r @{} c @{\hs{-0.5pt}} c @{} p{4mm} @{} c
    @{\hs{1.5mm}} c @{\hs{1mm}} c @{\hs{1.5mm}} c
    @{\hs{0.8mm}} p{4mm} @{\hs{-2.5mm}} l}
    &           &           &           &           \E_1
    &           \E_2        &           &           \E_3
    &           &                                                   \\
    \mr{4}{$\rule{0mm}{9mm}$}           &           x'_1
    &           \mr{4}{$\rule{0mm}{9mm}$}           &
    \raisebox{2.9mm}[0pt]{\mr{4}{$\left[\rule{0mm}{9mm}\right.$}}
    &           1           &           0           &
    \vline      &           0           &
    \raisebox{2.9mm}[0pt]{\mr{4}{$\left.\rule{0mm}{9mm}\right]$}}
    &           \mr{6}{$\rule{0mm}{24mm}$}                          \\
    &           x'_2        &           &           &
    0           &           1           &           \vline
    &           0           &           &                           \\
    \cline{2-9}
    &           x_1         &           &           &
    0           &           0           &           \vline
    &           1           &           &                           \\
    &           &           &           &           x'_1
    &           x'_2        &           \vline      &
    x'_3        &           &                                       \\
    \end{array}.
\end{equation}

The interim exponential matrix $\perm\wh\TT$ in \eqref{3-NewInterimMtrx-2} and reduced exponential matrix $\TN'$ or $\perm\TN'$ in \eqref{Reduced3XptlMtrx} can be synthesized into the following synthetic exponential matrix $\TQ'$ or $\perm\TQ'$ similar to \eqref{Syn3XptlMtrx} respectively.
\begin{equation}\label{Syn3XptlMtrx-3}
\begin{aligned}
    \TQ':=
    \begin{array}{r @{} c @{\hs{-0.5pt}} c @{} p{4mm} @{} c
    @{\hs{1.5mm}} c @{\hs{1.5mm}} c @{\hs{2.5mm}} c
    @{\hs{0.8mm}} p{4mm} @{\hs{-2.5mm}} l}
    &           &           &           &           \E_1
    &           &           &           &           &               \\
    \mr{4}{$\rule{0mm}{9mm}$}           &           x'_1
    &           \mr{4}{$\rule{0mm}{9mm}$}           &
    \raisebox{2.9mm}[0pt]{\mr{4}{$\left[\rule{0mm}{9mm}\right.$}}
    &           1           &           \vline      &
    \mc{2}{c}{\La'\hs{-0.7mm}\TD'}      &
    \raisebox{2.9mm}[0pt]{\mr{4}{$\left.\rule{0mm}{9mm}\right]$}}
    &           \mr{6}{$\rule{0mm}{24mm}$}                          \\
    \cline{2-9}
    &           x'_2        &           &           &
    0           &           \vline      &
    \mc{2}{c}{\mr{2}{$\TD'$}}           &           &               \\
    &           x_1         &           &           &
    0           &           \vline      &           &
    &           &                                                   \\[-2pt]
    &           &           &           &           z'_1
    &           \vline      &           z'_2        &
    z'_3        &           &                                       \\
    \end{array};
    &\hs{10mm}
    \perm\TQ':=
    \begin{array}{r @{} c @{\hs{-0.5pt}} c @{} p{4mm} @{} c
    @{\hs{1.5mm}} c @{\hs{1.5mm}} c @{\hs{1.5mm}} c
    @{\hs{0.8mm}} p{4mm} @{\hs{-2.5mm}} l}
    &           &           &           &           \E_1
    &           &           &           &           &               \\
    \mr{4}{$\rule{0mm}{9mm}$}           &           x'_2
    &           \mr{4}{$\rule{0mm}{9mm}$}           &
    \raisebox{2.9mm}[0pt]{\mr{4}{$\left[\rule{0mm}{9mm}\right.$}}
    &           1           &           \vline      &
    \lambda'c'  &           \lambda'g'  &
    \raisebox{2.9mm}[0pt]{\mr{4}{$\left.\rule{0mm}{9mm}\right]$}}
    &           \mr{6}{$\rule{0mm}{24mm}$}                          \\
    \cline{2-9}
    &           x_1         &           &           &
    0           &           \vline      &           c'
    &           g'          &           &                           \\
    &           x'_1        &           &           &
    0           &           \vline      &           a'
    &           b'          &           &                           \\[-2pt]
    &           &           &           &           z'_1
    &           \vline      &           z'_2        &
    z'_3        &           &                                       \\
    \end{array}.
\end{aligned}
\end{equation}
When the latent primary variable $x_1$ is consistent with $\TQ'$ and $\iI\Zop$ or $\perm\TQ'$ and $\iI\Zop$ in \eqref{Syn3XptlMtrx-3}, which amounts to $\lambda'>0$ for $\perm\TQ'$, through a canonical reduction of $\TQ'$ or $\perm\TQ'$ similar to \eqref{3-NrmlzdSynMtrx} and \eqref{3-NormalSynMtrx}, $x_1$ is revived and the singularity height is strictly less than that of $w(\X)$ in \eqref{3-WeierstrassPolynomial} which equals $d$.

In the case of inconsistency of the latent primary variable $x_1$ with the synthetic exponential matrix $\TQ'$ and $\iI\Zop$ in \eqref{Syn3XptlMtrx-3}, it is easy to see that $\TQ'$ bears a form resembling the one in \eqref{3-SpecialSynMtrx} with $x'_1$ being the primary variable.
Thus based on the Weierstrass form $f_0(x'_1,x'_2)$ in \eqref{3-Apexform} and similar to the discussion in Lemma \ref{Lemma:MiddleReviveDecrease}, we can show that the singularity height strictly decreases from the singularity height $d'$ of the apex form $f(x'_1,x'_2,x_1)$ in \eqref{3-Apexform}.

When $\lambda'=0$ in the synthetic exponential matrix $\perm\TQ'$ in \eqref{Syn3XptlMtrx-3}, the singularity height might have a temporary increase from the singularity height $d'$ of the apex form $f(x'_1,x'_2,x_1)$ in \eqref{3-Apexform}.
Nonetheless the nesting degree as in Definition \ref{Def:NestLatency} increases to $\fnd=2$.
Suppose that prior to the next resolution step, the latent preliminary and Weierstrass reductions yield an apex form $f(\Xq)$ with singularity height $d''$ that resembles the apex form $f(\Xq)$ in \eqref{NewApexForm}.
In the ensuing resolution step the exceptional index set $\iI\Zoq$ is either $\{2,3\}$ or $\{3\}$.

In the case when $\iI\Zoq=\{2,3\}$, the reduced exponential matrix bears the following form $\TN''$ or $\perm\TN''$, depending on the consistency of the primary variable $x''_1$, which is similar to the scenario in  \eqref{Reduced3XptlMtrx}.
\begin{equation}\label{Reduced3XptlMtrx-3}
\begin{aligned}
    \TN'':=
    \begin{array}{r @{} c @{\hs{-0.5pt}} c @{} p{4mm} @{} c
    @{\hs{1.5mm}} c @{\hs{1.5mm}} c @{\hs{2.5mm}} c
    @{\hs{0.8mm}} p{4mm} @{\hs{-2.5mm}} l}
    &           &           &           &           \E_1
    &           &           \V''_2      &           \V''_3
    &           &                                                   \\
    \mr{4}{$\rule{0mm}{9mm}$}           &           x''_1
    &           \mr{4}{$\rule{0mm}{9mm}$}           &
    \raisebox{2.9mm}[0pt]{\mr{4}{$\left[\rule{0mm}{9mm}\right.$}}
    &           1           &           \vline      &
    \mc{2}{c}{\La''\hs{-1.8pt}\TD''}    &
    \raisebox{2.9mm}[0pt]{\mr{4}{$\left.\rule{0mm}{9mm}\right]$}}
    &           \mr{6}{$\rule{0mm}{24mm}$}                          \\
    \cline{2-9}
    &           x''_2       &           &           &
    0           &           \vline      &
    \mc{2}{c}{\mr{2}{$\TD''$}}          &           &               \\
    &           x''_3       &           &           &
    0           &           \vline      &           &
    &           &                                                   \\[-2pt]
    &           &           &           &           z''_1
    &           \vline      &           z''_2        &
    z''_3       &           &                                       \\
    \end{array};
    &\hs{10mm}
    \perm\TN'':=
    \begin{array}{r @{} c @{\hs{-0.5pt}} c @{} p{4mm} @{} c
    @{\hs{1.5mm}} c @{\hs{1.5mm}} c @{\hs{2.5mm}} c
    @{\hs{0.8mm}} p{4mm} @{\hs{-2.5mm}} l}
    &           &           &           &           \E_1
    &           &           \V''_2      &           \V''_3
    &           &                                                   \\
    \mr{4}{$\rule{0mm}{9mm}$}           &           x''_2
    &           \mr{4}{$\rule{0mm}{9mm}$}           &
    \raisebox{2.9mm}[0pt]{\mr{4}{$\left[\rule{0mm}{9mm}\right.$}}
    &           1           &           \vline      &
    \lambda''\hs{-1.2pt}c'' &           \lambda''\hs{-1.2pt}g''
    &
    \raisebox{2.9mm}[0pt]{\mr{4}{$\left.\rule{0mm}{9mm}\right]$}}
    &           \mr{6}{$\rule{0mm}{24mm}$}                          \\
    \cline{2-9}
    &           x''_3       &           &           &
    0           &           \vline      &           c''
    &           g''         &           &                           \\
    &           x''_1       &           &           &
    0           &           \vline      &           a''
    &           b''         &           &                           \\[-2pt]
    &           &           &           &           z''_1
    &           \vline      &           z''_2       &
    z''_3       &           &                                       \\
    \end{array},
\end{aligned}
\end{equation}
where we have non-degeneracies $\det\TD''\cdot\det\TG''\ne 0$ with $\TG''$ denoting the $2$ by $2$ submatrix $\bigl[\begin{smallmatrix}c''&g''\\ a''&b''\end{smallmatrix}\bigr]$ of $\perm\TN''$ in \eqref{Reduced3XptlMtrx-3}.
From the perspective of synthetic monomial transformations and similar to \eqref{Interim3Matrix}, the synthetic exponential matrix $\perm\TQ'$ in \eqref{Syn3XptlMtrx-3} with $\lambda'=0$ can be decomposed as $\perm\TQ'=\perm\TS'\cdot\perm\TT'$ with the interim exponential matrices $\perm\TS'$ and $\perm\TT'$ being as follows.
\begin{equation}\label{3-InterimMtrx}
\begin{aligned}
    \perm\TS':=
    \begin{array}{r @{} c @{\hs{-0.5pt}} c @{} p{4mm} @{} c
    @{\hs{1mm}} c @{\hs{1mm}} c @{\hs{1mm}} c @{\hs{0.8mm}}
    p{4mm} @{\hs{-2.5mm}} l}
    &           &           &           &           \E_1
    &           &           \E_2        &           \E_3
    &           &                                                   \\
    \mr{4}{$\rule{0mm}{9mm}$}           &           x'_2
    &           \mr{4}{$\rule{0mm}{9mm}$}           &
    \raisebox{2.9mm}[0pt]{\mr{4}{$\left[\rule{0mm}{9mm}\right.$}}
    &           1           &           \vline      &
    0           &           0           &
    \raisebox{2.9mm}[0pt]{\mr{4}{$\left.\rule{0mm}{9mm}\right]$}}
    &           \mr{6}{$\rule{0mm}{24mm}$}                          \\
    \cline{2-9}
    &           x_1         &           &           &
    0           &           \vline      &           1
    &           0           &           &                           \\
    &           x'_1        &           &           &
    0           &           \vline      &           0
    &           1           &           &                           \\[-2pt]
    &           &           &           &           z'_1
    &           \vline      &           x_1         &
    x'_1        &           &                                       \\
    \end{array};
    &\hs{10mm}
    \perm\TT':=
    \begin{array}{r @{} c @{\hs{-0.5pt}} c @{} p{4mm} @{} c
    @{\hs{1mm}} c @{\hs{1mm}} c @{\hs{1mm}} c @{\hs{0.8mm}}
    p{4mm} @{\hs{-2.5mm}} l}
    &           &           &           &           \E_1
    &           &           &           &           &               \\
    \mr{4}{$\rule{0mm}{9mm}$}           &           z'_1
    &           \mr{4}{$\rule{0mm}{9mm}$}           &
    \raisebox{2.9mm}[0pt]{\mr{4}{$\left[\rule{0mm}{9mm}\right.$}}
    &           1           &           \vline      &
    0           &           0           &
    \raisebox{2.9mm}[0pt]{\mr{4}{$\left.\rule{0mm}{9mm}\right]$}}
    &           \mr{6}{$\rule{0mm}{24mm}$}                          \\
    \cline{2-9}
    &           x_1         &           &           &
    0           &           \vline      &           c'
    &           g'          &           &                           \\
    &           x'_1        &           &           &
    0           &           \vline      &           a'
    &           b'          &           &                           \\[-2pt]
    &           &           &           &           z'_1
    &           \vline      &           z'_2        &
    z'_3        &           &                                       \\
    \end{array}.
\end{aligned}
\end{equation}

The interim monomial transformation $\CT{\per\TS'}$ in \eqref{3-InterimMtrx} ensued by a localization of the variable $z'_1$ as well as the identity $x''_1=\wt z'_1$ reduce $f(x'_1,x'_2,x_1)$ in \eqref{3-Apexform} into an apex form:
\begin{equation}\label{3-UnivariableForm}
f(x''_1,x_1,x'_1)=x_1^{\alpha_1}{x'_1}^{a'_1}f_0(x''_1)+\phi (x''_1,x_1,x'_1)
\end{equation}
similar to \eqref{Dgr2LtnGrdtn} with $a'_1$ being the first component of the vertex $\A'$ associated with $\perm\TN'$ in \eqref{Reduced3XptlMtrx} and $\alpha_1$ the latent primary exponent associated with the exceptional support $\es{\perm\TQ'}\Zop$ in \eqref{Syn3XptlMtrx-3} as per Lemma \ref{Lemma:ConstantPrimaryExp}.
According to $\perm\TT'$ in \eqref{3-InterimMtrx}, a new interim exponential matrix can be defined as follows.
\begin{equation}\label{3-ModInterimMtrx}
    \perm\wh\TT':=
    \begin{array}{r @{} c @{\hs{-0.5pt}} c @{} p{4mm} @{} c
    @{\hs{1mm}} c @{\hs{1mm}} c @{\hs{1mm}} c @{\hs{0.8mm}}
    p{4mm} @{\hs{-2.5mm}} l}
    &           &           &           &           \E_1
    &           &           &           &           &               \\
    \mr{4}{$\rule{0mm}{9mm}$}           &           x''_1
    &           \mr{4}{$\rule{0mm}{9mm}$}           &
    \raisebox{2.9mm}[0pt]{\mr{4}{$\left[\rule{0mm}{9mm}\right.$}}
    &           1           &           \vline      &
    0           &           0           &
    \raisebox{2.9mm}[0pt]{\mr{4}{$\left.\rule{0mm}{9mm}\right]$}}
    &           \mr{6}{$\rule{0mm}{24mm}$}                          \\
    \cline{2-9}
    &           x_1         &           &           &
    0           &           \vline      &           c'
    &           g'          &           &                           \\
    &           x'_1        &           &           &
    0           &           \vline      &           a'
    &           b'          &           &                           \\[-2pt]
    &           &           &           &           x''_1
    &           \vline      &           x''_2       &
    x''_3       &           &                                       \\
    \end{array}.
\end{equation}

The above new interim exponential matrix $\perm\wh\TT'$ and reduced exponential matrix $\TN''$ or $\perm\TN''$ in \eqref{Reduced3XptlMtrx-3} can be synthesized into a synthetic exponential matrix $\TQ''$ or $\perm\TQ''$ as follows.

\begin{equation}\label{Syn3XptlMtrx-4}
\begin{aligned}
    \TQ'':=
    \begin{array}{r @{} c @{\hs{-0.5pt}} c @{} p{4mm} @{} c
    @{\hs{1.5mm}} c @{\hs{2mm}} c @{\hs{1.5mm}} c
    @{} p{4mm} @{\hs{-2.5mm}} l}
    &           &           &           &           \E_1
    &           &           &           &           &               \\
    \mr{4}{$\rule{0mm}{9mm}$}           &           x''_1
    &           \mr{4}{$\rule{0mm}{9mm}$}           &
    \raisebox{2.9mm}[0pt]{\mr{4}{$\left[\rule{0mm}{9mm}\right.$}}
    &           1           &           \vline      &
    \mc{2}{c}{\La''\hs{-2pt}\TD''}      &
    \raisebox{2.9mm}[0pt]{\mr{4}{$\left.\rule{0mm}{9mm}\right]$}}
    &           \mr{6}{$\rule{0mm}{24mm}$}                          \\
    \cline{2-9}
    &           x_1         &           &           &
    0           &           \vline      &
    \mc{2}{c}{\mr{2}{$\TG'\hs{-2pt}\TD''$}}         &
    &                                                               \\
    &           x'_1        &           &           &
    0           &           \vline      &           &
    &           &                                                   \\[-2pt]
    &           &           &           &           z''_1
    &           \vline      &           z''_2       &
    z''_3       &           &                                       \\
    \end{array};
    &\hs{10mm}
    \perm\TQ'':=
    \begin{array}{r @{} c @{\hs{-0.5pt}} c @{} p{4mm} @{} c
    @{\hs{1.5mm}} c @{\hs{2mm}} c @{\hs{1.5mm}} c
    @{} p{4mm} @{\hs{-2.5mm}} l}
    &           &           &           &           \E_1
    &           &           &           &           &               \\
    \mr{4}{$\rule{0mm}{9mm}$}           &           x''_1
    &           \mr{4}{$\rule{0mm}{9mm}$}           &
    \raisebox{2.9mm}[0pt]{\mr{4}{$\left[\rule{0mm}{9mm}\right.$}}
    &           0           &           \vline      &
    a''         &           b''         &
    \raisebox{2.9mm}[0pt]{\mr{4}{$\left.\rule{0mm}{9mm}\right]$}}
    &           \mr{6}{$\rule{0mm}{24mm}$}                          \\
    \cline{2-9}
    &           x_1         &           &           &
    c'          &           \vline      &           \mu'c''
    &           \mu'g''     &           &                           \\
    &           x'_1        &           &           &
    a'          &           \vline      &           \nu'c''
    &           \nu'g''     &           &                           \\
    &           &           &           &           z''_1
    &           \vline      &           z''_2       &
    z''_3       &           &                                       \\
    \end{array},
\end{aligned}
\end{equation}
where $\TD''$ is the submatrix of $\TN''$ in \eqref{Reduced3XptlMtrx-3} and $\TG'$ denotes the submatrix $\bigl[\begin{smallmatrix}c'&g'\\ a'&b'\end{smallmatrix}\bigr]$ of $\perm\wh\TT'$ in \eqref{3-ModInterimMtrx}.
Moreover,
\begin{equation}\label{NewCoeffAnalysis}
\begin{bmatrix}\mu'\\ \nu'\end{bmatrix}
=\TG'\cdot\begin{bmatrix}\lambda''\\ 1\end{bmatrix}
=\begin{bmatrix}c'&g'\\ a'&b'\end{bmatrix}\cdot\begin{bmatrix}\lambda''\\ 1\end{bmatrix},
\end{equation}
from which it follows that $\mu'>0$ since $g'\in\BB N^*$ as in \eqref{Reduced3XptlMtrx}.

Since the primary variable $x''_1$ is inconsistent with $\perm\TQ''$ and $\iI\Zoq$ in \eqref{Syn3XptlMtrx-4}, it follows that the latent primary variable $x_1$ can be revived when $\nu'>0$ such that the singularity height strictly decreases from the prior singularity height $d$ of $w(\X)$ in \eqref{3-WeierstrassPolynomial}.
In the case of $\nu'=0$ in $\perm\TQ''$ in \eqref{Syn3XptlMtrx-4}, the latent primary variable $x_1$ sustains its latency as in Definition \ref{Def:NestLatency} whereas $x'_1$ is revived.
Based on the Weierstrass form $f_0(x'_1,x'_2)$ in \eqref{3-Apexform}, we can argue as in Lemma \ref{Lemma:MiddleVariableRevive} to show that the new singularity height strictly decreases from the prior singularity height $d'$ of $f(x'_1,x'_2,x_1)$ in \eqref{3-Apexform}.
Moreover, it is easy to see that the same conclusions follow when the latent primary variable $x_1$ or $x'_1$ is consistent with $\TQ''$ and $\iI\Zoq$ in \eqref{Syn3XptlMtrx-4}.

When $\La''=\bz$ in $\TQ''$ in \eqref{Syn3XptlMtrx-4}, the latent primary variables $x_1$ and $x'_1$ are inconsistent with $\TQ''$ and $\iI\Zoq$ such that they cannot be revived.
When the exceptional support $\es{\TQ''}\Zoq$ in \eqref{Syn3XptlMtrx-4} corresponds to the latent primary exponents $\alpha_1$ and $a'_1$ of $x_1$ and $x'_1$ as in \eqref{3-UnivariableForm} according to Lemma \ref{Lemma:ConstantPrimaryExp}, we write $f_0(x''_1)={x''_1}^{d''}(c+r(x''_1))$ in \eqref{3-UnivariableForm} with $c\in\BB C^*$ and $r(0)=0$, which is similar to \eqref{3-NoWeierstrass}.
After a localization of the non-exceptional variable $z''_1$, the singularity height is reduced to zero after an appropriate deficient contraction as in Definition \ref{Def:DeficientContraction} based on the identity $z''_1=x''_1$ from $\TQ''$ in \eqref{Syn3XptlMtrx-4} with $\La''=\bz$ as well as the above redundant function $c+r(x''_1)$.
And the same holds when $\es{\TQ''}\Zoq$ in \eqref{Syn3XptlMtrx-4} corresponds to other latent primary exponents of $x_1$ and $x'_1$ in \eqref{3-UnivariableForm}.

In the case when the exceptional index set $\iI\Zoq=\{3\}$, the reduced exponential matrix $\TN''$ or $\perm\TN''$ bears the following form instead of the one in \eqref{Reduced3XptlMtrx-3}:
\begin{equation}\label{Reduced3XptlMtrx-4}
\begin{aligned}
    \TN'':=
    \begin{array}{r @{} c @{\hs{-0.5pt}} c @{} p{4mm} @{} c
    @{\hs{1.5mm}} c @{\hs{1mm}} c @{\hs{1.7mm}} c @{\hs{0.8mm}}
    p{4mm} @{\hs{-2.5mm}} l}
    &           &           &           &           \E_1
    &           \E_2        &           &           \V''_3
    &           &                                                   \\
    \mr{4}{$\rule{0mm}{9mm}$}           &           x''_1
    &           \mr{4}{$\rule{0mm}{9mm}$}           &
    \raisebox{2.9mm}[0pt]{\mr{4}{$\left[\rule{0mm}{9mm}\right.$}}
    &           1           &           0           &
    \vline      &           b''         &
    \raisebox{2.9mm}[0pt]{\mr{4}{$\left.\rule{0mm}{9mm}\right]$}}
    &           \mr{6}{$\rule{0mm}{24mm}$}                          \\
    &           x''_2       &           &           &
    0           &           1           &           \vline
    &           a''         &           &                           \\
    \cline{2-9}
    &           x''_3       &           &           &
    0           &           0           &           \vline
    &           g''         &           &                           \\[-2pt]
    &           &           &           &           z''_1
    &           z''_2       &           \vline      &
    z''_3       &           &                                       \\
    \end{array};
    &\hs{10mm}
    \perm\TN'':=
    \begin{array}{r @{} c @{\hs{-0.5pt}} c @{} p{4mm} @{} c
    @{\hs{1.5mm}} c @{\hs{1mm}} c @{\hs{1.5mm}} c
    @{\hs{0.8mm}} p{4mm} @{\hs{-2.5mm}} l}
    &           &           &           &           \E_1
    &           \E_2        &           &           \V''_3
    &           &                                                   \\
    \mr{4}{$\rule{0mm}{9mm}$}           &           x''_2
    &           \mr{4}{$\rule{0mm}{9mm}$}           &
    \raisebox{2.9mm}[0pt]{\mr{4}{$\left[\rule{0mm}{9mm}\right.$}}
    &           1           &           0           &
    \vline      &           0           &
    \raisebox{2.9mm}[0pt]{\mr{4}{$\left.\rule{0mm}{9mm}\right]$}}
    &           \mr{6}{$\rule{0mm}{24mm}$}                          \\
    &           x''_3       &           &           &
    0           &           1           &           \vline
    &           0           &           &                           \\
    \cline{2-9}
    &           x''_1       &           &           &
    0           &           0           &           \vline
    &           g''         &           &                           \\[-2pt]
    &           &           &           &           z''_1
    &           z''_2       &           \vline      &
    z''_3       &           &                                       \\
    \end{array}
\end{aligned}
\end{equation}
with $g''\in\BB N^*$.
In particular, $\perm\TN''$ can be further reduced to the unix matrix like in \eqref{Incstn3ExpMtrx-2} since its column vector $\V''_3$ can be reduced to the unit vector $\E_3$.
Henceforth we just take $\perm\TN''=\TE$ in \eqref{Reduced3XptlMtrx-4} as well.
The interim exponential matrix $\perm\wh\TT'$ in \eqref{3-ModInterimMtrx} and reduced exponential matrix $\TN''$ or $\perm\TN''$ in \eqref{Reduced3XptlMtrx-4} can be synthesized into a synthetic exponential matrix $\TQ''$ or $\perm\TQ''$ as follows.
\begin{equation}\label{3-SynMatrix}
\begin{aligned}
    \TQ'':=
    \begin{array}{r @{} c @{\hs{-0.5pt}} c @{} p{4mm} @{} c
    @{\hs{1.5mm}} c @{\hs{1mm}} c @{\hs{1.7mm}} c @{\hs{0.8mm}}
    p{4mm} @{\hs{-2.5mm}} l}
    &           &           &           &           \E_1
    &           &           &           &           &               \\
    \mr{4}{$\rule{0mm}{9mm}$}           &           x''_1
    &           \mr{4}{$\rule{0mm}{9mm}$}           &
    \raisebox{2.9mm}[0pt]{\mr{4}{$\left[\rule{0mm}{9mm}\right.$}}
    &           1           &           0           &
    \vline      &           b''         &
    \raisebox{2.9mm}[0pt]{\mr{4}{$\left.\rule{0mm}{9mm}\right]$}}
    &           \mr{6}{$\rule{0mm}{24mm}$}                          \\
    &           x_1         &           &           &
    0           &           c'          &           \vline
    &           \mu'        &           &                           \\
    \cline{2-9}
    &           x'_1        &           &           &
    0           &           a'          &           \vline
    &           \nu'        &           &                           \\
    &           &           &           &           z''_1
    &           z''_2       &           \vline      &
    z''_3       &           &                                       \\
    \end{array};
    &\hs{10mm}
    \perm\TQ'':=
    \begin{array}{r @{} c @{\hs{-0.5pt}} c @{} p{4mm} @{} c
    @{\hs{1.5mm}} c @{\hs{1mm}} c @{\hs{1.5mm}} c
    @{\hs{0.8mm}} p{4mm} @{\hs{-2.5mm}} l}
    &           &           &           &           &
    &           &           &           &                           \\
    \mr{4}{$\rule{0mm}{9mm}$}           &           x''_1
    &           \mr{4}{$\rule{0mm}{9mm}$}           &
    \raisebox{2.9mm}[0pt]{\mr{4}{$\left[\rule{0mm}{9mm}\right.$}}
    &           0           &           0           &
    \vline      &           1           &
    \raisebox{2.9mm}[0pt]{\mr{4}{$\left.\rule{0mm}{9mm}\right]$}}
    &           \mr{6}{$\rule{0mm}{24mm}$}                          \\
    &           x_1         &           &           &
    c'          &           g'          &           \vline
    &           0           &           &                           \\
    \cline{2-9}
    &           x'_1        &           &           &
    a'          &           b'          &           \vline
    &           0           &           &                           \\
    &           &           &           &           z''_1
    &           z''_2       &           \vline      &
    z''_3       &           &                                       \\
    \end{array},
\end{aligned}
\end{equation}
where we have:
\[\begin{bmatrix}\mu'\\ \nu'\end{bmatrix}
=\TG'\cdot\begin{bmatrix}a''\\ g''\end{bmatrix}
=\begin{bmatrix}c'&g'\\ a'&b'\end{bmatrix}\cdot\begin{bmatrix}a''\\ g''\end{bmatrix}\]
in $\TQ''$ and it is easy to verify that we always have $\mu'\in\BB N^*$.
The best scenario is when $b''\in\BB N^*$ or $\nu'\in\BB N^*$ in which case the latent primary variable $x_1$ is revived and the singularity height strictly decreases from the prior singularity height $d$ of the Weierstrass polynomial $w(\X)$ in \eqref{3-WeierstrassPolynomial}.
When $\nu'=b''=0$, the canonical reduction of $\TQ''$ bears the following form similar to \eqref{RedSyn3XptlMtrx}:
\begin{equation}\label{3-NormalizedSynMtrx}
    \TN_{\TQ''}:=
    \begin{array}{r @{} c @{\hs{-0.5pt}} c @{} p{4mm} @{} c
    @{\hs{1.5mm}} c @{\hs{1mm}} c @{\hs{1.7mm}} c @{\hs{0.8mm}}
    p{4mm} @{\hs{-2.5mm}} l}
    &           &           &           &           \E_1
    &           \E_2        &           &           &
    &                                                               \\
    \mr{4}{$\rule{0mm}{9mm}$}           &           x'_1
    &           \mr{4}{$\rule{0mm}{9mm}$}           &
    \raisebox{2.9mm}[0pt]{\mr{4}{$\left[\rule{0mm}{9mm}\right.$}}
    &           1           &           0           &
    \vline      &           0           &
    \raisebox{2.9mm}[0pt]{\mr{4}{$\left.\rule{0mm}{9mm}\right]$}}
    &           \mr{6}{$\rule{0mm}{24mm}$}                          \\
    &           x''_1       &           &           &
    0           &           1           &           \vline
    &           0           &           &                           \\
    \cline{2-9}
    &           x_1         &           &           &
    0           &           0           &           \vline
    &           \mu'        &           &                           \\
    &           &           &           &           t''_1
    &           t''_2       &           \vline      &
    t''_3       &           &                                       \\
    \end{array}.
\end{equation}
In \eqref{3-NormalizedSynMtrx} the latent primary variable $x'_1$ is revived as $t''_1$ such that the new singularity height strictly decreases from the prior singularity height $d'$ of the Weierstrass form $f_0(x'_1,x'_2)$ in \eqref{3-Apexform}.
The argument is similar to that in Lemma \ref{Lemma:MiddleVariableRevive}, same as the case of $\nu'=0$ in $\perm\TQ''$ in \eqref{Syn3XptlMtrx-4} when $x'_1$ is revived.

From $g'\in\BB N^*$, the synthetic exponential matrix $\perm\TQ''$ in \eqref{3-SynMatrix} can be reduced to the following canonical form:
\begin{equation}
    \TN_{\per\TQ''}:=
    \begin{array}{r @{} c @{\hs{-0.5pt}} c @{} p{4mm} @{} c
    @{\hs{1.5mm}} c @{\hs{1mm}} c @{\hs{1.5mm}} c
    @{\hs{0.8mm}} p{4mm} @{\hs{-2.5mm}} l}
    &           &           &           &           \E_1
    &           \E_2        &           &           &
    &                                                               \\
    \mr{4}{$\rule{0mm}{9mm}$}           &           x_1
    &           \mr{4}{$\rule{0mm}{9mm}$}           &
    \raisebox{2.9mm}[0pt]{\mr{4}{$\left[\rule{0mm}{9mm}\right.$}}
    &           1           &           0           &
    \vline      &           0           &
    \raisebox{2.9mm}[0pt]{\mr{4}{$\left.\rule{0mm}{9mm}\right]$}}
    &           \mr{6}{$\rule{0mm}{24mm}$}                          \\
    &           x'_1        &           &           &
    0           &           1           &           \vline
    &           0           &           &                           \\
    \cline{2-9}
    &           x''_1       &           &           &
    0           &           0           &           \vline
    &           g'          &           &                           \\
    &           &           &           &           t''_1
    &           t''_2       &           \vline      &
    t''_3       &           &                                       \\
    \end{array}
\end{equation}
such that the latent primary variable $x_1$ is revived and the new singularity height strictly decreases from the singularity height $d$ of the Weierstrass polynomial $w(\X)$ in \eqref{3-WeierstrassPolynomial}.

After the resolution step associated with the canonical reduction $\perm\TN$ in \eqref{Incstn3ExpMtrx-2} with $\iI\Zo=\{3\}$, now consider the case when the reduced exponential matrix bears the form of either $\TN'$ or $\perm\TN'$ in \eqref{Reduced3XptlMtrx-2} with the exceptional index set $\iI\Zop=\{3\}$, depending on the consistency of the primary variable $x'_1$.
The interim exponential matrix $\perm\wh\TT$ in \eqref{3-NewInterimMtrx-2} and canonical reduction $\TN'$ or $\perm\TN'$ in \eqref{Reduced3XptlMtrx-2} can be synthesized into a synthetic exponential matrix $\TQ'$ or $\perm\TQ'$ as follows.
\begin{equation}\label{Syn3XptlMtrx-5}
\begin{aligned}
    \TQ':=
    \begin{array}{r @{} c @{\hs{-0.5pt}} c @{} p{4mm} @{} c
    @{\hs{1.5mm}} c @{\hs{1mm}} c @{\hs{1.7mm}} c @{\hs{0.8mm}}
    p{4mm} @{\hs{-2.5mm}} l}
    &           &           &           &           \E_1
    &           \E_2        &           &           &
    &                                                               \\
    \mr{4}{$\rule{0mm}{9mm}$}           &           x'_1
    &           \mr{4}{$\rule{0mm}{9mm}$}           &
    \raisebox{2.9mm}[0pt]{\mr{4}{$\left[\rule{0mm}{9mm}\right.$}}
    &           1           &           0           &
    \vline      &           b'          &
    \raisebox{2.9mm}[0pt]{\mr{4}{$\left.\rule{0mm}{9mm}\right]$}}
    &           \mr{6}{$\rule{0mm}{24mm}$}                          \\
    &           x'_2        &           &           &
    0           &           1           &           \vline
    &           a'          &           &                           \\
    \cline{2-9}
    &           x_1         &           &           &
    0           &           0           &           \vline
    &           g'          &           &                           \\[-1pt]
    &           &           &           &           z'_1
    &           z'_2        &           \vline      &
    z'_3        &           &                                       \\
    \end{array};
    &\hs{10mm}
    \perm\TQ':=
    \begin{array}{r @{} c @{\hs{-0.5pt}} c @{} p{4mm} @{} c
    @{\hs{1.5mm}} c @{\hs{1mm}} c @{\hs{1.5mm}} c
    @{\hs{0.8mm}} p{4mm} @{\hs{-2.5mm}} l}
    &           &           &           &           \E_1
    &           \E_2        &           &           \V'_3
    &           &                                                   \\
    \mr{4}{$\rule{0mm}{9mm}$}           &           x'_2
    &           \mr{4}{$\rule{0mm}{9mm}$}           &
    \raisebox{2.9mm}[0pt]{\mr{4}{$\left[\rule{0mm}{9mm}\right.$}}
    &           1           &           0           &
    \vline      &           0           &
    \raisebox{2.9mm}[0pt]{\mr{4}{$\left.\rule{0mm}{9mm}\right]$}}
    &           \mr{6}{$\rule{0mm}{24mm}$}                          \\
    &           x_1         &           &           &
    0           &           1           &           \vline
    &           0           &           &                           \\
    \cline{2-9}
    &           x'_1        &           &           &
    0           &           0           &           \vline
    &           g'          &           &                           \\[-1pt]
    &           &           &           &           z'_1
    &           z'_2        &           \vline      &
    z'_3        &           &                                       \\
    \end{array}
\end{aligned}
\end{equation}
with $g'\in\BB N^*$ and $a',b'\in\BB N$.
When $a'^2+b'^2\in\BB N^*$ in $\TQ'$ in \eqref{Syn3XptlMtrx-5}, it is easy to see that the latent primary variable $x_1$ is revived and the singularity height strictly decreases from that of $w(\X)$ in \eqref{3-WeierstrassPolynomial}, which is equal to $d$ and associated with $x_1$.
When $a'=b'=0$, it is evident that $z'_1$ of $\TQ'$ in \eqref{Syn3XptlMtrx-5} serves as the primary variable in this case such that the singularity height strictly decreases from the prior singularity height $d'$ of the Weierstrass form $f_0(x'_1,x'_2)$ in \eqref{3-Apexform}.
The argument is similar to that in the case of $\TQ'$ in \eqref{3-SpecialSynMtrx} when the latent primary variable $x_1$ is inconsistent with $\TQ'$ and $\iI\Zop$.

The scenario of the synthetic exponential matrix $\perm\TQ'$ in \eqref{Syn3XptlMtrx-5} is simple since the latent primary variable $x_1$ is revived via the revived primary variable $z'_2$ in this case.
It is evident that the singularity height strictly decreases from the prior singularity height $d$ of the Weierstrass polynomial $w(\X)$ in \eqref{3-WeierstrassPolynomial}.
All the above strict decreases of singularity heights starting from \eqref{3-NormalizedSynMtrx} hold at regular reduced branch points.
In the case when a reduced branch point is irregular, the resolution of irregular singularities of $\vfz f1(\Xp)$ in \eqref{3-PreWeierstrass} like in \eqref{Reduced2XptlMtrx} thenceforth can be repeated almost verbatim here to show that eventually the singularity height strictly decreases from the prior singularity height as well.

\end{document}